\newtheorem{theorem}{Theorem}
\theoremstyle{plain}
\newtheorem{corollary}{Corollary}
\newtheorem{definition}{Definition}
\newtheorem{example}{Example}
\newtheorem{lemma}{Lemma}
\newtheorem{problem}{Problem}
\newtheorem{proposition}{Proposition}
\newtheorem{remark}{Remark}
\numberwithin{equation}{section}
\newcommand{\abs}[1]{\left\lvert#1\right\rvert}
\begin{document}

\title[The Weighted Fermat-Steiner-Frechet multitree]{A Lagrangian program detecting the Weighted Fermat-Steiner-Frechet multitree for a Frechet $N-$multisimplex in the $N-$dimensional Euclidean Space}
\author{Anastasios N. Zachos}
\address{University of Patras, Department of Mathematics, GR-26500 Rion, Greece}
\email{azachos@gmail.com}
\keywords{Lagrange mutlipliers, convex program, Fermat-Steiner Frechet problem, Fermat-Steiner tree, Fermat-Steiner-Frechet multitree, Fermat-Steiner minimal trees, incongruent simplexes, plasticity of multitrees, Natural numbers} \subjclass[2010]{Primary 90C35, 90C25, 51K05; Secondary 52B12, 52A40, 52A41}
\begin{abstract}
In this paper, we introduce the Fermat-Steiner-Frechet problem for a given $\frac{N(N+1)}{2}-$tuple of positive real numbers determining the edge lengths of an $N-$simplex in $\mathbb{R}^{N},$ in order to study its solution called the "Fermat-Steiner-Frechet multitree," which consist of a union of Fermat-Steiner trees for all derived pairwise incongruent $N-$simplexes in the sense of Blumenthal, Herzog for $N=3$ and Dekster-Wilker for $N\ge 3.$ We obtain a method to determine the Fermat-Steiner Frechet multitree in $\mathbb{R}^{N}$  based on the theory of Lagrange multipliers, whose equality constraints depend on $N-1$ independent solutions of the inverse weighted Fermat problem for an $N-$simplex in $\mathbb{R}^{N}.$ A fundamental application of the Lagrangian program for the Fermat-Steiner Frechet problem in $\mathbb{R}^{N}$ is the detection of the Fermat-Steiner tree with global minimum length having $N-1$ equally weighted Fermat-Steiner points among $\frac{[\frac{N(N+1)}{2}]!}{(N+1)!}$ incongruent $N-$simplexes determined by an $\frac{N(N+1)}{2}-$tuple of consecutive natural numbers controlled by Dekster-Wilker, Blumenthal-Herzog conditions and enriched with the fundamental evolutionary processes of Nature (Minimum communication networks, minimum mass trasfer, maximum volume of incongruent simplexes). Furthermore, we obtain the unique solution of the inverse weighted Fermat problem, referring to the unique set of $(N+1)$ weights, which correspond to the vertices of an $N-$ boundary simplex in $\mathbb{R}^{N}.$ Additionaly, we give a negative answer for an intermediate weighted Fermat-Steiner-Frechet multitree having one node (weighted Fermat point) for $m$ boundary closed polytopes, ($m\ge N+2$), which is determined by $m$ prescribed rays meeting at a fixed weighted Fermat point, by deriving a linear dependence for the $m$ variable weights (Plasticity of an Intermediate Fermat-Steiner-Frechet multitree for $m$ boundary closed polytopes). By enriching the plasticity of an intermediate Fermat-Steiner-Frechet multitree for $m$ boundary closed polytopes with a two-way mass transport from $k$ vertices to the unique weighted Fermat point and from this point to the $m-k$ vertices and reversely, we derive the equations of "mutation" of intermediate Fermat-Steiner-Frechet multitree for $m$ boundary closed polytopes in $\mathbb{R}^{N}.$
Finally, we apply the unique solution of the inverse weighted Fermat problem for an $N-$simplex in $\mathbb{R}^{N},$ in order to construct an $\epsilon$ approximation for the weights which correspond to the vertices of a Frechet $N-$multisimplex.
\end{abstract}\maketitle

\section{Introduction}

The problem considered in the present paper is a result of synthesis of two problems:
The weighted Fermat-Steiner problem on the shortest networks for boundary $N-$simplexes in $\mathbb{R}^{N}$ and Frechet's problem on seeking the necessary and sufficient conditions for a given $\frac{N(N+1)}{2}$-tuple of positive real numbers determining the edge lengths of an $N$-simplex in $\mathbb{R}^{N}$ for $N\ge 3.$

The weighted Fermat-Steiner problem for a boundary $N-$simplex is a problem on finding a shortest network of total weighted length connecting $N+1$ non-collinear and non-coplanar points possessing each of them a positive real number (weight) in $\mathbb{R}^{N}.$

In \cite{Zachos:21}, we find the solution of the weighted Fermat-Steiner problem for a tetrahedron $A_{1}A_{2}A_{3}A_{4}$ ($N=3$). The solution is a tree $T$ having two weighted points $A_{0}$ and $A_{0}^{\prime}$ (nodes) inside the tetrahedron. Each node is a weighted Fermat-Steiner (or Torricelli) point. We call $T$ a weighted Steiner tree. $T$ is the union of one line segment between the two weighted Fermat-Steiner points and of $4$ line segments joining each weighted Fermat-Steiner point with two fixed neighboring vertices. In \cite{RubinsteinThomasWeng:02}, Rubinstein, Thomas and Weng solved the unweighted Fermat-Steiner problem for tetrahedra in $\mathbb{R}^{3}.$ The weighted Fermat-Steiner problem for four points in the Euclidean plane is a problem of practical importance first considered by Gauss in a letter to Schumacher, which deal with the construction of a railway network interconnecting four German cities (\cite{Gauss:73}). The weight of each vertex may be considered as the population of each city. If one of the four weights equals zero and the other three weights are equal, we obtain an equally weighted Fermat-Steiner problem for three points in $\mathbb{R}^{2},$ we derive the original problem posed by Fermat (\cite{Fermat:29}).
It is worth mentioning that Fermat proved Snell's law of retraction of light, which requires the computation of the minimum of a function of one variable as a sum of two weighted distances (the third weight is zero), such that the first weight is inversely proportional to the velocity of propagation of light in the upper medium and the second weight is inversely proportional to the velocity of propagation of light in the lower one (see in \cite[p.~21]{Tikhomirov:90}). In \cite{JarnikKossler:34}, Jarnik and Kossler introduced a generalization of the Fermat problem on optimal networks of a finite set of points in $\mathbb{R}^{2}.$ In \cite{CourantRobbins:96}, Courant and Robbins restated and named the problem as the Steiner problem. In \cite{IvanovTuzhilin:95}, \cite{IvanovTuzhilin:01b}, Ivanov and Tuzhilin studied properties of weighted minimal binary trees, which is an important generalization of weighted Fermat-Steiner trees in $\mathbb{R}^{2}.$

The Frechet problem in $\mathbb{R}^{3}$ states that (\cite{Frechet:35}):
Given a sextuple of positive real numbers $a_{ij}=a_{ji},$ for $i,j=1,2,3,4.$ What are the necessary and sufficient conditions that they be the lengths of the tetrahedron $A_{1}A_{2}A_{3}A_{4}$ in $\mathbb{R}^{3}$?

In \cite[Theorem~2.1]{Blumenthal:59}, Blumenthal proved that a sextuple of positive real numbers $(a+nd)^{1/2},$ $n=0,1,2,3,4,5,$ $0< 4d\le a$ form a completely tetrahedral sextuple, which yields thirty incongruent tetrahedra in $\mathbb{R}^{3}.$

In \cite[Theorem~3]{Fritz:59}, Hertog proved a theorem characterizing complete tetrahedral sextuples, which states that: Six positive real numbers $\{a,b,c,d,e,f\}$ satisfying the conditions $a>b>c>d>e>f>0$ and $e+f\ge a$ form a complete tetrahedral sextuple if and only if the Caley-Menger determinant $D(a,b;c,f;e)\ge 0,$ (the edges separated by semicolons are pairs of opposite edges of the tetrhaedron) that is if and only if the tetrahedron in which the faces are formed by the triples $(a,c,d),$ $(a,e,f),$ $(b,c,e)$ and $(b,d,f)$ is realizable in $\mathbb{R}^{3}.$

In \cite[Remark~4,~5,~6]{Fritz:59}, Hertog gave the following three important applications of his theorem:

i. Six consecutive positive integers $x+5,x+4,x+3,x+2,x+1,x$ form a complete tetrahedral sextuple
 if and only if $x$ is greater than or equal to the positive root $\rho$ of the equation $D(x+5,x+4;x+3,x;x+2,x+1)=0:$ $6.09<\rho<6.10$ (Hertog sextuple of consecutive positive integers)

ii. The sextuple of positive real numbers $\{1,1-\delta; 1-2\delta, \epsilon;1-3\delta,1-4\delta \}$ for $\delta=\delta(\epsilon)>0,$ $0<\epsilon<1,$ such that $1-4\delta >\epsilon$ forms a complete tetrahedral sextuple

iii. Blumenthal's sextuple $\{a+nd)^{1/2},\}$ $n=0,1,2,3,4,5$ is a complete tetrahedral sextuple if and only if $\frac{a}{d}$ is greater than or equal to a cubic irrationality $r:$ $1.91<r<1.92.$

In \cite{DeksterWilker:87}, Dekster and Wilker derived the condition $\frac{x}{x+5}\ge \lambda(3)\equiv \frac{1}{\sqrt{2}},$ which gives that $x\ge 13,$ which leaves six runs from Hertog result sextuple of consecutive integers that works for $x\ge 7.$

We introduce the Fermat-Steiner-Frechet problem for a given\\ $\frac{N(N+1)}{2}-$tuple of positive real numbers determining $N-$ simplexes in $\mathbb{R}^{N}.$

\begin{problem}[The weighted Fermat-Steiner-Frechet problem in $\mathbb{R}^{N}$]
Given a $2N-$tuple of positive real numbers (weights), such that $N-1$ of them are equal and a $\frac{N(N+1)}{2}$ of positive real numbers determining the edge lengths of $N-$simplexes in $\mathbb{R}^{N},$ find the shortest networks (weighted Fermat-Steiner trees) of total weighted length for all derived incongruent $N-$simplexes in $\mathbb{R}^{N}.$
\end{problem}

We call Fermat-Steiner Frechet multitree the solution of the weighted Fermat-Steiner-Frechet problem in $\mathbb{R}^{N},$ which is a union of the corresponding weighted Fermat-Steiner trees for all derived incongruent $N-$simplexes in $\mathbb{R}^{N}$ and Frechet $N-$multisimplex the class of incongruent $N-$simplexes derived by the same given $\frac{N(N+1)}{2}-$tuple of positive real numbers in $\mathbb{R}^{N}.$

In \cite{Zachos:16}, we solve the weighted Fermat-Frechet problem for $N=3$ in $\mathbb{R}^{3}$ and we find the condition to locate the corresponding weighted Fermat trees, by applying a generalization of the cosine law in $\mathbb{R}^{3},$ which depend only on edge lengths and the Caley-Menger determinant representing tetrahedral volume w.r to edge lengths.

In this paper, we obtain a Lagrangian program to detect a weighted Fermat-Steiner multitree for a boundary $N-$Frechet multisimplex in $\mathbb{R}^{N},$ which
gives:

(i) a characterization of the most natural of natural numbers obtained by a $\frac{N(N+1)}{2}-$tuple of consecutive natural numbers
\\
(ii)The plasticity of weighted Fermat-Steiner multitree having one node (intermediate multitrees) for boundary $m-$ closed polytopes in $\mathbb{R}^{N}$
\\
(iii) )The Bessel (random) plasticity of weighted Fermat-Steiner multitree having one node (intermediate multitrees) for boundary $m-$ closed polytopes in $\mathbb{R}^{N}$
\\
(iv) An $\epsilon$ approximation method for the weights of a Frechet $N-$multisimplex in $\mathbb{R}^{N}.$

Our main results are:

1. A Lagrangian program, which detects the weighted Fermat-Steiner-Frechet multitree for a given $\frac{N(N+1)}{2}$ in $\mathbb{R}^{N}$ for $N\ge 3$

\emph{Theorem~\ref{Lagrangerulemultitreern} Lagrange multiplier rule for the weighted Fermat-Steiner Frechet multitree in $\mathbb{R}^{N}$}
If the admissible point $\tilde{x}_{i}$ yields a weighted minimum multitree for $1\le i \le \frac{\frac{1}{2}N(N+1)!}{(N+1)!},$ which correspond to a Frechet $N-$multisimplex derived by a $\frac{N(N+1)}{2}-$tuple of edge lengths determining upto $\frac{\frac{1}{2}N(N+1)!}{(N+1)!}$ incongruent $N-$simplexes constructed by the Dekster-Wilker domain $DW_{\mathbb{R}^{N}}(\ell,s)$, then there are numbers $\lambda_{0i},\lambda_{1i},\lambda_{2i},\ldots \lambda_{(N^2-1)i},$ (components of the Lagrangian vector) such that:

\[\frac{\partial \mathcal{L}_{i}(\tilde{x}_{i},\tilde{\lambda}_{i})}{\partial x_{ji}}=0,\]
for $j=1,2,\ldots,2N(N-1),$

\[\tilde{x}_{i}= \{a_{(0,1),1},a_{(0,1),2},a_{(0,1),3},\beta_{(0,1),4},\ldots,\beta_{(0,1),N},\ldots,
a_{(0,N-1),1},a_{(0,N-1),2},\]\[a_{(0,N-1),3},\beta_{(0,N-1),4},\ldots,\beta_{(0,N-11),N},\]\[ w_{(0,1),1},\ldots, w_{(0,1),N},w_{(0,2),1},\ldots, w_{(0,2),N},\ldots,\] \[w_{(0,N-1),1},\ldots, w_{(0,N-1),N}\} \]

$\tilde{\lambda}_{i}=\{\lambda_{0},\lambda_{1},\ldots,\lambda_{N^2-1}\}\}$

and \[\mathcal{L}_{i}(\tilde{x}_{i},\tilde{\lambda}_{i})\] is a Lagrangian function.

Theorems~\ref{Lagrangerulemultitree},\ref{Lagrangerulemultitreer4} are particular cases for $N=3$ and $N=4,$ respectively.
We note that we mention these two particular cases, in order to explain how the Schlafli angle formed by the normals of two planes is eliminated and vanishes from the computation of the Lagrangian function in $\mathbb{R}^{3}$ and how the Schlafli angle formed by the normals of a plane and a hyperplane
cannot be eliminated and is embodied in the computation of the Lagrangian function in $\mathbb{R}^{4}.$ Hence, $N-3$ Schlafli angles are embodied in the computation of the Lagrangian function in $\mathbb{R}^{N}.$

2. A characterization of the most natural of natural numbers obtained by a $\frac{N(N+1)}{2}-$tuple of consecutive natural numbers

\emph{Theorem~\ref{mostnaturalconsecutivetentuplesrn}}
The most natural $\frac{N(N+1)}{2}-$tuple of numbers from $\frac{N(N+1)}{2}$ consecutive natural numbers $\{a+\frac{N(N+1)}{2}-1,\ldots,a+1,a,\}$
for $a\ge a(N)$ is a $\frac{N(N+1)}{2}-$tuple of edge lengths having the maximum volume (maximum $\frac{N(N+1)}{2}-$tuple) among the $\frac{\frac{1}{2}N(N+1)!}{(N+1)!},$ incongruent $N-$simplexes, which corresponds to a Fermat-Steiner tree of minimum total weighted length (global minimum solution), such that the upper bound for the weight $B_{ST}$ is determined by the rest Fermat-Steiner minimal trees having larger or equal weighted minimal total length.

The function $a_{N}$ is derived by the Dekster-Wilker function $\ell (N)$ (Section~8).

We note that for $N=3,$ we use Blumenthal-Herzog sextuples of positive real numbers determining the edge lengths of 30 incongruent tetrahedra (Frechet multitetrahedron), in order to detect the most natural of natural numbers obtained by a consecutive sextuple of natural numbers whose least element $a\ge 7$ (Theorem~\ref{mostnaturalconsecutivesextuples}). The corresponding Dekster-Wilker sextuples detects the most natural of natural numbers for $a\ge 13.$

For $N=4,$ we use Dekster-Wilker tentuples of positive real numbers determining the edge lengths of 30.240 incongruent $4$-simplexes (Frechet $4-$multisimplex), in order to detect the most natural of natural numbers obtained by a consecutive sextuple of natural numbers whose least element $a\ge 30$ (Theorem~\ref{mostnaturalconsecutivetentuplesr4}).

3. A theoretical construction of a weighted Fermat-Steiner Frechet multitree for a Frechet multitetrahedron in $\mathbb{R}^{3},$ using two variable dihedral angles, which focus on an auxiliary construction of five points that lie on the same circle.

\emph{Theorem~\ref{mainresmultitreetetr2}}
There are upto $4!\cdot 30$ weighted Simpson lines defined by the points $T_{12}, T_{34},$ such that the two equally weighted Fermat-Steiner points $O_{12}, O_{34}$ in $[T_{12},T_{34}],$ which give the position of the weighted Fermat-Steiner-Frechet multitree for 30 incongruent tetrahedra determined by Blumenthal, Herzog and Dekster-Wilker sextuples of edge lengths in $\mathbb{R}^{3}.$

4. Non-random and random plasticity equations of an intermediate weighted Fermat-Steiner Frechet multitrees.

A. \emph{Theorem\ref{5inverseR4}} [Unique solution of the $4-$INVWF problem in $\mathbb{R}^{4}$]

The weight $B_{i}$ is uniquely determined by:

\[B_{i}=\frac{C}{1+\abs{\frac{\sin{\alpha_{i,k0lm}}}{\sin{\alpha_{j,k0lm}}}}+\abs{\frac{\sin{\alpha_{i,j0lm}}}{\sin{\alpha_{k,j0lm}}}}+\abs{\frac{\sin{\alpha_{i,k0jm}}}{\sin{\alpha_{l,k0jm}}}}
+\abs{\frac{\sin{\alpha_{i,k0jl}}}{\sin{\alpha_{m,k0jl}}}}},\]
for $i,j,k,l,m=1,2,3,4,5$ and $i \neq j\neq k\neq l\neq m,$
where $\alpha_{i,k0lm}$ is the angle formed by the line segment $A_{0}A_{i}$ and $A_{0},P_{i},$ where $P_{i}$ is the trace of the orthogonal projection
of $A_{i}$ to the hyperplane defined by $A_{0}A_{k}A_{l}A_{m}.$

\emph{Theorem~\ref{nplus1inverseRn}}[Solution of the $N-$INVWF problem in $\mathbb{R}^{N}$]
The weight $B_{i}$ is uniquely determined by:

\[B_{i}=\frac{C}{1+\abs{\frac{\sin{\alpha_{i,0k_{1}k_{2}\ldots k_{N-1}}}}{\sin{\alpha_{k_{N},0k_{1}k_{2}\ldots k_{N-1}}}}}+\abs{\frac{\sin{\alpha_{i,0k_{1}k_{2}\ldots k_{N-2}k_{n}}}}{\sin{\alpha_{k_{N-1},0k_{1}k_{2}\ldots k_{N-2}k_{N}}}}}+\ldots+\abs{\frac{\sin{\alpha_{i,0k_{2}\ldots k_{N-1}k_{N}}}}{\sin{\alpha_{k_{1},0k_{2}\ldots k_{N-1}k_{N}}}}}},\]

for $i, k_{1},k_{2},...,k_{N}=1,2,...,N+1$ and $k_{1} \neq k_{2}\neq...\neq k_{N}.$

The non-uniqueness of the $N-$INVWF problem in $\mathbb{R}^{N}$ gives the plasticity equations of non-random plasticity and Bessel(random) plasticity
of intermediate weighted Fermat-Steiner Frechet multitrees having one node (weighted Fermat point) for boundary $m-$ closed polytopes in $\mathbb{R}^{N}$
for $m\ge N+2$ (Theorems~\ref{theorplastmpol} and ~\ref{Besselpasticitypolytopes}, respectively).

\emph{Theorem~\ref{plasticitymultitreern}}
An increase to the weight that corresponds to the $(N+2)th$ ray causes a decrease to the weight that corresponds to the $(N+1)th$ ray and a variation to the weight that corresponds to the $ith$ ray depends on the difference $(B_{i})_{123\ldots N(N+1)}-(B_{i})_{123\ldots N(N+2)},$ such that the geometric structure of the weighted Fermat-Frechet multitree with respect to a boundary $N-$multisimplex, remains the same, for $i=1,2,\ldots N,$ $1\le k \le \frac{\frac{1}{2}N(N+1)!}{(N+1)!}.$

The Bessel plasticity of multitrees is derived by adding a random weight, which follows a Bessel motion on a new growing branch starting from the weighted Fermat point $A_{0},$ i.e $A_{0}A_{N+2}$ (Theorem~\ref{Besselpasticitypolytopes}).

B.

\emph{Theorem~\ref{RatioVolumesSimplex}, weighted volume equalities in $\mathbb{R}^{N}$}
\[\frac{B_{1}}{a_{1}Vol(A_{0}A_{2}A_{3}...A_{N+1})}=\frac{B_{2}}{a_{2}Vol(A_{1}A_{0}...A_{N+1})}=...=\]
\[=\frac{B_{N+1}}{a_{N+1}Vol(A_{1}A_{2}...A_{0})}=\frac{\sum_{i=1}^{N+1}\frac{B_{i}}{a_{i}}}{Vol(A_{1}A_{2}...A_{N+1})}.\]

The weighted volume equalities are important to compute the weighted Fermat point of an $N-$simplex in $\mathbb{R}^{N}$ and to construct the Lagrangian function of a weighted Fermat-Steiner multitree for the Frechet $N-$multisimplex in $\mathbb{R}^{N-1},$ by taking into account a system of variable weighted volume equalities derived by $(N-1)$ weighted Fermat points located inside each boundary $N-$simplex derived by the Frechet $N-$multisimplex.

C. By adding an optimal mass transport of a two-way communication network along $k$ directions in the plasticity of non-random plasticity
of intermediate weighted Fermat-Steiner Frechet multitrees having one weighted Fermat point, for boundary $m-$ closed polytopes in $\mathbb{R}^{N}$
for $m\ge N+2,$ we derive the $(m,k)$ "mutation" of intermediate weighted Fermat-Steiner Frechet multitrees having one weighted Fermat point, for boundary $m-$ closed polytopes in $\mathbb{R}^{N}$ (Theorem~\ref{mutationamultitrees}).

5.  Constructive weights for the vertices of an $N-$ Frechet multisimplex in $\mathbb{R}^{N}.$

An $\epsilon$ approximation of the value of the weight $B_{N+1},$ which corresponds to the vertex $A_{N+1}$ of an $N-$ simplex in $\mathbb{R}^{N}$ circumscribed in a $(N-1)$sphere $S^{N-1}$ of radius $r,$ and center $O,$ is given by Theorem~\ref{constructiveweights}, by applying the $N-$INWF problem in $\mathbb{R}^{N}.$ For $\epsilon \to 0,$ the limiting floating weighted Fermat tree solution coincides with the absorbing weighted Fermat tree solution of Theorem~\ref{theor1}.Therefore, we get an approximation for the weights of a Frechet $N-$multisimplex via an $\epsilon$ approximation of each corner of incongruent $N-$simplexes with a multiweighted Fermat-Frechet multitree in $\mathbb{R}^{N}.$

In Section~2, we mention some fundamental known results concerning the existence and uniqueness of solution for the weighted Fermat-Steiner problem and the weighted Fermat problem in $\mathbb{R}^{N}$ and the uniqueness of solution of the inverse weighted Fermat-problem for a boundary tetrahedron and boundary triangles in $\mathbb{R}^{3}.$ In Section.~3, we introduce a Lagrangian program, which detects the weighted Fermat-Steiner Frechet multitree for a Frechet multitetrahedron in $\mathbb{R}^{3}.$ In Section~4, we apply this program to detect a weighted Fermat-Steiner tree with respect to a boundary tetrahedron
having the maximum volume whose edge lengths form a Herzog sextuple of six consecutive natural numbers and the global weighted minimum length. In Section~5, we give a theoretical construction of a weighted Fermat-Steiner Frechet multitree for a Frechet multitetrahedron in $\mathbb{R}^{3},$ using two variable dihedral angles.

In Section~6, we study the uniqueness of solution of the inverse weighted Fermat problem for $N-$simplexes in $\mathbb{R}^{N}$ and the plasticity solutions of a weighted Fermat-Frechet multitree with one node (weighted Fermat point) for $m-$ boundary closed polytopes in $\mathbb{R}^{N}.$ In Sections~7,~8, we apply the uniqueness of the inverse weighted Fermat problem for $N-$simplexes in $\mathbb{R}^{N}$ to a Lagrangian program, in order to detect the weighted Fermat-Steiner Frechet multitree for an $N-$ Frechetmultisimplex for $N=4$ and $N>4,$ respectively. In Section~9, we extend the Lagrangian program for intermediate weighted Fermat-Steiner-Frechet multitrees for an $N-$ Frechetmultisimplex in $\mathbb{R}^{N}$ for an $N-$Frechet multisimplex whose $\#$ of nodes is less than $N-1.$ In Section~10, we give the "mutation" equations of an intermediate weighted Fermat-Frechet multitree having one node for $m-$boundary closed polytopes in $\mathbb{R}^{N},$ which is a generalization of the plasticity solutions given in section~5. In Section~12, we construct the weights of a Frechet $N-$multisimplex in $\mathbb{R}^{N},$ by introducing a method of $\epsilon$ approximation of variable weighted multitrees in $\mathbb{R}^{N}.$ In Section~13, we introduce the Bessel plasticity equations of an intermediate weighted Fermat-Steiner Frechet multitree having one node for $m-$boundary closed polytopes in $\mathbb{R}^{N},$ such that one of the weights follows a Bessel motion. In the final section, we conclude with two open questions, which deal with a "partition" of optimal networks in $\mathbb{R}^{N}.$  

\section{Prelimiminaries: The weighted Fermat-Steiner problem and the inverse weighted Fermat problem for tetrahedra in $\mathbb{R}^{3}$}
In this section, we state the weighted Fermat-Steiner problem for an $N-$simplex in $\mathbb{R}^{N},$ which is a generalization of the weighted Fermat problem in $\mathbb{R}^{N}$ and mention some fundamental results established by Ivanov-Tuzhilin concerning the existence and uniqueness of the Fermat-Steiner tree solutions. We also mention the characterization of solutions with respect to the weighted Fermat problem given by Sturm for the unweighted case and extended by Kupitz-Martini for the weighted case. We continue by giving some recent known results for the weighted Fermat-Steiner problem for tetrahedra in $\mathbb{R}^{3}$ regarding the position of the weighted Fermat-Steiner tree given by the author, which extends previous results given by Rubinstein-Thomas-Weng for the unweighted case. Finally, We state an inverse weighted Fermat problem for $m$ points in $\mathbb{R}^{N}$ and an explicit solution of the inverse weighted Fermat problem for $N=3$ given by Zouzoulas and the author and generalizes the solution of the inverse weighted Fermat problem for $N=2$ given by Gueron and Tessler.

We denote by $A_{0,1},A_{0,2},\ldots A_{0,N-1}$ $N-1$ points inside the $N-$simplex $A_{1}A_{2}\ldots A_{N+1}$ in $\mathbb{R}^{N},$ by $b_{i}$ a positive real number(weight), which corresponds to each vertex $A_{i},$ by $a_{(0,i),(0,j}$ the length of the line segment $A_{0,i}A_{0,j}$ and by $a_{k0}$ the length of the line segment $A_{k}A_{0},$ for $i,j=1,2,\ldots, N-1,$ $k=1,2,\ldots, N+1.$

\begin{problem}[The weighted Fermat-Steiner problem for $A_{1}A_{2}\ldots A_{N+1}$ in $\mathbb{R}^{N}$]\label{WFSN}
Find $A_{0,i}$ in $\mathbb{R}^{N},$ for $i=1,2\ldots, N-1$ with equal given weight $B_{ST},$ such that:

\begin{equation}\label{objectivewfsn}
f(\{A_{0,i}\})=\sum_{i=1,\- j_{i} \in \{1,2,\ldots,N-1\} }^{N+1}b_{i}a_{i,(0,j_{i})}+b_{ST}\sum_{i=1,\\
j_{i}\in \{1,2,\ldots,N-1\}}^{N-1}a_{(0,i),(0,j_{i})} \to min.
\end{equation}

\end{problem}

In \cite[Corollary~1.1, Proposition~1.2]{IvanovTuzhlin:92},\cite[Theorem~1.1, Corollary~1.2, Chapter~3]{IvanTuzh:094}, \cite{IvanovTuzhilin:95},\cite{IvanovTuzhilin:01b},  Ivanov and Tuzhilin studied the weighted Fermat-Steiner problem and many variations starting from an embedding of a Steiner topological graph (one dimensional cell complex) on a manifold $W.$

The following definitions of an immersed parametric network are given in \cite[Definitions,p.~56,p.~58]{IvanTuzh:094}:
A parametric network $\Gamma$ of type $G$ in the manifold $W$ is an arbitrary mapping $\phi: G\to W.$
A network $\Gamma=\{\phi:G \to W \}$ is called piecewise smooth if each of its edges is a piecewise curve. A piecewise smooth curved is called immersed if all its edges are nondegenerate. We note that a description of weighted minimum networks in $\mathbb{R}^{N}$ is given in \cite[Section~3]{IvanovTuzhilin:98}. Local minimal networks of type $G$ can be realized as weighted minimal networks type $G$ with constant weight function (positive real number) defined on the edge set of $G.$

\begin{lemma}{Weighted Local Minimality criterion, \cite[Corollary~1.2, Chapter~3]{IvanTuzh:094},\cite{IvanovTuzhilin:95}}\label{ivantuzhimp1}
Let $\Gamma$ be an immersed weighted parametric network (Weighted Fermat-Steiner network). The network is local minimal if and only if all the edges of $\Gamma$ are geodesic segments for any mobile vertex (Fermat-Steiner point) $A_{0,i}$ of $\Gamma$ the linear combination of weighted unit vectors of the directions of edges of $\Gamma$ going out of $A_{0,i}$ (with coefficients equal to the weights of these edges) vanishes.
\end{lemma}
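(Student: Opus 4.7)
The plan is to prove the two directions separately by computing the first variation of the total weighted length functional $L(\Gamma)=\sum_{e\in E(G)} w_e\,\ell(\phi|_e)$, where $\ell$ denotes arc length. Because $\Gamma$ is immersed, each edge $\phi|_e$ is a nondegenerate piecewise smooth curve, so arc length varies smoothly under compactly supported perturbations.

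I would handle the ``only if'' direction in two stages. First, fix the positions of all mobile vertices and consider a compactly supported variation $\phi_s$ of the interior of a single edge $e$, leaving its endpoints and all other edges fixed. Since $w_e>0$, stationarity of $L$ under this variation is equivalent to stationarity of the arc length of $e$ alone, which by the classical first variation formula forces $e$ to be a geodesic segment. Having established this, fix all edges to be geodesic and vary one mobile vertex $A_{0,i}$ by $s\mapsto \exp_{A_{0,i}}(sv)$ in a direction $v\in T_{A_{0,i}}W$, letting the adjacent edges be replaced by geodesics joining the moved vertex to its unchanged neighbors. The first variation formula for a geodesic with a moving endpoint gives $\tfrac{d}{ds}|_{s=0}\ell(e)=-\langle v, T_e\rangle$, where $T_e$ is the unit tangent of $e$ at $A_{0,i}$ pointing outward. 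Summing over edges incident to $A_{0,i}$,
\[
\left.\frac{d}{ds}\right|_{s=0} L(\Gamma_s) = -\Bigl\langle v,\; \sum_{e\ni A_{0,i}} w_e\, T_e \Bigr\rangle,
\]
which must vanish for every admissible $v$, producing the weighted balance condition.

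For the ``if'' direction, assume both conditions hold. Near $\Gamma$, any competitor of the same topological type is parametrized by the mobile-vertex positions together with the edge curves joining them. For each fixed placement of the mobile vertices, the weighted length (at fixed topology) is minimized by replacing each edge with its joining geodesic, since geodesics are locally length-minimizing and every $w_e$ is positive. This reduces the problem to a finite-dimensional one in the mobile-vertex positions alone. On a convex neighborhood, the distance $d(\cdot,\cdot)$ is smooth and convex, so $L$ is smooth and convex as a function of the mobile-vertex positions; the balance condition is precisely the vanishing of its gradient, and a critical point of a convex function is a local minimum.

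The main obstacle I anticipate is the ``if'' direction in a general manifold $W$: a vanishing first variation alone does not imply local minimality, and one must either compute the second variation or invoke a convexity argument. In Euclidean ambient space (the setting relevant to this paper) convexity of the distance functions is immediate; in the Riemannian generality of \emph{immersed parametric networks} one must first restrict attention to a totally convex neighborhood of $\Gamma$ in which geodesics between any two points exist, are unique, and depend smoothly on their endpoints, so that $L$ becomes a smooth function on an open subset of $W^{N-1}$ whose Hessian at the critical configuration is positive semidefinite.
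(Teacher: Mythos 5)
The paper does not actually prove this lemma: it is imported verbatim from Ivanov--Tuzhilin (\cite[Corollary~1.2, Chapter~3]{IvanTuzh:094}, \cite{IvanovTuzhilin:95}) as a known criterion, so there is no in-paper argument to compare yours against. Judged on its own, your proof follows the standard route and is essentially sound in the setting the paper actually uses. The ``only if'' direction --- first varying the interior of a single edge with endpoints fixed to force geodesics, then varying one mobile vertex along $\exp_{A_{0,i}}(sv)$ and reading off $\frac{d}{ds}\big|_{s=0}L=-\langle v,\sum_e w_e T_e\rangle$ --- is exactly the first-variation computation underlying the cited result, and your restriction to competitors of the same topological type is consistent with Ivanov--Tuzhilin's notion of local minimality for parametric networks of fixed type $G$ (so you correctly do not need the extra angle conditions that would arise if vertex-splitting were allowed). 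In $\mathbb{R}^N$ your ``if'' direction also closes: $(x,y)\mapsto\|x-y\|$ is jointly convex, so the reduced functional on the mobile-vertex positions is convex and a critical point is a minimum.

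The one soft spot is the Riemannian ``if'' direction, which you flag yourself but do not resolve: ending with ``the Hessian at the critical configuration is positive semidefinite'' is not a proof of local minimality, since a degenerate critical point of a smooth function need not be a local minimum, and the distance function $d(x,y)$ is not jointly convex in positive curvature even on arbitrarily small totally convex neighborhoods (sliding both endpoints of a geodesic segment is a flat direction Euclideanly, and curvature can make it strictly length-decreasing). To close that case one must either work with the Ivanov--Tuzhilin definition of local minimality (minimality of sufficiently small pieces, where the comparison is effectively one edge or one star at a time and can be handled by the triangle inequality for geodesics in a normal ball) or carry out a genuine second-variation/calibration argument. Since the present paper only ever applies the lemma in $\mathbb{R}^N$, this gap does not affect its use here, but you should not present the Riemannian case as done.
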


We mention an important theorem of Ivanov-Tuzhilin (\cite{IvanTuzh:094}), which deals with the uniqueness for networks with boundaries in $\mathbb{R}^{N}.$ A strong local minimal network is a parametric network $\Gamma,$ if for any point $x$ of the parametric graph of the reduced network (without degenerate edges) corresponding to $\Gamma,$ there exists a strong local network $\Gamma^{x}_{sloc},$ such that any small deformation of the network
$\Gamma^{x}_{sloc},$ that preserves its boundary does not decrease the length of $\Gamma^{x}_{sloc}$ (see in \cite[Definitions,p.~91]{IvanTuzh:094}).

\begin{lemma}{Uniqueness theorem of Ivanov-Tuzhilin for networks with boundaries in $\mathbb{R}^{N},$ \cite[Theorem~3.1 Chpater~2, Proof of Theorem~3.1, Chapter~3]{IvanTuzh:094}}\label{ivantuzhimp2}
Let $M\subset \mathbb{R}^{N}$ be an arbitrary non-empty finite set of points from $\mathbb{R}^{N};$ denote by $G$ an acyclic topological graph and let $\beta$ be a boundary mapping that maps a subset of vertices of graph $G$ onto the set $M.$ Assume that among strong minimal networks in class $\mathcal{M}_{G}(\beta)$ there exists an embedded network $\Gamma$ which does not possess mobile vertices $A_{0,i}$ of degree two. Then all other strong minimal networks in this class coincide with $\Gamma$ (up to a parameterization); that is $\Gamma$ is unique.
\end{lemma}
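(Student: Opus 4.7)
The plan is to prove uniqueness by combining the strict convexity of the weighted length functional $L$ on the space of networks with fixed topology and boundary, together with the two structural hypotheses on $\Gamma$ (embeddedness and absence of mobile vertices of degree two). First, I would parametrize the class $\mathcal{M}_{G}(\beta)$ by the positions of the mobile vertices of the acyclic graph $G$ in $\mathbb{R}^{N}$, with the boundary vertices pinned by $\beta$. In these coordinates $L$ is a finite positive-weighted sum of Euclidean distances between adjacent vertex positions, and so is a convex function of the mobile vertex coordinates.

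Now suppose $\Gamma'$ is another strong minimal network in $\mathcal{M}_{G}(\beta)$. By the weighted local minimality criterion (Lemma~\ref{ivantuzhimp1}), both $\Gamma$ and $\Gamma'$ satisfy the vanishing weighted balance of unit direction vectors at every mobile vertex. I would then form the straight-line homotopy $\Gamma_{t} = (1-t)\Gamma + t\Gamma'$ in parameter space; by convexity of $L$ together with $L(\Gamma_{0}) = L(\Gamma_{1}) = L_{\min}$, the function $t \mapsto L(\Gamma_{t})$ is constant on $[0,1]$ and each summand $|x(t) - y(t)|$ is affine in $t$. The strict-convexity rigidity of the Euclidean norm then forces, for each edge, the displacement $x(t) - y(t)$ to remain parallel to a fixed direction as $t$ varies, so that corresponding edges of $\Gamma$ and $\Gamma'$ are parallel. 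Because $G$ is acyclic, I can propagate from each boundary vertex inward along the unique path to any mobile vertex: fixed boundary endpoint plus parallel edge direction plus equal edge length forces the mobile endpoint to coincide in $\Gamma$ and $\Gamma'$, and iterating this concludes $\Gamma = \Gamma'$ up to parameterization.

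The main obstacle will be the propagation step at a mobile vertex of degree two. At such a vertex the weighted balance condition of Lemma~\ref{ivantuzhimp1} forces the two incident edges to be collinear with anti-parallel unit vectors, and the vertex may then slide freely along the joint segment without changing $L$, producing a one-parameter family of minimizers and breaking the rigidity step. This is precisely the configuration excluded by the hypothesis on $\Gamma$. The embeddedness of $\Gamma$ is used to globalize the local conclusion: it prevents coincidences of distinct mobile vertices or edge collisions along the homotopy that could otherwise degenerate the parametric picture and permit the network topology to collapse, so that the strict-convexity argument applies uniformly along $[0,1]$ and yields $\Gamma' = \Gamma$ up to parameterization, which is the stated uniqueness.
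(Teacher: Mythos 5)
The paper itself gives no proof of this lemma---it is imported verbatim from Ivanov--Tuzhilin's book with a citation---so your proposal has to stand on its own. Your global strategy is the standard one and is sound up to a point: the weighted length $L$ is convex in the mobile-vertex coordinates, so $t\mapsto L(\Gamma_t)$ is constant on the segment joining two minimizers, each summand is then forced to be affine, and the rigidity of the Euclidean norm (affineness of $t\mapsto|v+tw|$ forces $v,w$ linearly dependent) makes corresponding edge vectors of $\Gamma$ and $\Gamma'$ parallel with the same orientation. The gap is in your final propagation step. Affineness gives $E_e(1)=\lambda_e E_e(0)$ with $\lambda_e\ge 0$; it does \emph{not} give $\lambda_e=1$, so the phrase ``equal edge length'' is unjustified. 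Consequently ``fixed boundary endpoint plus parallel edge direction'' pins the adjacent mobile vertex only to a line through that boundary point, not to a point, and walking single paths from the boundary into the tree never determines the position of any mobile vertex. The rigidity step therefore fails at the very first mobile vertex.

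What closes the argument---and where the no-degree-two hypothesis does its positive work, beyond excluding the sliding degeneracy you correctly describe---is a leaf-pruning induction on the subtree of mobile vertices. Since $G$ is acyclic and every mobile vertex has degree at least three (degree one is impossible at a mobile vertex of a locally minimal network with positive weights, by Lemma~\ref{ivantuzhimp1}), there is always a mobile vertex $s$ with at least two already-determined neighbours, initially one with at least two boundary neighbours. The two incident edges have prescribed directions which, by embeddedness and the balance condition, are not parallel; hence $s$ is the unique intersection of two non-parallel lines through determined points and coincides in $\Gamma$ and $\Gamma'$. Declaring $s$ determined and repeating exhausts the tree. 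You should also record explicitly that orientation is preserved: with $|E_e(0)|>0$ the affine function $|a+tb|$ cannot change sign on $[0,1]$, so no edge of $\Gamma'$ is reversed, and a degenerate edge ($\lambda_e=0$) would contradict the assumed structure. With these repairs your outline becomes a complete proof; as written it is incomplete at its decisive step.
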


By substituting $A_{0}\equiv A_{0,i},$ for $i=1,2,\ldots N-1$ in (\ref{objectivewfsn}), we derive the weighted Fermat problem $A_{1}A_{2}\ldots A_{N+1}$ in $\mathbb{R}^{N}.$

\begin{problem}[The weighted Fermat problem for $A_{1}A_{2}\ldots A_{N+1}$ in $\mathbb{R}^{N}$]\label{WFN}
Find $A_{0}$ in $\mathbb{R}^{N},$ such that:

\begin{equation}\label{objectivewfrn}
f(\{A_{0,i}\})=\sum_{i=1}^{N+1}b_{i}a_{0i}\to min.
\end{equation}

\end{problem}

In \cite{Sturm:84}, Sturm gave a complete characterization of the
solutions of the unweighted Fermat problem for $m$ given
points in $\mathbb{R}^{N},$ and Kupitz and Martini extended these characterization in the weighted case, in
\cite[Theorem~18.37, pp.~250]{BolMa/So:99}.

\begin{theorem} {\cite{Sturm:84},\cite[Theorem~18.37,pp.~250]{BolMa/So:99}}\label{theor1}

(I) The weighted Fermat point $A_{0}$ with respect
exists and is unique.

(II) If for each point  $A_{i}\in \{A_{1},A_{2},...,A_{m}\}$
\[ \|\sum_{j=1,j\ne i}^{m}b_{j}\vec {u}(A_j,A_i)\|>b_i, \]
for $i,j=1,2,\ldots,m,$ then

(a) the weighted Fermat point $A_{0}$ does not belong
to\\ $\{A_{1},A_{2},...,A_{m}\},$

(b)

\begin{equation}\label{weightedfloatingft}
 \sum_{i=1}^{m}b_{i}\vec {u}(A_0,A_i)=\vec{0}.
\end{equation}

(Weighted floating Fermat tree solution).

(III) If there is a point $A_{i}\in\{A_{1},A_{2},...,A_{m}\}$
satisfying
\[ \|{\sum_{j=1,j\ne i}^{m}b_{j}\vec {u}(A_j,A_i)}\|\le b_i.
\] for $i,j=1,2,...,m,$ then the weighted Fermat point  $A_{0} \equiv A_i$
(Weighted absorbed Fermat tree solution).

\end{theorem}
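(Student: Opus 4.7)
The plan is to read all three parts as consequences of applying convex analysis to the proper convex objective $f(X)=\sum_{i=1}^{m}b_{i}\|X-A_{i}\|$, treating (I), then (III), then (II); the key tools are coercivity, strict convexity away from the collinear case, and the subdifferential of a sum of norms.

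\textbf{Existence and uniqueness (part I).} Each summand $\|X-A_{i}\|$ is continuous, convex, and nonnegative, so $f$ inherits these properties. Coercivity follows from the reverse triangle inequality $f(X)\ge\bigl(\sum_{i}b_{i}\bigr)\|X\|-\sum_{i}b_{i}\|A_{i}\|$, so $f$ attains its infimum on $\mathbb{R}^{N}$, and the set of minimizers is nonempty, closed, and convex. For uniqueness I would note that $\|X-A_{i}\|$ is strictly convex along every direction not parallel to $X-A_{i}$; hence whenever the $A_{i}$'s are not all collinear (which is automatic for vertices of an $N$-simplex) $f$ is strictly convex on $\mathbb{R}^{N}$, and the minimizer $A_{0}$ is unique. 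The collinear subcase reduces to the one-dimensional weighted median and can be treated separately.

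\textbf{Subdifferential computation.} Off the data set, $f$ is smooth with
\[
\nabla f(X)\;=\;\sum_{i=1}^{m}b_{i}\,\frac{X-A_{i}}{\|X-A_{i}\|}\;=\;-\sum_{i=1}^{m}b_{i}\,\vec u(X,A_{i}).
\]
At $X=A_{i}$ only the $i$-th term fails to be smooth, and since $\partial\|\cdot -A_{i}\|(A_{i})$ equals the closed Euclidean unit ball $\overline{B}(0,1)$, sum-rule for convex subdifferentials (all summands are real-valued and continuous) gives
\[
\partial f(A_{i})\;=\;\sum_{j\ne i}b_{j}\,\vec u(A_{j},A_{i})\;+\;b_{i}\,\overline{B}(0,1).
\]

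\textbf{Deducing (III) and (II), and the main obstacle.} By Fermat's rule for convex functions, $A_{0}$ minimizes $f$ iff $0\in\partial f(A_{0})$. Testing the candidate $A_{0}=A_{i}$ gives $-\sum_{j\ne i}b_{j}\vec u(A_{j},A_{i})\in b_{i}\overline{B}(0,1)$, i.e.\ $\bigl\lVert\sum_{j\ne i}b_{j}\vec u(A_{j},A_{i})\bigr\rVert\le b_{i}$, which is exactly (III). Conversely, if the strict opposite inequality holds at \emph{every} vertex (the hypothesis of (II)), then no $A_{i}$ can be a minimizer; the uniqueness from (I) then forces $A_{0}\notin\{A_{1},\dots,A_{m}\}$, which is (II)(a), and at such a smooth point the first-order condition $\nabla f(A_{0})=0$ rewrites as $\sum_{i=1}^{m}b_{i}\,\vec u(A_{0},A_{i})=\vec 0$, giving (II)(b). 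The essential care lies in (i) verifying $\partial\|\cdot-A_{i}\|(A_{i})=\overline{B}(0,1)$ and the applicability of the sum rule, and (ii) keeping the unit-vector orientations straight so that the criteria of (II) and (III) come out as exact complements at each vertex. Arranging the argument as (I) $\to$ (III) $\to$ (II) lets one invoke uniqueness to exclude the absorbed case in (II)(a) without any circularity.
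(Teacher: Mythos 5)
The paper does not prove this statement at all: Theorem~\ref{theor1} is quoted as a known preliminary, with the proof deferred to Sturm and to Kupitz--Martini (\cite{Sturm:84}, \cite[Theorem~18.37]{BolMa/So:99}), so there is no internal argument to compare yours against. Your convex-analytic proof is correct and is essentially the standard modern treatment of this classical result: coercivity plus convexity for existence, strict convexity off the collinear case for uniqueness, and Fermat's rule $0\in\partial f(A_{0})$ together with $\partial\lVert\cdot-A_{i}\rVert(A_{i})=\overline{B}(0,1)$ and the Moreau--Rockafellar sum rule (valid here since all summands are finite and continuous) to separate the floating case (II) from the absorbed case (III). The orientation bookkeeping is right: $\nabla\lVert X-A_{j}\rVert\big|_{X=A_{i}}=\vec u(A_{j},A_{i})$, so $0\in\partial f(A_{i})$ is exactly $\bigl\lVert\sum_{j\ne i}b_{j}\vec u(A_{j},A_{i})\bigr\rVert\le b_{i}$, and the two criteria are exact complements at each vertex as you intended. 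The only caveat worth recording is that in the genuinely collinear subcase uniqueness can fail (e.g.\ two points with equal weights give a whole segment of minimizers), so part (I) as stated requires the non-collinearity that the paper's setting --- vertices of an $N$-simplex --- automatically supplies; your remark to that effect is the right qualification.
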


The inverse weighted Fermat problem for $m$ points (non-collinear and non-coplanar) in $\mathbb{R}^{N}$
($(m-1)-$INVWF)states that:

\begin{problem}[The $(m-1)-$INVWF problem in $\mathbb{R}^{N}$]
Given a point $A_{0}\notin \{A_{1},A_{2},...,A_{m}\},$ $A_{0},A_{i}\in \mathbb{R}^{N},$ does there exist
a unique set of positive weights $b_{i0},$ such that
\begin{eqnarray}\label{isoperimetricconditionweights}
 b_{10}+b_{20}+...+b_{m0} = c =const,
\end{eqnarray}
for which $A_{0}$
\begin{displaymath}
 f(A_{0})=\sum_{i=1}^{m}b_{i0}a_{i0}\to min.
\end{displaymath}

\end{problem}

The solution of the $2-$INVWF problem in $\mathbb{R}^{2}$ ($b_{4}=b_{5}=b_{6}=...=b_{m}=0$) is given in \cite{Gue/Tes:02}.
\begin{proposition}{Solution of the 2-INVWF problem in $\mathbb{R}^{2}$ \cite{Gue/Tes:02}}\label{3inverser2}
\begin{equation}\label{inverse11132}
b_{i}=\frac{C}{1+\frac{\sin{\alpha_{i0j}}}{\sin{\alpha_{j0k}}}+\frac{sin{\alpha_{i0k}}}{\sin{\alpha_{j0k}}}},
\end{equation}
for $i,j,k=1,2,3$ and $i \neq j\neq k.$
\end{proposition}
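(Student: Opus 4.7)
The plan is to reduce the statement to the floating-Fermat equilibrium condition of Theorem~\ref{theor1}(II)(b) and then apply Lami's theorem. Since $A_{0}\notin\{A_{1},A_{2},A_{3}\}$ by hypothesis, the configuration cannot be absorbed, so $A_{0}$ being the weighted Fermat point of $\{A_{1},A_{2},A_{3}\}$ with weights $(b_{1},b_{2},b_{3})$ is equivalent to the vector balance
\[
b_{1}\vec u(A_{0},A_{1})+b_{2}\vec u(A_{0},A_{2})+b_{3}\vec u(A_{0},A_{3})=\vec 0,
\]
together with the normalization $b_{1}+b_{2}+b_{3}=C$ from (\ref{isoperimetricconditionweights}).

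First I would apply Lami's theorem (the planar sine law for three coplanar concurrent vectors in equilibrium) to the three weighted unit directions $b_{i}\vec u(A_{0},A_{i})$. Concretely this is obtained by taking the scalar cross product of the equilibrium equation successively with each $\vec u(A_{0},A_{i})$ and using that the planar wedge of two unit vectors equals the sine of the angle between them (for a fixed orientation of $\mathbb{R}^{2}$). The outcome is the classical proportionality
\[
\frac{b_{i}}{\sin\alpha_{j0k}}=\frac{b_{j}}{\sin\alpha_{i0k}}=\frac{b_{k}}{\sin\alpha_{i0j}},
\]
in which each weight is divided by the sine of the angle at $A_{0}$ between the other two rays. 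All three angles $\alpha_{102},\alpha_{203},\alpha_{301}$ lie in $(0,\pi)$ and sum to $2\pi$ (three unit vectors with strictly positive coefficients summing to $\vec 0$ cannot be contained in any open half-plane), hence the three sines are strictly positive and the ratios are unambiguous.

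Next I would eliminate $b_{j},b_{k}$ in terms of $b_{i}$ via these proportionalities and substitute into the normalization $b_{i}+b_{j}+b_{k}=C$, which yields
\[
b_{i}\!\left(1+\frac{\sin\alpha_{i0k}}{\sin\alpha_{j0k}}+\frac{\sin\alpha_{i0j}}{\sin\alpha_{j0k}}\right)=C,
\]
and rearranging gives exactly (\ref{inverse11132}). Uniqueness is automatic: Lami's theorem fixes the ratios $b_{i}:b_{j}:b_{k}$, and the isoperimetric condition fixes the common scale; positivity of the resulting weights is guaranteed by the positivity of all three sines.

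The only real bookkeeping obstacle is in the cross-product step, since the statement uses unordered angles $\alpha_{i0j}=\alpha_{j0i}$, so one must fix a single orientation of $\mathbb{R}^{2}$ and verify that the signed sines obtained from the three cross-product manipulations recombine into unsigned ones consistently. Once an orientation is chosen, this reduces to the observation that the three signed angles traversed cyclically around $A_{0}$ sum to $2\pi$, and the remainder of the argument is a short algebraic verification that invokes nothing beyond Theorem~\ref{theor1}.
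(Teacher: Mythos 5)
Your proof is correct. The paper itself gives no proof of this proposition (it is quoted directly from Gueron--Tessler), but your argument --- pass from the floating equilibrium condition $\sum_i b_i\vec u(A_0,A_i)=\vec 0$ of Theorem~\ref{theor1}(II)(b) to the Lami proportionality $b_i/\sin\alpha_{j0k}=b_j/\sin\alpha_{i0k}=b_k/\sin\alpha_{i0j}$ by wedging with each unit direction, then normalize via (\ref{isoperimetricconditionweights}) --- is exactly the two-dimensional instance of the method the paper uses to prove the higher-dimensional analogues (Theorems~\ref{5inverseR4} and~\ref{nplus1inverseRn}), where the planar cross product is replaced by inner products with normals to hyperplanes spanned by subsets of the rays. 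Your orientation/sign bookkeeping and the observation that the three angles lie in $(0,\pi)$ and sum to $2\pi$ correctly justify replacing signed sines by unsigned ones.
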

The weights $b_{1}, b_{2},$ $b_{3}$ depend on exactly two given angles $\alpha_{102},$ $\alpha_{203}$ because $\alpha_{103}=2\pi-\alpha_{102}-\alpha_{203}.$

\begin{problem}[The $3-$INVWF problem in $\mathbb{R}^{3}$ \cite{Zach/Zou:09}]
Given a point $A_{0}$ which belongs to the interior of $A_{1}A_{2}A_{3}A_{4}$ in $\mathbb{R}^{3},$
does there exist a unique set of positive weights $b_{i0},$ such
that:
\[b_{10}+b_{20}+b_{30}+b_{40}=C>0,\]
\[f(A_{0})=\sum_{i=1}^{4}b_{i0}a_{i0} \to min.\]

\end{problem}

The unique solution of the weighted Fermat problem for tetrahedra has been established in \cite{Zach/Zou:09}, by taking into account (\ref{tetraed1})-(\ref{tetraed4}) of Lemma~\ref{tetrahedroninv}.

\begin{lemma}[Solution of the inverse weighted Fermat problem for tetrahedra in $\mathbb{R}^{3}$ {\cite[(3.12),(3.13),p.~120]{Zach/Zou:09}}]
\begin{equation}\label{solinvtetrr3}
(\frac{b_{j0}}{b_{i0}})^{2}=\frac{\sin^{2}\alpha_{kom}-\cos^{2}\alpha_{moi}-\cos^{2}\alpha_{koi}+2\cos\alpha_{moi}\cos\alpha_{koi}\cos\alpha_{kom}}{\sin^{2}\alpha_{kom}-\cos^{2}\alpha_{moj}-\cos^{2}\alpha_{koj}+2\cos\alpha_{moj}\cos\alpha_{koj}\cos\alpha_{kom}}
\end{equation}
for $i,j,k=1,2,3,4$ and $i\ne j\ne k\ne m\ne i.$
\end{lemma}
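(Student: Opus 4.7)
The plan is to derive the displayed identity directly from the vector equilibrium condition at the weighted Fermat point, supplied by part (II) of Theorem~\ref{theor1}, namely
\[
b_{10}\vec{u}(A_0,A_1)+b_{20}\vec{u}(A_0,A_2)+b_{30}\vec{u}(A_0,A_3)+b_{40}\vec{u}(A_0,A_4)=\vec{0},
\]
which holds because $A_0$ lies in the interior of $A_1A_2A_3A_4$ and therefore falls in the floating case. Write $\vec{u}_p\equiv\vec{u}(A_0,A_p)$, so $\vec{u}_p\cdot\vec{u}_q=\cos\alpha_{poq}$, and fix distinct indices $i,j,k,m\in\{1,2,3,4\}$.

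First I would eliminate $b_{k0}$ and $b_{m0}$ in one stroke by dotting the vector equation with the cross product $\vec{u}_k\times\vec{u}_m$. Since $\vec{u}_k\cdot(\vec{u}_k\times\vec{u}_m)=0$ and $\vec{u}_m\cdot(\vec{u}_k\times\vec{u}_m)=0$, only the $i$ and $j$ terms survive, yielding the scalar relation
\[
b_{i0}\,[\vec{u}_i,\vec{u}_k,\vec{u}_m]+b_{j0}\,[\vec{u}_j,\vec{u}_k,\vec{u}_m]=0,
\]
where $[\cdot,\cdot,\cdot]$ is the scalar triple product. Squaring and invoking the Gram identity $[\vec{v}_1,\vec{v}_2,\vec{v}_3]^{2}=\det\bigl(\vec{v}_p\cdot\vec{v}_q\bigr)_{p,q}$ converts this into a ratio of $3\times3$ Gram determinants:
\[
\left(\frac{b_{j0}}{b_{i0}}\right)^{2}=\frac{G(\vec{u}_i,\vec{u}_k,\vec{u}_m)}{G(\vec{u}_j,\vec{u}_k,\vec{u}_m)}.
\]

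Finally I would expand each Gram determinant, for example
\[
G(\vec{u}_i,\vec{u}_k,\vec{u}_m)=\det\!\begin{pmatrix}1&\cos\alpha_{iok}&\cos\alpha_{iom}\\ \cos\alpha_{iok}&1&\cos\alpha_{kom}\\ \cos\alpha_{iom}&\cos\alpha_{kom}&1\end{pmatrix}=1-\cos^{2}\alpha_{iok}-\cos^{2}\alpha_{iom}-\cos^{2}\alpha_{kom}+2\cos\alpha_{iok}\cos\alpha_{iom}\cos\alpha_{kom},
\]
and use $1-\cos^{2}\alpha_{kom}=\sin^{2}\alpha_{kom}$ to recover the numerator in the claimed formula; the denominator is the analogous expression with $i$ replaced by $j$. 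This yields the displayed identity exactly.

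The geometric content is that each Gram determinant is the squared volume of the parallelepiped on the unit directions $\vec{u}_i,\vec{u}_k,\vec{u}_m$, i.e.\ the squared sine of the solid angle at $A_0$ spanned by those three rays, so the formula reads $(b_{j0}/b_{i0})^{2}$ as a ratio of such squared solid-angle volumes. The main obstacle is really just bookkeeping: verifying that the sign chosen when passing from the triple-product relation to the squared ratio is consistent with $b_{i0},b_{j0}>0$ (which follows from $A_0$ being interior, so both triple products are nonzero and of opposite sign), and checking that the non-vanishing of the denominator is guaranteed by the three rays $\vec{u}_j,\vec{u}_k,\vec{u}_m$ being linearly independent, a consequence of $A_1,\ldots,A_4$ being the vertices of a non-degenerate tetrahedron. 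Once these points are dispatched, the chain displayed above is the complete proof, and it is exactly the one compatible with the system (3.12)--(3.13) already cited from \cite{Zach/Zou:09}.
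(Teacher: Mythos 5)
Your proof is correct, and it is essentially the argument behind the cited formulas (3.12)--(3.13) of \cite{Zach/Zou:09} as well as the paper's own higher-dimensional analogues (Theorems~\ref{5inverseR4} and~\ref{nplus1inverseRn}): there the weighted floating condition $\sum_{p}b_{p0}\vec{u}(A_{0},A_{p})=\vec{0}$ is contracted with the unit normal to the subspace spanned by all but two of the directions, which in $\mathbb{R}^{3}$ is exactly your $\vec{u}_{k}\times\vec{u}_{m}$ up to the factor $\sin\alpha_{kom}$, so the resulting two-term relation $b_{i0}[\vec{u}_{i},\vec{u}_{k},\vec{u}_{m}]+b_{j0}[\vec{u}_{j},\vec{u}_{k},\vec{u}_{m}]=0$ is the same identity the author uses, and squaring it gives your ratio of Gram determinants. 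Your expansion $G(\vec{u}_{i},\vec{u}_{k},\vec{u}_{m})=1-\cos^{2}\alpha_{iok}-\cos^{2}\alpha_{iom}-\cos^{2}\alpha_{kom}+2\cos\alpha_{iok}\cos\alpha_{iom}\cos\alpha_{kom}$ combined with $1-\cos^{2}\alpha_{kom}=\sin^{2}\alpha_{kom}$ reproduces (\ref{solinvtetrr3}) exactly, and your closing remarks on the signs and the non-vanishing of the triple products (guaranteed by $A_{0}$ being interior to a non-degenerate tetrahedron) dispatch the only delicate points.
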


\begin{lemma}\cite[Proposition~1]{Zachos:20}\label{lem4r32}
The ratios $\frac{b_{j0}}{b_{i0}}$
depend on exactly five given angles
$\alpha_{102},$ $\alpha_{103},$ $\alpha_{104},$ $\alpha_{203}$ and
$\alpha_{204}.$

The sixth angle $\alpha_{304}$ is calculated by the following formula:

\begin{eqnarray}\label{calcalpha3042}
&&\cos\alpha_{304}=\frac{1}{4} [4 \cos\alpha _{103} (\cos\alpha
_{104}-\cos\alpha _{102} \cos\alpha _{204})+\nonumber\\
&&+2 \left(b+2 \cos\alpha _{203} \left(-\cos\alpha _{102}
\cos\alpha _{104}+\cos\alpha _{204}\right)\right)] \csc{}^2\alpha
_{102}\nonumber\\
\end{eqnarray}

where

\begin{eqnarray}\label{calcalpha304auxvar}
b\equiv\sqrt{\prod_{i=3}^{4}\left(1+\cos\left(2 \alpha
_{102}\right)+\cos\left(2 \alpha _{10i}\right)+\cos\left(2 \alpha
_{20i}\right)-4 \cos\alpha _{102} \cos\alpha _{10i} \cos\alpha
_{20i}\right)}\nonumber\\.
\end{eqnarray}

for $i,k,m=1,2,3,4,$ and $i \ne k \ne m.$
\end{lemma}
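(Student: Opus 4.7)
The heart of the matter is that the four unit vectors $\vec u_i := \vec u(A_0,A_i)$ at $A_0$ live in $\mathbb{R}^{3}$ and are therefore linearly dependent, so their $4\times 4$ Gram matrix
\[
M := \bigl[\cos\alpha_{i0j}\bigr]_{i,j=1}^{4}
\]
(with diagonal ones) has rank at most three and hence vanishing determinant. The identity $\det M=0$ is the single algebraic relation among the six cosines $\cos\alpha_{i0j}$; it reduces the free angular parameters at $A_0$ from six to five.

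For the first assertion, I would combine this Gram constraint with formula (\ref{solinvtetrr3}). The right-hand side of (\ref{solinvtetrr3}) involves only the cosines $\cos\alpha_{i0j}$, and running over the three independent ratios $b_{20}/b_{10},\ b_{30}/b_{10},\ b_{40}/b_{10}$, all six angles enter a priori. Since $\det M=0$ determines one of them from the remaining five, the ratios depend on \emph{exactly} five independent angles; the natural choice---those emanating from $A_1$ and the remaining pair $\alpha_{203},\alpha_{204}$ at $A_2$---is $\alpha_{102},\alpha_{103},\alpha_{104},\alpha_{203},\alpha_{204}$, so that $\alpha_{304}$ becomes the dependent angle.

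To obtain the explicit expression (\ref{calcalpha3042}) I would cofactor-expand $\det M=0$ along the last row, regarding it as a polynomial in the unknown $c_{34}:=\cos\alpha_{304}$. A direct expansion gives a quadratic of the form
\[
\sin^{2}\alpha_{102}\,c_{34}^{\,2} \;+\; 2P\bigl(\{c_{ij}\}_{(i,j)\neq(3,4)}\bigr)\,c_{34} \;+\; Q\bigl(\{c_{ij}\}_{(i,j)\neq(3,4)}\bigr) \;=\; 0,
\]
whose solution by the quadratic formula yields
\[
c_{34} \;=\; \frac{-P \pm \sqrt{\,P^{2} - \sin^{2}\alpha_{102}\, Q\,}}{\sin^{2}\alpha_{102}}.
\]
The task is then to recognize that the discriminant $P^{2}-\sin^{2}\alpha_{102}\,Q$ factors as the product (over $i=3,4$) of the spherical triangle quantities
\[
1+\cos(2\alpha_{102})+\cos(2\alpha_{10i})+\cos(2\alpha_{20i})-4\cos\alpha_{102}\cos\alpha_{10i}\cos\alpha_{20i},
\]
each of which is $16$ times the squared area of the spherical triangle with sides $\alpha_{102},\alpha_{10i},\alpha_{20i}$ on the unit sphere centered at $A_0$ (via the standard half-angle identities $1+\cos 2\theta=2\cos^{2}\theta$ and $\cos 2\theta=1-2\sin^{2}\theta$). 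This identification delivers exactly the auxiliary quantity $b$ of (\ref{calcalpha304auxvar}), and choosing the branch $+$ compatible with $A_0$ being interior to the tetrahedron (the opposite root corresponds to reflecting $\vec u_4$ through the plane $\mathrm{span}\{\vec u_1,\vec u_3\}$ at $A_0$) reproduces (\ref{calcalpha3042}).

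The main obstacle is the algebraic bookkeeping: the cofactor expansion produces a polynomial of several dozen monomials in the five free cosines, and verifying that the discriminant factors as the stated product of two spherical-triangle quantities is best confirmed by symbolic computation. The secondary subtlety is the branch selection: one must argue that only one of the two real roots is compatible with $A_0$ lying inside $A_1A_2A_3A_4$, since the other root geometrically corresponds to replacing $A_4$ by its mirror image through the plane $A_0A_1A_3$ and so violates the interiority hypothesis underlying the inverse weighted Fermat problem.
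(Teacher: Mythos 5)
The paper does not prove this lemma at all: it is imported verbatim as Proposition~1 of \cite{Zachos:20}, so there is no in-paper argument to set your proposal against. Judged on its own terms, your derivation is correct, and it is in substance the same computation the paper carries out one dimension higher in the proof of Theorem~\ref{5inverseR4}: there the unit vectors $\vec u(A_0,A_i)$ are written in an adapted orthonormal frame and the auxiliary spherical coordinates are eliminated so that the last angle is expressed through the others; your condition $\det M=0$ on the $4\times4$ Gram matrix is exactly a determinantal repackaging of that elimination, and it also gives the cleanest justification of the first assertion (six cosines, one relation, five free parameters).

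Two refinements. First, you can bypass the ``several dozen monomials'' and the symbolic verification of the discriminant entirely: writing $\vec u_1=(1,0,0)$, $\vec u_2=(\cos\alpha_{102},\sin\alpha_{102},0)$ and $\vec u_i=(x_i,y_i,z_i)$ with $x_i=\cos\alpha_{10i}$, $y_i=(\cos\alpha_{20i}-\cos\alpha_{102}\cos\alpha_{10i})/\sin\alpha_{102}$, one has $\cos\alpha_{304}=x_3x_4+y_3y_4+z_3z_4$ with $z_i^2=G_{12i}/\sin^2\alpha_{102}$, where $G_{12i}$ is the $3\times3$ Gram determinant of $\vec u_1,\vec u_2,\vec u_i$. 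The two roots of your quadratic are therefore $x_3x_4+y_3y_4\pm\sqrt{G_{123}G_{124}}/\sin^2\alpha_{102}$, so the discriminant factors by construction; expanding $x_3x_4+y_3y_4$ reproduces the rational part of (\ref{calcalpha3042}) and identifies $b/2$ with $\pm\sqrt{G_{123}G_{124}}$. Second, two sign points need correcting. Each factor under the root in (\ref{calcalpha304auxvar}) equals $-2G_{12i}\le 0$, so it is not a positive multiple of a squared spherical area; only the product of the two factors is nonnegative. More importantly, for $A_0$ interior to the tetrahedron the plane through $A_0$, $A_1$, $A_2$ contains the edge $A_1A_2$ and an interior point, hence separates $A_3$ from $A_4$; thus $z_3z_4<0$ and the admissible branch is the one with \emph{negative} radical term, the opposite of the plus branch you select. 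Neither point undermines your method, but the branch discussion should be adjusted accordingly.
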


Thus, the solution of the 3-INVWF problem in $\mathbb{R}^{3}$ depend on exactly five given angles $\alpha_{102},$ $\alpha_{103},$ $\alpha_{104},$ $\alpha_{203},$ $\alpha_{204}.$


We mention the definitions of a tree topology, the degree of a vertex, a Fermat tree
topology, a Fermat-Steiner tree  topology, an intermediate Fermat-Steiner tree topology, in order to study the
solution of the weighted Fermat-Steiner problem with respect to fixed tree topology for $A_{1}A_{2}\ldots A_{N+1}$ in $\mathbb{R}^{N}.$

\begin{definition}{\cite{GilbertPollak:68}}\label{topology}
A tree topology is a connection matrix specifying which pairs of
points from the list
$A_{1},A_{2},...,A_{N},A_{0,1},A_{0,2},...,A_{0,N-2}$ have a
connecting line segment (edge).
\end{definition}

\begin{definition}{\cite{IvanTuzh:094},\cite{Ci2}}\label{degreeSteinertree}
The degree of a vertex corresponds to the number of connections of
the vertex with line segments.
\end{definition}

\begin{definition}{\cite{GilbertPollak:68}}\label{Fermattopology}
A Fermat tree topology of a boundary $N-$simplex $A_{1}A_{2}\ldots A_{N+1}$ in $\mathbb{R}^{N}$ is a tree topology, such that each boundary vertex $A_{i}$ has degree one and the weighted Fermat point $A_{0,1}$ has degree $N+1.$
\end{definition}

\begin{definition}{\cite{GilbertPollak:68}}\label{Fermattopology}
A Fermat-Steiner tree topology of a boundary $N-$simplex $A_{1}A_{2}\ldots A_{N+1}$ in $\mathbb{R}^{N}$ is a tree topology, such that each boundary vertex $A_{i}$ has degree one and each weighted Fermat point $A_{0,i}$ has degree three for $i=1,2,\ldots, N-1.$
\end{definition}

\begin{definition}{\cite{GilbertPollak:68}}\label{Fermattopology}
An intermediate Fermat Steiner tree topology of a boundary $N-$simplex $A_{1}A_{2}\ldots A_{N+1}$ in $\mathbb{R}^{N}$ is a tree topology, such that each boundary vertex $A_{i}$ has degree one and the number of weighted Fermat points $A_{0,i}$ is less than $N-1$ having degree less than $N+1.$
\end{definition}

\begin{definition}\label{WeightedFermatrtree}
A tree of weighted minimum length with a Fermat tree topology  is called a weighted Fermat-tree.
\end{definition}

\begin{definition}\label{weightedFermatSteinerrtree}
A tree of weighted minimum length with a Fermat tree topology  is called a weighted Fermat-Steiner tree.
\end{definition}

\begin{definition}\label{intermediateweightedFermatSteinerrtree}
A tree of weighted minimum length with an intermediate Fermat-Steiner tree topology  is called an intermediate weighted Fermat-Steiner tree.
\end{definition}

By replacing $b_{5}=\ldots=b_{N+1}=0,$ $A_{0,1}\equiv A_{0}$ and $A_{0,N-1}\equiv A_{0^\prime}$ in Problem~\ref{WFSN}, we obtain the weighted Fermat-Steiner problem for a boundary tetrahedron $A_{1}A_{2}A_{3}A_{4}$ in $\mathbb{R}^{3}.$

We denote by $H$ the length of the common perpendicular (line segment) between the two lines defined by $A_{1}A_{2},$ $A_{4}A_{3},$ by $a_{ij}$ the length of the line segment $A_{i}A_{j}$ and by $\alpha_{ijk}$ the angle $\angle A_{i}A_{j}A_{k}$ at $A_{j},$  for $i,j,k=0,0^{\prime},1,2,3,4.$

The weighted Fermat-Steiner problem for $A_{1}A_{2}A_{3}A_{4}$ in $\mathbb{R}^{3}$
states that (\cite{Zachos:21}):

\begin{problem}{\cite[Problem~5]{Zachos:21}}\label{Steinertetrahedron}
Find $A_{0}(x_{0},y_{0},z_{0})$ and
$A_{0^{\prime}}(x_{0^{\prime}},y_{0^{\prime}},z_{0^{\prime}})$ with given
weights $b_{0}$ in $A_{0}$ and $b_{0^{\prime}}$ in $A_{0^{\prime}},$ such that
\begin{equation}\label{equat1}
f(A_{0},A_{0^{\prime}})=b_{1}a_{01}+b_{2}a_{02}+b_{3}a_{0^{\prime}3}+b_{4}a_{0^{\prime}4}+\frac{b_{0}+b_{0^{\prime}}}{2}a_{00^{\prime}}\to min.
\end{equation}
\end{problem}

\begin{figure} \label{fig:tas}
\centering
\includegraphics[scale=0.9]{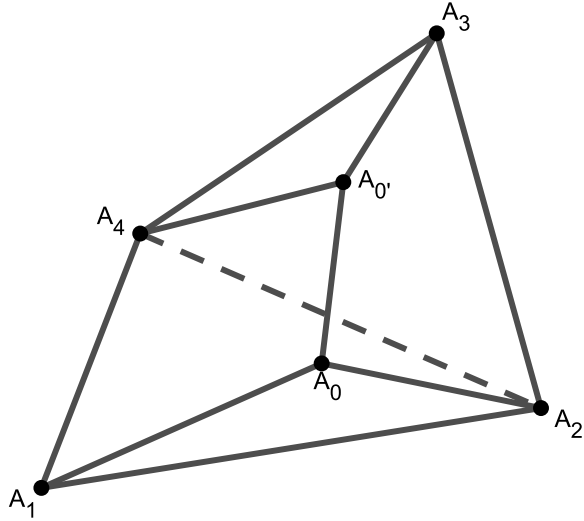}
\caption{A weighted Fermat-Steiner tree for a Frechet tetrahedron in $\mathbb{R}^{3}$}
\end{figure}

By substituting $a_{00^{\prime}}=0$  in (\ref{equat1}) for $A_{0^{\prime}}\to A_{0},$ we get the weighted Fermat problem for $A_{1}A_{2}A_{3}A_{4}$ (\cite{Zach/Zou:09}). The solution(s) of the weighted Fermat-Steiner problem is a weighted Fermat-Steiner tree and $A_{0},$ $A_{0^{\prime}}$ are named as weighted Fermat-Steiner points. If one of the two weighted Fermat-Steiner points is a fixed boundary vertex of $A_{1}A_{2}A_{3}A_{4},$  some degenerate cases may occur and the corresponding trees are degenerate Fermat-Steiner trees. The unique solution of the weighted Fermat problem for $A_{1}A_{2}A_{3}A_{4},$ is a weighted Fermat tree and $A_{0}$ is named as the weighted Fermat point. If $A_{0}$ is a fixed boundary vertex of $A_{1}A_{2}A_{3}A_{4},$ the corresponding tree is a degenerate (absorbing) weighted Fermat tree.

We proceed by giving the definitions of the weighted Fermat-Steiner tree and the weighted Fermat tree for $A_{1}A_{2}A_{3}A_{4}$ in $\mathbb{R}^{3}.$

\begin{definition}\label{weightedFermatSteinertreer3}
A weighted Fermat-Steiner tree $T_{S}(A_{1}A_{2};A_{3}A_{4})$ is a union of the weighted line segments $\{A_{1}A_{0},A_{2}A_{0},A_{0}A_{0^{\prime}},A_{3}A_{0^{\prime}},A_{4}A_{0^{\prime}}\}$ with corresponding weights $\{b_{1},b_{2},\frac{b_{0}+b_{0^{\prime}}}{2},b_{3},b_{4}\}.$
\end{definition}

\begin{definition}\label{weightedFermattreer3}
A weighted Fermat tree $T_{F}(A_{1}A_{2},A_{3}A_{4})$ is a union of the weighted line segments $\{A_{1}A_{0},A_{2}A_{0},A_{3}A_{0},A_{4}A_{0}\}$ with corresponding weights $\{b_{1},b_{2},b_{3},b_{4}\}.$
\end{definition}

We continue by mentioning two lemmas regarding the necessary and sufficient conditions for the existence of the two non-degenerate weighted Fermat points $A_{0}$ and $A_{0^{\prime}},$ (weighted Fermat-Steiner points) and the angular solutions with respect to $A_{0}$ and $A_{0^{\prime}}.$

We set
\[r_{12}\equiv \frac{a_{12}}{(b_{1}+b_{2}+\frac{b_{0}+b_{0^{\prime}}}{2})(b_{1}+b_{2}-\frac{b_{0}+b_{0^{\prime}}}{2})(b_{2}+\frac{b_{0}+b_{0^{\prime}}}{2}-b_{1})(b_{1}+\frac{b_{0}+b_{0^{\prime}}}{2}-b_{2})},\]
\[r_{34}\equiv \frac{a_{34}}{(b_{3}+b_{4}+\frac{b_{0}+b_{0^{\prime}}}{2})(b_{3}+b_{4}-\frac{b_{0}+b_{0^{\prime}}}{2})(b_{3}+\frac{b_{0}+b_{0^{\prime}}}{2}-b_{4})(b_{4}+\frac{b_{0}+b_{0^{\prime}}}{2}-b_{3})},\]
\[\beta_{12}=\arccos(\frac{a_{12}}{2r_{12}}),\]
\[\beta_{34}=\arccos(\frac{a_{34}}{2r_{34}}).\]

We suppose that $A_{1},$ $A_{2}$ lie on the x-axis and satisfy $x_{1}<x_{2}.$
Let $C=(x(C),y(C),z(C)) \in \mathbb{R}^{3}.$

\begin{lemma}{Existence of a weighted Fermat-Steiner tree in $\mathbb{R}^{3},$\cite[Theorem~2]{Zachos:21}}\label{conditionswst}
The following inequalities provide the necessary and sufficient conditions  for the existence of the two non-degenerate weighted Fermat points $A_{0}$ and $A_{0^{\prime}}:$
\begin{equation}\label{ineq1}
\frac{\sqrt{y(C)^2+z(C)^2}}{|x_{1}-x(C)|}>\tan(\arccos(\frac{(\frac{b_{0}+b_{0^{\prime}}}{2})^2-b_{1}^2-b_{2}^2)}{2 b_{1}b_{2}})),
\end{equation}

\begin{equation}\label{ineq3}
(\sqrt{y(C)^2+z(C)^2}+r_{12}\sin\beta_{12})^{2}+(\frac{x_{1}+x_{2}}{2}-x(C))^2>r_{12}^{2},
\end{equation}

\begin{equation}\label{ineq4}
\frac{\sqrt{y(C)^2+z(C)^2}}{|x_{4}-x(C)|}>\tan(\arccos(\frac{(\frac{b_{0}+b_{0^{\prime}}}{2})^2-b_{3}^2-b_{4}^2)}{2 b_{3}b_{4}})),
\end{equation}

\begin{equation}\label{ineq6}
(\sqrt{y(C)^2+z(C)^2}+r_{34}\sin\beta_{34})^{2}+(\frac{x_{4}+x_{3}}{2}-x(C))^2>r_{34}^{2}.
\end{equation}

\end{lemma}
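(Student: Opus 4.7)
The plan is to decouple the four-point weighted Fermat-Steiner problem into two coupled three-point weighted Fermat problems and then apply Sturm's non-degeneracy criterion (Theorem~\ref{theor1}) to each. Concretely, at the optimum the point $A_{0}$ minimises $b_{1}\|A_{0}A_{1}\|+b_{2}\|A_{0}A_{2}\|+\tfrac{b_{0}+b_{0^{\prime}}}{2}\|A_{0}A_{0^{\prime}}\|$ with $A_{0^{\prime}}$ held fixed, so $A_{0}$ is the weighted Fermat point of $\{A_{1},A_{2},A_{0^{\prime}}\}$ with weights $\{b_{1},b_{2},\tfrac{b_{0}+b_{0^{\prime}}}{2}\}$. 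Symmetrically, $A_{0^{\prime}}$ is the weighted Fermat point of $\{A_{3},A_{4},A_{0}\}$ with weights $\{b_{3},b_{4},\tfrac{b_{0}+b_{0^{\prime}}}{2}\}$. Thus both $A_{0}$ and $A_{0^{\prime}}$ are non-degenerate if and only if the strict Sturm inequalities (part (II) of Theorem~\ref{theor1}) hold at each of the six boundary vertices $A_{1},A_{2},A_{0^{\prime}}$ (for the first sub-problem) and $A_{3},A_{4},A_{0}$ (for the second).

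Next I would translate the Sturm conditions at $A_{1}$ and $A_{2}$ into inequality~(\ref{ineq1}). Squaring the weighted vector-balance at $A_{0}$, the characteristic Fermat angle satisfies
\begin{equation*}
\cos(\angle A_{1}A_{0}A_{2})=\frac{(\tfrac{b_{0}+b_{0^{\prime}}}{2})^{2}-b_{1}^{2}-b_{2}^{2}}{2b_{1}b_{2}},
\end{equation*}
and Sturm's strict inequality $\|b_{2}\vec{u}(A_{2},A_{1})+\tfrac{b_{0}+b_{0^{\prime}}}{2}\vec{u}(A_{0^{\prime}},A_{1})\|>b_{1}$ reduces, after using the x-axis placement of $A_{1},A_{2}$, to a comparison between the angle that the vector from $A_{1}$ to the reference point $C$ makes with $A_{1}A_{2}$ and the arccosine above. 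Writing this angle in terms of its tangent via $\sqrt{y(C)^{2}+z(C)^{2}}/|x_{1}-x(C)|$ yields (\ref{ineq1}) directly. The same computation on the $A_{3}A_{4}$-side, after an isometry carrying that line onto a reference axis, produces (\ref{ineq4}).

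The remaining two inequalities come from the Sturm non-degeneracy test at the ``internal'' vertex of each triangular sub-problem — that is, at $A_{0^{\prime}}$ within the $\{A_{1},A_{2},A_{0^{\prime}}\}$ problem and at $A_{0}$ within the $\{A_{3},A_{4},A_{0}\}$ problem. The classical weighted Simpson--Torricelli construction encodes this condition as membership in (or exclusion from) a specific circle: the circle whose chord is $A_{1}A_{2}$, whose radius is precisely the quantity $r_{12}$ introduced before the statement, and which subtends the angle $\beta_{12}$. Expanding $(\mathrm{dist}(A_{0^{\prime}},\mathrm{center}))^{2}>r_{12}^{2}$ in coordinates, using the placement of $A_{1},A_{2}$ on the x-axis and the midpoint formula, produces the quadratic inequality~(\ref{ineq3}); the identical argument on the opposite edge gives~(\ref{ineq6}).

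The main obstacle is the bookkeeping needed to justify that the strict Sturm inequality at the ``floating'' third vertex of a three-point Fermat problem is genuinely equivalent to the weighted Simpson circle condition encoded by $(r_{12},\beta_{12})$ — in other words, one must verify that the auxiliary point $C$ correctly parameterises the position of $A_{0^{\prime}}$ relative to the line $A_{1}A_{2}$ and that the centre of the Simpson circle lies where the algebra of (\ref{ineq3}) places it. Once this correspondence is established, necessity and sufficiency follow jointly from Theorem~\ref{theor1}: all four strict inequalities hold iff Sturm's condition holds at every boundary vertex of both sub-problems, iff both weighted Fermat points $A_{0}$ and $A_{0^{\prime}}$ exist in the interior as non-degenerate points.
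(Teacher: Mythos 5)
The paper does not actually prove this lemma: it is imported verbatim from \cite[Theorem~2]{Zachos:21} as a preliminary, so there is no internal proof to compare yours against. Judged on its own terms, your strategy — split the Steiner tree into two coupled three-point weighted Fermat problems ($A_{0}$ as the weighted Fermat point of $\{A_{1},A_{2},A_{0^{\prime}}\}$, $A_{0^{\prime}}$ of $\{A_{3},A_{4},A_{0}\}$), apply the Sturm/Kupitz--Martini floating criterion of Theorem~\ref{theor1} at each vertex, and convert the criterion at the mobile third vertex into a weighted Simpson-circle condition with radius $r_{12}$ and angle $\beta_{12}$ — is the natural and almost certainly intended route. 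But as written the proposal has genuine gaps rather than mere bookkeeping left to the reader.

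First, every one of the four inequalities is a statement about the point $C=(x(C),y(C),z(C))$, and you never say what $C$ is. The Sturm conditions for your two sub-problems are conditions on the \emph{unknown} mobile points $A_{0^{\prime}}$ and $A_{0}$; the lemma's conditions are on the fixed datum $C$. Bridging that — i.e.\ showing the coupled fixed-point conditions are equivalent to inequalities in the given boundary data via the Melzak-type replacement of $\{A_{1},A_{2}\}$ by an auxiliary weighted-equilateral point — is the entire content of the theorem, and it is precisely the step you label ``the main obstacle'' and then do not carry out. Without identifying $C$ you cannot verify any of the four asserted equivalences. Second, your claim that the Sturm conditions at both $A_{1}$ \emph{and} $A_{2}$ reduce to the single inequality~(\ref{ineq1}) is unjustified: the absorbing test at $A_{1}$ constrains the angle at $A_{1}$ and the test at $A_{2}$ the angle at $A_{2}$, and these are independent. (The equation labels here run $1,3,4,6$, which strongly suggests the source theorem contains six inequalities, with the companion conditions at $A_{2}$ and $A_{3}$ omitted in this paper's restatement; your proof should either produce those two missing inequalities or explain why they are redundant, and it does neither.) Until $C$ is pinned down and the $A_{2}$/$A_{3}$ conditions are accounted for, the ``necessity and sufficiency follow jointly'' conclusion is not established.
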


We set $b_{ST}\equiv \frac{B_{0}+B_{0^{\prime}}}{2}.$

\begin{lemma}{\cite[Theorem~3]{Zachos:21}}\label{angularsolutionweightedFermatSteiner}
The solution of the weighted Steiner problem is a weighted Steiner tree in $\mathbb{R}^{3}$
whose nodes $A_{0}$ and $A_{0^{\prime}}$ (weighted Fermat points) are seen by the angles:
\begin{eqnarray}\label{FTangles}
 \cos\alpha_{102}& = & \frac{b_{ST}^2-b_{1}^2-b_{2}^2}{2b_{1}b_{2}},\nonumber \\
\cos\alpha_{012} & = & \frac{b_{1}^2-b_{2}^2-b_{ST}^2}{2b_{2}b_{ST}},\nonumber \\
\cos\alpha_{30^{\prime}4} & = &\frac{b_{ST}^2-b_{3}^2-b_{4}^2}{2b_{3}b_{4}},\nonumber \\
\cos\alpha_{340^{\prime}} & =& \frac{b_{4}^2-b_{3}^2-b_{ST}^2}{2b_{3}b_{ST}}.
\end{eqnarray}

\end{lemma}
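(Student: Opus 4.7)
The plan is to derive the four angular identities by applying the Weighted Local Minimality Criterion (Lemma~\ref{ivantuzhimp1}) separately at the two mobile vertices $A_{0}$ and $A_{0^{\prime}}$. Under the existence conditions of Lemma~\ref{conditionswst}, both mobile vertices lie in the open interior of the tetrahedron, so the three edges emerging from each of them are non-degenerate geodesic segments and the local minimality condition becomes a genuine vector equation among unit directions in $\mathbb{R}^{3}$.

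First I would write out the balance of weighted unit vectors at $A_{0}$, whose three incident edges carry weights $b_{1}$, $b_{2}$ and the effective weight $b_{ST}=(b_{0}+b_{0^{\prime}})/2$ on the bridge $A_{0}A_{0^{\prime}}$:
\begin{equation*}
b_{1}\vec{u}(A_{0},A_{1})+b_{2}\vec{u}(A_{0},A_{2})+b_{ST}\vec{u}(A_{0},A_{0^{\prime}})=\vec{0}.
\end{equation*}
From this single identity, every pair of directions emanating from $A_{0}$ is controlled by the standard force-triangle device: isolate one of the three weighted unit vectors on one side of the equation, take Euclidean norms squared, and use $\vec{u}\cdot\vec{u}=1$ together with $\vec{u}(A_{0},A_{i})\cdot\vec{u}(A_{0},A_{j})=\cos\alpha_{i0j}$. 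Isolating $b_{ST}\vec{u}(A_{0},A_{0^{\prime}})$ delivers $b_{ST}^{2}=b_{1}^{2}+b_{2}^{2}+2b_{1}b_{2}\cos\alpha_{102}$, hence the first formula; isolating instead $b_{1}\vec{u}(A_{0},A_{1})$ produces the second formula for $\cos\alpha_{012}$ (the angle at $A_{0}$ between the rays to $A_{2}$ and to $A_{0^{\prime}}$, whose adjacent weights $b_{2},b_{ST}$ appear in the denominator and whose opposite weight $b_{1}$ appears alone in the numerator).

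I would then carry out the verbatim computation at $A_{0^{\prime}}$, whose three incident edges run to $A_{3}, A_{4}, A_{0}$ with weights $b_{3}, b_{4}, b_{ST}$. The local minimality balance
\begin{equation*}
b_{3}\vec{u}(A_{0^{\prime}},A_{3})+b_{4}\vec{u}(A_{0^{\prime}},A_{4})+b_{ST}\vec{u}(A_{0^{\prime}},A_{0})=\vec{0}
\end{equation*}
is structurally identical to the one at $A_{0}$, and after the same square-and-rearrange step it produces the remaining two identities for $\cos\alpha_{30^{\prime}4}$ and $\cos\alpha_{340^{\prime}}$.

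The only subtle point I anticipate is to exclude spurious (non-minimizing) solutions of the two vector equations and to pin down the correct angle rather than its supplement. I would dispose of both issues by invoking Lemma~\ref{ivantuzhimp2}: the embedded Fermat--Steiner network of topology $T_{S}(A_{1}A_{2};A_{3}A_{4})$ contains no mobile vertex of degree two, so the strong local minimal network in its topological class is unique up to parameterization; combined with the strict inequalities (\ref{ineq1})--(\ref{ineq6}) of Lemma~\ref{conditionswst}, which force $A_{0}$ and $A_{0^{\prime}}$ to lie in the open interior of the tetrahedron on opposite sides of the common perpendicular of length $H$, this selects the correct orientation of each directional pair and establishes the angular identities as stated.
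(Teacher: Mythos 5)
Your derivation is correct, and it is essentially the intended one: the paper itself offers no proof of this lemma (it is imported as Theorem~3 of \cite{Zachos:21}), but writing the weighted balancing condition of Lemma~\ref{ivantuzhimp1} at each of the two degree-three mobile vertices and then isolating and squaring one weighted unit vector at a time is exactly the mechanism the paper invokes elsewhere, e.g.\ when the analogous angular constraints at the points $A_{0,i}$ in $\mathbb{R}^{4}$ are said to be ``derived as a special case of Lemmas~\ref{ivantuzhimp1},~\ref{ivantuzhimp2}.'' Your reading of $\alpha_{012}$ and $\alpha_{340^{\prime}}$ as the angles at the mobile vertices $A_{0}$ and $A_{0^{\prime}}$ between the two rays whose weights appear in the denominator (rather than the angles at $A_{1}$ and $A_{4}$ that the paper's literal convention $\alpha_{ijk}=\angle A_{i}A_{j}A_{k}$ would suggest) is the only interpretation consistent with the formulas, so that resolution is right as well.
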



In \cite{Melzak:61}, Melzak constructed an algorithm of circles to find Steiner tree topologies for an $N$convex polygon in $\mathbb{R}^{2}.$ Unfortunately, Melzak cannot be extended in $\mathbb{R}^{3}.$ In \cite{RubinsteinThomasWeng:02}, Rubinstein, Thomas and Weng succeeded in solving numerically the unweighted Fermat-Steiner problem, by applying a fixed point iteration method to a system of two equations with two variable line segmens. In \cite[Theorem~4]{Zachos:21}, we extended Rubinstein, Thomas and Weng method to solve the weighted Fermat-Steiner problem for tetrahedra in $\mathbb{R}^{3}.$ We note that extended Rubinstein Thomas method takes into account the coordinates $(x_{i},y_{i},z_{i})$ of each vertex $A_{i},$ for $i=1,2,3,4.$ Therefore, we need to find a method to use the six edge lengths of the tetrahedron and some variable edge lengths, in order to consider the weighted Fermat-Steiner-Frechet problem for Frechet tetrahedra in $\mathbb{R}^{3}.$


We denote by $D(S)$ the Cayley-Menger determinant:

\begin{equation}\label{CaleyMenger}
D(S) =\operatorname{det} \left(
\begin{array}{ccccc}
0      & a_{12}^2      & a_{13}^2 & a_{14}^2  &  1  \\
a_{12}^2 & 0 & a_{23}^2 &a_{24}^2 & 1          \\
a_{13}^2 & a_{23}^2 &0   &a_{34}^2  & 1         \\
a_{14}^2 & a_{24}^2 &a_{34}^2  & 0   & 1         \\
1 & 1 &1  & 1     &   0    \\
\end{array} \right).
\end{equation}

We consider the $3-INVWF$ problem in $\mathbb{R}^{3}.$  Let $A_{0}$ be a weighted Fermat point inside $A_{1}A_{2}A_{3}A_{4}$ in $\mathbb{R}^{3}.$ In \cite{Zach/Zou:09}, the following relation is proved:


\begin{lemma}{\cite[Formula (2.25)]{Zach/Zou:09}}\label{tetrahedroninv}
If $b_{i0}$ is the weight, which corresponds to the vertex $A_{i}:$

\begin{equation}\label{floatingcase}
\| \sum_{j=1, i\ne j}^{4}b_{j0}\vec{a}_{ij}\|>b_{i0},
\end{equation}
for $i,j=1,2,3,4$  holds, then
\begin{equation}\label{tetraed1}
\frac{b_{30}}{a_{03}\operatorname{Vol}(A_{0}A_{1}A_{2}A_{4})}=\frac{b_{40}}{a_{04}\operatorname{Vol}(A_{0}A_{1}A_{2}A_{3})}=C,
\end{equation}

\begin{equation}\label{tetraed2}
\frac{b_{30}}{a_{03}\operatorname{Vol}(A_{0}A_{1}A_{2}A_{4})}=\frac{b_{10}}{a_{01}\operatorname{Vol}(A_{0}A_{2}A_{3}A_{4})}=C,
\end{equation}

\begin{equation}\label{tetraed3}
\frac{b_{30}}{a_{03}\operatorname{Vol}(A_{0}A_{1}A_{2}A_{4})}=\frac{b_{20}}{a_{02}\operatorname{Vol}(A_{0}A_{1}A_{3}A_{4})}=C,
\end{equation}

and

\begin{equation}\label{tetraed4}
\frac{b_{10}}{a_{01}\operatorname{Vol}(A_{0}A_{2}A_{3}A_{4})}=\frac{b_{20}}{a_{02}\operatorname{Vol}(A_{0}A_{1}A_{3}A_{4})}=C,
\end{equation}

where
$C=\frac{\sum_{i=1}^{4}\frac{b_{i0}}{a_{0i}}}{\operatorname{Vol}(A_{1}A_{2}A_{3}A_{4})}.$

\end{lemma}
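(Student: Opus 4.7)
The plan is to derive the weighted volume identities directly from the first-order optimality (gradient) condition satisfied by an interior weighted Fermat point. Since the hypothesis \eqref{floatingcase} is precisely the floating-case condition of Theorem~\ref{theor1}(II), the point $A_{0}$ does not coincide with any vertex and satisfies the vectorial balance
\[
\sum_{i=1}^{4} b_{i0}\,\vec u(A_{0},A_{i}) = \vec 0.
\]
Writing $\vec u(A_{0},A_{i}) = (A_{i}-A_{0})/a_{0i}$, this is the same as
\[
\sum_{i=1}^{4} \frac{b_{i0}}{a_{0i}}(A_{i}-A_{0}) = \vec 0.
\]

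Set $S := \sum_{i=1}^{4} b_{i0}/a_{0i}>0$ and $\mu_{i} := (b_{i0}/a_{0i})/S$. The previous identity then rearranges to
\[
A_{0} = \sum_{i=1}^{4} \mu_{i}\, A_{i}, \qquad \sum_{i=1}^{4}\mu_{i}=1, \qquad \mu_{i}>0,
\]
so the numbers $\mu_{i}$ are the (necessarily unique, since $A_{1},A_{2},A_{3},A_{4}$ are affinely independent in $\mathbb R^{3}$) barycentric coordinates of $A_{0}$ relative to the tetrahedron $A_{1}A_{2}A_{3}A_{4}$.

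The second ingredient is the classical volume interpretation of barycentric coordinates of an interior point of a simplex: for each $i\in\{1,2,3,4\}$,
\[
\mu_{i} = \frac{\operatorname{Vol}(A_{0}A_{j_{1}}A_{j_{2}}A_{j_{3}})}{\operatorname{Vol}(A_{1}A_{2}A_{3}A_{4})}, \qquad \{j_{1},j_{2},j_{3}\}=\{1,2,3,4\}\setminus\{i\}.
\]
This identity is obtained by writing $A_{0}-A_{i}=\sum_{k\ne i}\mu_{k}(A_{k}-A_{i})$ and taking scalar triple products of both sides with two of the edges $A_{j_{a}}-A_{i}$, $A_{j_{b}}-A_{i}$, or equivalently by expanding the determinant $\det(A_{0}-A_{i},\, A_{j_{1}}-A_{i},\, A_{j_{2}}-A_{i},\, A_{j_{3}}-A_{i})$ and applying linearity in the first column.

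Equating the two expressions for $\mu_{i}$ and clearing denominators yields
\[
\frac{b_{i0}}{a_{0i}\operatorname{Vol}(A_{0}A_{j_{1}}A_{j_{2}}A_{j_{3}})} \;=\; \frac{S}{\operatorname{Vol}(A_{1}A_{2}A_{3}A_{4})} \;=\; C
\]
for each $i=1,2,3,4$, which is exactly the common-value chain \eqref{tetraed1}--\eqref{tetraed4}, with the prescribed constant $C$. The main obstacle is only bookkeeping, namely that all four sub-volumes and all four coefficients $\mu_{i}$ are simultaneously positive; this is guaranteed because hypothesis \eqref{floatingcase} places $A_{0}$ strictly inside the tetrahedron, so signs in the scalar triple products can be chosen consistently and the unsigned volumes appearing in \eqref{tetraed1}--\eqref{tetraed4} are the correct ones.
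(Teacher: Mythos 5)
Your proof is correct, and it takes a genuinely different route from the one the paper relies on. The paper does not reprove Lemma~\ref{tetrahedroninv} itself --- it imports it from \cite{Zach/Zou:09} --- but it does prove the $N$-dimensional generalization (Theorem~\ref{RatioVolumesSimplex}, restated for tetrahedra as Corollary~\ref{ratiovolumes3}), and the method there is to project the equilibrium identity $\sum_i B_i\vec u(A_0,A_i)=\vec 0$ onto the normal of each hyperplane spanned by $N-1$ of the rays, obtaining pairwise sine relations of the form $B_{m}\sin a_{m,i_10i_2\ldots i_{N-1}}=B_{n}\sin a_{n,i_10i_2\ldots i_{N-1}}$, and then to multiply both sides by a product of edge lengths and an $(N-1)$-dimensional polar sine so that each side becomes $N!$ times a sub-simplex volume. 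You start from the same equilibrium identity (correctly deduced from hypothesis \eqref{floatingcase} via Theorem~\ref{theor1}(II)) but instead read it as a barycentric representation $A_0=\sum_i\mu_i A_i$ with $\mu_i=(b_{i0}/a_{0i})/S$, and then invoke the classical fact that the barycentric coordinates of an interior point are the ratios of the opposite sub-simplex volumes to the total volume. This is shorter, yields all four equalities \eqref{tetraed1}--\eqref{tetraed4} together with the stated value of $C$ in one stroke, and generalizes verbatim to $\mathbb{R}^N$; what it bypasses are the intermediate sine identities, which the paper reuses elsewhere (e.g.\ in Theorem~\ref{5inverseR4} and the plasticity theorems), so the polar-sine route earns its keep in the larger architecture even though it is heavier here. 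One cosmetic remark: the determinant you describe, $\det(A_0-A_i,\,A_{j_1}-A_i,\,A_{j_2}-A_i,\,A_{j_3}-A_i)$, lists four vectors of $\mathbb{R}^3$; you presumably mean either the $4\times4$ homogeneous (affine) determinant or the expansion of $\det(A_{j_1}-A_0,\,A_{j_2}-A_0,\,A_{j_3}-A_0)$ after substituting $A_0=\sum_k\mu_k A_k$. Either repair is routine, and the volume interpretation you invoke is valid precisely because the strictly positive $\mu_i$ place $A_0$ in the open interior of the tetrahedron, so no sign issues arise.
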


The volumes of $A_{0}A_{i}A_{j}A_{k}$ $\operatorname{Vol}(A_{0}A_{i}A_{j}A_{k})$ for $i,j,k=1,2,3,4,$
can be computed via the Caley-Menger determinant (\cite[pp.~249-255]{Uspensky:48}):
\[288 \operatorname{Vol}(A_{0}A_{i}A_{j}A_{k})^{2}
=D(\{a_{0i},a_{0j},a_{0k},a_{ij},a_{ik},a_{jk}\}).\]


\section{The weighted Fermat-Steiner-Frechet multitree for a given sextuple of positive real numbers determining the edge lengths of incongruent tetrahedra in $\mathbb{R}^{3}$}
In this section, we focus on the solution (multitree) of the weighted Fermat-Steiner-Frechet problem (P(Fermat-Steiner-Frechet)), by inserting some equality constraints derived by two independent solutions for two new variable weighted Fermat problems for the Frechet multitetrahedron derived by incongruent boundary tetrahedra in $\mathbb{R}^{3},$ which correspond to the same sextuple of positive real numbers (edge lengths) and an equality constraint derived by two different expressions of the line segment connecting the two weighted Fermat-Steiner points. The detection of the weighted Fermat-Steiner Frechet multitrees is achieved by applying the Lagrange multiplier rule.

We give a vector proof of the law of cosine law in $\mathbb{R}^{3},$ which has been introduced in \cite{Zach/Zou:09}, by using addition and inner product of vectors in $\mathbb{R}^{3}.$

We denote by $\alpha$ the dihedral
angle defined by the planes formed by $\triangle
A_{0}A_{1}A_{2}$ and $\triangle A_{1}A_{2}A_{3},$  the dihedral
angle $\alpha_{g_{i}}$ defined by the planes formed by $\triangle
A_{1}A_{2}A_{3}$ and $\triangle A_{1}A_{2}A_{i},$ by $h_{0,12}$
the height of $\triangle A_{0}A_{1}A_{2}$ from $A_{0},$ by
$h_{0,12m}$ the distance of $A_{0}$ from the plane defined by
$\triangle A_{1}A_{2}A_{m},$ for $i,j,k=0,1,2,3,4$ and $m=3,4,$
by $A_{0,12}$ the trace of the orthogonal projection of $A_{0}$ to $A_{1}A_{2}$
by $A_{0,123},$ the trace of the orthogonal projection of $A_{0}$ to the plane defined by $\triangle A_{1}A_{2}A_{3},$ by $x_{(0,12),2}$ the length of the line segment $A_{0,12}A_{2},$ by $x_{(0,123),2}$
the length of the line segment $A_{0,123}A_{2}.$

\begin{lemma}{Generalized cosine law in $\mathbb{R}^{3},$ \cite{Zach/Zou:09}}\label{calculationa0304a01a03alpha}

The line segment $a_{i0}$ depends on $a_{10}, a_{20}$ and $\alpha$ in $\mathbb{R}^{3}:$

\begin{equation}\label{eq:deral2}
a_{i0}^2=a_{20}^2+a_{2i}^2-2a_{2i}[\sqrt{a_{20}^2-h_{0,12}^2}\cos({\alpha_{12i}})+h_{0,12}\sin({\alpha_{12i}})\cos({\alpha_{g_{i}}}-\alpha)],
\end{equation}

or

\begin{equation}\label{eq:derall2n}
a_{i0}^2=a_{10}^2+a_{1i}^2-2a_{1i}[\sqrt{a_{10}^2-h_{0,12}^2}\cos({\alpha_{21i}})+h_{0,12}\sin({\alpha_{21i}})\cos({\alpha_{g_{i}}}-\alpha)].
\end{equation}

for $i=3,4.$

\end{lemma}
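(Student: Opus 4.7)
The plan is to reduce the identity to an inner product computation in $\mathbb{R}^3$. Writing $\vec{A_iA_0}=\vec{A_2A_0}-\vec{A_2A_i}$ and squaring gives
\[
a_{i0}^{2}=a_{20}^{2}+a_{2i}^{2}-2\,\vec{A_2A_0}\cdot\vec{A_2A_i},
\]
so the entire content of the lemma is an explicit formula for the dot product $\vec{A_2A_0}\cdot\vec{A_2A_i}$ in terms of the six quantities $a_{20}$, $a_{2i}$, $\alpha_{12i}$, $h_{0,12}$, $\alpha$ and $\alpha_{g_i}$. I would compute this dot product by decomposing both vectors into a component along the common line $A_1A_2$ and a component perpendicular to it.

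First I set up a local orthonormal frame at $A_2$: let $\hat e_1=\vec{A_2A_1}/a_{12}$, let $\hat w$ be the unit vector in the plane of $\triangle A_0A_1A_2$ orthogonal to $\hat e_1$ and pointing toward $A_0$, and let $\hat v_i$ be the unit vector in the plane of $\triangle A_1A_2A_i$ orthogonal to $\hat e_1$ and pointing toward $A_i$. Using $A_{0,12}$, the foot of the perpendicular from $A_0$ to $A_1A_2$, Pythagoras gives $|A_2A_{0,12}|=\sqrt{a_{20}^{2}-h_{0,12}^{2}}$, so
\[
\vec{A_2A_0}=\sqrt{a_{20}^{2}-h_{0,12}^{2}}\,\hat e_1+h_{0,12}\,\hat w,\qquad
\vec{A_2A_i}=a_{2i}\cos\alpha_{12i}\,\hat e_1+a_{2i}\sin\alpha_{12i}\,\hat v_i.
\]
When I expand the inner product, the two mixed terms $\hat e_1\cdot\hat v_i$ and $\hat w\cdot\hat e_1$ vanish by orthogonality, leaving
\[
\vec{A_2A_0}\cdot\vec{A_2A_i}=a_{2i}\sqrt{a_{20}^{2}-h_{0,12}^{2}}\cos\alpha_{12i}+a_{2i}\,h_{0,12}\sin\alpha_{12i}\,(\hat w\cdot\hat v_i).
\]
Both $\hat w$ and $\hat v_i$ are perpendicular to the common edge $A_1A_2$, so $\hat w\cdot\hat v_i$ is the cosine of the dihedral angle along $A_1A_2$ between the planes of $\triangle A_0A_1A_2$ and $\triangle A_1A_2A_i$. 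By definition these dihedral angles are measured from the reference half‑plane of $\triangle A_1A_2A_3$: the first is $\alpha$ and the second is $\alpha_{g_i}$, so the dihedral angle between the two planes is $\alpha_{g_i}-\alpha$, giving $\hat w\cdot\hat v_i=\cos(\alpha_{g_i}-\alpha)$. Plugging this back into the squared-distance identity yields \eqref{eq:deral2}. Formula \eqref{eq:derall2n} is obtained by the identical argument with the roles of $A_1$ and $A_2$ interchanged, i.e.\ starting from $\vec{A_iA_0}=\vec{A_1A_0}-\vec{A_1A_i}$ and decomposing at $A_1$.

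The main delicate point is orientation and sign bookkeeping rather than algebraic difficulty: one must take $\hat w$ and $\hat v_i$ to be the unit normals to $A_1A_2$ pointing into the half‑planes containing $A_0$ and $A_i$ respectively (so that both sine coefficients are nonnegative), and then verify that the dihedral difference $\alpha_{g_i}-\alpha$ between the oriented half‑planes carries the correct sign so that $\cos(\alpha_{g_i}-\alpha)$ is the true inner product $\hat w\cdot\hat v_i$. The sign of $\sqrt{a_{20}^{2}-h_{0,12}^{2}}$ corresponds to the convention that $A_{0,12}$ lies on the same side of $A_2$ as $A_1$, which is the standing hypothesis in the configuration. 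As a sanity check, the coplanar degenerate case $\alpha_{g_i}=\alpha$ collapses the bracket to $a_{20}\cos(\alpha_{120}-\alpha_{12i})=a_{20}\cos\alpha_{02i}$ via the addition formula, recovering the classical planar law of cosines, which confirms the correctness of the signs and the dihedral interpretation.
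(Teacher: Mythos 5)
Your proof is correct, and it is the same kind of vector/inner-product argument as the paper's, but the decomposition you use is genuinely different. The paper writes $\vec{a}_{03}=\vec{h}_{0,123}+\vec{x}_{(0,123),2}+\vec{a}_{23}$, i.e.\ it passes through the orthogonal projection $A_{0,123}$ of $A_{0}$ onto the plane of $\triangle A_{1}A_{2}A_{3}$, applies the planar cosine law in that plane with the auxiliary angle $\varphi=\angle A_{1}A_{2}A_{0,123}$, and then eliminates $\varphi$ via the right triangle $A_{0,12}A_{2}A_{0,123}$ (which is where $\sqrt{a_{20}^{2}-h_{0,12}^{2}}$ and $h_{0,12}\cos\alpha$ enter); the case $i=4$ is then obtained by ``changing the index,'' with $\alpha_{g_{4}}\neq 0$ appearing somewhat implicitly. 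You instead resolve both $\vec{A_{2}A_{0}}$ and $\vec{A_{2}A_{i}}$ against the common edge $A_{1}A_{2}$ (along-edge component plus perpendicular component) and read off the cross term as $h_{0,12}\sin\alpha_{12i}\,(\hat w\cdot\hat v_{i})$ with $\hat w\cdot\hat v_{i}=\cos(\alpha_{g_{i}}-\alpha)$. This buys you two things: the two cases $i=3,4$ are handled uniformly, and the provenance of the factor $\cos(\alpha_{g_{i}}-\alpha)$ as the dihedral angle between the half-planes through $A_{1}A_{2}$ is made explicit rather than emerging from the substitution of $\cos\varphi$ and $\sin\varphi$. The price is exactly the orientation bookkeeping you flag (the sign of $\sqrt{a_{20}^{2}-h_{0,12}^{2}}$, i.e.\ whether $A_{0,12}$ lies on the $A_{1}$-side of $A_{2}$, and the sign convention making $\alpha_{g_{i}}-\alpha$ the correct relative dihedral angle), which the paper's projection-based route also implicitly assumes; your planar sanity check $\alpha_{g_{i}}=\alpha$ confirms the conventions are consistent.
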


\begin{proof}

\begin{figure}\label{figg1}
\centering
\includegraphics[scale=0.80]{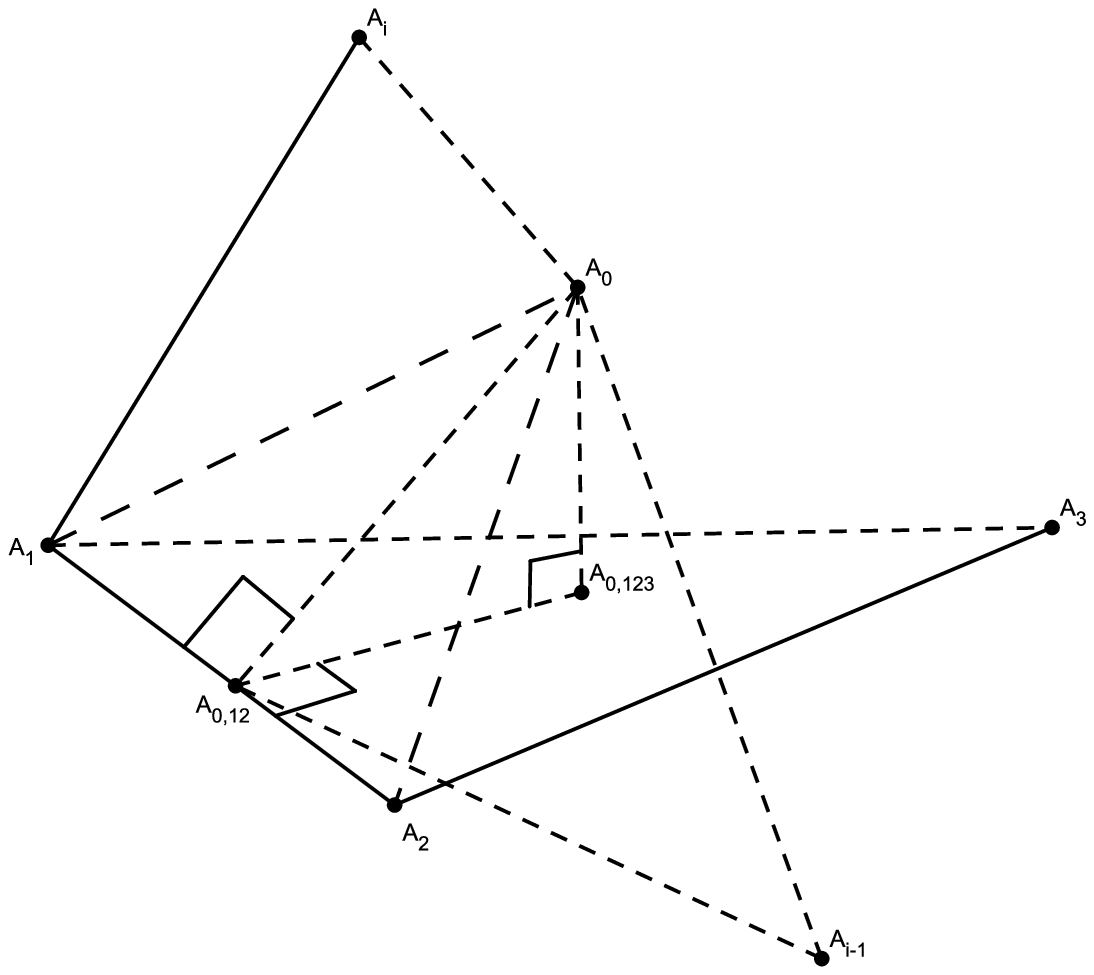}
\caption{} \label{figg1}
\end{figure}

First, we start with the elementary observation
\[\vec{a}_{02}=\vec{h}_{0,12}+\vec{x}_{(0,12),2}.\]
The inner product $\vec{a}_{02}\ddot \vec{a}_{02}$ yields the cosine law for $\triangle A_{0}A_{1}A_{2}$ in $\mathbb{R}^{2}:$

\[a_{20}^{2}=a_{10}^{2}+a_{12}^{2}-2a_{10}a_{12}\cos\alpha_{012}.\]

We consider the following vector equality in $\mathbb{R}^{3}:$

\[\vec{a}_{03}=\vec{h}_{0,123}+\vec{x}_{(0,123),2}+\vec{a}_{23}.\]

Taking the inner product $\vec{a}_{03}\ddot \vec{a}_{03}$  and by setting $\varphi=\angle A_{1}A_{2}A_{0,123},$ we get:
\begin{equation}\label{vecproof}
a_{03}^{2}=a_{02}^{2}+a_{23}^{2}-2x_{(0,123),2}a_{23}\cos(\alpha_{123}-\varphi).
\end{equation}

From $\triangle A_{0,12}A_{2}A_{0,123},$ we get:
 \begin{equation}\label{cosvec2}
 \cos\varphi=\frac{\sqrt{a_{02}^2 - h_{0,12}^{2}}}{x_{(0,123),2}}
 \end{equation}

\begin{equation}\label{sinvec3}
\sin\varphi=\frac{h_{0,12}\cos\alpha}{x_{(0,123),2}}.
 \end{equation}

By substituting (\ref{cosvec2}) and (\ref{sinvec3}) in (\ref{vecproof}), we obtain:
(\ref{eq:deral2}) for $i=3$ and $\alpha_{g_{3}}=0.$

By changing the index $3\to 4,$ we derive (\ref{eq:deral2}) for $i=4$ and $\alpha_{g_{4}}\ne 0.$

Taking into account the following vector equality in $\mathbb{R}^{3},$

\[\vec{a}_{03}=\vec{h}_{0,123}+\vec{x}_{(0,123),1}+\vec{a}_{13}.\]

and following the same process, we obtain (\ref{eq:derall2n}) for $i=3$ and $i=4.$

\end{proof}

If we substitute $\alpha_{g_{3}}=\alpha_{g_{4}}=\alpha$, we obtain a generalization of the
cosine law in $\mathbb{R}^{2}:$

\begin{lemma}[Generalized Cosine law in $\mathbb{R}^{2}$]\label{cosinelawr2}
The line segment $a_{i0}$ depends on $a_{10}, a_{20}$ in $\mathbb{R}^{2}:$

\begin{equation}\label{eq:deral2}
a_{i0}^2=a_{20}^2+a_{2i}^2-2a_{2i}[\sqrt{a_{20}^2-h_{0,12}^2}\cos({\alpha_{12i}})+h_{0,12}\sin({\alpha_{12i}})],
\end{equation}

or

\begin{equation}\label{eq:derall2n}
a_{i0}^2=a_{10}^2+a_{1i}^2-2a_{1i}[\sqrt{a_{10}^2-h_{0,12}^2}\cos({\alpha_{21i}})+h_{0,12}\sin({\alpha_{21i}})].
\end{equation}
where
\[h_{0,12}=\frac{a_{10}a_{20}}{a_{12}}\sqrt{1-\left(\frac{a_{10}^{2}+a_{20}^{2}-a_{12}^2}{2a_{10}a_{20}}
\right)^{2}}\]

for $i=3,4.$

\end{lemma}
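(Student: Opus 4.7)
The plan is to obtain this planar statement as a direct specialization of Lemma~\ref{calculationa0304a01a03alpha} (the Generalized Cosine Law in $\mathbb{R}^{3}$) already proved above, followed by an independent derivation of the closed-form for $h_{0,12}$.

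First, I would observe that when $A_{0}, A_{1}, A_{2}, A_{i}$ are coplanar (the situation in $\mathbb{R}^{2}$), the plane spanned by $\triangle A_{1}A_{2}A_{i}$ coincides with the plane spanned by $\triangle A_{1}A_{2}A_{3}$ used as a reference in the $\mathbb{R}^{3}$ lemma, so the dihedral angle $\alpha_{g_{i}}$ equals the dihedral angle $\alpha$ between $\triangle A_{0}A_{1}A_{2}$ and the reference face. Hence $\alpha_{g_{i}}-\alpha=0$ and $\cos(\alpha_{g_{i}}-\alpha)=1$. Substituting this into equations~(\ref{eq:deral2}) and~(\ref{eq:derall2n}) of the three-dimensional statement collapses the bracketed expressions to $\sqrt{a_{20}^{2}-h_{0,12}^{2}}\cos(\alpha_{12i})+h_{0,12}\sin(\alpha_{12i})$ and $\sqrt{a_{10}^{2}-h_{0,12}^{2}}\cos(\alpha_{21i})+h_{0,12}\sin(\alpha_{21i})$ respectively, which are exactly the two displayed formulas of the planar lemma.

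Second, I would establish the closed-form expression for $h_{0,12}$. The quantity $h_{0,12}$ is the altitude from $A_{0}$ in $\triangle A_{0}A_{1}A_{2}$, so $h_{0,12}=a_{10}\sin\alpha_{012}=a_{20}\sin\alpha_{102}$. Applying the planar cosine law to $\triangle A_{0}A_{1}A_{2}$ gives
\[
\cos\alpha_{102}=\frac{a_{10}^{2}+a_{20}^{2}-a_{12}^{2}}{2a_{10}a_{20}},
\]
and then $\sin\alpha_{102}=\sqrt{1-\cos^{2}\alpha_{102}}$. Combining $h_{0,12}=\frac{2\cdot\mathrm{Area}(\triangle A_{0}A_{1}A_{2})}{a_{12}}=\frac{a_{10}a_{20}\sin\alpha_{102}}{a_{12}}$ with this expression for $\sin\alpha_{102}$ yields the stated formula
\[
h_{0,12}=\frac{a_{10}a_{20}}{a_{12}}\sqrt{1-\Bigl(\frac{a_{10}^{2}+a_{20}^{2}-a_{12}^{2}}{2a_{10}a_{20}}\Bigr)^{2}},
\]
completing the proof.

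I expect no real obstacle here: the first step is a transparent specialization of an already-proved identity, and the second step is standard planar triangle trigonometry. The only thing to be careful about is the identification of the dihedral reference planes when passing from $\mathbb{R}^{3}$ to $\mathbb{R}^{2}$, i.e.\ verifying that the hypothesis $\alpha_{g_{3}}=\alpha_{g_{4}}=\alpha$ built into the planar reduction is precisely what coplanarity of the four points enforces; this amounts to noting that all four vectors $\vec{h}_{0,12}$, $\vec{x}_{(0,123),2}$, $\vec{a}_{23}$, $\vec{a}_{24}$ used in the proof of Lemma~\ref{calculationa0304a01a03alpha} lie in a common plane when one works in $\mathbb{R}^{2}$, collapsing $\cos(\alpha_{g_{i}}-\alpha)$ to unity.
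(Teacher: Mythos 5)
Your proposal is correct and follows essentially the same route as the paper: the paper obtains Lemma~\ref{cosinelawr2} precisely by substituting $\alpha_{g_{3}}=\alpha_{g_{4}}=\alpha$ into Lemma~\ref{calculationa0304a01a03alpha} so that $\cos(\alpha_{g_{i}}-\alpha)=1$, which is exactly your first step. Your additional justification of why coplanarity forces this substitution, and your derivation of the closed form for $h_{0,12}$ from the altitude and the planar cosine law, merely make explicit details the paper leaves implicit.
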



By solving (\ref{eq:deral2}) with respect to $\alpha,$ for $i=3,$ we get:

\[\alpha=\arccos\left(
\frac{\left(\frac{a_{02}^2+a_{23}^2-a_{03}^2}{2 a_{23}}
\right)-\sqrt{a_{02}^2-h_{0,12}^2}\cos\alpha_{123}}{h_{0,12}\sin\alpha_{123}}
\right).\]

Therefore, by substituting $\alpha$ function in (\ref{eq:deral2}) for $i=4,$
we obtain a functional dependence of $a_{40}$ in terms of lengths of line segments (\cite{Zachos:16}).


\begin{lemma}{\cite[Proposition~1]{Zachos:16}}\label{a04a01a02a03r3}
The variable length $a_{40}$ depends on the three variable
lengths $a_{10},$ $a_{20},$ $a_{30}$ and the given sextuple of
positive real numbers $S=\{a_{12},a_{13},a_{14},a_{34},a_{24},a_{23}\}$ determining the edge lengths of incongruent tetrahedra in $\mathbb{R}^{3},$ by taking into account the following relations:

\[\cos\alpha_{123}=\frac{a_{12}^2+a_{23}^2-a_{13}^2}{2 a_{12}a_{23}},\]

\[\sin\alpha_{123}=\frac{\sqrt{(a_{12}+a_{23}+a_{13})(a_{23}+a_{13}-a_{12})(a_{12}+a_{13}-a_{23})(a_{12}+a_{23}-a_{13})}}{2
a_{12} a_{23}},\]

\[h_{0,12}=h_{0,12}(a_{01},a_{02};a_{12})=\frac{a_{01}a_{02}}{a_{12}}\sqrt{1-\left(\frac{a_{01}^{2}+a_{02}^{2}-a_{12}^2}{2a_{01}a_{02}}
\right)^{2}},\]

\[\cos\alpha_{124}=\frac{a_{12}^2+a_{24}^2-a_{14}^2}{2 a_{12}
a_{24}},\]

\[\sin\alpha_{124}=\frac{\sqrt{(a_{12}+a_{24}+a_{14})(a_{24}+a_{14}-a_{12})(a_{12}+a_{14}-a_{24})(a_{12}+a_{24}-a_{14})}}{2
a_{12} a_{24}}\]

\[\alpha_{g_{4}}=\arccos\left(
\frac{\left(\frac{a_{42}^2+a_{23}^2-a_{43}^2}{2 a_{23}}
\right)-\sqrt{a_{42}^2-h_{4,12}^2}\cos\alpha_{123}}{h_{4,12}\sin\alpha_{123}}
\right)\]

and

\[h_{4,12}=h_{4,12}(a_{41},a_{42},a_{12})=\frac{a_{41}a_{42}}{a_{12}}\sqrt{1-\left(\frac{a_{41}^{2}+a_{42}^{2}-a_{12}^2}{2a_{41}a_{42}}
\right)^{2}}.\]

\end{lemma}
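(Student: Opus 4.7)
The plan is to apply the generalized cosine law of Lemma~\ref{calculationa0304a01a03alpha} twice, once with $i=3$ to extract the dihedral angle $\alpha$ between the plane of $\triangle A_0A_1A_2$ and the plane of $\triangle A_1A_2A_3$ from the three variable edges $a_{01},a_{02},a_{03}$, and once with $i=4$ to read off $a_{04}$. Along the way the fixed dihedral angle $\alpha_{g_4}$ between the planes of $\triangle A_1A_2A_3$ and $\triangle A_1A_2A_4$ must be computed directly from the given sextuple $S$, so that no hidden dependence on the position of $A_0$ survives.

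First I would set $\alpha_{g_3}=0$ by convention and specialize formula (\ref{eq:deral2}) from Lemma~\ref{calculationa0304a01a03alpha} to $i=3$. The quantities $\cos\alpha_{123},\sin\alpha_{123}$ appearing there are expressible in $a_{12},a_{13},a_{23}\in S$ via the triangle cosine/sine law, and $h_{0,12}$ is the height of $\triangle A_0A_1A_2$ from $A_0$, which depends only on $a_{01},a_{02}$ and $a_{12}$ through Heron's formula. Hence the resulting relation
\begin{equation*}
a_{03}^{2}=a_{02}^{2}+a_{23}^{2}-2a_{23}\!\left[\sqrt{a_{02}^{2}-h_{0,12}^{2}}\cos\alpha_{123}+h_{0,12}\sin\alpha_{123}\cos\alpha\right]
\end{equation*}
can be inverted for $\cos\alpha$, giving $\alpha=\alpha(a_{01},a_{02},a_{03};S)$ explicitly as the arccosine displayed in the statement of Lemma~\ref{a04a01a02a03r3}.

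Next I would compute $\alpha_{g_4}$ from $S$ alone by applying Lemma~\ref{calculationa0304a01a03alpha} with the point $A_4$ playing the role of $A_0$: since $A_4$ is a fixed vertex of the boundary tetrahedron, the edges $a_{14},a_{24},a_{34}\in S$ replace the variable triple $(a_{01},a_{02},a_{03})$, and $h_{4,12}$ is determined through Heron's formula from $a_{14},a_{24},a_{12}$. Solving the resulting equation for the angle between planes $A_1A_2A_3$ and $A_1A_2A_4$ yields the expression for $\alpha_{g_4}$ recorded in the statement. Finally, substituting both $\alpha=\alpha(a_{01},a_{02},a_{03};S)$ and $\alpha_{g_4}=\alpha_{g_4}(S)$ into formula (\ref{eq:deral2}) with $i=4$ expresses $a_{40}^{2}$ entirely in terms of $a_{01},a_{02},a_{03}$ and $S$, which is the claim.

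The main obstacle I foresee is not the algebra, which is a mechanical substitution, but checking that the inversion step for $\alpha$ is unambiguous: the arccosine returns a value in $[0,\pi]$, and one must verify that the geometric configuration (with $A_0$ in the interior of the tetrahedron and the orientation convention $\alpha_{g_3}=0$, $\alpha_{g_4}\neq 0$) selects the correct branch, rather than its conjugate across the plane of $\triangle A_1A_2A_3$. The same branch issue arises for $\alpha_{g_4}$, and both ambiguities are resolved by fixing the halfspace relative to the plane spanned by $A_1,A_2,A_3$. Once the branches are fixed, the functional dependence $a_{40}=a_{40}(a_{01},a_{02},a_{03};S)$ is determined by the explicit compositions listed in the lemma.
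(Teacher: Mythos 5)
Your proposal follows essentially the same route as the paper: solve the generalized cosine law of Lemma~\ref{calculationa0304a01a03alpha} for $i=3$ to express $\alpha$ as an arccosine in $a_{01},a_{02},a_{03}$ and the sextuple, obtain $\alpha_{g_4}$ by applying the same formula with $A_{4}$ in the role of $A_{0}$ (so that $h_{4,12}$ comes from $a_{41},a_{42},a_{12}$), and substitute both into the $i=4$ case. Your additional remark about selecting the correct arccosine branch relative to the plane of $\triangle A_{1}A_{2}A_{3}$ is a point the paper passes over silently, but it does not change the argument.
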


The weighted Fermat-Steiner-Frechet problem for a given sextuple of edge lengths determining tetrahedra in $\mathbb{R}^{3},$ states that:

\begin{problem}[The weighted Fermat-Steiner Frechet in $\mathbb{R}^{3}$]\label{FermatSteinerFrechettetrahedronr3}
Given a sextuple of weights $\{b_{1},b_{2},b_{3},b_{4},b_{ST},b_{ST}\},$ and a given sextuple of positive real number (edge lengths) $\{a_{12},a_{13},a_{14},a_{23},a_{24},a_{34}\},$ determining a Frechet multitetrahedron $F(A_{1}A_{2}A_{3}A_{4}),$   find the position of $A_{0}$ and / or
$A_{0^{\prime}}$ with given
weights $b_{ST}$ in $A_{0}$ and $b_{ST}$ in $A_{0^{\prime}},$ such that
\begin{equation}\label{equat1L0}
f_{0}(a_{01},a_{02},a_{0^{\prime}3},a_{0^{\prime}4},a_{0^{\prime}0})=b_{1}a_{01}+b_{2}a_{02}+b_{3}a_{0^{\prime}3}+b_{4}a_{0^{\prime}4}+b_{ST} a_{00^{\prime}}\to min.
\end{equation}
\end{problem}

\begin{definition}[A non degenerate weighted Fermat-Steiner tree for $\{A_{1}A_{2}A_{3}A_{4}\}$]

A non degenerate weighted Fermat-Steiner tree is the minimum of the weighted Fermat-Steiner trees $T_{S}(A_{1}A_{2};A_{3}A_{4}),$\\ $T_{S}(A_{1}A_{4};A_{2}A_{3})$ and $T_{S}(A_{1}A_{3};A_{2}A_{4}).$

\end{definition}

\begin{definition}[A degenerate weighted Fermat-Steiner (Gauss) tree for $A_{1}A_{2};A_{3}A_{4}$]
A degenerate weighted Fermat-Steiner tree or Gauss tree is a weighted minimal tree, such that one of two vertices $A_{0}$ or $A_{0^{\prime}}$ coincides with $A_{1}$ or $A_{2}$ or $A_{3}$ or $A_{4},$ respectively.
\end{definition}

We continue by constructing the Lagrangian function \[\mathcal{L}(\tilde{x},\tilde{\lambda})=\sum_{i=0}^{7}\lambda_{i}f_{i}(\tilde{x}).\]
where the point
\[\tilde{x}=\{x_{1},\ldots,x_{12}\}=\] \[=\{a_{10},a_{20},a_{30}, a_{20^{\prime}}, a_{30^{\prime}},a_{40^{\prime}},B_{10},B_{20},B_{30},B_{10^{\prime}},B_{20^{\prime}},B_{30^{\prime}}\}\in \mathbb{R}^{12}\] is inside the parallelepiped $\Pi(p_{1},q_{1};\cdots ;p_{12},q_{12}),$
where $p_{i}<x_{i}<q_{i},$ for $i=1,2,\cdots, 12$ and the Lagrange multiplication vector is given by:

\[\tilde{\lambda}=\{\lambda_{0},\lambda_{1},\cdots,\lambda_{7}\}.\]

We shall deal with the weighted Fermat-Steiner-Frechet problem (P(Fermat-Steiner-Frechet)), by inserting some equality constraints derived by two independent solutions for two new weighted Fermat problems for $A_{1}A_{2}A_{3}A_{4}$ in $\mathbb{R}^{3},$ which give a connection with the initial weighted Fermat-Steiner objective function and an equality constraint derived by two different expressions of $a_{00^{\prime}}.$
\begin{problem}[The weighted Fermat-Steiner-Frechet (P(Fermat-Steiner-Frechet)) problem in $\mathbb{R}^{3}$ with equality constraints]
\begin{equation*}
\begin{aligned}
& & f_{0}(\tilde{x})\to min, \\
& & f_{i}(\tilde{x}) = 0, \; i = 1, \cdots,7
\end{aligned}
\end{equation*}

\begin{equation}\label{fundzero}
f_{0}(\tilde{x})=b_{1}a_{01}+b_{2}a_{02}+b_{3}a_{0^{\prime}3}+b_{4}a_{0^{\prime}4}+b_{ST} a_{00^{\prime}},
\end{equation}

\begin{equation}\label{fundone}
f_{1}(\tilde{x})=\frac{B_{10}}{a_{10}\operatorname{Vol}(A_{0}A_{2}A_{3}A_{4})}-\frac{1-B_{10}-B_{20}-B_{30}}{a_{40}(a_{10},a_{20},a_{30})\operatorname{Vol}(A_{0}A_{1}A_{2}A_{3})},
\end{equation}

\begin{equation}\label{fundseond}
f_{2}(\tilde{x})=\frac{B_{20}}{a_{20}\operatorname{Vol}(A_{0}A_{1}A_{3}A_{4})}-\frac{1-B_{10}-B_{20}-B_{30}}{a_{4}(a_{10},a_{20},a_{30})\operatorname{Vol}(A_{0}A_{1}A_{2}A_{3})},
\end{equation}

\begin{equation}\label{fundthird}
f_{3}(\tilde{x})=\frac{B_{30}}{a_{30}\operatorname{Vol}(A_{0}A_{1}A_{2}A_{4})}-\frac{1-B_{10}-B_{20}-B_{30}}{a_{40}(a_{10},a_{20},a_{30})\operatorname{Vol}(A_{0}A_{1}A_{2}A_{3})},
\end{equation}


\begin{equation}\label{fundfourth}
f_{4}(\tilde{x})=\frac{B_{40^{\prime}}}{a_{40^{\prime}}\operatorname{Vol}(A_{0^{\prime}}A_{1}A_{2}A_{3})}-\frac{1-B_{20^{\prime}}-B_{30^{\prime}}-B_{40^{\prime}}}{a_{10^{\prime}}(a_{20^{\prime}},a_{30^{\prime}},a_{40^{\prime}})\operatorname{Vol}(A_{0^{\prime}}A_{2}A_{3}A_{4})},
,
\end{equation}

\begin{equation}\label{fundfifth}
f_{5}(\tilde{x})=\frac{B_{20^{\prime}}}{a_{20^{\prime}}\operatorname{Vol}(A_{0^{\prime}}A_{1}A_{3}A_{4})}-\frac{1-B_{20^{\prime}}-B_{30^{\prime}}-B_{40^{\prime}}}{a_{10^{\prime}}(a_{20^{\prime}},a_{30^{\prime}},a_{40^{\prime}})\operatorname{Vol}(A_{0^{\prime}}A_{2}A_{3}A_{4})},
,
\end{equation}

\begin{equation}\label{fundsixth}
f_{6}(\tilde{x})=\frac{B_{30^{\prime}}}{a_{30^{\prime}}\operatorname{Vol}(A_{0^{\prime}}A_{1}A_{2}A_{4})}-\frac{1-B_{20^{\prime}}-B_{30^{\prime}}-B_{40^{\prime}}}{a_{10^{\prime}}(a_{20^{\prime}},a_{30^{\prime}},a_{40^{\prime}})\operatorname{Vol}(A_{0^{\prime}}A_{2}A_{3}A_{4})},
\end{equation}

\begin{equation}\label{seven}
f_{7}(\tilde{x})=a_{00^{\prime}}(a_{10},a_{20},a_{10^{\prime}},a_{20^{\prime}})-a_{00^{\prime}}(a_{30^{\prime}},a_{40^{\prime}},a_{30},a_{40}(a_{10},a_{20},a_{30})).
\end{equation}


\end{problem}

The next theorem is a direct consequence of the Lagrange multiplier rule given in \cite[p.~112]{Tikhomirov:90} in \cite[Theorem~3.1,p.~586]{BrinkhisTikhomirov:05} and a particular case of an ordinary convex program involving only equalities in \cite[Theorem~28.1]{Rockafellar:97}

\begin{theorem}[Lagrange multiplier rule for the weighted Fermat-Steiner Frechet multitree in $\mathbb{R}^{3}$]\label{Lagrangerulemultitree}
If the admissible point $\tilde{x}_{i}$ yields a weighted minimum multitree for $1\le i \le 30,$ which correspond to a Frechet multitetrahedron derived by a sextuple of edge lengths determining upto 30 incongruent tetrahedra, then there are numbers $\lambda_{0i},\lambda_{1i},\lambda_{2i},\ldots \lambda_{7i},$ such that:

\begin{equation}\label{lagrangemultitreer3cond1}
\frac{\partial \mathcal{L}_{i}(\tilde{x}_{i},\tilde{\lambda}_{i})}{\partial x_{ji}}=0
\end{equation}
for $j=1,2,\ldots,12,$

\[\tilde{x}_{i}=\{(a_{10})_{i},(a_{20})_{i},(a_{30})_{i}, (a_{20^{\prime}})_{i}, (a_{30^{\prime}})_{i},(a_{40^{\prime}})_{i},\]\[(B_{10}){i},(B_{20})_{i},(B_{30}){i},(B_{10^{\prime}})_{i},(B_{20^{\prime}})_{i},(B_{30^{\prime}})_{i},\}\]

$\tilde{\lambda}_{i}=\{\lambda_{0i},\lambda_{1i}\lambda_{2i},\ldots, \lambda_{7i}\}.$

\end{theorem}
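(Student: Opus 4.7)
The plan is to recognize Problem P(Fermat-Steiner-Frechet) as a smooth equality-constrained optimization problem on an open subset of $\mathbb{R}^{12}$ (the interior of the parallelepiped $\Pi(p_{1},q_{1};\cdots;p_{12},q_{12})$) and to apply the classical Lagrange multiplier rule of Tikhomirov and Brinkhuis-Tikhomirov cited by the author. Once the regularity hypotheses of that rule are verified at each admissible minimizer $\tilde{x}_{i}$, the identities $\partial \mathcal{L}_{i}/\partial x_{ji}=0$ in \eqref{lagrangemultitreer3cond1} follow by coordinate-wise differentiation of $\mathcal{L}_{i}=\sum_{k=0}^{7}\lambda_{ki}f_{k}$.

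The verification has two ingredients: smoothness and constraint qualification. For smoothness, the objective $f_{0}$ in \eqref{fundzero} is of class $C^{\infty}$ near a minimizer because the weighted Fermat-Steiner points $A_{0},A_{0^{\prime}}$ lie strictly inside the tetrahedron by Lemma \ref{conditionswst}, so none of the edge lengths $a_{01},a_{02},a_{0^{\prime}3},a_{0^{\prime}4},a_{00^{\prime}}$ vanish; the ratios $f_{1},\ldots,f_{6}$ in \eqref{fundone}--\eqref{fundsixth} are smooth because the Cayley-Menger volumes $\operatorname{Vol}(A_{0}A_{j}A_{k}A_{\ell})$ remain bounded away from zero on the $30$ realizable tetrahedra of the Frechet multitetrahedron, as ensured by the Blumenthal-Herzog-Dekster-Wilker conditions; and $f_{7}$ in \eqref{seven} is smooth because the nested arccosines and square roots appearing in the generalized cosine law of Lemma \ref{a04a01a02a03r3} have arguments lying strictly inside their natural domains at an interior optimum. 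For constraint qualification, one must verify that the $7\times 12$ Jacobian of $(f_{1},\ldots,f_{7})$ has rank $7$ at $\tilde{x}_{i}$. The block structure helps: $f_{1},f_{2},f_{3}$ depend only on the $A_{0}$-side variables $(a_{10},a_{20},a_{30},B_{10},B_{20},B_{30})$; $f_{4},f_{5},f_{6}$ depend only on the $A_{0^{\prime}}$-side variables; and $f_{7}$ is the unique constraint coupling the two blocks. The partial derivative of each $f_{k}$ with respect to its distinguished weight variable is nonzero at a nondegenerate configuration (since the corresponding volume is positive), so the first six rows are linearly independent, and $f_{7}$ contributes an independent seventh row through the genuine dependence of $a_{00^{\prime}}$ on variables from both blocks via the two geometrically distinct expressions appearing in \eqref{seven}.

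With smoothness and full-rank qualification established, the classical Lagrange rule yields multipliers $(\lambda_{0i},\lambda_{1i},\ldots,\lambda_{7i})$, not simultaneously zero, such that $\nabla_{\tilde{x}_{i}}\mathcal{L}_{i}=0$, which is precisely \eqref{lagrangemultitreer3cond1}. The main obstacle I anticipate is the rank verification in step two: the implicit expression $a_{40}=a_{40}(a_{10},a_{20},a_{30})$ and both representations of $a_{00^{\prime}}$ inside $f_{7}$ are heavy compositions of radicals and arccosines coming from Lemma \ref{a04a01a02a03r3}, so propagating their chain-rule contributions through the Jacobian requires careful bookkeeping and a small nondegeneracy argument showing no accidental cancellation occurs at the optimum. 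If such an exceptional configuration did force rank deficiency, one would fall back on the Fritz John form of the multiplier rule (permitting $\lambda_{0i}=0$) or on Rockafellar's convex-programming formulation (Theorem~28.1), but the generic-rank case suffices for the statement as formulated.
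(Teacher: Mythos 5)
Your proposal is correct and follows essentially the same route as the paper: the paper's own proof consists of exactly your first ingredient --- noting that the partial derivatives $\partial (f_{k})_{i}/\partial x_{ji}$ are continuous on the parallelepiped $\Pi_{i}$ and then invoking the cited Lagrange multiplier rule of Tikhomirov/Brinkhuis--Tikhomirov. Your additional constraint-qualification (rank) analysis is extra rigor the paper omits entirely; note that since the statement includes $\lambda_{0i}$ among the multipliers, the Fritz John form of the rule already suffices and needs only the smoothness you verify, so your fallback remark at the end is in fact the form of the theorem actually being proved.
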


\begin{proof}
Taking into account that $\frac{\partial (f_{k})_{i} }{(x_{ji})}$ are continuous in each parallelepiped $\Pi_{i},$ for $1\le i\le 30,$ $k=0,1,2,\ldots, 7,$ $j=1,2,\ldots, 12$ and by applying Lagrange multiplier rule,
yields the Lagrangian vector $\tilde{\lambda}_{i}=\{\lambda_{0i},\lambda_{1i}\lambda_{2i},\ldots, \lambda_{7i}\},$ such that (\ref{lagrangemultitreer3cond1}) is valid.

\end{proof}

We note that the system (\ref{fundone})-(\ref{seven}), (\ref{lagrangemultitreer3cond1}) contains 19 equations with 19 variables, since we can set one of the Lagrange multipliers 1 ($(\lambda_{0})_{i}=1$), by the definition of the P(Fermat-Steiner-Frechet)problem.


\section{A Lagrange program detecting the most natural sextuples of six consecutive natural numbers}
In this section, we apply a Lagrange program to detect weighted Fermat-Frechet multitree for a given sextuple of edge lengths determining 30 incongruent tetrahedra (Frechet multitetrahedron) in $\mathbb{R}^{3},$ by using Blumenthal, Herzog and Dekster Wilker sextuples. An interesting application of seeking unweighted Fermat-Frechet multitrees with two equally weighted Fermat-Steiner points inside the Frechet nultitetrahedron, is to detect the most natural of six consecutive natural numbers (Herzog sextuples) for $N\ge 7.$ This result is achieved by seeking an upper bound for these two equal weights, which yield a global weighted Fermat-Steiner tree of minimum length for the boundary tetrahedron having the maximum volume among the 30 incongruent tetrahedra in $\mathbb{R}^{3}.$

\begin{problem}[The Fermat-Steiner-Frechet (P(Fermat-Steiner-Frechet)) problem in $\mathbb{R}^{3}$ with equality constraints]
\begin{equation*}
\begin{aligned}
& & y_{0}(\tilde{x})\to min, \\
& & y_{i}(\tilde{x}) = 0, \; i = 1, \cdots,7
\end{aligned}
\end{equation*}
where
\begin{equation}\label{fundzerounw}
y_{0}(\tilde{x})=a_{01}+a_{02}+a_{0^{\prime}3}+a_{0^{\prime}4}+b_{ST}a_{00^{\prime}},
\end{equation}

and

\[y_{i}(\tilde{x})=f_{i}(\tilde{x}).\]

\end{problem}

\begin{proposition}[Lagrange multiplier rule for the Fermat-Steiner Frechet multitree in $\mathbb{R}^{3}$]\label{LagrangerulemultitreeBlumenthal}
If the admissible point $\tilde{x}_{i}$ yields a minimum multitree for $i=1,2,\ldots, 30,$ which correspond to a Frechet multitetrahedron derived by the Blumenthal, Herzog or Dekster-Wilker sextuples of edge lengths determining 30 incongruent tetrahedra, then there are numbers $\lambda_{0i},\lambda_{1i},\lambda_{2i},\ldots \lambda_{7i},$ such that:

\begin{equation}\label{lagrangemultitreer3cond1bl}
\frac{\partial \mathcal{L}_{i}(\tilde{x}_{i},\tilde{\lambda}_{i})}{\partial x_{ji}}=0
\end{equation}
for $j=1,2,\ldots,12,$

\[\tilde{x}_{i}=\{(a_{10})_{i},(a_{20})_{i},(a_{30})_{i}, (a_{20^{\prime}})_{i}, (a_{30^{\prime}})_{i},(a_{40^{\prime}})_{i},\]\[(B_{10}){i},(B_{20})_{i},(B_{30}){i},(B_{10^{\prime}})_{i},(B_{20^{\prime}})_{i},(B_{30^{\prime}})_{i},\}\]
$\tilde{\lambda}_{i}=\{\lambda_{0i},\lambda_{1i}\lambda_{2i},\ldots, \lambda_{7i}\}.$

\end{proposition}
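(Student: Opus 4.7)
The plan is to reduce Proposition \ref{LagrangerulemultitreeBlumenthal} directly to Theorem \ref{Lagrangerulemultitree} by observing that the objective function $y_0$ is obtained from $f_0$ by specializing the vertex weights to $b_1=b_2=b_3=b_4=1$, while the equality constraints are left unchanged since $y_i=f_i$ for $i=1,\ldots,7$. In particular the smoothness hypotheses that drive the Lagrange multiplier rule remain in force on each admissible parallelepiped $\Pi_i\subset\mathbb{R}^{12}$, for $i=1,\ldots,30$.

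First I would verify that for each of the (up to) thirty incongruent tetrahedra produced by a Blumenthal, Herzog, or Dekster-Wilker sextuple, the associated admissible parallelepiped $\Pi_i$ is nonempty and that the partial derivatives $\partial y_k/\partial x_{ji}$ are continuous on $\Pi_i$ for every $k=0,1,\ldots,7$ and $j=1,\ldots,12$. This rests on the realizability results quoted in the Introduction: Blumenthal's Theorem~2.1, Herzog's Theorem~3 with Remarks~4--6, and Dekster--Wilker's range $x/(x+5)\geq 1/\sqrt{2}$, which all guarantee that the Cayley--Menger determinant of the candidate tetrahedron is nonnegative. Positivity of the Cayley--Menger determinant in turn ensures the volume terms $\operatorname{Vol}(A_0 A_i A_j A_k)$ appearing in $f_1,\ldots,f_6$ are strictly positive, while Lemma~\ref{a04a01a02a03r3} supplies $a_{40}$ as a smooth function of $(a_{10},a_{20},a_{30})$ and the fixed sextuple, and Lemma~\ref{calculationa0304a01a03alpha} yields the corresponding smooth expression for $a_{00^{\prime}}$ in $f_7$.

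Second, I would apply the ordinary Lagrange multiplier rule on each $\Pi_i$ separately, precisely as in the proof of Theorem~\ref{Lagrangerulemultitree}, but now with the objective $y_0$ in place of $f_0$ and the same seven equality constraints $y_1,\ldots,y_7$. The rule, in the form stated in \cite[p.~112]{Tikhomirov:90} or \cite[Theorem~3.1,p.~586]{BrinkhisTikhomirov:05} (and as an ordinary convex program with only equalities, \cite[Theorem~28.1]{Rockafellar:97}), produces for each minimizer $\tilde{x}_i$ a Lagrange vector $\tilde{\lambda}_i=\{\lambda_{0i},\ldots,\lambda_{7i}\}$ satisfying (\ref{lagrangemultitreer3cond1bl}) for every $j=1,\ldots,12$.

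The main obstacle I anticipate is the nondegeneracy of the multipliers: one must rule out the abnormal situation in which $\lambda_{0i}=0$ trivializes the system, or equivalently, one must confirm that the Jacobian of $(y_1,\ldots,y_7)$ has full rank at each candidate minimizer. For the three admissible sextuples this will follow from the fact that Lemma~\ref{lem4r32} identifies five independent angular variables governing the inverse weighted Fermat problem at each of the two Fermat--Steiner points $A_0$ and $A_{0^{\prime}}$, so the six weighted-volume identities split into two functionally independent triples, and the coupling $f_7$ introduced through the two expressions for $a_{00^{\prime}}$ adds a genuinely new equation. Once this regularity is established the multiplier $\lambda_{0i}$ can be normalized to $1$, exactly as in Theorem~\ref{Lagrangerulemultitree}, which closes the argument.
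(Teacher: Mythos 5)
Your proposal is correct and follows essentially the same route as the paper, which proves this proposition in one line as a direct consequence of Theorem~\ref{Lagrangerulemultitree} applied to the Blumenthal, Herzog, and Dekster--Wilker sextuples. The additional detail you supply --- the realizability of the thirty tetrahedra via the Cayley--Menger determinant, the continuity of the partials on each $\Pi_i$, and the discussion of ruling out the abnormal multiplier $\lambda_{0i}=0$ --- merely makes explicit what the paper leaves implicit.
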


\begin{proof}
It is a direct consequence of Theorem~\ref{Lagrangerulemultitree} for Blumenthal, Herzog, Dekster, Wilker sextuples determining thirty incongruent tetrahedra in $\mathbb{R}^{3}.$
\end{proof}

\begin{remark}\label{controlledrelationsr3}
From Lemmas~\ref{solinvtetrr3}, ~\ref{lem4r32}, (\ref{calcalpha3042}) yields that the weight $B_{i0}$ depends on five given angles $\alpha_{102},$ $\alpha_{103},$ $\alpha_{104},$
$\alpha_{203},$ $\alpha_{204},$ such that:

\[\alpha_{102}=\arccos(\frac{b_{ST}^2 -b_{1}^2-b_{2}^2}{2 b_{1}b_{2}}),\]

\[\alpha_{i0j}=\arccos(\frac{a_{i0}^2+a_{j0}^2-a_{ij}^2}{2 a_{i0}a_{j0}}).\]
Therefore, we get:
\[B_{i0}=B_{i0}(a_{10},a_{20},a_{30};b_{1};b_{2};b_{ST}),\]
for $i=1,2,3,4.$
By following a similar process, we get:
\[B_{i0}=B_{i0}(a_{20^{\prime}},a_{30^{\prime}},a_{40^{\prime}};b_{3};b_{4};b_{ST}),\]
for $i=1,2,3,4.$
\end{remark}

\begin{theorem}\label{mostnaturalconsecutivesextuples}
The most natural sextuple of numbers from six consecutive natural numbers $\{a+5,a+4,a+3,a+2,a+1,a,\}$
for $a\ge 7$ is a sextuple of edge lengths having the maximum volume (maximum sextuple) among the 30 incongruent tetrahedra, which corresponds a Fermat-Steiner tree of minimum total weighted length (global minimum solution), such that the upper bound for the weight $B_{ST}$ is determined by the rest Fermat-Steiner minimal trees having larger or equal weighted minimal total length.
\end{theorem}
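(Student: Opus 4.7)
The plan is to combine Herzog's existence result for complete tetrahedral sextuples of consecutive integers with the Lagrange program of Proposition~\ref{LagrangerulemultitreeBlumenthal}, and then to show that among all 30 competing minimum Fermat-Steiner trees the one attached to the tetrahedron of maximum Cayley-Menger volume attains the smallest weighted length, provided the Steiner weight $B_{ST}$ stays below a critical upper bound.

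First, I would invoke Herzog's result (Remark i from \cite{Fritz:59} cited in the introduction), which guarantees that for $a\ge 7$ the sextuple $\{a,a+1,a+2,a+3,a+4,a+5\}$ is a complete tetrahedral sextuple. By Blumenthal's combinatorial count this yields exactly 30 incongruent tetrahedra $T_1,\ldots,T_{30}$, so that the Frechet multitetrahedron is well-defined. For each $T_k$ I would apply Proposition~\ref{LagrangerulemultitreeBlumenthal} to extract an admissible point $\tilde{x}_k$ and Lagrange multipliers $\tilde{\lambda}_k$ satisfying (\ref{lagrangemultitreer3cond1bl}); substituting back into the objective (\ref{fundzerounw}) yields the minimum weighted tree length $L_k(B_{ST})$ as a continuous function of the single remaining parameter $B_{ST}$.

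Next, I would compute the volumes $V_1,\ldots,V_{30}$ via the Cayley-Menger determinant (\ref{CaleyMenger}) with the six lengths permuted according to each incongruent labeling, and single out $T_{k^*}$ with $V_{k^*}=\max_k V_k$. The core step is to prove $L_{k^*}(B_{ST})\le L_k(B_{ST})$ for every $k\ne k^*$ on an interval $0<B_{ST}<B_{ST}^*$. The mechanism I would rely on is the weighted-volume identity of Theorem~\ref{RatioVolumesSimplex} together with the equality constraints (\ref{fundone})--(\ref{fundsixth}): at each Steiner node the optimal branch proportions are controlled by ratios of sub-simplex volumes, so a larger host volume $V_{k^*}$ permits the two internal Steiner points to separate enough to keep every branch geodesic while shortening the four outer legs. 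Since for $a\ge 7$ the integer gaps in the consecutive sextuple are bounded away from zero, Remark~\ref{controlledrelationsr3} ensures that the dependence $B_{i0}=B_{i0}(a_{10},a_{20},a_{30};b_1;b_2;b_{ST})$ is smooth, so the comparison $L_{k^*}\le L_k$ at $B_{ST}=0^+$ propagates by continuity to a maximal half-open interval.

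Finally, the upper bound is defined by
\[
B_{ST}^*=\min_{k\ne k^*}\bigl\{\,B_{ST}>0\,:\,L_{k^*}(B_{ST})=L_k(B_{ST})\,\bigr\},
\]
so that for $B_{ST}\ge B_{ST}^*$ some other tree in the multitree ties or overtakes $L_{k^*}$, matching the statement that the upper bound on $B_{ST}$ is determined by the rest of the Fermat-Steiner trees having larger or equal total weighted length. The main obstacle I anticipate is the middle step: volume and Steiner length are not globally monotone, and the argument must convert Cayley-Menger volume inequalities into strict inequalities between the values $L_k(B_{ST})$ of the Lagrangian. Carrying this out rigorously will almost certainly require the integer separation $a\ge 7$ together with a careful use of the constraint (\ref{seven}) that links the two expressions of $a_{00'}$, so that the maximum-volume configuration produces the shortest common segment between the two Steiner points and hence the smallest value of the objective (\ref{fundzerounw}).
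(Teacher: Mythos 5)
Your overall route (Herzog's existence of the 30 incongruent tetrahedra for $a\ge 7$, the Lagrange program of Proposition~\ref{LagrangerulemultitreeBlumenthal} applied to each one, and a comparison of the resulting minimal lengths $L_k(B_{ST})$ that singles out the maximum-volume tetrahedron and yields an upper bound for $B_{ST}$ as the first crossing with the other trees) is essentially the paper's, and your closing definition of $B_{ST}^*$ matches in spirit the paper's ``upper bound determined by the rest Fermat--Steiner minimal trees.'' However, your core step contains a genuine gap, and its base case is contradicted by the paper's own computations. You claim that $L_{k^*}(B_{ST})\le L_k(B_{ST})$ holds at $B_{ST}=0^+$ (via the heuristic that a larger Cayley--Menger volume lets the two Steiner points separate and shorten the outer legs) and then propagates by continuity. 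The worked example following the theorem, for the sextuple $\{7,8,9,10,11,12\}$ with $b_1=\cdots=b_4=1$ and $a_{00^{\prime}}=0$, shows the opposite: the global minimum $22.7838$ is attained by the arrangement $\{12,7,11,10,8,9\}$, while the maximum-volume arrangement $\{12,7,11,9,8,10\}$ gives $22.9123$. So the maximum-volume tetrahedron does \emph{not} win in the degenerate one-node limit, there is no valid starting inequality to propagate, and no monotonicity of the form ``larger volume implies shorter tree'' is available from Theorem~\ref{RatioVolumesSimplex}.

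The paper's proof runs in the opposite logical direction: it does not attempt to prove that the maximum-volume tetrahedron minimizes $L_k$ for small $B_{ST}$. Instead it treats $B_{ST}$ as the free parameter to be \emph{tuned}: if the given $B_{ST}$ already makes the tree of the maximum-volume tetrahedron the global minimum among the $3\times 30=90$ trees (three Steiner topologies $T_{S}(A_{1}A_{2};A_{3}A_{4})$, $T_{S}(A_{1}A_{4};A_{2}A_{3})$, $T_{S}(A_{1}A_{3};A_{2}A_{4})$ per tetrahedron --- a count your proposal drops, since you attach only one topology to each $T_k$), one stops; otherwise $B_{ST}$ is perturbed until this happens, and the upper bound for $B_{ST}$ is read off from the comparison with the remaining 29 tetrahedra. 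To repair your argument you would need either to prove the inequality $L_{k^*}\le L_k$ on some nonempty range of $B_{ST}$ --- which is a computational claim not implied by the weighted-volume identities or by the constraint (\ref{seven}) --- or to reformulate the middle step, as the paper implicitly does, as a definition of the admissible $B_{ST}$ rather than a theorem valid for all small $B_{ST}$.
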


\begin{proof}
By applying Proposition~\ref{LagrangerulemultitreeBlumenthal}  for the Herzog sextuple of edge lengths $\{a+5,a+4,a+3,a+2,a+1,a,\}$ for $a\ge 7$ forming 30 incongruent tetrahedra in $\mathbb{R}^{3},$ we obtain 90 minimum trees for a given weight $B_{ST},$  $T_{S}((A_{1}A_{2};A_{3}A_{4})_{j}),$ $T_{S}((A_{1}A_{4};A_{2}A_{3})_{j})$ and $T_{S}((A_{1}A_{3};A_{2}A_{4})_{j})$ which correspond to each derived
tetrahedron $(A_{1}A_{2};A_{3}A_{4})_{j},$ for $j=1,2,\ldots, 30.$ If $B_{ST}$ yields a global minimum tree of the tetrahedron with edge lengths that belongs to $\{a+5,a+4,a+3,a+2,a+1,a,\}$ having maximum volume, then we derive the most natural sextuple of the six consecutive natural numbers $\{a+5,a+4,a+3,a+2,a+1,a,\}$ otherwise we consider the variable weight $B_{ST},$ in order to perturb the length of the minimum tree only for the maximum sextuple. Hence, we obtain an upper bound for $B_{ST},$ by comparing the length of the minimum tree derived for the maximum sextuple with the rest Fermat-Steiner trees, which correspond to the rest 29 incongruent tetrahedra.
\end{proof}

\begin{example}
Consider 30 incongruent tetrahedra derived by six consecutive natural numbers having edge lengths $\{7, 8, 9, 10, 11, 12\}$ where $f_{0}=a_{1}+a_{2}+a_{3}+a_{4},$ for $b_{1}=b_{2}=b_{3}=b_{4}=1,$ and $a_{00^{\prime}}=0.$
This is the first tetrahedral sextuple of sequential positive integers forming 30 tetrahedra (Blumenthal-Herzog).
We take these 30 deformations of a tetrahedron (plasticity of the boundary of a tetrahedron) and we
compute the length of each Fermat tree. The radius $R$ corresponds to the circumscribed sphere of a tetrahedron having six edges
$a_{12},$ $a_{43},$ $a_{13},$  $a_{23},$ $a_{24},$ $a_{14}.$
We may expect that nature chooses the minimum communication among these 30 deformation on a boundary tetrahedron having the maximum volume or on a sphere having maximum volume, but computations do not give such a result for Fermat trees having one node (Fermat point) inside each derived tetrahedron.
The minimum of the minimum communication is achieved by the edge lengths $\{a_{12},a_{43}, a_{13}, a_{23}, a_{24} ,a_{14}\}=\{12, 7, 11, 10,	8,	9\},$ having a Fermat tree of minimum length $22.7838,$ with respect to the derived boundary tetrahedron without having the maximum volume and the corresponding circumscribed sphere with radius $6.59837$ without having the maximum volume.

\begin{tabular}{|c|c|c|c|c|c|c|c|c|}
\hline

$a_{12}$ & $a_{43}$ & $a_{13}$ & $a_{23}$ & $a_{24}$ & $a_{14}$ & minf & Radius R &  $D(a_{12},a_{43}, a_{13},a_{23},a_{24},a_{14})$ \\
\hline

12	&7	&11	&10	&9	&8	&22.8131	&6.62431	&1905982\\
12	&7	&11	&10	&8	&9	&22.7838	&6.59837	&1994518\\
12	&7	&11	&9	&10	&8	&23.0364	&6.29963	&1843168\\
12	&7	&11	&9	&8	&10	&22.9123	&6.28226	&2200288\\
12	&7	&11	&8	&9	&10	&22.9827	&6.1946  	&2179582\\
12	&7	&11	&8	&10	&9	&23.0773	&6.18682	&1910998\\
12	&8	&11	&10	&9	&7	&22.8788	&6.69308	&1808302\\
12	&8	&11	&10	&7	&9	&22.8186	&6.64231	&1914478\\
12	&8	&11	&9	&10	&7	&23.149	    &6.33008	&1811038\\
12	&8	&11	&9	&7	&10	&22.955	    &6.29235	&2133358\\
12	&8	&11	&7	&10	&9	&23.2132	&6.15014	&1918558\\
12	&8	&11	&7	&9	&10	&23.0802	&6.16018	&2134702\\
12	&9	&11	&10	&8	&7	&22.9099	&6.77582	&1642518\\
12	&9	&11	&10	&7	&8	&22.8789	&6.7513	    &1660158\\
12	&9	&11	&8	&7	&10	&23.0802	&6.1715	    &1986750\\
12	&9	&11	&8	&10	&7	&23.3948	&6.17336	&1823958\\
12	&9	&11	&7	&10	&8	&23.4179	&6.10726	&1863648\\
12	&9	&11	&7	&8	&10	&23.1348	&6.12701	&2008800\\
12	&10	&11	&9	&8	&7	&23.7593	&6.35084	&1397038\\
12	&10	&11	&9	&7	&8	&23.8075	&6.32303	&1362238\\
12	&10	&11	&8	&9	&7	&23.433	    &6.1913  	&1575742\\
12	&10	&11	&8	&7	&9	&23.2132	&6.17608	&1469950\\
12	&10	&11	&7	&8	&9	&23.3136	&6.09502	&1557550\\
12	&10	&11	&7	&9	&8	&23.4634	&6.09011	&1628542\\
12	&11	&10	&9	&8	&7	&23.4331	&6.24487	&664558\\
12	&11	&10	&9	&7	&8	&23.3949	&6.24406	&612118\\
12	&11	&10	&8	&9	&7	&23.4178	&6.05007	&863968\\
12	&11	&10	&8	&7	&9	&23.5755	&6.05327	&652000\\
12	&11	&10	&7	&8	&9	&23.5829	&6.00785	&717550\\
12	&11	&10	&7	&9	&8	&23.4634	&6.00985	&877078\\

  \hline
\end{tabular}

The maximum volume of the 30 incongruent tetrahedra corresponds to the following edge lengths
$\{a_{12},a_{43}, a_{13}, a_{23}, a_{24} ,a_{14}\}=\{12, 7,	11,	9, 8, 10\},$ which yields a Fermat tree
having minimal length $22.9123>22.7838.$	

Hence, we need to add one more node $A_{0^{\prime}},$ in order to obtain the Fermat-Steiner-Frechet multitree for the thirty incongruent tetrahedra (Frechet multitetrahedron) derived by the consecutive sextuple of natural numbers $\{12,11,10,9,8\}$ and to reduce the total length for each component of the Fermat-Steiner-Frechet multitree.

Thus, by applying the Lagrangian program of Theorem~\ref{mostnaturalconsecutivesextuples}\\ for $\{a_{12},a_{43}, a_{13}, a_{23}, a_{24} ,a_{14}\}=\{12, 7,	11,	9, 8, 10\},$ we may derive an upper bound for the variable weight $B_{ST}.$

\end{example}

\section{A Theoretical construction of a weighted Fermat-Steiner-Frechet multitree for a Frechet multitetrahedron in $\mathbb{R}^{3}$}

In this section, we describe a theoretical construction to locate a weighted Fermat-Steiner-Frechet multitree for Blumenthal-Herzog, Dekster-Wilker sextuples determining the edge lengths of Frechet multitetrahedra in $\mathbb{R}^{3},$ giving all the necessary notations, which are used to develop a system of two equations, which depend on two variable dihedral angles and some given metric Euclidean elements. This system of equations may be solved using fixed point Banach-Peano functional iteration.

We denote by

$\bullet$ $l_{1}$ a line, which passes through $M_{12}$ and is parallel to the line defined by $A_{3}, A_{4},$

$\bullet$ $l_{2}$ a line, which passes through $T_{12}$ and is parallel to $l_{1},$

$\bullet$ $A_{4}^{\prime},$ $T_{12}^{\prime},$ $H_{34}^{\prime}$ the orthogonal projection of $A_{4},$ $T_{12},$ $H_{34},$ to $l_{1},$ respectively,

$\bullet$ $E_{34}$ the trace of the orthogonal projection of $T_{12}$ to $l_{1}$

$\bullet$ $E_{12}$ the trace of the orthogonal projection of $T_{34}^{\prime}$ to the line defined by $A_{1}, A_{2},$

$\bullet$ $Q$ the intersection point of the line defined by $A_{34}, H_{34}$  with the plane defined by $l_{1}$ and $A_{1},$

$\bullet$ $P$ the intersection point of the line, which passes through $T_{34}$ and is parallel to the line defined by $A_{34}, H_{34}$  with the plane defined by $l_{1}$ and $A_{1},$

$\bullet$ $\delta_{12}$ the dihedral angle defined by
$\triangle A_{1}A_{2}A_{12}$ and the plane perpendicular to
$\vec{a}_{12}\times \vec{a}_{43}$ and by $\delta_{34}$ the
dihedral angle defined by the plane $A_{3}A_{4}A_{34}$ and the
plane perpendicular to $\vec{a}_{12}\times \vec{a}_{43}$

$\bullet$ $A_{34}^{\prime\prime}$ is the trace of the orthogonal projection of $A_{34}$ to the plane, which passes through the line defined by $A_{4}, A_{3}$ and is parallel to the plane defined by $l_{1},$ $A_{1},$

$\bullet$ $A_{34}^{\prime}$ is the trace of the orthogonal projection of $A_{34}^{\prime\prime}$ to the plane defined by $l_{1},$ $A_{1},$

$\bullet$ $A_{12}^{\prime}$ is the trace of the orthogonal projection of $A_{12}$ to the plane defined by $l_{1},$ $A_{1}.$

We set

$\bullet$ $\delta_{34}\equiv \angle A_{34}^{\prime\prime}H_{34}A_{34},$ $\delta_{12}\equiv \angle A_{12}^{\prime}H_{12}A_{12} ,$

$\bullet$ $\omega_{12}\equiv\angle PT_{12}T_{34}^{\prime},$ $\omega_{34}\equiv \angle E_{12}T_{34}^{\prime}T_{12},$

$\bullet$ $\alpha\equiv \angle A_{34}T_{34}A_{34}^{\prime\prime}=\angle A_{12}T_{12}A_{12}^{\prime}.$

We assume that $45^{\circ}<\varphi<90^{\circ}$ and $E_{12}\in [M_{12}, A_{1}],$   $T_{12}\in [A_{1},H_{12}]$
and $T_{34}\in [A_{4},H_{34}]$ (Fig~\ref{Fig111}).

\begin{theorem}\label{mainres1}
$T_{34}^{\prime},$ $P,$ $T_{12},$ $E_{12},$ $E_{34}$ belong to the same circle.
\end{theorem}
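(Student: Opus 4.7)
The plan is to exhibit an explicit circle, namely the circle $\omega$ with diameter $T_{12}T_{34}^{\prime}$ lying in the plane $\Pi$ spanned by $l_{1}$ and $A_{1}$, and then to verify via the converse of Thales' theorem (right angle inscribed in a semicircle) that each of the other four points lies on $\omega$. The proof then reduces to checking three right angles, with vertex running over $E_{12}, E_{34}, P$, each subtending the chord $T_{12}T_{34}^{\prime}$.

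First I would verify coplanarity of the five points. The midpoint $M_{12}$ belongs to $l_{1}$ and also to line $A_{1}A_{2}$, so $\Pi$ contains all of line $A_{1}A_{2}$; therefore $T_{12}\in [A_{1},H_{12}]\subset A_{1}A_{2}$ and $E_{12}$ on line $A_{1}A_{2}$ both lie in $\Pi$. Next, $E_{34}$ and $T_{34}^{\prime}$ are the orthogonal projections of $T_{12}$ and $T_{34}$ onto $l_{1}$, hence they sit on $l_{1}\subset \Pi$. Finally, $P$ is defined precisely as an intersection with $\Pi$, so $P\in \Pi$.

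The two easy right angles come for free from the definitions. Since $T_{12}E_{34}\perp l_{1}$ by construction of $E_{34}$ and the segment $E_{34}T_{34}^{\prime}$ is contained in $l_{1}$, we have $\angle T_{12}E_{34}T_{34}^{\prime}=90^{\circ}$. Similarly, $T_{34}^{\prime}E_{12}$ is perpendicular to line $A_{1}A_{2}$ by construction of $E_{12}$, while $E_{12}T_{12}$ is contained in line $A_{1}A_{2}$, giving $\angle T_{12}E_{12}T_{34}^{\prime}=90^{\circ}$. Applying the converse of Thales' theorem inside $\Pi$, both $E_{34}$ and $E_{12}$ lie on $\omega$.

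The crux, and the main obstacle, is to show $\angle T_{12}PT_{34}^{\prime}=90^{\circ}$. Here I would exploit the $3$D construction of $P$: it is obtained from $T_{34}$ by translating parallel to $A_{34}H_{34}$ until hitting $\Pi$. Writing $\vec{T_{34}^{\prime}P}$ as the planar shadow of this translation and using that $T_{34}T_{34}^{\prime}\perp l_{1}$ in $\mathbb{R}^{3}$, I expect to decompose $\vec{PT_{12}}$ and $\vec{PT_{34}^{\prime}}$ along and perpendicular to $l_{1}$ inside $\Pi$ and to verify that their inner product vanishes, using the specific direction of $A_{34}H_{34}$ imposed by the Fermat–Steiner configuration (this is where the dihedral angle $\delta_{34}$, the auxiliary chain $A_{34}\to A_{34}^{\prime\prime}\to A_{34}^{\prime}$, and the intersection point $Q$ of $A_{34}H_{34}$ with $\Pi$ enter the argument). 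Once this right angle at $P$ is secured, the Thales converse places $P$ on $\omega$, and combined with the previous steps, all five points $T_{12}, T_{34}^{\prime}, E_{12}, E_{34}, P$ are concyclic on $\omega$.
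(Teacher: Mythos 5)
Your overall strategy coincides with the paper's: take the circle with diameter $T_{12}T_{34}^{\prime}$ in the plane $\Pi$ and place the remaining three points on it by the converse of Thales, and your treatment of the two easy right angles at $E_{12}$ and $E_{34}$ (immediate from the projection definitions) matches the paper's. The problem is the right angle at $P$, which you correctly identify as the crux but do not actually establish. Your planned decomposition only delivers half of what is needed: since $\vec{A_{34}H_{34}}\perp A_{3}A_{4}\parallel l_{1}$ and $T_{34}T_{34}^{\prime}\perp l_{1}$, every component of $\vec{T_{34}^{\prime}P}$ is perpendicular to $l_{1}$, so $\vec{PT_{34}^{\prime}}$ is indeed the ``perpendicular to $l_{1}$'' direction inside $\Pi$. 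But then $\vec{PT_{12}}\cdot\vec{PT_{34}^{\prime}}=0$ forces the component of $\vec{PT_{12}}$ perpendicular to $l_{1}$ to vanish, i.e.\ it forces $P$ and $T_{12}$ to lie at the same distance from $l_{1}$ within $\Pi$ --- equivalently $P\in l_{2}$, equivalently that $T_{12}E_{34}T_{34}^{\prime}P$ is a rectangle, equivalently the metric relation $H\cot\delta_{34}=(M_{12}T_{12})\sin\varphi$ of the configuration. Nothing in your sketch addresses why this holds; the direction of $A_{34}H_{34}$ alone cannot give it, since it constrains only $\vec{T_{34}^{\prime}P}$ and says nothing about where $T_{12}$ sits on the line $A_{1}A_{2}$.

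This is exactly the step the paper supplies: its proof recharacterizes $P$ as the intersection of $l_{2}$ with the line through $T_{34}^{\prime}$ parallel to $H_{34}^{\prime}Q$, and asserts that $T_{12}$, $P$, $Q$ are collinear on $l_{2}$, from which $\angle T_{34}^{\prime}PT_{12}=90^{\circ}$ follows because $PT_{12}$ runs along $l_{2}\parallel l_{1}$ while $PT_{34}^{\prime}$ is perpendicular to $l_{1}$. To close your argument you must either prove this collinearity ($P\in l_{2}$) from the Fermat--Steiner construction of $T_{12}$, $T_{34}$, $\delta_{12}$, $\delta_{34}$, or import the relation $H\cot\delta_{34}=(M_{12}T_{12})\sin\varphi$ from the surrounding construction; as written, the inner-product computation you promise cannot terminate in $0$ without it.
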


\begin{proof}
$P$ is the intersection point of the line $l_{2},$ and the line, which passes through $T_{34}^{\prime}$ and is parallel to the line defined by $H_{34}^{\prime}Q.$ Thus, we obtain that $\delta_{34}=\angle T_{34}^{\prime}PT_{34}=\angle H_{34}^{\prime}QH_{34}$ and $T_{12},$ $P,$ $Q$ are collinear and belong to $l_{2}.$
Therefore, $\angle T_{34}^{\prime}PT_{12}=90^{\circ}$ and taking into account that $\angle T_{34}^{\prime}E_{12}T_{12}=\angle T_{12}E_{34}T_{34}^{\prime}=90^{\circ},$ we derive that $T_{34}^{\prime}T_{12}$ is a diameter of a circle, which is seen by $90^{\circ}$ from $P,$ $E_{12},$ $E_{34}.$ Therefore,  $T_{34}^{\prime},$ $P,$ $T_{12},$ $E_{12},$ $E_{34}$ are concircular.
\end{proof}

\begin{theorem}\label{mainres2}
The position of the weighted Simpson line defined by $T_{12}, T_{34}$ is given by the following two
equations, which depend on $\delta_{12}$ and $\delta_{34}:$

\begin{equation}\label{equat20}
\cot\delta_{12}=\frac{M_{34}H_{34}\sin\varphi+h_{34}\cos\delta_{34}\cos\varphi}{H+h_{34}\sin\delta_{34}}
\end{equation}

\begin{equation}\label{equat23}
\cot\delta_{34}=\frac{M_{12}H_{12}\sin\varphi+h_{12}\cos\delta_{12}\cos\varphi}{H+h_{12}\sin\delta_{12}}.
\end{equation}

\end{theorem}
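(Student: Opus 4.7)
The plan is to read both equations off the right triangle supplied by Theorem~\ref{mainres1} together with its symmetric counterpart. I would first set up an orthonormal frame with origin at $H_{12}$, the $x$-axis along $\vec{a}_{12}$, the $z$-axis along the common perpendicular (so that $H_{34}$ sits at height $H$ above $H_{12}$), and the $y$-axis chosen so that $\vec{a}_{43}=(\cos\varphi,\sin\varphi,0)$. In this frame, the definitions of $\delta_{12}$ and $\delta_{34}$ as dihedral angles to the plane perpendicular to $\vec{a}_{12}\times\vec{a}_{43}$ place $T_{12}$ at $H_{12}$ shifted along $\vec{a}_{12}$ and then tilted by $\delta_{12}$ out of the horizontal plane, and symmetrically locate $T_{34}$ at $H_{34}$ shifted along $\vec{a}_{43}$ and tilted by $\delta_{34}$ out of its own horizontal plane at height $H$.

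Next, Theorem~\ref{mainres1} gives the concyclic quintuple $T_{34}^{\prime},P,T_{12},E_{12},E_{34}$ on the circle with diameter $T_{12}T_{34}^{\prime}$, so $\angle T_{34}^{\prime}PT_{12}=90^{\circ}$. Since $T_{12}P$ lies along $l_{2}$ and is parallel to $\vec{a}_{43}$, while $T_{34}^{\prime}P$ lies in the plane through $l_{1}$ and $A_{1}$ perpendicular to $l_{2}$, the tilt angle $\delta_{12}$ agrees with $\angle PT_{12}T_{34}^{\prime}$, giving $\cot\delta_{12}=T_{12}P/T_{34}^{\prime}P$. The key calculation is then to decompose the displacement $\overrightarrow{T_{12}T_{34}^{\prime}}$ into its component parallel to $\vec{a}_{43}$ (the leg $T_{12}P$) and its orthogonal component in the plane of $l_{1},A_{1}$ (the leg $T_{34}^{\prime}P$). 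The parallel component assembles from the in-edge offset $M_{34}H_{34}\sin\varphi$ together with the out-of-edge contribution $h_{34}\cos\delta_{34}\cos\varphi$, since the horizontal part of $T_{34}$'s tilt must still be re-projected along $\vec{a}_{43}$; the orthogonal component collects the common-perpendicular length $H$ and the vertical part $h_{34}\sin\delta_{34}$ of the same tilt. This immediately yields equation \eqref{equat20}.

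The symmetric equation \eqref{equat23} then follows by running the identical construction after exchanging the roles of the two skew edges: one draws the auxiliary line through $M_{34}$ parallel to $A_{1}A_{2}$, projects $T_{12}$ rather than $T_{34}$, and applies the concyclicity of Theorem~\ref{mainres1} verbatim with indices $12$ and $34$ swapped. The ratio $\cot\delta_{34}=T_{34}P^{\star}/T_{12}^{\prime}P^{\star}$ in the dual right triangle gives the formula with $h_{12},\delta_{12}$ playing the role that $h_{34},\delta_{34}$ did above.

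The step I expect to be the main obstacle is the careful bookkeeping of how the out-of-edge displacement of $T_{34}$, tilted by $\delta_{34}$ from the horizontal plane at height $H$, resolves after successive projection onto $l_{1}$ and then onto $l_{2}$: the tilt contributes $h_{34}\sin\delta_{34}$ to the perpendicular leg, but its horizontal part $h_{34}\cos\delta_{34}$ must be further split along and transverse to $\vec{a}_{43}$, which is precisely what produces the factors $\cos\varphi$ and $\sin\varphi$ in the numerator. Verifying that each signed contribution enters with the orientation claimed, and that the point $P$ indeed coincides with the foot of perpendicular from $T_{34}^{\prime}$ onto $l_{2}$ within the plane of $l_{1},A_{1}$ used by Theorem~\ref{mainres1}, is the main bookkeeping burden; once that alignment is fixed, the rest is elementary trigonometry on two right triangles.
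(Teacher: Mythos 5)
Your central identity fails, so the argument does not go through as written. You assert that the dihedral angle $\delta_{12}$ ``agrees with'' $\angle PT_{12}T_{34}^{\prime}$ and hence that $\cot\delta_{12}=T_{12}P/T_{34}^{\prime}P$. But $\angle PT_{12}T_{34}^{\prime}$ is the angle the paper calls $\omega_{12}$: it is a \emph{horizontal} angle between two directions both lying in the plane of $l_{1},A_{1}$, whereas $\delta_{12}$ is the dihedral angle of the tilted plane $A_{1}A_{2}A_{12}$ over the edge $A_{1}A_{2}$. In the right triangle $T_{12}PT_{34}^{\prime}$ one has $T_{34}^{\prime}P=H\cot\delta_{34}$ and $T_{12}P/T_{34}^{\prime}P=\cot\omega_{12}$, and $\sin\omega_{12}=\cot\delta_{34}/\cot\alpha$; combined with the relation the concyclicity actually yields, namely $\cot\delta_{12}/\cot\alpha=\sin(\omega_{12}+\varphi)$, this gives $\cot\omega_{12}=(\cot\delta_{12}-\cot\delta_{34}\cos\varphi)/(\cot\delta_{34}\sin\varphi)$, which equals $\cot\delta_{12}$ only in degenerate configurations. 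Your decomposition is also internally inconsistent: $\overrightarrow{T_{12}T_{34}^{\prime}}$ lies entirely in the base plane, so neither of its components can ``collect'' the common-perpendicular length $H$ or the vertical rise $h_{34}\sin\delta_{34}$ — the quantity $H+h_{34}\sin\delta_{34}$ is measured along $\vec{a}_{12}\times\vec{a}_{43}$, out of that plane, and is not a leg of the planar triangle you are working in.

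The correct geometric content you are circling around is different: since $A_{34}$ and $T_{34}$ lie in the plane of $\triangle A_{1}A_{2}A_{12}$ (the plane spanned by the edge $A_{1}A_{2}$ and the Simpson line), every point of that plane satisfies $\cot\delta_{12}=(\text{horizontal distance from the line }A_{1}A_{2})/(\text{height above the base plane})$; for $A_{34}$ the height is $H+h_{34}\sin\delta_{34}$ and the horizontal distance is $M_{34}H_{34}\sin\varphi+h_{34}\cos\delta_{34}\cos\varphi$, which would give \eqref{equat20} directly — but that is a different right triangle from the one you invoke, and you never establish the coplanarity it requires. The paper instead works entirely inside the base plane: it uses the similarity relations $H\cot\delta_{12}=(M_{12}T_{34}^{\prime})\sin\varphi$ and $H\cot\delta_{34}=(M_{12}T_{12})\sin\varphi$, the length $T_{34}^{\prime}H_{34}^{\prime}=h_{34}\sin\delta_{34}\sqrt{\cot^{2}\alpha-\cot^{2}\delta_{34}}$, and the concyclicity of Theorem~\ref{mainres1} to eliminate $\cot\alpha$ via $\sqrt{\cot^{2}\alpha-\cot^{2}\delta_{34}}=(\cot\delta_{12}-\cot\delta_{34}\cos\varphi)/\sin\varphi$, and only then solves $M_{12}T_{34}^{\prime}=M_{12}H_{34}^{\prime}-T_{34}^{\prime}H_{34}^{\prime}$ for $\cot\delta_{12}$. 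None of these steps is present or replaced by a valid substitute in your sketch, so it does not yet constitute a proof of either \eqref{equat20} or \eqref{equat23}.
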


\begin{proof}

From the theoretical construction of the weighted Simpson line defined by $O_{12}O_{34},$ which intersects
the two edges $A_{4}A_{3},$ $A_{1}A_{2}$ of the tetrahedron $A_{1}A_{2}A_{3}A_{4},$ we get (Fig.\ref{Fig111}):

\begin{figure}
\centering
\includegraphics[scale=0.90]{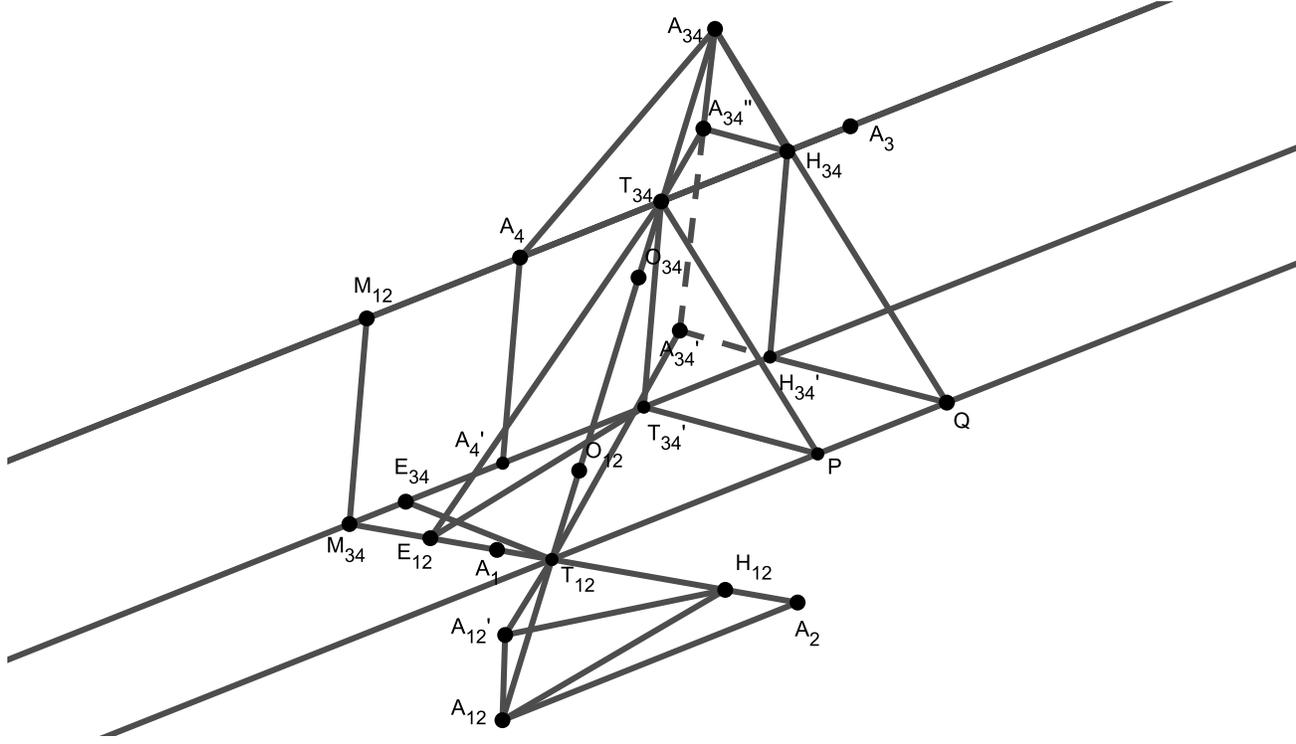}
\caption{Theoretical construction for weighted Fermat-Steiner-Frechet multitree in $\mathbb{R}^{3}$} \label{Fig111}
\end{figure}

\begin{figure}
\centering
\includegraphics[scale=0.90]{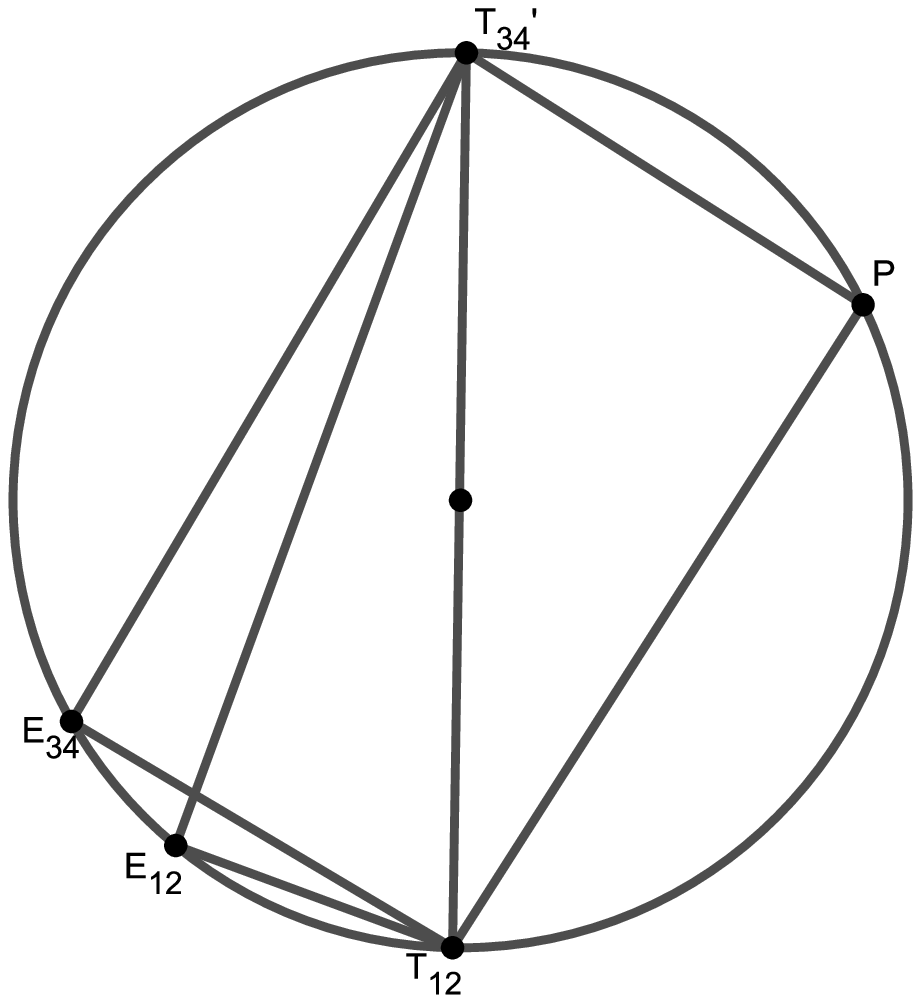}
\caption{} \label{Fig21}
\end{figure}

We mention some useful relations derived by this theoretical construction:

\[\angle T_{34}^{\prime}E_{12}T_{34}=\delta_{12},\]

\begin{equation}\label{mr21}
T_{34}^{\prime}P= H\cot \delta_{34},
\end{equation}

\begin{equation}\label{mr22}
T_{34}^{\prime}T_{12}= H\cot \alpha,
\end{equation}

\begin{equation}\label{mr25}
T_{34}^{\prime}E_{12}= H\cot \delta_{12},
\end{equation}

\begin{equation}\label{mr23}
\angle E_{12}T_{12}P=\pi-\varphi,
\end{equation}

\begin{equation}\label{mr24}
\angle E_{12}T_{34}^{\prime}P=\varphi.
\end{equation}

The similarity of $\triangle A_{12}A_{12}^{\prime}H_{12}$ and
$\triangle T_{34}T_{34}^{\prime}E_{12},$ which are perpendicular to the
line defined by $M_{12}A_{1}H_{12}A_{2}E_{12}$ yields:

\begin{equation}\label{equat16bis}
E_{12}T_{34}^{\prime}=H
\cot\delta_{12}=(M_{12}T_{34}^{\prime})\sin\varphi.
\end{equation}

The similarity of triangles $\triangle A_{34}A_{34}^{\prime\prime}H_{34}$ and
$\triangle T_{34}T_{34}^{\prime}P,$ which are perpendicular to the
parallel lines defined by $M_{34}A_{4}H_{34}A_{3}$ and
$M_{12}A_{4}^{\prime}H_{34}^{\prime}A_{3}^{\prime},$ respectively,
yields:

\begin{equation}\label{equat17}
PT_{34}^{\prime}=H \cot\delta_{34}=(M_{12}T_{12})\sin\varphi,
\end{equation}

because $T_{12}P$ is equal and parallel to $E_{34}T_{34}^{\prime}$
and $T_{12}E_{34}$ is equal and parallel to $PT_{34}^{\prime}.$

Thus, we get:

\begin{equation}\label{mainres2n1}
M_{34}T_{34}=M_{12}T_{34}^{\prime}=M_{12}H_{34}^{\prime}-T_{34}^{\prime}H_{34}^{\prime}
\end{equation}
and

\begin{equation}\label{equat18}
T_{34}^{\prime}H_{34}^{\prime}=T_{34}H_{34}=\sqrt{-h_{34}^{2}\cos^{2}\delta_{34}+(h_{34}\sin\delta_{34}\cot\alpha)^2}=h_{34}\sin\delta_{34}\sqrt{\cot^{2}\alpha-\cot^{2}\delta_{34}}
\end{equation}

By applying Theorem~\ref{mainres1} the quintuple of points $\{E_{34}, T_{12}, E_{12}, P,
T_{34}^{\prime}\}$ belong to the same circle (see Fig.~\ref{Fig21}).

Therefore, we get:
\[\frac{\cot\delta_{12}}{\cot\alpha}=\sin(\omega_{12}+\varphi)=\frac{\cot\delta_{34}}{\cot\alpha}\cos\varphi+\frac{\sin\varphi}{\cot\alpha}\sqrt{\cot^{2}\alpha-\cot^{2}\delta_{34}}\]

or

\begin{equation}\label{equat19}
\sqrt{\cot^{2}\alpha-\cot^{2}\delta_{34}}=\frac{\cot\delta_{12}-\cot\delta_{34}\cos\varphi}{\sin\varphi}.
\end{equation}

By substituting (\ref{equat18}), (\ref{equat19})
(\ref{equat16bis}), (\ref{mainres2n1}), we obtain

\[\frac{H\cot\delta_{12}}{\sin\varphi}=M_{12}H_{34}^{\prime}-h_{34}\sin\delta_{34}\frac{\cot\delta_{12}-\cot\delta_{34}\cos\varphi}{\sin\varphi}\]
which yields (\ref{equat20}).

By following a similar process, we get:

\begin{equation}\label{mr2nn2}
M_{12}T_{12}=M_{12}H_{12}-H_{12}T_{12}
\end{equation}
where

\begin{equation}\label{equat21}
M_{12}T_{12}=h_{12}\sin\delta_{12}\sqrt{\cot^{2}\alpha-\cot^{2}\delta_{12}}.
\end{equation}

Taking into account that the points $E_{34}, T_{12}, E_{12}, F,
T_{34}^{\prime}$ belong to the same circle (Fig.~\ref{Fig21}),
we obtain:

\[\frac{\cot\delta_{34}}{\cot\alpha}=\cos(\varphi -\omega_{34})=\frac{\cot\delta_{12}}{\cot\alpha}\cos\varphi+\frac{\sqrt{\cot^{2}\alpha-\cot^{2}\delta_{12}}}{\cot\alpha}\sin\varphi\]

or

\begin{equation}\label{equat22}
\sqrt{\cot^{2}\alpha-\cot^{2}\delta_{12}}=\frac{\cot\delta_{34}-\cot\delta_{12}\cos\varphi}{\sin\varphi}.
\end{equation}

By substituting (\ref{equat21}), (\ref{equat22}),
(\ref{equat17}) in (\ref{mr2nn2}), we get:

\[\frac{H\cot\delta_{34}}{\sin\varphi}=M_{12}H_{12}+h_{12}\sin\delta_{12}\frac{\cot\delta_{12}\cos\varphi -\cot\delta_{34}}{\sin\varphi},\]

which yields (\ref{equat23}).

\end{proof}

A fixed point iteration (Banach or Picard-Peano) method may be used, in order
to derive the numerical values of $\delta_{12},$ $\delta_{34}.$

We set $x=\cot\delta_{12}$ and $y=\cot \delta_{34}.$

\begin{theorem}\label{mainres2b}
The following two equations

\begin{equation}\label{equat201}
x=\frac{M_{34}H_{34}\sin\varphi\sqrt{1+y^2}+h_{34}\cos\varphi y}{H\sqrt{1+y^2}+h_{34}}
\end{equation}

\begin{equation}\label{equat232}
y=\frac{M_{12}H_{12}\sin\varphi\sqrt{1+x^2}+h_{12}\cos\varphi x}{H\sqrt{1+x^2}+h_{12}}.
\end{equation}

represent a functional iteration w.r. to $x$ or $y:$

\[x=f(x)\]

or

\[y=g(y).\]

\end{theorem}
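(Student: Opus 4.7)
The plan is to obtain (\ref{equat201}) and (\ref{equat232}) from (\ref{equat20}) and (\ref{equat23}) by a direct trigonometric substitution and then explain how coupling the two equations yields a univariate fixed-point map.

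First I would substitute $x=\cot\delta_{12}$ and $y=\cot\delta_{34}$ by rewriting the sines and cosines in the right-hand sides of (\ref{equat20}) and (\ref{equat23}) in terms of the cotangents. Using the identities
\[
\sin\delta=\frac{1}{\sqrt{1+\cot^{2}\delta}},\qquad \cos\delta=\frac{\cot\delta}{\sqrt{1+\cot^{2}\delta}},
\]
valid on the range $0<\delta<\pi/2$ that is forced by the geometric configuration in Fig.~\ref{Fig111} (the dihedral angles $\delta_{12},\delta_{34}$ are acute because $T_{12}\in[A_{1},H_{12}]$ and $T_{34}\in[A_{4},H_{34}]$ with $45^{\circ}<\varphi<90^{\circ}$), the right-hand side of (\ref{equat20}) becomes
\[
\frac{M_{34}H_{34}\sin\varphi+h_{34}\cos\varphi\,\dfrac{y}{\sqrt{1+y^{2}}}}{H+h_{34}\,\dfrac{1}{\sqrt{1+y^{2}}}}.
\]
Multiplying numerator and denominator by $\sqrt{1+y^{2}}$ gives exactly (\ref{equat201}); the same manipulation applied to (\ref{equat23}) produces (\ref{equat232}).

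Next I would build the univariate fixed-point map. Equation (\ref{equat201}) expresses $x$ as an explicit function of $y$; call this $x=\Phi(y)$. Equation (\ref{equat232}) expresses $y$ as an explicit function of $x$; call this $y=\Psi(x)$. Composing, we obtain $x=\Phi(\Psi(x))\equiv f(x)$, and symmetrically $y=\Psi(\Phi(y))\equiv g(y)$. Each composition is well defined because $\Phi,\Psi$ are smooth on the admissible cotangent range and carry positive values to positive values (the numerators and denominators of (\ref{equat201})--(\ref{equat232}) are positive under the geometric hypotheses on $\varphi$, $h_{12}$, $h_{34}$, $H$, $M_{12}H_{12}$, $M_{34}H_{34}$).

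The main subtlety I expect is not the algebraic derivation, which is routine once the cotangent substitution is in hand, but the justification that the resulting $f$ (or $g$) is a genuine contraction on the interval of admissible values, so that Banach's fixed-point theorem applies and the Picard--Peano iteration converges. I would handle this by differentiating $f$ with respect to $x$, using the chain rule together with $f'(x)=\Phi'(\Psi(x))\Psi'(x)$, and estimating each factor by the geometric bounds on $h_{12},h_{34},H$ and the range $45^{\circ}<\varphi<90^{\circ}$ so as to conclude $|f'(x)|<1$ on the admissible interval. This contraction verification is the only step that is not a purely formal substitution, and it is where the iteration's numerical usefulness is actually established.
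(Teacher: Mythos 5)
Your proof is correct and takes essentially the same route as the paper's: both derive (\ref{equat201})--(\ref{equat232}) from (\ref{equat20})--(\ref{equat23}) via $\sin\delta_{34}=1/\sqrt{1+y^{2}}$, $\cos\delta_{34}=y/\sqrt{1+y^{2}}$ (and likewise for $\delta_{12}$), clearing the radical, and then reading the coupled pair as a composed fixed-point map $x=f(x)$ or $y=g(y)$. The only differences are that the paper spends most of its proof expressing the constants $H$, $\varphi$, $h_{12}$, $h_{34}$, $M_{12}H_{12}$, $M_{34}H_{34}$ in terms of the vertex coordinates and the weights $b_{1},\dots,b_{4},b_{ST}$ via Lemma~\ref{angularsolutionweightedFermatSteiner}, which you treat as given, and that your closing promise of a contraction estimate $|f'(x)|<1$ is not actually carried out --- but the theorem as stated claims only the fixed-point form, not convergence, and the paper likewise proves no contraction, saying only that such an iteration ``may be used.''
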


\begin{proof}

By substituting $x=\cot \delta_{12},$ $y=\cot \delta_{34}$ in (\ref{equat20})
and (\ref{equat23}), we derive (\ref{equat20})and (\ref{equat23}).

We proceed by computing $H,$ $h_{12},$ $h_{34}$ $M_{12}H_{12},$ $M_{34}H_{34}$
with respect to the coordinates of $A_{1},A_{2},A_{3},A_{4}$ and the weights $b_{1},b_{2},b_{3},b_{4},b_{ST}.$

The common perpendicular distance $H=M_{12}M_{34}$ is given by:
\begin{equation}\label{equat8}
H =\frac{ \left|
\begin{array}{ccccc}
x_{4}-x_{1}      & y_{4}-y_{1}      & z_{4}-z_{1}   \\
x_{2}-x_{1}      & y_{2}-y_{1}      & z_{2}-z_{1}          \\
x_{3}-x_{4}      & y_{3}-y_{4}      & z_{3}-z_{4}          \\

\end{array} \right|}{a_{12}a_{34}\sin\varphi}.
\end{equation}

The angle $\varphi$ is given by:

\[\varphi=\arccos(\frac{\vec{a}_{12}\cdot
\vec{a}_{43}}{a_{12}a_{43}}).\]


The line segments $M_{12}A_{1}$ and $M_{12}A_{4}^{\prime}$ are
given by

\begin{equation}\label{equat9}
M_{12}A_{1} =\frac{ \left|
\begin{array}{ccccc}
x_{4}-x_{1}      & y_{4}-y_{1}      & z_{4}-z_{1}   \\
x_{3}-x_{4}      & y_{3}-y_{4}      & z_{3}-z_{4}          \\
v_{1}      & v_{2}      & v_{3}          \\

\end{array} \right|}{v_{1}^2+v_{2}^2+v_{3}^2}a_{12}
\end{equation}

and

\begin{equation}\label{equat10}
M_{12}A_{4}^{\prime}=M_{34}A_{4}=\frac{ \left|
\begin{array}{ccccc}
x_{4}-x_{1}      & y_{4}-y_{1}      & z_{4}-z_{1}   \\
x_{2}-x_{1}      & y_{2}-y_{1}      & z_{2}-z_{1}          \\
v_{1}      & v_{2}      & v_{3}          \\

\end{array} \right|}{v_{1}^2+v_{2}^2+v_{3}^2}a_{43}
\end{equation}

where

\[v_{1}=(y_{2}-y_{1})(z_{3}-z_{4})-(y_{3}-y_{4})(z_{2}-z_{1}),\]
\[v_{2}=(z_{2}-z_{1})(x_{3}-x_{4})-(x_{2}-x_{1})(z_{3}-z_{4}),\]
and
\[v_{3}=(x_{2}-x_{1})(y_{3}-y_{4})-(x_{3}-x_{4})(y_{2}-y_{1}).\]

By taking into account the equations derived by the angular solution of Lemma~\ref{angularsolutionweightedFermatSteiner}, we get:

\begin{eqnarray}\label{FTangles}
 \cos\alpha_{12}& = & \frac{b_{ST}^2-b_{1}^2-b_{2}^2}{2b_{1}b_{2}},\nonumber \\
\cos\alpha_{1} & = & \frac{b_{1}^2-b_{2}^2-b_{ST}^2}{2b_{2}b_{ST}},\nonumber \\
\cos\alpha_{34} & = &\frac{b_{ST}^2-b_{3}^2-b_{4}^2}{2b_{3}b_{4}},\nonumber \\
\cos\alpha_{4} & =& \frac{b_{4}^2-b_{3}^2-b_{ST}^2}{2b_{3}b_{ST}}.
\end{eqnarray}

or

\begin{equation}\label{equat5}
\frac{b_{1}}{\sin\alpha_{1}}=\frac{b_{2}}{\sin\alpha_{2}}=\frac{b_{ST}}{\sin\alpha_{12}}.
\end{equation}

\begin{equation}\label{equat6}
\frac{b_{3}}{\sin\alpha_{3}}=\frac{b_{4}}{\sin\alpha_{4}}=\frac{b_{ST}}{\sin\alpha_{34}}.
\end{equation}

\begin{eqnarray}\label{eqar1}
 \cos\alpha_{12}& = & \frac{b_{ST}^2-B_{1}^2-b_{2}^2}{2b_{1}b_{2}}\nonumber \\
\cos\alpha_{1} & = & \frac{b_{1}^2-b_{2}^2-b_{ST}^2}{2b_{2}b_{ST}}\nonumber \\
\cos\alpha_{34} & = &\frac{b_{ST}^2-b_{3}^2-b_{4}^2}{2b_{3}b_{4}}\nonumber \\
\cos\alpha_{4} & =& \frac{b_{4}^2-b_{3}^2-b_{ST}^2}{2b_{3}b_{ST}}
\end{eqnarray}

From (\ref{equat5}) and $\triangle A_{1}A_{2}A_{12}$ we get:

\begin{equation}\label{equat11}
\left(A_{1}A_{12}\right)=a_{12}\frac{b_{2}}{b_{ST}}
\end{equation}

and the height

\begin{equation}\label{equat12}
\left(A_{12}H_{12}\right)\equiv
h_{12}=A_{1}A_{2}\left(\frac{b_{2}}{b_{ST}}
\right)\sqrt{1-\left(\frac{b_{1}^2-b_{2}^2-b_{ST}^2}{2b_{2}b_{ST}}
\right)^{2}},
\end{equation}

\begin{equation}\label{equat13}
\left(A_{1}H_{12}\right)=A_{1}A_{2}\frac{b_{2}}{b_{ST}}\frac{-b_{1}^2+b_{2}^2+b_{ST}^2}{2b_{2}b_{ST}}.
\end{equation}

By adding (\ref{equat9}) and (\ref{equat13}), we obtain:

\[M_{12}H_{12}=M_{12}A_{1}+A_{1}H_{12}.\]

From (\ref{equat6}) and $\triangle A_{3}A_{4}A_{34}$ we get:

\begin{equation}\label{equat14}
\left(A_{4}A_{34}\right)=a_{34}\frac{b_{3}}{b_{ST}}
\end{equation}

and the height

\begin{equation}\label{equat15}
\left(A_{34}H_{34}\right)\equiv
h_{34}=a_{34}\left(\frac{b_{3}}{b_{ST}}
\right)\sqrt{1-\left(\frac{b_{4}^2-b_{3}^2-b_{ST}^2}{2b_{3}b_{ST}}
\right)^{2}},
\end{equation}

\begin{equation}\label{equat16}
\left(A_{4}H_{34}\right)=a_{34}\frac{b_{3}}{b_{ST}}\frac{-b_{4}^2+b_{3}^2+b_{ST}^2}{2b_{3}b_{ST}}.
\end{equation}



By adding (\ref{equat10}) and (\ref{equat16}), we obtain:

\[M_{34}H_{34}=M_{34}A_{4}+A_{4}H_{34}.\]

\end{proof}

\begin{theorem}\label{mainres3}

The location of $O_{12}$ and $O_{34}$ is given by the computation
of the line segments $A_{1}O_{12},$ $A_{2}O_{12},$ $A_{3}O_{34},$ $A_{4}O_{34},$ with respect to
$b_{1}, b_{2},b_{3},b_{ST},$ $A_{1}T_{12},$ $A_{2}T_{2},$ $A_{3}T_{34},$ $A_{4}T_{34}:$

\begin{equation}\label{loc1}
A_{2}O_{12}=A_{1}O_{12}\left(\frac{A_{2}T_{12}}{A_{1}T_{12}}\right)\left(\frac{b_{2}}{b_{1}}\right),
\end{equation}
\begin{equation}\label{loc2}
A_{1}O_{12}=A_{1}A_{2}\left[\sqrt{\left(\frac{A_{2}T_{12}}{A_{1}T_{12}}\right)^{2}\left(\frac{b_{2}}{b_{1}}\right)^{2}-2\left(\frac{A_{2}T_{12}}{A_{1}T_{12}}\right)\left(\frac{b_{2}}{b_{1}}\right)\cos\alpha_{12}+1}\right]^{-1},
\end{equation}

\begin{equation}\label{loc3}
A_{3}O_{34}=A_{4}O_{34}\left(\frac{A_{3}T_{34}}{A_{4}T_{34}}\right)\left(\frac{b_{3}}{b_{4}}\right),
\end{equation}

\begin{equation}\label{loc4}
A_{4}O_{34}=A_{3}A_{4}\left[\sqrt{\left(\frac{A_{3}T_{34}}{A_{4}T_{34}}\right)^{2}\left(\frac{b_{3}}{b_{4}}\right)^{2}-2\left(\frac{A_{3}T_{34}}{A_{4}T_{34}}\right)\left(\frac{b_{3}}{b_{4}}\right)\cos\alpha_{34}+1}\right]^{-1}.
\end{equation}

taking into account that:

\[A_{2}T_{12}=A_{1}A_{2}-A_{1}T_{12},\]
\[A_{1}T_{12}=M_{12}T_{12}-M_{12}A_{1},\]

\[A_{4}T_{34}=M_{34}T_{34}-M_{34}A_{4},\]
\[A_{3}T_{34}=A_{3}A_{4}-A_{4}T_{34}.\]

\end{theorem}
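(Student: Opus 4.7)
The plan is to reduce (\ref{loc1})--(\ref{loc4}) to two elementary facts about the planar triangle $A_{1}A_{2}O_{12}$ (and its mirror $A_{3}A_{4}O_{34}$): an angular law-of-sines argument determining the ratio $A_{2}O_{12}/A_{1}O_{12}$, followed by the law of cosines to pin down the individual distances.

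First, I would read off from Lemma~\ref{angularsolutionweightedFermatSteiner} and (\ref{equat5}) the three angles $\alpha_{1},\alpha_{2},\alpha_{12}$ formed at $O_{12}$ by the outgoing rays $O_{12}A_{1},\,O_{12}A_{2},\,O_{12}O_{34}$. By the weighted equilibrium at $O_{12}$ these satisfy $b_{1}/\sin\alpha_{1}=b_{2}/\sin\alpha_{2}=b_{ST}/\sin\alpha_{12}$, hence in particular $\sin\alpha_{2}/\sin\alpha_{1}=b_{2}/b_{1}$.

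Next, the collinearity of $T_{12},O_{12},O_{34}$ along the Simpson line supplied by the theoretical construction preceding Theorem~\ref{mainres1}, together with $T_{12}\in[A_{1},H_{12}]$ and with $O_{12}$ interior to the tetrahedron, place $O_{12}$ strictly between $T_{12}$ and $O_{34}$. Consequently the ray $O_{12}T_{12}$ is opposite to $O_{12}O_{34}$, giving $\angle A_{1}O_{12}T_{12}=\pi-\alpha_{2}$ and $\angle A_{2}O_{12}T_{12}=\pi-\alpha_{1}$. Applying the law of sines inside the planar triangles $A_{1}T_{12}O_{12}$ and $A_{2}T_{12}O_{12}$, which share the side $T_{12}O_{12}$ and whose angles at $T_{12}$ are supplementary along the line $A_{1}A_{2}$, I would divide the two resulting identities; the sines at $T_{12}$ cancel, leaving
\[\frac{A_{2}O_{12}}{A_{1}O_{12}}=\frac{A_{2}T_{12}}{A_{1}T_{12}}\cdot\frac{\sin\alpha_{2}}{\sin\alpha_{1}}=\frac{A_{2}T_{12}}{A_{1}T_{12}}\cdot\frac{b_{2}}{b_{1}},\]
which is exactly (\ref{loc1}).

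To conclude (\ref{loc2}), I would substitute the expression for $A_{2}O_{12}$ just obtained into the cosine law
\[a_{12}^{2}=(A_{1}O_{12})^{2}+(A_{2}O_{12})^{2}-2(A_{1}O_{12})(A_{2}O_{12})\cos\alpha_{12}\]
for $\triangle A_{1}A_{2}O_{12}$ and solve for the unique positive root $A_{1}O_{12}$, recovering the denominator advertised in (\ref{loc2}). The identities (\ref{loc3})--(\ref{loc4}) then follow by repeating verbatim the same argument for the mirror configuration at $O_{34}$, swapping $(A_{1},A_{2},T_{12},b_{1},b_{2},\alpha_{12})$ with $(A_{4},A_{3},T_{34},b_{4},b_{3},\alpha_{34})$; the four appended collinearity identities expressing $A_{1}T_{12},A_{2}T_{12},A_{3}T_{34},A_{4}T_{34}$ are immediate oriented-length relations along the lines $A_{1}A_{2}$ and $A_{3}A_{4}$ using the points $M_{12},M_{34}$ already fixed in Theorem~\ref{mainres2}.

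The one delicate step is the supplementary-angle claim $\angle A_{i}O_{12}T_{12}=\pi-\angle A_{i}O_{12}O_{34}$: it rests on $O_{12}$ lying strictly interior to the segment $T_{12}O_{34}$, which must be justified from the existence conditions of Lemma~\ref{conditionswst} together with the geometric hypotheses $45^{\circ}<\varphi<90^{\circ}$, $T_{12}\in[A_{1},H_{12}]$ and $T_{34}\in[A_{4},H_{34}]$ assumed before Theorem~\ref{mainres1}. Once that orientation issue is settled, the remainder is routine planar trigonometry.
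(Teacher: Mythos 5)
Your proof is correct and follows essentially the same route as the paper: law of sines in the two triangles $\triangle A_{1}T_{12}O_{12}$ and $\triangle A_{2}T_{12}O_{12}$ sharing the cevian $T_{12}O_{12}$ (with supplementary angles at $T_{12}$) to obtain the ratio (\ref{loc1}), then the cosine law in $\triangle A_{1}O_{12}A_{2}$ to isolate $A_{1}O_{12}$, and the mirror argument at $O_{34}$. The only cosmetic difference is that you extract the ratio $\sin\alpha_{2}/\sin\alpha_{1}=b_{2}/b_{1}$ directly from the weighted equilibrium of Lemma~\ref{angularsolutionweightedFermatSteiner} and (\ref{equat5}), whereas the paper also invokes the auxiliary triangles through $A_{12}$; your explicit attention to the orientation of $O_{12}$ on the Simpson line is a point the paper leaves implicit.
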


\begin{proof}

By applying the sine law in $\triangle A_{1}O_{12}T_{12},$ $\triangle A_{2}O_{12}T_{12},$ $A_{1}O_{12}A_{12},$
$\triangle A_{2}O_{12}A_{12},$ we get (\ref{loc1}).

By applying the sine law in $\triangle A_{3}O_{34}T_{34},$ $\triangle A_{4}O_{34}T_{34},$ $A_{3}O_{34}A_{34},$
$\triangle A_{4}O_{34}A_{34},$ we get (\ref{loc3}).

By substituting (\ref{loc1}) and  $\cos\alpha_{12}$ from the cosine law in $\triangle A_{1}O_{12}A_{2},$ on the right hand side of (\ref{loc2}),  we derive $A_{1}O_{12}.$

By substituting (\ref{loc3}) and  $\cos\alpha_{34}$ from the cosine law in $\triangle A_{3}O_{34}A_{4},$ on the right hand side of (\ref{loc4}),  we derive $A_{4}O_{34}.$

\end{proof}

\begin{example}\label{ex1}
Given

$b_{1}=0.85,$ $b_{2}=0.88,$ $b_{3}=0.83,$ $b_{4}=1.08,$ $b_{ST}=1,$

$A_{1}=(2,0,0),$ $A_{2}=(6.86,1.37,0),$ $A_{3}=(0,6,5),$ $A_{4}=(0,0,5)$
we derive that $\varphi=74,25^{\circ},$ $H=5,$ $M_{12}=(0,-0.56,0),$ $M_{34}=(0,-0.56,5).$
By inserting these data in Theorem~\ref{mainres2b}, we obtain that

$\delta_{12}=58.58^{\circ},$ $\delta_{34}=57.75^{\circ},$ $\alpha=52.08^{\circ}.$

We note that the quintuple of points deduced from this construction $P=(3.16,2.61,0),$ $E_{34}=(0,0.33,0),$ $T_{34}^{\prime}=(0,2.61,0),$ $T_{12}=(3.16,0.33,0)$
$E_{12}=(0.83,-0.33,0)$ belong to the same circle.

\end{example}

\begin{theorem}\label{mainresmultitreetetr2}
There are upto $4!\cdot 30$ weighted Simpson lines defined by $T_{12}, T_{34},$ such that $O_{12}, O_{34}$ in $[T_{12},T_{34}],$ which give the position of the weighted Fermat-Steiner-Frechet multitree for 30 incongruent tetrahedra determined by Blumenthal, Herzog and Dekster-Wilker sextuples of edge lengths in $\mathbb{R}^{3}$
\end{theorem}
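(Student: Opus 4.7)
The plan is to show that the construction from Theorems~\ref{mainres2},~\ref{mainres2b},~\ref{mainres3} applies to each boundary tetrahedron of the Frechet multitetrahedron and to each labelled pairing of its four vertices into the two ``Simpson pairs'' $(A_i,A_j)$ and $(A_k,A_l)$, and then count the resulting weighted Simpson lines.

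First I would fix one of the 30 incongruent tetrahedra in the Frechet multitetrahedron produced by a Blumenthal--Herzog or Dekster--Wilker sextuple (invoking the completeness results of \cite{Blumenthal:59}, \cite{Fritz:59}, \cite{DeksterWilker:87} to guarantee that these 30 realizable tetrahedra sit inside $\mathbb{R}^{3}$). For such a fixed tetrahedron I would consider every ordered labelling of the vertices by $A_{1},A_{2},A_{3},A_{4}$; there are $4!=24$ of these, and each one singles out an ordered splitting into the pairs $(A_{1},A_{2}),(A_{3},A_{4})$ and therefore an oriented common perpendicular segment $M_{12}M_{34}$ of length $H$.

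Next, for each labelling I would apply Theorem~\ref{mainres2}: the two dihedral angles $\delta_{12},\delta_{34}$ built along the common perpendicular satisfy the coupled system (\ref{equat20})--(\ref{equat23}), whose coefficients $H$, $h_{12}$, $h_{34}$, $M_{12}H_{12}$, $M_{34}H_{34}$ and the angle $\varphi$ are explicit functions of the edge lengths and of the weights $b_{1},b_{2},b_{3},b_{4},b_{ST}$ through (\ref{equat8})--(\ref{equat16}). Recasting the system in the form (\ref{equat201})--(\ref{equat232}) with $x=\cot\delta_{12}$, $y=\cot\delta_{34}$ and invoking a Banach/Picard--Peano fixed point argument on the compact region singled out by Lemma~\ref{conditionswst} delivers the weighted Simpson line supported on $T_{12}T_{34}$. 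Theorem~\ref{mainres3} then pins down the two weighted Fermat--Steiner points $O_{12},O_{34}$ on this line via (\ref{loc1})--(\ref{loc4}), and the angular solution of Lemma~\ref{angularsolutionweightedFermatSteiner} forces $O_{12},O_{34}\in[T_{12},T_{34}]$ as required.

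The count then follows: for each of the 30 incongruent tetrahedra and each of the $4!$ labellings of its vertices one obtains at most one admissible weighted Simpson line, which gives the bound $4!\cdot 30$. I would close by noting that the word ``upto'' absorbs precisely (i) the labellings for which one of the inequalities (\ref{ineq1})--(\ref{ineq6}) fails, so that a degenerate Gauss-type Fermat--Steiner tree appears in place of a non-degenerate Simpson line, and (ii) the labellings that produce the same unordered pairing of vertices and therefore the same Simpson line (each unordered pairing is hit by $2^{3}=8$ ordered labellings, so the $3\cdot 30=90$ geometrically distinct Simpson lines sit inside the abstract count $4!\cdot 30$).

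The main technical obstacle I expect is the fixed-point step in Theorem~\ref{mainres2b}: verifying that the map $(x,y)\mapsto (f(y),g(x))$ is a genuine contraction on a domain carved out by (\ref{ineq1})--(\ref{ineq6}) is not automatic, particularly near the regime where one of $\delta_{12},\delta_{34}$ approaches $\pi/2$ and the iteration (\ref{equat201})--(\ref{equat232}) loses hyperbolicity. This is the step where one must be most careful in order to cleanly separate the non-degenerate weighted Simpson lines that are counted in $4!\cdot 30$ from the degenerate Gauss branches that have to be subtracted by the ``upto''.
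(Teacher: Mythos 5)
Your proposal is correct and follows essentially the same route as the paper: the paper's proof is a one-paragraph appeal to Theorems~\ref{mainres2} and~\ref{mainres3}, multiplying the $30$ incongruent tetrahedra of the Blumenthal--Herzog--Dekster--Wilker sextuple by the $4!$ permutations of the weights $\{b_{1},b_{2},b_{3},b_{4}\}$ over the vertices (which is the same combinatorial count as your $4!$ vertex labellings), with the two Steiner weights held at the common value $b_{ST}$. Your additional remarks on the non-automatic contraction property of the fixed-point map and on the degenerate Gauss branches absorbed by ``upto'' are more careful than anything in the paper's proof, but they elaborate rather than alter the argument.
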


\begin{proof}
It is a direct consequence of Theorems~\ref{mainres2},\ref{mainres3}, taking into account 30 incongruent tetrahedra derived by Blumenthal,Herzog,Dekster-Wilker sextuples multiplied by the permutation of the four weights $\{b_{1},b_{2},b_{3},b_{4}\}$ $4!,$ such that the two equal weights $b_{12}$ and $b_{34}$ that correspond
to the two weighted Fermat-Steiner points have the same constant value $b_{ST}.$ Therefore, we derive upto $4!\dot 30$ weighted Fermat-Steiner trees, which yield a weighted Fermat-Steiner-Frechet multitree in $\mathbb{R}^{3}.$
\end{proof}

\begin{remark}
The position of the weighted Fermat-Steiner-Frechet multitree for 30 incongruent tetrahedra determined by Blumenthal, Herzog and Dekster-Wilker sextuples of edge lengths in $\mathbb{R}^{3}$ may also be derived by a fixed point functional iteration method, which computes some variable lengths instead of variable dihedral angles (see in \cite{Zachos:21}).
\end{remark}


\section{Plasticity of weighted Fermat-Frechet multitrees for boundary closed polytopes in $\mathbb{R}^{N}$}

In this section, we find the unique solution of the $4-$INVWF problem in $\mathbb{R}^{4},$ which depends on exactly nine given angles. We continue, by deriving the unique solution w.r to the $N-$INVWF problem in $\mathbb{R}^{N},$ which depends on exactly $\frac{N(N+1)}{2}-1$ and the non-unique solution (dynamic plasticity) of the $(N+1)-$ INVWF problem in $\mathbb{R}^{N}.$ The dynamic plasticity of the $(N+1)-$ INVWF problem in $\mathbb{R}^{N}$ leads to the plasticity of weighted Fermat-Frechet multitrees for boundary closed polytopes in $\mathbb{R}^{N}.$


\begin{theorem}[Solution of the $4-$INVWF problem in $\mathbb{R}^{4}$]\label{5inverseR4}
The weight $B_{i}$ is uniquely determined by:
\begin{eqnarray}\label{inverse111}
B_{i}=\frac{C}{1+\abs{\frac{\sin{\alpha_{i,k0lm}}}{\sin{\alpha_{j,k0lm}}}}+\abs{\frac{\sin{\alpha_{i,j0lm}}}{\sin{\alpha_{k,j0lm}}}}+\abs{\frac{\sin{\alpha_{i,k0jm}}}{\sin{\alpha_{l,k0jm}}}}
+\abs{\frac{\sin{\alpha_{i,k0jl}}}{\sin{\alpha_{m,k0jl}}}}},
\end{eqnarray}
for $i,j,k,l,m=1,2,3,4,5$ and $i \neq j\neq k\neq l\neq m.$
\end{theorem}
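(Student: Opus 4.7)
The plan is to reduce the $4$-INVWF problem to a system of linear equations in the unknown weights $B_1,\dots,B_5$ by projecting the Fermat equilibrium condition onto normals of suitably chosen hyperplanes, and then invert it using the normalization constraint $\sum_{i=1}^{5} B_i = C$.

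The starting point is the floating Fermat condition from Theorem~\ref{theor1}(II), namely $\sum_{i=1}^{5} B_i\,\vec{u}(A_0,A_i)=\vec{0}$, which is applicable because $A_0$ is assumed to lie in the interior of the $4$-simplex $A_1A_2A_3A_4A_5$. For each pair of indices $(i,j)$ with $\{i,j,k,l,m\}=\{1,2,3,4,5\}$, I would form the three-dimensional hyperplane $H_{k0lm}$ spanned by $A_0,A_k,A_l,A_m$ and let $\vec{n}_{klm}$ denote a unit normal to $H_{k0lm}$. Since $\vec{u}(A_0,A_k),\vec{u}(A_0,A_l),\vec{u}(A_0,A_m)$ all lie in $H_{k0lm}$, taking the inner product of the equilibrium equation with $\vec{n}_{klm}$ collapses to
\[
B_i\bigl(\vec{u}(A_0,A_i)\cdot\vec{n}_{klm}\bigr)+B_j\bigl(\vec{u}(A_0,A_j)\cdot\vec{n}_{klm}\bigr)=0.
\]
By the definition of the angle $\alpha_{i,k0lm}$ (the angle between $A_0A_i$ and its projection onto $H_{k0lm}$) we have $|\vec{u}(A_0,A_i)\cdot\vec{n}_{klm}|=|\sin\alpha_{i,k0lm}|$, and analogously for $j$. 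Since $B_i,B_j>0$, the two projections must have opposite signs, which forces $A_i$ and $A_j$ to lie on opposite sides of $H_{k0lm}$; this is automatic because $A_0$ is interior to the simplex, hence the hyperplane through $A_0$ and any three vertices separates the remaining two. This yields the pairwise ratio
\[
\frac{B_j}{B_i}=\left|\frac{\sin\alpha_{i,k0lm}}{\sin\alpha_{j,k0lm}}\right|.
\]

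Applying the same argument to the three other hyperplanes $H_{j0lm}$, $H_{k0jm}$, $H_{k0jl}$ produces the analogous expressions for $B_k/B_i$, $B_l/B_i$, $B_m/B_i$. Substituting these four ratios into the constraint $B_i+B_j+B_k+B_l+B_m=C$ and factoring out $B_i$ gives the claimed formula. Uniqueness is immediate because the four ratio equations together with the linear normalization form a nonsingular linear system in the five unknowns $B_1,\dots,B_5$, so there is a unique positive solution; alternatively, uniqueness follows from Theorem~\ref{theor1}(I) applied to the resulting weighted Fermat problem.

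The main obstacle is the geometric lemma about separation: one must verify rigorously that the hyperplane through $A_0$ and any three of the five vertices strictly separates the remaining two vertices, which justifies the opposite signs used to pass from the projected equilibrium equation to the absolute-value ratio. I would handle this by a standard convexity argument using barycentric coordinates of $A_0$ inside the $4$-simplex; a secondary subtlety is checking that none of the sines vanish, which corresponds to $A_i\notin H_{k0lm}$ and is guaranteed by the non-degeneracy of the simplex.
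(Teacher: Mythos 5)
Your proposal is correct and follows essentially the same route as the paper: the paper likewise takes the inner product of the floating equilibrium condition $\sum_{i=1}^{5}B_{i}\vec{u}(A_{0},A_{i})=\vec{0}$ with the normal $\vec{n}_{0,klm}$ to the hyperplane through $A_{0}$ and three of the vertices, obtains the pairwise relations $B_{i}\sin\alpha_{i,k0lm}=B_{j}\sin\alpha_{j,k0lm}$, and substitutes the resulting ratios into $\sum B_{i}=C$. Your explicit separation argument for the opposite signs is a welcome clarification of a point the paper only asserts, and the only content you omit is the paper's auxiliary coordinate computation showing that the angles $\alpha_{i,k0lm}$ reduce to nine given angles $\alpha_{i0j}$, which is not needed for the formula itself.
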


\begin{proof}
We consider the following five unit vectors $\vec{u}(A_{0},A_{i}),$\\ for $i=1,2,...,5,$ which meet at the weighted Fermat point $A_{0}:$
\begin{eqnarray}\label{veca1}
\vec{u}(A_{0},A_{1})=(1,0,0,0),
\end{eqnarray}
\begin{eqnarray}\label{veca2}
\vec{u}(A_{0},A_{2})=(\cos \alpha_{102},\sin \alpha_{102},0,0),
\end{eqnarray}
\begin{eqnarray}\label{veca3}
\vec{u}(A_{0},A_{3})=(\cos a_{3,102} \cos \omega_{3,102},\cos a_{3,102}\sin\omega_{3,102},\sin a_{3,102},0),
\end{eqnarray}
\begin{eqnarray}\label{veca4}
\vec{u}(A_{0},A_{4})=(\cos a_{4,1023} \cos \omega_{4,1023}\cos z_{4,1023},
\cos a_{4,1023} \cos \omega_{4,1023}\sin z_{4,1023},\nonumber\\
\cos a_{4,1023} \sin \omega_{4,1023},\sin a_{4,1023}),\nonumber\\
\end{eqnarray}
\begin{eqnarray}\label{veca5}
\vec{u}(A_{0},A_{5})=(\cos a_{5,1023} \cos \omega_{5,1023}\cos z_{5,1023},
\cos a_{5,1023} \cos \omega_{5,1023}\sin z_{5,1023},\nonumber\\
\cos a_{5,1023} \sin \omega_{5,1023},\sin a_{5,1023}). \nonumber\\
\end{eqnarray}

Taking into account (\ref{veca1})-(\ref{veca4}) the inner products $\vec{u}(A_{0},A_{4})\cdot \vec{u}(A_{0},A_{i})$  for $i=1,2,3$ yield:
\begin{eqnarray}\label{104}
\cos a_{4,1023}\cos \omega_{4,1023}\cos z_{4,1023}=\cos \alpha_{104},
\end{eqnarray}

\begin{eqnarray}\label{204}
\cos a_{4,1023}\cos \omega_{4,1023}\sin z_{4,1023}=\frac{1}{\sin \alpha_{102}}(\cos \alpha_{204}-\cos \alpha_{102}\cos \alpha_{104}),\nonumber\\
\end{eqnarray}

\begin{eqnarray}\label{304}
\cos a_{4,1023}\sin\omega_{4,1023}=\frac{1}{\sin a_{3,102}}(\cos \alpha_{304}-\cos \alpha_{103}\cos\alpha_{104}-\nonumber \\
-(\frac{\cos \alpha_{203}-\cos\alpha_{102}\cos\alpha_{103}}{\sin \alpha_{102}})(\frac{\cos\alpha_{204}-\cos\alpha_{102}\cos\alpha_{104}}{\sin\alpha_{102}})).\nonumber\\
\end{eqnarray}

By squaring both parts of (\ref{104}) and (\ref{204}) and by adding the two derived equations, we eliminate $z_{4,1023}:$

\begin{eqnarray}\label{newimp1}
\cos^{2} a_{4,1023}\cos^{2} \omega_{4,1023}=\frac{1}{\sin^{2} \alpha_{102}}(\cos \alpha_{204}-\cos \alpha_{102}\cos \alpha_{104})^2 + \cos^{2} \alpha_{104}.\nonumber\\
\end{eqnarray}

By squaring both parts of (\ref{newimp1}) and (\ref{304}) and by adding the two derived equations, we eliminate $\omega_{4,1023},$ which yields that
$\cos^{2} a_{4,1023}$ depends on six angles $\alpha_{102},$ $\alpha_{103},$ $\alpha_{104},$ $\alpha_{203},$ $\alpha_{204}$ and $\alpha_{304},$ because $a_{3,102}$ depends on $\alpha_{102},$ $\alpha_{103},$ $\alpha_{203}$ (\cite[(3.10),p.~120]{Zach/Zou:09}):

\[\cos^{2} a_{3,102}=\frac{\cos^{2} \alpha_{203}+\cos^{2} \alpha_{103}-2\cos\alpha_{102}\cos\alpha_{103}\cos\alpha_{203}}{\sin^{2}\alpha_{102}}.\]

By taking into account (\ref{veca1})-(\ref{veca4}) the inner products $\vec{u}(A_{0},A_{5})\cdot \vec{u}(A_{0},A_{i})$  for $i=1,2,3$ yield:
\begin{eqnarray}\label{105}
\cos a_{5,1023}\cos \omega_{5,1023}\cos z_{5,1023}=\cos \alpha_{105},
\end{eqnarray}

\begin{eqnarray}\label{205}
\cos a_{5,1023}\cos \omega_{5,1023}\sin z_{5,1023}=\frac{1}{\sin\alpha_{102}}\cos \alpha_{205}-\cos \alpha_{102}\cos \alpha_{105},\nonumber\\
\end{eqnarray}
\begin{eqnarray}\label{305}
\cos a_{5,1023}\sin\omega_{5,1023}=\frac{1}{\sin a_{3,102}}(\cos \alpha_{305}-\cos \alpha_{103}\cos\alpha_{105}-\nonumber \\
(\frac{\cos \alpha_{203}-\cos\alpha_{102}\cos\alpha_{103}}{\sin \alpha_{102}})(\frac{\cos\alpha_{205}-\cos\alpha_{102}\cos\alpha_{105}}{\sin\alpha_{102}})).\nonumber\\
\end{eqnarray}
\begin{eqnarray}\label{405}
\cos \alpha_{405}=\cos \alpha_{105}\cos\alpha_{104}+\nonumber \\
+(\frac{\cos \alpha_{205}-\cos\alpha_{102}\cos\alpha_{105}}{\sin \alpha_{102}})(\frac{\cos\alpha_{204}-\cos\alpha_{102}\cos\alpha_{104}}{\sin\alpha_{102}})+\nonumber\\
+\cos a_{4,1023}\sin \omega_{4,1023} \cos a_{5,1023}\sin \omega_{5,1023}+\nonumber\\
+\sin a_{4,1023}\sin a_{5,1023}.
\end{eqnarray}

By squaring both parts of (\ref{105}) and (\ref{205}) and by adding the two derived equations, we eliminate $z_{5,1023}:$

\begin{eqnarray}\label{newimp15}
\cos^{2} a_{5,1023}\cos^{2} \omega_{5,1023}=\frac{1}{\sin^{2} \alpha_{102}}(\cos \alpha_{205}-\cos \alpha_{102}\cos \alpha_{105})^2 + \cos^{2} \alpha_{105}.\nonumber\\
\end{eqnarray}

By squaring both parts of (\ref{newimp15}) and (\ref{305}) and by adding the two derived equations, we eliminate $\omega_{5,1023},$ which yields that
$\cos^{2} a_{5,1023}$ depends on six angles $\alpha_{102},$ $\alpha_{103},$ $\alpha_{105},$ $\alpha_{203},$ $\alpha_{205}$ and $\alpha_{305}.$ Thus, we get:\\ $a_{5,1023}=a_{5,1023}(\alpha_{102},\alpha_{203},\alpha_{103},\alpha_{105},\alpha_{205},\alpha_{305}).$
\\Therefore, (\ref{405}) yields:

$\alpha_{405}=\alpha_{405}(\alpha_{102},\alpha_{203},\alpha_{103},\alpha_{104},\alpha_{204},\alpha_{304},\alpha_{105},\alpha_{205}, \alpha_{305}).$

The angle $a_{i,1023}$ is the angle formed by $\vec{u}(A_{0},A_{i})$ and $\vec{u}(A_{0},A_{i,1023}),$ where $A_{i,1023}$ is the projection of $A_{i}$ w.r. to the subspace of $A_{1},A_{0},A_{2},A_{4},$ for $i=4,5.$

The normal $\vec{n}_{1023}$ is orthogonal to $A_{1}A_{0}A_{2}A_{3}.$

Thus, we obtain:
\begin{eqnarray}\label{extprodnsin}
\vec{n}_{0,123}\cdot \vec{u}(A_{0},A_{i})=\sin a_{i,1023},
\end{eqnarray}
for $i=4,5.$

The weighted floating condition (\ref{weightedfloatingft})for $N=5$ yields:

\begin{equation}\label{flotcond1}
\sum_{i=1}^{5}B_{i}\vec{u}(A_{0},A_{i})=\vec{0},.
\end{equation}

By taking the inner product of (\ref{flotcond1}) with $ \vec{n}_{0,123},$ we derive that:

\[\sum_{i=1}^{5}B_{i}\vec{u}(A_{0},A_{i})\cdot \vec{n}_{0,123}=\vec{0}\]
or
\begin{eqnarray}\label{flotcond1extern0123}
B_{4}\sin a_{4,1023}=B_{5} \sin a_{5,1023},
\end{eqnarray}

where $A_{4}$ belongs to the upper half space defined by $A_{1}A_{0}A_{2}A_{3}$ and $A_{5}$ belongs to the
corresponding lower half space.

By working similarly and by exchanging the indices $\{1,2,3,4 \}$ of the vertices $A_{i}$ for $i=1,...,4,$
we obtain:

\begin{eqnarray}\label{flotcond1extern0ijk}
B_{m}\sin a_{m,i0jk}=B_{n} \sin a_{n,i0jk},
\end{eqnarray}
for $m,n,i,j,k=1,2,3,4,5, m\neq n \neq i\neq j\neq k.$

By replacing (\ref{flotcond1extern0ijk}) in (\ref{isoperimetricconditionweights}) for $m=5,$ we derive (\ref{inverse111}). Thus, the weights $B_{i}$ depend on nine given angles\\  $\alpha_{102}, \alpha_{203}, \alpha_{103}, \alpha_{104}, \alpha_{204}, \alpha_{304}, \alpha_{105}, \alpha_{205}, \alpha_{305}.$

\end{proof}

\begin{theorem}[Solution of the $N-$INVWF problem in $\mathbb{R}^{N}$]\label{nplus1inverseRn}
The weight $B_{i}$ is uniquely determined by:
\begin{eqnarray}\label{inverse111n}
B_{i}=\frac{C}{1+\abs{\frac{\sin{\alpha_{i,0k_{1}k_{2}\ldots k_{N-1}}}}{\sin{\alpha_{k_{N},0k_{1}k_{2}\ldots k_{N-1}}}}}+\abs{\frac{\sin{\alpha_{i,0k_{1}k_{2}\ldots k_{N-2}k_{n}}}}{\sin{\alpha_{k_{N-1},0k_{1}k_{2}\ldots k_{N-2}k_{N}}}}}+\ldots+\abs{\frac{\sin{\alpha_{i,0k_{2}\ldots k_{N-1}k_{N}}}}{\sin{\alpha_{k_{1},0k_{2}\ldots k_{N-1}k_{N}}}}}},\nonumber\\
\end{eqnarray}
for $i, k_{1},k_{2},...,k_{N}=1,2,...,N+1$ and $k_{1} \neq k_{2}\neq...\neq k_{N}.$
\end{theorem}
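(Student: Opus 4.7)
The plan is to follow the same strategy as in Theorem~\ref{5inverseR4}, but organised inductively in the dimension $N$. In outline: (i) set up canonical hyperspherical coordinates for the $N+1$ unit vectors $\vec{u}(A_0,A_i)$, $i=1,\dots,N+1$; (ii) eliminate the auxiliary spherical angles in favour of the mutual angles $\alpha_{j0k}$; (iii) project the floating-Fermat identity $\sum_{i=1}^{N+1} B_i \vec{u}(A_0,A_i)=\vec{0}$ onto the unit normals of the $N+1$ hyperplanes obtained by dropping one vertex at a time; and (iv) combine the $N$ resulting balance relations with the weight normalisation $\sum_j B_j = C$ of (\ref{isoperimetricconditionweights}).

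For (i) and (ii) I would place $\vec{u}(A_0,A_1)$ along the first coordinate axis and, by induction on $j=2,\dots,N+1$, place $\vec{u}(A_0,A_j)$ in the first $\min(j,N)$-flat, so that $\vec{u}(A_0,A_j)$ depends on exactly $j-1$ spherical angles. Then the $\binom{N+1}{2}=\frac{N(N+1)}{2}$ inner-product equations $\vec{u}(A_0,A_j)\cdot\vec{u}(A_0,A_k)=\cos\alpha_{j0k}$ permit the successive elimination of the auxiliary spherical coordinates, exactly as $z_{4,1023}$ and $\omega_{4,1023}$ were eliminated in the proof of Theorem~\ref{5inverseR4} via squaring and adding. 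This leaves each ``altitude'' angle $a_{j,01\ldots(j-1)}$ expressed purely in terms of the given $\alpha_{j0k}$, and its sine is the normalised distance from $\vec{u}(A_0,A_j)$ to the $(j-1)$-dimensional subspace spanned by $\vec{u}(A_0,A_1),\dots,\vec{u}(A_0,A_{j-1})$.

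For (iii), fix an $(N-1)$-subset $\{k_1,\dots,k_{N-1}\}\subset\{1,\dots,N+1\}$ and let $\vec{n}$ be a unit normal to the hyperplane through $A_0$ spanned by $\vec{u}(A_0,A_{k_s})$, $1\leq s\leq N-1$. The two remaining vertices $A_i$ and $A_{k_N}$ lie on opposite sides of that hyperplane (since $A_0$ is strictly interior), so $\vec{u}(A_0,A_i)\cdot\vec{n}$ and $\vec{u}(A_0,A_{k_N})\cdot\vec{n}$ carry opposite signs and respective magnitudes $\sin\alpha_{i,0k_1\ldots k_{N-1}}$, $\sin\alpha_{k_N,0k_1\ldots k_{N-1}}$. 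Dotting (\ref{weightedfloatingft}) with $\vec{n}$ annihilates the $N-1$ in-plane summands and produces $B_i\sin\alpha_{i,0k_1\ldots k_{N-1}}=B_{k_N}\sin\alpha_{k_N,0k_1\ldots k_{N-1}}$, the direct generalisation of (\ref{flotcond1extern0ijk}). For (iv), fix $i$ and let $k_N$ range over the $N$ elements of $\{1,\dots,N+1\}\setminus\{i\}$ (with the complementary $(N-1)$-set playing the role of $\{k_1,\dots,k_{N-1}\}$); this expresses each $B_{k_N}$ as $B_i$ times one of the absolute sine ratios in (\ref{inverse111n}). Substituting into $\sum_j B_j=C$ and solving for $B_i$ yields the claimed formula.

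The main obstacle is step (ii): for arbitrary $N$ the inductive elimination involves $\frac{N(N-1)}{2}$ auxiliary spherical angles and one must verify that the outcome depends only on the $\frac{N(N+1)}{2}$ mutual angles $\alpha_{j0k}$, and that the sine of each altitude so obtained coincides with the normal-projection sine $\sin\alpha_{i,0k_1\ldots k_{N-1}}$ appearing in step (iii). The cleanest route is to bypass the hyperspherical computation entirely by observing that $\sin^2\alpha_{i,0k_1\ldots k_{N-1}}$ equals the ratio of the Gram determinant of $\{\vec{u}(A_0,A_i),\vec{u}(A_0,A_{k_1}),\dots,\vec{u}(A_0,A_{k_{N-1}})\}$ to that of $\{\vec{u}(A_0,A_{k_1}),\dots,\vec{u}(A_0,A_{k_{N-1}})\}$; both are polynomials in $\cos\alpha_{j0k}$, and the opposite-sides geometry fixes the signs needed to collapse the balance equations of step (iii) into the absolute-value ratios of (\ref{inverse111n}).
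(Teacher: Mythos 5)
Your proposal follows essentially the same route as the paper's proof: hyperspherical coordinates for the $N+1$ unit vectors, inductive elimination of the auxiliary spherical angles, projection of the floating equilibrium condition onto the normal of each hyperplane spanned by $N-1$ of the directions to obtain the pairwise balance relations $B_{i}\sin\alpha_{i,0k_{1}\ldots k_{N-1}}=B_{k_{N}}\sin\alpha_{k_{N},0k_{1}\ldots k_{N-1}}$, and substitution into the normalisation $\sum_{j}B_{j}=C$. Your closing remark that the altitude sines can be read off as ratios of Gram determinants in the $\cos\alpha_{j0k}$ is a tidy way to shortcut the elimination step, but it does not change the structure of the argument.
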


\begin{proof}
We consider $N+1$ unit vectors $\vec{u}(A_{0},A_{i})\in \mathbb{R}^{N},$ \\ for $i=1,2,\ldots,N+1,$ which meet at the weighted Fermat point $A_{0}:$
\begin{eqnarray}\label{veca1n}
\vec{u}(A_{0},A_{1})=(1,0,\ldots,0),
\end{eqnarray}
\begin{eqnarray}\label{veca2n}
\vec{u}(A_{0},A_{2})=(\cos \alpha_{102},\sin \alpha_{102},0,\ldots,0),
\end{eqnarray}
\begin{eqnarray}\label{veca3n}
\vec{u}(A_{0},A_{3})=(\cos a_{3,102} \cos \omega_{3,102},\cos a_{3,102}\sin\omega_{3,102},\sin a_{3,102},0,\ldots 0),\nonumber\\
\end{eqnarray}
\begin{eqnarray}\label{veca4n}
\vec{u}(A_{0},A_{4})=(\cos a_{4,1023} \cos \omega_{4,1023}\cos z_{4,1023},
\cos a_{4,1023} \cos \omega_{4,1023}\sin z_{4,1023},\nonumber\\
\cos a_{4,1023} \sin \omega_{4,1023},\sin a_{4,1023},0,\ldots,0),\nonumber\\
\end{eqnarray}
$\vdots$
\begin{eqnarray}\label{vecann}
\vec{u}(A_{0},A_{N})=(\cos a_{N,1023\ldots N-1} \cos \omega_{(1)N,1023\ldots N-1} \ldots
\cos\omega_{(N-2)N,1023\ldots N-1},\nonumber\\
\cos a_{N,1023\ldots N-1} \cos \omega_{(1)N,1023\ldots N-1} \ldots
\sin\omega_{(N-2)N,1023\ldots N-1},\nonumber\\
\ldots,\sin a_{N,1023\ldots N-1}), \nonumber\\
\end{eqnarray}
\begin{eqnarray}\label{vecannplusone}
\vec{u}(A_{0},A_{N+1})=(\cos a_{N+1,1023\ldots N-1} \cos \omega_{(1)(N+1),1023\ldots N-1} \ldots
\cos\omega_{(N-2)(N+1),1023\ldots N-1},\nonumber\\
\cos a_{N+1,1023\ldots N-1} \cos \omega_{(1)(N+1),1023\ldots N-1} \ldots
\sin\omega_{(N-2)(N+1),1023\ldots N-1},\nonumber\\
\ldots,\sin a_{N+1,1023\ldots N-1}). \nonumber\\
\end{eqnarray}

By following the same process that used in the proof of Theorem~\ref{5inverseR4}
and by using induction the angles $\omega_{(i)(N),1023\ldots N-1},\omega_{(i)(N+1),1023\ldots N-1},$
$\dots \omega_{4,1023},$

$\omega_{3,102},z_{4,1023}$ are eliminated.

$\cos^{2} a_{N,1023\ldots N-1},$ depend on $\frac{N(N-1)}{2}$ angles $\alpha_{102},$ $\alpha_{103},$ $\alpha_{10(N-2},$ $\alpha_{10N},$ $\alpha_{203},$ $\ldots$ $\alpha_{(N-2)0(N}.$
$\cos^{2} a_{N+1,1023\ldots N-1}$ depend on $\frac{N(N-1)}{2}$ angles $\alpha_{102},$ $\alpha_{103},$ $\alpha_{10(N-2},$ $\alpha_{10N-2},$ $\alpha_{10(N+1)},$ $\alpha_{203},$ $\ldots$ $\alpha_{(N-2)0(N+1}.$

The inner product $\vec{u}(A_{0}A_{N})\cdot \vec{u}(A_{0},A_{N+1})$ (\ref{405}) yields:

$\alpha_{N0N+1}=\alpha_{N0N+1}(\alpha_{102},\alpha_{103},\alpha_{10N-2},\alpha_{10N},\alpha_{10(N+1)},\ldots\alpha_{(N-2)0(N+1)}).$

The angle $a_{i,1023\ldots(N-1)}$ is the angle formed by $\vec{u}(A_{0},A_{i})$ and $\vec{u}(A_{0},A_{i,1023\ldots(N-1)}),$ where $A_{i,1023\ldots(N-1}$ is the projection of $A_{i}$ w.r. to the subspace of $A_{1}A_{0}A_{2}A_{4}\ldots A_{N-1}$ for $i=N,N+1.$

The normal $\vec{n}_{1023\ldots(n-1)}$ is orthogonal to $A_{1}A_{0}A_{2}\ldots A_{N-1}.$

Hence, we get:
\begin{eqnarray}\label{extprodnsinn}
\vec{n}_{0,123\ldots(N-1)}\cdot \vec{u}(A_{0},A_{i})=\sin a_{i,1023\ldots(N-1)},
\end{eqnarray}
for $i=N,N+1.$

The weighted floating condition (\ref{weightedfloatingft}) yields:

\begin{equation}\label{flotcond1n}
\sum_{i=1}^{N+1}B_{i}\vec{u}(A_{0},A_{i})=\vec{0}.
\end{equation}

By taking the inner product of (\ref{flotcond1n}) with $ \vec{n}_{0,123\ldots(N-1)},$ we derive that:

\[\sum_{i=1}^{N+1}B_{i}\vec{u}(A_{0},A_{i})\cdot \vec{n}_{0,123\ldots(N-1)}=\vec{0}\]
or
\begin{eqnarray}\label{flotcond1extern0123n}
B_{N}\sin a_{N,1023\ldots(N-1)}=B_{N+1} \sin a_{N+1,1023\ldots(N-1)}.
\end{eqnarray}

By working similarly and by exchanging the indices $\{1,2,3,\ldots,N,N+1 \}$ of the vertices $A_{i}$ for $i=1,\ldots,N+1,$
we obtain:

\begin{eqnarray}\label{flotcond1extern0ijkn}
B_{i_{m}}\sin a_{i_{m},i_{1}0i_{2}\ldots i_{N-1}}=B_{i_{N}} \sin a_{i_{N},i_{1}0i_{2}\ldots i_{N-1}},
\end{eqnarray}
for $m,N,i_{m},i_{N}\in \{1,2,\ldots,N+1\}, m\neq n , i_{m} \neq i_{N}.$

By replacing (\ref{flotcond1extern0ijkn}) in (\ref{isoperimetricconditionweights}), we obtain (\ref{inverse111n}). Thus, the weights $B_{i}$ depend on $\frac{N(N+1)}{2}-1$ given angles  $\alpha_{i0j}$, for $i,j=1,2,\ldots,N+1, i\neq j$ and $\alpha_{N0N+1}=\alpha_{N0N+1}(\alpha_{i0j}).$

\end{proof}

We set $\vec{a}_{i}\equiv \overrightarrow{A_{0}A_{i}}.$ The corner $(A_{0},A_{1}A_{2}\ldots A_{n})$with $N$ edges is determined by $\vec{a}_{1},\vec{a}_{2},\ldots \vec{a}_{N}.$
The $N-$dimensional polar sine of the corner $(A_{0},A_{1}A_{2}\ldots A_{N})$ is defined in \cite[p.~76]{Eriksson:78}:

\[polsin_{N}(A_{0},A_{1}A_{2}\ldots A_{N})=\frac{\abs{[\vec{a}_{1},\vec{a}_{2},\ldots,\vec{a}_{N}]}}{\abs{\vec{a}_{1}}\abs{\vec{a}_{2}}\ldots \abs{\vec{a}_{N}}},\]
where $[\vec{a}_{1},\vec{a}_{2},\ldots,\vec{a}_{N}]$ is the content of an $N$ dimensional parallelotope with sides
$\abs{\vec{a}_{1}}\abs{\vec{a}_{2}}\ldots \abs{\vec{a}_{N}}.$


\begin{theorem}[Weighted volume equalities in $\mathbb{R}^{N}$ ]\label{RatioVolumesSimplex}

\begin{eqnarray}\label{ratiovolumesn}
\frac{B_{1}}{a_{1}Vol(A_{0}A_{2}A_{3}...A_{N+1})}=\frac{B_{2}}{a_{2}Vol(A_{1}A_{0}...A_{N+1})}=...=\\ \nonumber
=\frac{B_{N+1}}{a_{N+1}Vol(A_{1}A_{2}...A_{0})}=\frac{\sum_{i=1}^{N+1}\frac{B_{i}}{a_{i}}}{Vol(A_{1}A_{2}...A_{N+1})}.
\end{eqnarray}
\end{theorem}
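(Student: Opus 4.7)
The plan is to derive the weighted volume equalities from the vector equilibrium at the weighted Fermat point $A_{0}$, translated into multilinear form via the $(N-1)$-fold exterior product, and to close the chain by the interior decomposition of the simplex at $A_{0}$. Let $\vec{a}_{i}\equiv \overrightarrow{A_{0}A_{i}}$ and $\vec{u}(A_{0},A_{i})=\vec{a}_{i}/a_{i}$. In the floating case of Theorem~\ref{theor1}, $A_{0}$ lies in the interior of the simplex $A_{1}A_{2}\cdots A_{N+1}$, and the weighted equilibrium at $A_{0}$ reads
\[
\sum_{m=1}^{N+1}\frac{B_{m}}{a_{m}}\,\vec{a}_{m}=\vec{0}.
\]

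Next, I would fix an arbitrary pair of distinct indices $i,j\in\{1,\ldots,N+1\}$ and let $\{k_{1},\ldots,k_{N-1}\}=\{1,\ldots,N+1\}\setminus\{i,j\}$. Taking the exterior product of the equilibrium relation with $\vec{a}_{k_{1}}\wedge\cdots\wedge \vec{a}_{k_{N-1}}$ annihilates every summand indexed by some $k_{\ell}$, leaving
\[
\frac{B_{i}}{a_{i}}\,\vec{a}_{i}\wedge \vec{a}_{k_{1}}\wedge\cdots\wedge \vec{a}_{k_{N-1}}+\frac{B_{j}}{a_{j}}\,\vec{a}_{j}\wedge \vec{a}_{k_{1}}\wedge\cdots\wedge \vec{a}_{k_{N-1}}=\vec{0}.
\]
Each surviving wedge is a top form in $\mathbb{R}^{N}$ whose magnitude equals $N!$ times the volume of the simplex it spans with $A_{0}$. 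Passing to magnitudes yields
\[
\frac{B_{i}}{a_{i}}\,\operatorname{Vol}(A_{0}A_{i}A_{k_{1}}\cdots A_{k_{N-1}})=\frac{B_{j}}{a_{j}}\,\operatorname{Vol}(A_{0}A_{j}A_{k_{1}}\cdots A_{k_{N-1}}).
\]
Observe that $A_{0}A_{i}A_{k_{1}}\cdots A_{k_{N-1}}$ is obtained from $A_{1}\cdots A_{N+1}$ by substituting $A_{0}$ for $A_{j}$, and symmetrically for $i\leftrightarrow j$. Writing $V_{m}$ for the volume of the simplex obtained by replacing the vertex $A_{m}$ with $A_{0}$, the identity above becomes $(B_{i}/a_{i})V_{j}=(B_{j}/a_{j})V_{i}$; dividing through by $V_{i}V_{j}$ yields exactly $B_{i}/(a_{i}V_{i})=B_{j}/(a_{j}V_{j})$. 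Varying the pair $(i,j)$ over $\{1,\ldots,N+1\}$ produces the full chain of pairwise equalities in the theorem.

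For the final equality, I would use the interior decomposition of $A_{1}A_{2}\cdots A_{N+1}$ induced by the interior point $A_{0}$, namely
\[
\operatorname{Vol}(A_{1}A_{2}\cdots A_{N+1})=\sum_{m=1}^{N+1}V_{m}.
\]
Denote the common value of the pairwise ratios by $C$; then $B_{m}=C\,a_{m}V_{m}$, hence $\sum_{m=1}^{N+1}B_{m}/a_{m}=C\sum_{m=1}^{N+1}V_{m}=C\operatorname{Vol}(A_{1}A_{2}\cdots A_{N+1})$, which rearranges to the last equality of the statement.

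The step I expect to demand the most care is the orientation bookkeeping in passing from the signed wedge identity to the unsigned magnitude identity: the two residual top forms must be \emph{negatives} of one another so that the positive coefficients $B_{i}/a_{i}$, $B_{j}/a_{j}$ give a valid equality of positive quantities. This opposite orientation is precisely the geometric assertion that the hyperplane through $\{A_{0},A_{k_{1}},\ldots,A_{k_{N-1}}\}$ separates $A_{i}$ from $A_{j}$, which follows from the floating hypothesis that $A_{0}$ is interior to the simplex. Once this geometric fact is isolated, the exterior-algebra identity and the volume decomposition combine to give the theorem.
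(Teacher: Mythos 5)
Your argument is correct and rests on the same underlying idea as the paper's proof: annihilate all but two terms of the equilibrium $\sum_{m}\frac{B_{m}}{a_{m}}\vec{a}_{m}=\vec{0}$ by pairing it against the hyperplane through $A_{0}$ and the $N-1$ remaining vertices. The paper does this by taking the inner product with the normal $\vec{n}_{0,12\ldots(N-1)}$ to obtain $B_{N}\sin a_{N,102\ldots(N-1)}=B_{N+1}\sin a_{N+1,102\ldots(N-1)}$, and then multiplies by $\prod_{i}\abs{\vec{a}_{i}}\,polsin_{N-1}(A_{0},A_{1}\ldots A_{N-1})$ to convert the sines into $N$-volumes via the polar-sine formula $N!\operatorname{Vol}=\prod\abs{\vec{a}_{i}}\,polsin_{N}$; your single wedge with $\vec{a}_{k_{1}}\wedge\cdots\wedge\vec{a}_{k_{N-1}}$ performs the projection and the conversion to volumes in one step, which is a cleaner packaging of the same computation. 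You are also more complete than the paper in two respects: you isolate the orientation issue (that $A_{i}$ and $A_{j}$ lie on opposite sides of the hyperplane, so the two residual top forms have opposite signs and the unsigned identity relates positive quantities), which the paper handles only implicitly through its upper/lower half-space convention and absolute values; and you actually prove the final member of the chain via the interior decomposition $\operatorname{Vol}(A_{1}\cdots A_{N+1})=\sum_{m}V_{m}$ together with the mediant property of equal ratios, a step the paper asserts without argument.
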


\begin{proof}

By multiplying both members of (\ref{flotcond1extern0123n}) with\\ $\prod_{i=1}^{N+1}\abs{\vec{a}_{i}} polsin_{N-1}(A_{0},A_{1}A_{2}\ldots A_{N-1}),$
we get:

\begin{eqnarray}\label{volumepolsines1}
B_{N}\prod_{i=1}^{N+1}\abs{\vec{a}_{i}} polsin_{N-1}(A_{0},A_{1}A_{2}\ldots A_{N-1})\sin a_{N,1023\ldots N-1}=\nonumber\\
=B_{N+1}\prod_{i=1}^{N+1}\abs{\vec{a}_{i}} polsin_{N-1}(A_{0},A_{1}A_{2}\ldots A_{N-1}) \sin a_{N+1,1023\ldots N-1},
\end{eqnarray}

The volumes of the $N-$-dimensional simplexes $A_{0}A_{1}A_{2}\ldots A_{N-1}A_{N},$\\ $A_{0}A_{1}A_{2}\ldots A_{n-1}A_{n+1},$ are given by:
\begin{eqnarray}\label{volumepolsines2}
N! Vol(A_{0}A_{1}A_{2}\ldots A_{N-1}A_{N})=\prod_{i=1}^{N+1}\abs{\vec{a}_{i}} polsin_{N}(A_{0},A_{1}A_{2}\ldots A_{N})
\end{eqnarray}
\begin{eqnarray}\label{volumepolsines3}
N! Vol(A_{0}A_{1}A_{2}\ldots A_{N-1}A_{N+1})=\prod_{i=1}^{N+1}\abs{\vec{a}_{i}} polsin_{N}(A_{0},A_{1}A_{2}\ldots A_{N+1})
\end{eqnarray}

By substituting (\ref{volumepolsines2}), (\ref{volumepolsines3}) in (\ref{volumepolsines1}), we obtain:
\begin{eqnarray}\label{vol4imp}
\frac{B_{N+1}}{a_{N+1}Vol(A_{0}A_{1}...A_{N-1}A_{N})}=\frac{B_{N}}{a_{N}Vol(A_{0}A_{1}A_{2}...A_{N-1}A_{N+1})}.
\end{eqnarray}

By exchanging the indices cyclically $i\to j$ for $i,j=1,2,\ldots,N,N+1$ and taking into account (\ref{flotcond1extern0ijkn}), we get (\ref{ratiovolumesn}).

\end{proof}

For $N=3,$ the following corollary is derived for tetrahedra in $\mathbb{R}^{3}:$

\begin{corollary}{Weighted volume equalities in $\mathbb{R}^{N}$ ,\cite{Zach/Zou:09}}\label{ratiovolumes3}
\begin{eqnarray}\label{ratiovolumesn3}
\frac{B_{1}}{a_{1}Vol(A_{0}A_{2}A_{3}A_{4})}=\frac{B_{2}}{a_{2}Vol(A_{1}A_{0}A_{3}A_{4})}=\\\nonumber=
\frac{B_{3}}{a_{3}Vol(A_{0}A_{1}A_{2}A_{4})}=\frac{B_{4}}{a_{4}Vol(A_{1}A_{2}A_{3}A_{0})}=\\\nonumber
=\frac{\sum_{i=1}^{4}\frac{B_{i}}{a_{i}}}{Vol(A_{1}A_{2}A_{3}A_{4})}.
\end{eqnarray}

\end{corollary}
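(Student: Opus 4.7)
The plan is to obtain the corollary as the direct specialization $N=3$ of Theorem~\ref{RatioVolumesSimplex}. Substituting $N=3$ into (\ref{ratiovolumesn}), the index $i$ ranges over $\{1,2,3,4\}$, and the volumes in the denominators are exactly the tetrahedral volumes $\operatorname{Vol}(A_0 A_j A_k A_l)$ obtained by replacing the vertex $A_i$ of $A_1A_2A_3A_4$ by the weighted Fermat point $A_0$. This delivers the equalities among the first four ratios in (\ref{ratiovolumesn3}).

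First I would reuse equation (\ref{flotcond1extern0ijkn}) with $N=3$, which reads $B_m\sin a_{m,i0jk}=B_n\sin a_{n,i0jk}$ for distinct $m,n,i,j,k\in\{1,2,3,4\}$. Multiplying through by $\prod_{r=1}^{4}\lvert\vec{a}_{r}\rvert\,\operatorname{polsin}_{2}(A_0,A_iA_j)$ and invoking the identity $3!\operatorname{Vol}(A_0 A_i A_j A_k)=\lvert\vec a_i\rvert\lvert\vec a_j\rvert\lvert\vec a_k\rvert\,\operatorname{polsin}_3(A_0,A_iA_jA_k)$ used in the proof of the parent theorem converts each side into a weighted tetrahedral volume, establishing the pairwise equality of any two of the four ratios $\dfrac{B_i}{a_i\operatorname{Vol}(A_0 A_j A_k A_l)}$. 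Cycling the indices $i\to i+1 \pmod 4$ handles all four terms simultaneously.

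To establish the last equality with $\dfrac{\sum_{i=1}^{4}B_i/a_i}{\operatorname{Vol}(A_1A_2A_3A_4)}$, I would denote the common value of the four ratios by $k$, rewrite $B_i/a_i = k\,\operatorname{Vol}(A_0 A_j A_k A_l)$ for each $i\in\{1,2,3,4\}$, and sum over $i$. The right-hand side then collapses by the additive decomposition $\operatorname{Vol}(A_1A_2A_3A_4)=\sum_{i=1}^{4}\operatorname{Vol}(A_0 A_j A_k A_l)$, valid whenever $A_0$ lies in the interior of the boundary tetrahedron, which holds in the floating case (\ref{floatingcase}) assumed throughout this section.

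The only nontrivial ingredient is the interior additive decomposition of the tetrahedral volume; everything else is a mechanical transcription of the $N=3$ case of Theorem~\ref{RatioVolumesSimplex}. I would make this decomposition explicit at the outset by noting that $A_0$ partitions $A_1A_2A_3A_4$ into the four sub-tetrahedra obtained by coning each of the four faces to $A_0$, so no further calculation is required beyond quoting the parent theorem.
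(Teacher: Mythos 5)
Your proposal is correct and follows essentially the same route as the paper, which states the corollary without a separate proof as the direct $N=3$ specialization of Theorem~\ref{RatioVolumesSimplex} (citing \cite{Zach/Zou:09}). The only addition is that you make explicit the interior decomposition $\operatorname{Vol}(A_{1}A_{2}A_{3}A_{4})=\sum_{i}\operatorname{Vol}(A_{0}A_{j}A_{k}A_{l})$ needed for the final mediant equality, a step the paper leaves implicit even in the parent theorem's proof; this is a welcome clarification, not a departure.
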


\begin{definition}\label{plasticitympolytopes}
 We call \textit{dynamic plasticity} of a variable weighted Fermat tree (weighted network) whose endpoints correspond to a closed polytope in $\mathbb{R}^{N},$ which is formed by $(N+2)$ weighted
line segments meeting at the weighted Fermat point $A_{0},$ the
set of solutions of the $(n+2)$ variable weights with respect to the
$(N+1)-$INVWF problem in $\mathbb{R}^{N},$ for a given
constant value $c,$ which correspond to a family of variable weighted networks that preserve the weighted Fermat point and the boundary of the closed polytope, such that the $(N+1)$ variable
weights depend on a variable weight and the value of $c.$

\end{definition}
We denote by $(B_{i})_{12\ldots N+2}$ the weight which corresponds to the vertex that
lies on the ray $A_{0}A_{i},$ for $i=1,2,\ldots,N+2$ and the weight
$(B_{j})_{i_{1}i_{2}\ldots i_{N+1}}$ corresponds to the vertex $A_{j}$ that lies on
the ray $A_{0}A_{j}$ with respect to the $N-$ simplex
$A_{i_{1}}A_{i_{2}}\ldots A_{i_{N+1}},$ for $i_{1},i_{2},\ldots,i_{N+1}\in \{1,2,\ldots N+2\}$ and $i_{1}\ne i_{2}\ne\ldots\ne i_{N+1}.$
\begin{theorem}\label{theorplastmpol}
The following equations point out the dynamic plasticity of a weighted Fermat tree for $(N+1)$ weighted boundary
closed polytopes with respect to the non-negative variable weights
$(B_{i})_{12\ldots N+2}$ in $\mathbb{R}^{N}$:

\begin{eqnarray}\label{dynamicplasticity2}
(\frac{B_{1}}{B_{N+1}})_{12\ldots N+2}=(\frac{B_{1}}{B_{N+1}})_{12\ldots N+1}(1-(\frac{B_{N+2}}{B_{N+1}})_{12\ldots (N+2)}(\frac{B_{N+1}}{B_{N+2}})_{2\ldots N+2})\nonumber\\
\end{eqnarray}
\begin{eqnarray}\label{dynamicplasticity3}
(\frac{B_{2}}{B_{N+1}})_{12\ldots N+2}=(\frac{B_{2}}{B_{N+1}})_{12\ldots N+1}(1-(\frac{B_{N+2}}{B_{N+1}})_{12\ldots N+2}(\frac{B_{N+1}}{B_{N+2}})_{13\ldots N+2})\nonumber\\
\end{eqnarray}
$\vdots$
\begin{eqnarray}\label{dynamicplasticity1}
(\frac{B_{N}}{B_{N+1}})_{12\ldots N+2}=(\frac{B_{N}}{B_{N+1}})_{12\ldots N+1}(1-(\frac{B_{N+2}}{B_{N+1}})_{12\ldots N+2}(\frac{B_{N+1}}{B_{N+2}})_{12\ldots (N-1)(N+1)(N+2)}).\nonumber\\
\end{eqnarray}

\end{theorem}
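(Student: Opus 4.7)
The plan is to exploit an overdetermination: the $N+2$ unit rays $\vec u_i=\vec u(A_0,A_i)$ live in $\mathbb{R}^N$, so the subspace $V\subset\mathbb{R}^{N+2}$ of linear dependences $\sum x_i\vec u_i=\vec 0$ is generically $2$-dimensional, and after fixing the isoperimetric normalization $\sum B_i=c$ the INVWF weight vector varies in a one-parameter family—this is exactly the dynamic plasticity of Definition~\ref{plasticitympolytopes}. Two canonical basis dependences of $V$ arise from the unique $N$-INVWF solutions of the sub-simplexes obtained by removing one vertex, provided by Theorem~\ref{nplus1inverseRn}.

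First I would establish $\dim V=2$. By Theorem~\ref{nplus1inverseRn} any $N+1$ of the $\vec u_i$ are positively linearly dependent with a \emph{unique} (up to scaling) relation, so $\vec u_1,\dots,\vec u_{N+1}$ span $\mathbb{R}^N$; adding $\vec u_{N+2}$ forces exactly one further independent relation, giving $\dim V=2$. For each $k\in\{1,\dots,N\}$, the two dependences obtained by removing $A_{N+2}$ (with coefficients $(B_i)_{12\ldots N+1}$, zero at the $(N+2)$-th slot) and by removing $A_k$ (with coefficients $(B_i)_{12\ldots\widehat{k}\ldots N+2}$, zero at the $k$-th slot) have disjoint zero-loci at distinct indices, so they are linearly independent in $V$ and therefore span it. Hence there exist $\alpha,\gamma>0$ with
\begin{equation*}
(B_i)_{12\ldots N+2}=\alpha\,(B_i)_{12\ldots N+1}+\gamma\,(B_i)_{12\ldots\widehat{k}\ldots N+2},\qquad i=1,\dots,N+2,
\end{equation*}
under the conventions $(B_{N+2})_{12\ldots N+1}=0$ and $(B_k)_{12\ldots\widehat{k}\ldots N+2}=0$.

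Reading off coordinates $i=k$, $i=N+2$ and $i=N+1$ yields $(B_k)_{12\ldots N+2}=\alpha(B_k)_{12\ldots N+1}$, $(B_{N+2})_{12\ldots N+2}=\gamma(B_{N+2})_{12\ldots\widehat{k}\ldots N+2}$, and $(B_{N+1})_{12\ldots N+2}=\alpha(B_{N+1})_{12\ldots N+1}+\gamma(B_{N+1})_{12\ldots\widehat{k}\ldots N+2}$. A direct computation then gives
\begin{equation*}
\Big(\tfrac{B_{N+2}}{B_{N+1}}\Big)_{12\ldots N+2}\Big(\tfrac{B_{N+1}}{B_{N+2}}\Big)_{12\ldots\widehat{k}\ldots N+2}=\frac{\gamma\,(B_{N+1})_{12\ldots\widehat{k}\ldots N+2}}{\alpha\,(B_{N+1})_{12\ldots N+1}+\gamma\,(B_{N+1})_{12\ldots\widehat{k}\ldots N+2}},
\end{equation*}
so its $1$-complement equals $\alpha(B_{N+1})_{12\ldots N+1}\big/[\alpha(B_{N+1})_{12\ldots N+1}+\gamma(B_{N+1})_{12\ldots\widehat{k}\ldots N+2}]$. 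Multiplying this $1$-complement by $(B_k/B_{N+1})_{12\ldots N+1}$ reproduces $(B_k/B_{N+1})_{12\ldots N+2}$ exactly, which is the asserted plasticity identity for each $k=1,\dots,N$; specializing $k=1,2,N$ gives the three displayed equations of the theorem.

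The hard part will be the clean verification that $\dim V=2$ and that the chosen pair of sub-dependences spans $V$ with strictly positive coefficients $\alpha,\gamma$. This needs $A_0$ to be interior to the polytope so that every face of the configuration satisfies the floating hypothesis of Theorem~\ref{theor1}(II) and Theorem~\ref{nplus1inverseRn} applies to both sub-problems; the positivity of $\alpha,\gamma$ then guarantees that the denominator appearing in the ratio calculation never vanishes, so the plasticity equations are well defined and the one free parameter can be taken to be the variable weight attached to the new $(N+2)$-th ray $A_0A_{N+2}$.
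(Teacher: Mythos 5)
Your argument is correct, and it reaches the paper's identities by a genuinely different route. The paper proves Theorem~\ref{theorplastmpol} by pairing the full balancing condition $\sum_{i=1}^{N+2}(B_i)_{12\ldots N+2}\vec u(A_0,A_i)=\vec 0$ with explicit normal vectors $\vec n_{0,23\ldots N},\ \vec n_{0,13\ldots N},\ldots$ to the hyperplanes spanned by $N-1$ of the rays: each inner product annihilates all but three terms and yields the scalar three-term relations (\ref{eq1weights})--(\ref{eqnweights}) with coefficients $\operatorname{sgn}\cdot\sin$ of the angles to those hyperplanes; the sub-configuration ratios (\ref{ratio1n+1})--(\ref{n+2n+1n0}) are then obtained by setting one weight to zero, and back-substitution gives the plasticity equations. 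You instead observe that the space $V$ of linear dependences of the $N+2$ unit rays is two-dimensional, take as a basis the two sub-dependences supported off $A_{N+2}$ and off $A_k$, write $(B_i)_{12\ldots N+2}=\alpha(B_i)_{12\ldots N+1}+\gamma(B_i)_{12\ldots\widehat k\ldots N+2}$, and read off coordinates; I checked the final ratio computation and it reproduces each displayed equation exactly for $k=1,2,\ldots,N$. What your route buys is a coordinate-free derivation that dispenses with the sine formulas and the $\operatorname{sgn}$ bookkeeping, and it makes transparent both why the family is one-parameter (Definition~\ref{plasticitympolytopes}) and why the factor $1-(\cdot)(\cdot)$ lies in $(0,1)$ when all sub-weights are positive. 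What the paper's route buys is explicitness: the sine expressions are reused later (e.g., in Theorem~\ref{RatioVolumesSimplex} and the Bessel-plasticity section), and the sign functions let the relations survive configurations where $A_0$ is not interior to every sub-simplex. Your closing caveat is the right one: the positivity of $\alpha,\gamma$ and of the basis dependences requires $A_0$ to be an interior (floating, per Theorem~\ref{theor1}(II)) Fermat point for both sub-configurations, which is exactly the cone condition the paper imposes before Theorem~\ref{plasticityn2polytopes}; outside that regime your identity still holds as signed linear algebra but the monotonicity interpretation does not.
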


\begin{proof}
We consider the weighted floating case for $A_{0}\notin \{A_{1}A_{2}\ldots A_{N+2}\}:$
\begin{eqnarray}\label{floatcondnplus2}
\sum_{i=1}^{N+2}(B_{i})_{12\ldots N+2}\vec{u}(A_{0},A_{i})=\vec{0}.
\end{eqnarray}
The normals $\vec{n}_{0,23\ldots N},$
$\vec{n}_{0,13\ldots N},$ and $\vec{n}_{0,12\ldots(N-1)}$ are orthogonal to the subspaces $A_{2}A_{3}\ldots A_{N},$
$A_{1}A_{3}\ldots A_{N},$ $A_{1}A_{2}\ldots A_{N},$ respectively.

We set
\[sgn_{i,203\ldots N}=\begin{cases} +1,& \text{if $A_{i}$
is upper from the subspace  $A_{2}A_{0}\ldots A_{N}$ },\\
0,& \text{if $A_{i}$ belongs to the subspace $A_{2}A_{0}\ldots A_{N}$},\\
-1, & \text{if $A_{i}$ is under the subspace $A_{2}A_{0}\ldots A_{N}$ } ,
\end{cases},\]
for $i=1, N+1, N+2.$

The inner product of (\ref{floatcondnplus2}) with $\vec{n}_{0,23\ldots N},$
$\vec{n}_{0,13\ldots N},$ and $\vec{n}_{0,12\ldots (N-1)}$ yield

\begin{eqnarray}\label{eq1weights}
(B_{1})_{12\ldots N+2}sgn_{1,203\ldots N}\sin a_{1,203\ldots N}+\nonumber\\
+(B_{N+1})_{12\ldots N+2}sgn_{N+1,203\ldots N}\sin a_{N+1,203\ldots N}+\nonumber\\
+(B_{N+2})_{12\ldots N+2}sgn_{N+2,203\ldots n}\sin a_{N+2,203\ldots N}=0,\nonumber\\
\end{eqnarray}

\begin{eqnarray}\label{eq2weights}
(B_{2})_{12\ldots N+2}sgn_{2,103\ldots N}\sin a_{2,103\ldots N}+\nonumber\\
+(B_{N+1})_{12\ldots N+2}sgn_{N+1,103\ldots N}\sin a_{N+1,103\ldots N}+\nonumber\\
+(B_{N+2})_{12\ldots N+2}sgn_{N+2,103\ldots N}\sin a_{N+2,103\ldots N}=0,\nonumber\\
\end{eqnarray}
$\vdots$
\begin{eqnarray}\label{eqnweights}
(B_{N})_{12\ldots N+2}sgn_{N,102\ldots N-1}\sin a_{N,102\ldots N-1}+\nonumber\\
+(B_{N+1})_{12\ldots N+2}sgn_{N+1,102\ldots N-1}\sin a_{N+1,102\ldots N-1}+\nonumber\\
+(B_{N+2})_{12\ldots N+2}sgn_{N+2,102\ldots N-1}\sin a_{N+2,102\ldots N-1}=0.\nonumber\\
\end{eqnarray}

By substituting $B_{N+2}=0$ in (\ref{eq1weights}), (\ref{eq2weights}), (\ref{eqnweights}),
we derive:
\begin{eqnarray}\label{ratio1n+1}
(\frac{B_{1}}{B_{N+1}})_{12\ldots N+1}=-\frac{sgn_{N+1,203\ldots N}\sin a_{N+1,203\ldots N}}{sgn_{1,203\ldots N}\sin a_{1,203\ldots N}},
\end{eqnarray}
\begin{eqnarray}\label{ratio2n+1}
(\frac{B_{2}}{B_{N+1}})_{12\ldots N+1}=-\frac{sgn_{N+1,103\ldots N}\sin a_{N+1,103\ldots N}}{sgn_{2,103\ldots N}\sin a_{2,103\ldots N}},
\end{eqnarray}
\begin{eqnarray}\label{rationn+1}
(\frac{B_{N}}{B_{N+1}})_{12\ldots N+1}=-\frac{sgn_{N+1,102\ldots N-1}\sin a_{N+1,102\ldots N-1}}{sgn_{N,102\ldots N-1}\sin a_{N,102\ldots N-1}}.
\end{eqnarray}


By substituting $(B_{1})_{12\ldots N+2}=0, $ $(B_{2})_{12\ldots N+2}=0,$ $(B_{N})_{12\ldots N+2}=0,$ in (\ref{eq1weights}), (\ref{eq2weights}), (\ref{eqnweights}), respectively, we derive:

\begin{eqnarray}\label{n+2n+110}
(\frac{B_{N+2}}{B_{N+1}})_{2\ldots N+2}=-\frac{sgn_{N+1,203\ldots N}\sin a_{N+1,203\ldots N}}{sgn_{N+2,203\ldots N}\sin a_{N+2,203\ldots N}},
\end{eqnarray}

\begin{eqnarray}\label{n+2n+120}
(\frac{B_{N+2}}{B_{N+1}})_{13\ldots N+2}=-\frac{sgn_{N+1,103\ldots N}\sin a_{N+1,103\ldots N}}{sgn_{N+2,103\ldots N}\sin a_{n+2,103\ldots N}},
\end{eqnarray}

\begin{eqnarray}\label{n+2n+1n0}
(\frac{B_{N+2}}{B_{N+1}})_{12\ldots (N-1)(N+1)(N+2)}=-\frac{sgn_{N+1,102\ldots N-1}\sin a_{N+1,102\ldots N-1}}{sgn_{N+2,102\ldots N-1}\sin a_{N+2,102\ldots N-1}}.
\end{eqnarray}

By substituting (\ref{ratio1n+1}), (\ref{ratio2n+1}), (\ref{rationn+1}), (\ref{n+2n+110}), (\ref{n+2n+120}), (\ref{n+2n+1n0}) in (\ref{eq1weights}), (\ref{eq2weights}), (\ref{eqnweights}), we obtain (\ref{dynamicplasticity2}),
(\ref{dynamicplasticity3}) and (\ref{dynamicplasticity1}).

\end{proof}

A direct consequence of Theorem~\ref{theorplastmpol} is the following corollary, by setting\\ $\sum_{12\ldots N+2}B\equiv \sum_{i=1}^{N+2} (B_{i})_{12\ldots N+2},$\\
$\sum_{i_{1}i_{2}\ldots i_{N+1}}B=(B_{i_{1}})_{i_{1}i_{2}\ldots i_{N+1}}+\ldots +(B_{i_{N+1}})_{i_{1}i_{2}\ldots i_{N+1}},$\\ for $i_{1},\ldots i_{N+1}.
\in\{1,2,\ldots N+2\}.$

\begin{corollary}\label{dependenceweights}
If $\,\sum_{12\ldots N+2}B=\sum_{i_{1}i_{2}\ldots i_{N+1}}B,$ for every $i_{1},\ldots i_{N+1} \in\{1,2,\ldots N+2\},$ where
$\sum_{12\ldots N+2}B:=(B_{N+1})_{12\ldots N+2}(1+\sum_{i=1, i\neq N+1}^{N+2}(\frac{B_i}{B_{N+1}})_{1,2,\ldots N+2}),$
then
\begin{eqnarray}\label{B12nplus2}
 (B_{i})_{12\ldots N+2}=a_i (B_{N+2})_{12\ldots N+2}+ b_i,\quad i=1,2\ldots, N+1,
\end{eqnarray}
where

\begin{eqnarray}\label{abnplusone}
\nonumber
&&(a_{N+1},\,b_{N+1})=\nonumber\\&&{} (\frac{(\frac{B_{1}}{B_{N+1}})_{12\ldots N+1}(\frac{B_{N+1}}{B_{N+2}})_{2\ldots N+2}+\dots +
(\frac{B_{N}}{B_{N+1}})_{12\ldots N+1}(\frac{B_{N+1}}{B_{N+2}})_{12\ldots (N-1)(N+1)N+2}  -1}
{\sum_{i=1}^{N+1}(\frac{B_{i}}{B_{N+1}})_{12\ldots N+1}  },\ \nonumber\\&&{}(B_{N+1})_{12\ldots N+1}),
\nonumber
\end{eqnarray}

\begin{eqnarray}\label{abn}
\nonumber
&&(a_{N},\,b_{N})=(a_{N+1}(\frac{B_{N}}{B_{N+1}})_{12\ldots N+1}-(\frac{B_{N}}{B_{N+1}})_{12\ldots N+1}(\frac{B_{N+1}}{B_{N+2}})_{12\ldots (N-1)(N+1)(N+2)},\ \nonumber\\&&{}(B_{N})_{12\ldots N+1}),
\nonumber
\end{eqnarray}
$\vdots$
\begin{eqnarray}\label{ab1}
\nonumber
&&(a_{1},\,b_{1})=(a_{N+1}(\frac{B_{1}}{B_{N+1}})_{12\ldots N+1}-(\frac{B_{1}}{B_{N+1}})_{12\ldots N+1}(\frac{B_{N+1}}{B_{N+2}})_{23\ldots(N+2)},\ \nonumber\\&&{}(B_{1})_{12\ldots N+1}).
\nonumber
\end{eqnarray}

\end{corollary}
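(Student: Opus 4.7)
\textbf{Proof proposal for Corollary~\ref{dependenceweights}.}
The plan is to treat Theorem~\ref{theorplastmpol} as a black box supplying the $N$ affine relations
\[
\Bigl(\tfrac{B_{i}}{B_{N+1}}\Bigr)_{12\ldots N+2}=\Bigl(\tfrac{B_{i}}{B_{N+1}}\Bigr)_{12\ldots N+1}\Bigl(1-\Bigl(\tfrac{B_{N+2}}{B_{N+1}}\Bigr)_{12\ldots N+2}\,\rho_{i}\Bigr),\qquad i=1,2,\ldots,N,
\]
where $\rho_{i}$ abbreviates the ratio $(B_{N+1}/B_{N+2})_{12\ldots(i-1)(i+1)\ldots(N+2)}$ extracted from the plasticity equations. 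The ratios $(B_{i}/B_{N+1})_{12\ldots N+1}$ and the quantities $\rho_{i}$ are \emph{constants} of the construction: they are determined by the fixed boundary geometry of the $(N+1)$-tuples obtained by deleting a single vertex other than $A_{N+1}$, independently of the variable weight $(B_{N+2})_{12\ldots N+2}$. This is the key observation that makes the forthcoming substitution linear.

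First I would rewrite each ratio relation as an identity for $(B_{i})_{12\ldots N+2}$ by multiplying by $(B_{N+1})_{12\ldots N+2}$, obtaining
\[
(B_{i})_{12\ldots N+2}=\Bigl(\tfrac{B_{i}}{B_{N+1}}\Bigr)_{12\ldots N+1}(B_{N+1})_{12\ldots N+2}-\Bigl(\tfrac{B_{i}}{B_{N+1}}\Bigr)_{12\ldots N+1}\rho_{i}\,(B_{N+2})_{12\ldots N+2},
\]
for $i=1,2,\ldots,N$. Hence, once $(B_{N+1})_{12\ldots N+2}$ is expressed as an affine function of $(B_{N+2})_{12\ldots N+2}$, all remaining $(B_{i})_{12\ldots N+2}$ inherit the affine form \eqref{B12nplus2} with coefficients assembled exactly as in the statements of $(a_{N},b_{N}),\ldots,(a_{1},b_{1})$.

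The central step is therefore to determine $(a_{N+1},b_{N+1})$. I would invoke the hypothesis $\sum_{12\ldots N+2}B=\sum_{12\ldots N+1}B$, which reads
\[
(B_{N+1})_{12\ldots N+2}\Bigl(1+\sum_{i=1,i\neq N+1}^{N+2}\Bigl(\tfrac{B_{i}}{B_{N+1}}\Bigr)_{12\ldots N+2}\Bigr)=\sum_{i=1}^{N+1}(B_{i})_{12\ldots N+1}=(B_{N+1})_{12\ldots N+1}\sum_{i=1}^{N+1}\Bigl(\tfrac{B_{i}}{B_{N+1}}\Bigr)_{12\ldots N+1}.
\]
Substituting the plasticity expressions for $(B_{i}/B_{N+1})_{12\ldots N+2}$, $i\neq N+1,N+2$, and keeping $(B_{N+2}/B_{N+1})_{12\ldots N+2}$ as the free ratio, the bracket on the left becomes an affine function of this ratio. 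Solving for $(B_{N+1})_{12\ldots N+2}$ and rearranging via $(B_{N+2}/B_{N+1})_{12\ldots N+2}=(B_{N+2})_{12\ldots N+2}/(B_{N+1})_{12\ldots N+2}$ converts the relation into a linear equation in the two unknowns $(B_{N+1})_{12\ldots N+2},(B_{N+2})_{12\ldots N+2}$; its solution yields $b_{N+1}=(B_{N+1})_{12\ldots N+1}$ and the explicit $a_{N+1}$ stated in the corollary.

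The main obstacle I anticipate is purely bookkeeping: collecting the telescoping combination $\sum_{i=1}^{N}(B_{i}/B_{N+1})_{12\ldots N+1}\rho_{i}-1$ that appears in the numerator of $a_{N+1}$ and verifying that its coefficient matches the denominator $\sum_{i=1}^{N+1}(B_{i}/B_{N+1})_{12\ldots N+1}$ from the sum-invariance equation. Once $(a_{N+1},b_{N+1})$ are identified, the formulas for $(a_{i},b_{i})$ with $i\le N$ follow by direct substitution into the affine identity above and by evaluating both sides at $(B_{N+2})_{12\ldots N+2}=0$, which collapses the $(N+2)$-boundary to the $(N+1)$-boundary and forces $b_{i}=(B_{i})_{12\ldots N+1}$.
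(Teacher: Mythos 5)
Your proposal is correct and follows exactly the route the paper intends: the paper states this corollary as a ``direct consequence'' of Theorem~\ref{theorplastmpol} without writing out the algebra, and your derivation — multiplying the plasticity ratio relations by $(B_{N+1})_{12\ldots N+2}$, using the sum-invariance hypothesis to solve for $(B_{N+1})_{12\ldots N+2}$ as an affine function of $(B_{N+2})_{12\ldots N+2}$, and then back-substituting (with the evaluation at $(B_{N+2})_{12\ldots N+2}=0$ identifying each $b_i$) — is precisely that omitted computation, and it reproduces the stated $(a_i,b_i)$.
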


Suppose that $A_{0}$ is an interior weighted Fermat point for the $N-$simplex
with respect to the non-negative given weights $\{B_{1}(0),B_{2}(0),\ldots,B_{N}(0),B_{N+1}(0)\}$ in $\mathbb{R}^{N}.$
Therefore, the topology of the branches $A_{i}A_{0}$ which meet at $A_{0}$ form a unique floating weighted Fermat tree.
The unique solution of the $N-$INVWF problem for $A_{1}A_{2}\ldots A_{N+1}$ is responsible for the cancellation of the dynamic plasticity of simplexes. We assume that after time $t$ an $(N+2)$ branch $A_{0}A_{N+2}$ starts to grow from $A_{0}$ and the new branch $A_{0}A_{N+2}$ is located inside the cone $C(ray(A_{0}A_{1}),ray(A_{0}A_{3})\ldots, ray(A_{0}A_{N+1}))$ and $A_{0}A_{2}$ is located outside\\ $C(ray(A_{0}A_{1}),ray(A_{0}A_{3})\ldots, ray(A_{0}A_{N+1})).$ We assume that $A_{N+1}$ is upper from the hyperplane formed by the first, second,$\ldots$ and the $(N-1)$th ray and $A_{N},$ $A_{N+2}$ are under this hyperplane.
\begin{theorem}\label{plasticityn2polytopes}
An increase to the weight that corresponds to the $(N+2)th$ ray causes a decrease to the weight that corresponds to the $(N+1)th$ ray and a variation to the weight that corresponds to the $ith$ ray depends on the difference $(B_{i})_{123\ldots N(N+1)}-(B_{i})_{123\ldots N(N+2)},$ for $i=1,2,\ldots N.$
\end{theorem}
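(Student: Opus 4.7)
The plan is to derive Theorem~\ref{plasticityn2polytopes} as an immediate corollary of the linear dependence furnished by Corollary~\ref{dependenceweights}. That result parametrises the admissible weight configurations after growth of the $(N{+}2)$th branch by the single variable $(B_{N+2})_{12\ldots N+2},$ via the affine formula
\[(B_{i})_{12\ldots N+2} \;=\; a_{i}\,(B_{N+2})_{12\ldots N+2} \;+\; (B_{i})_{12\ldots N+1},\qquad i=1,\ldots,N+1,\]
where $(B_{i})_{12\ldots N+1}=(B_{i})_{123\ldots N(N+1)}$ is the pre-growth weight on the original $N$-simplex. Growth of the new branch corresponds to sliding along this one-parameter family, so the signs and sizes of the slopes $a_{i}$ control both the monotonicity and the variation asserted by the theorem.

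To establish that the weight on the $(N{+}1)$th ray decreases, I would re-evaluate the affine identity at the degenerate configuration $(B_{N+1})_{12\ldots N+2}\to 0.$ Under the cone hypothesis --- $A_{N+2}$ inside $C(\mathrm{ray}(A_{0}A_{1}),\mathrm{ray}(A_{0}A_{3}),\ldots,\mathrm{ray}(A_{0}A_{N+1}))$ and $A_{2}$ outside it --- this limit is non-degenerate, and $A_{0}$ remains the floating weighted Fermat point of the reduced $N$-simplex $A_{1}\ldots A_{N}A_{N+2}$ with strictly positive weight $(B_{N+2})_{12\ldots N(N+2)}.$ Substituting $(B_{N+1})_{12\ldots N+2}=0$ into the affine relation with $i=N{+}1$ yields
\[a_{N+1} \;=\; -\,\frac{(B_{N+1})_{12\ldots N+1}}{(B_{N+2})_{12\ldots N(N+2)}} \;<\; 0,\]
because both weights on the right are strictly positive. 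Hence any increment $\Delta(B_{N+2})_{12\ldots N+2}>0$ produces $\Delta(B_{N+1})_{12\ldots N+2}=a_{N+1}\,\Delta(B_{N+2})_{12\ldots N+2}<0,$ which is the first assertion.

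For the variation of the $i$th weight with $1\le i\le N,$ I would repeat the limiting procedure on the $i$th row of the affine system. As $(B_{N+1})_{12\ldots N+2}\to 0$ the weight $(B_{i})_{12\ldots N+2}$ tends to $(B_{i})_{123\ldots N(N+2)},$ and substitution gives
\[a_{i} \;=\; -\,\frac{(B_{i})_{123\ldots N(N+1)} - (B_{i})_{123\ldots N(N+2)}}{(B_{N+2})_{123\ldots N(N+2)}},\]
so that
\[\Delta (B_{i})_{12\ldots N+2} \;=\; a_{i}\,\Delta (B_{N+2})_{12\ldots N+2}\]
is proportional to the difference $(B_{i})_{123\ldots N(N+1)} - (B_{i})_{123\ldots N(N+2)},$ which is precisely the dependence claimed.

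The main obstacle is to legitimise the limit $(B_{N+1})_{12\ldots N+2}\to 0$ inside the family supplied by Corollary~\ref{dependenceweights}: one must verify that the sign indicators $\mathrm{sgn}_{N+1,\cdots}$ and $\mathrm{sgn}_{N+2,\cdots}$ appearing in equations~(\ref{eq1weights})--(\ref{eqnweights}) under the cone hypothesis make both reduced Fermat trees on $A_{1}\ldots A_{N}A_{N+1}$ and $A_{1}\ldots A_{N}A_{N+2}$ of floating, non-absorbing type, so that the ratios in the formulas above are finite and of definite sign. A secondary check is that the isoperimetric constraint $\sum_{12\ldots N+2}B=\sum_{i_{1}\ldots i_{N+1}}B$ of Corollary~\ref{dependenceweights} is preserved throughout the deformation; this follows from Definition~\ref{plasticitympolytopes} and fixes $A_{0}$ as a common weighted Fermat point for every value of the parameter $(B_{N+2})_{12\ldots N+2}.$
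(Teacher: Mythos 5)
Your proposal is correct and follows essentially the same route as the paper: both rest on the affine dependence $(B_{i})_{12\ldots N+2}=a_{i}(B_{N+2})_{12\ldots N+2}+(B_{i})_{12\ldots N+1}$ from Corollary~\ref{dependenceweights}, identify $a_{N+1}=-\,(B_{N+1})_{123\ldots N+1}/(B_{N+2})_{123\ldots N(N+2)}<0,$ and read off that the variation of the $i$th weight is proportional to $(B_{i})_{123\ldots N(N+2)}-(B_{i})_{123\ldots N(N+1)}.$ The only mechanical difference is that you extract the slopes $a_{i}$ by evaluating the affine relation at the degenerate endpoint $(B_{N+1})_{12\ldots N+2}\to 0,$ whereas the paper substitutes the relation for $(B_{N+1})_{123\ldots N+2}$ back into the system (\ref{eq1weights})--(\ref{eqnweights}); both yield the same coefficients, and your endpoint evaluation arguably makes the asserted value of $a_{N+1}$ more transparent than the paper's direct appeal to the formula in (\ref{abnplusone}).
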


\begin{proof}
First, we will show that $a_{N+1}<0.$ Taking into account (\ref{abnplusone}), we obtain that\\ $a_{N+1}=-\frac{(B_{N+1})_{123\ldots N+1}}{(B_{N+2})_{123\ldots N N+2}}<0,$\\ because $(B_{N+1})_{123\ldots N+1}, (B_{N+2})_{123\ldots N N+2}>0.$ Thus, we get:
\begin{eqnarray}\label{eqnplus2nplus1}
(B_{N+1})_{123\ldots N+2}=-\frac{(B_{N+1})_{123\ldots N+1}}{(B_{N+2})_{123\ldots N N+2}}(B_{N+2})_{123\ldots N+2}+(B_{N+1})_{123\ldots N+1}.
\end{eqnarray}

By inserting (\ref{eqnplus2nplus1}) into (\ref{eq1weights}), (\ref{eq2weights}),$\dots$ (\ref{eqnweights}),
we derive that:
\begin{eqnarray}
(B_{i})_{123\ldots N+2}=\frac{(B_{i})_{123\ldots N(N+2)}-(B_{i})_{123\ldots (N+1)}}{(B_{N+2})_{123\ldots N N+2}}(B_{N+2})_{123\ldots N+2}+ \nonumber\\ +(B_{i})_{123\ldots N+1},
\end{eqnarray}
for $i=1,2,\ldots N.$
Therefore, the difference $\delta B_{i}$ of the two positive weights $(B_{i})_{123\ldots N(N+2)}-(B_{i})_{123\ldots (N+1)},$ which corresponds to the $ith$ branch $A_{0}A_{i}$ of the boundary simplexes $A_{1}A_{2}\ldots A_{N}A_{N+2},$ $A_{1}A_{2}\ldots A_{N}A_{N+1},$
yields the following two results:

(a) If $\delta B_{i}>0,$ the weight $(B_{i})_{123\ldots N+2}$ is increased,

(b) If $\delta B_{i}<0,$ the weight $(B_{i})_{123\ldots N+2}$ is decreased.

\end{proof}

\begin{proposition}\label{plasticityn3polytopes}
An increase to the weight that corresponds to the $(N+2)th$ ray and a decrease to the weight that corresponds to the $ith$ ray for $i=1,3,\ldots,N,$ causes a decrease to the weight that corresponds to the $(N+1)th$ ray and an increase to the weights that corresponds to the second ray.
\end{proposition}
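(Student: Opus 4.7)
The plan is to handle the two conclusions separately, leveraging the machinery already in the proof of Theorem~\ref{plasticityn2polytopes}. First, the conclusion that $(B_{N+1})$ decreases is immediate: in that proof it was shown that $a_{N+1} = -(B_{N+1})_{12\ldots N+1}/(B_{N+2})_{12\ldots N(N+2)} < 0$ identically, with no appeal to any hypothesis on the remaining coefficients. Thus as $(B_{N+2})_{12\ldots N+2}$ grows from $0$, the term $a_{N+1}(B_{N+2})_{12\ldots N+2}$ in (\ref{eqnplus2nplus1}) lowers $(B_{N+1})_{12\ldots N+2}$ below its reference value $(B_{N+1})_{12\ldots N+1}$, which is exactly the first half of the statement and requires no new input.

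For the increase of $(B_2)$, I would exploit the isoperimetric constraint $\sum_i B_i = C$. Writing $(B_i)_{12\ldots N+2} = a_i (B_{N+2})_{12\ldots N+2} + b_i$ from Theorem~\ref{plasticityn2polytopes} for $i=1,\ldots,N+1$, summing over $i$, adding $(B_{N+2})_{12\ldots N+2}$, and invoking conservation for both the reference $(N+1)$-simplex and the enlarged configuration yields the balance identity
\[
\sum_{i=1}^{N+1} a_i \;=\; -1.
\]
The hypothesis that $(B_i)$ decreases for $i=1,3,\ldots,N$ translates to $a_i < 0$ for these indices, and combined with $a_{N+1}<0$ this gives
\[
a_2 \;=\; -1 - \sum_{\substack{i=1 \\ i\ne 2}}^{N+1} a_i \;=\; -1 + \sum_{\substack{i=1 \\ i\ne 2}}^{N+1} |a_i|.
\]

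The main obstacle is verifying the strict lower bound $\sum_{i\ne 2}|a_i| > 1$, equivalently $a_2 > 0$, since the conservation law alone only fixes the signed sum of the $a_i$'s and not their magnitudes. This is precisely where the geometric positioning imposed in the preamble of Theorem~\ref{plasticityn2polytopes} enters: $A_{N+2}$ lies inside the cone $C(\mathrm{ray}(A_0A_1),\mathrm{ray}(A_0A_3),\ldots,\mathrm{ray}(A_0A_{N+1}))$ while $A_0A_2$ lies outside it, so $A_2$ and $A_{N+2}$ sit on opposite sides of each reference hyperplane appearing in (\ref{eq1weights})--(\ref{eqnweights}) that separates them, and the signs $sgn_{2,\cdot}$, $sgn_{N+2,\cdot}$ are therefore rigidly coupled. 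Propagating these sign conventions through the ratio expressions (\ref{ratio1n+1})--(\ref{n+2n+1n0}) and recombining with the assumed signs of $a_i$ for $i=1,3,\ldots,N$ should produce the required estimate $\sum_{i\ne 2}|a_i|>1$ and close the argument.
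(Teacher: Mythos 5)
Your first half (the decrease of $(B_{N+1})_{12\ldots N+2}$) agrees with the paper: both arguments rest on the identity $a_{N+1}=-\frac{(B_{N+1})_{123\ldots N+1}}{(B_{N+2})_{123\ldots N (N+2)}}<0$ established in the proof of Theorem~\ref{plasticityn2polytopes}, and this part is fine. For the second half, however, your argument is incomplete at exactly the step you yourself flag as ``the main obstacle.'' You reduce the claim $a_{2}>0$ to the strict inequality $\sum_{i\neq 2}\abs{a_{i}}>1$ via the conservation identity $\sum_{i=1}^{N+1}a_{i}=-1$, and then state that propagating the sign conventions $sgn_{2,\cdot}$, $sgn_{N+2,\cdot}$ through (\ref{ratio1n+1})--(\ref{n+2n+1n0}) ``should produce the required estimate.'' That sentence is a conjecture, not a derivation: the sign data only determine the signs of the individual $a_{i}$, not their magnitudes, and nothing you have written bounds $\sum_{i\neq 2}\abs{a_{i}}$ away from $1$. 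Since $a_{2}=-1+\sum_{i\neq 2}\abs{a_{i}}$ is exactly as likely a priori to be negative as positive, the conclusion does not follow from what you have established. A secondary point: the identity $\sum_{i=1}^{N+1}a_{i}=-1$ itself requires the normalization hypothesis of Corollary~\ref{dependenceweights} (that the total weight $c$ is the same for the $(N+2)$-point configuration and for each of its $(N+1)$-point subconfigurations); this is consistent with the isoperimetric condition (\ref{isoperimetricconditionweights}) but is not among the stated hypotheses of the proposition, so it should be invoked explicitly.

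For comparison, the paper's own proof takes a different and more elementary route that bypasses the conservation law entirely. It works directly with the weighted floating equilibrium condition of Theorem~\ref{theor1}, rearranged as
\[
\vec{X}_{2}+\vec{X}_{N+1}=-\Bigl(\sum_{i=1,\ i\neq 2}^{N}\vec{X}_{i}+\vec{X}_{N+2}\Bigr),
\]
where $\vec{X}_{i}=(B_{i})_{12\ldots N+2}\,\vec{u}(A_{0},A_{i})$: after the perturbation the bracketed sum on the right has decreased in length while $\vec{X}_{N+2}$ has increased, and on the left $\vec{X}_{N+1}$ has decreased, from which the paper concludes that $\abs{\vec{X}_{2}}$, hence $(B_{2})_{12\ldots N+2}$, must increase. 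If you want to salvage your algebraic approach, the missing inequality $\sum_{i\neq 2}\abs{a_{i}}>1$ would have to be extracted from this same balance condition (or from the explicit sine-ratio formulas (\ref{eq1weights})--(\ref{eqnweights}) together with the cone hypothesis on $A_{N+2}$ and $A_{2}$); as written, your proof does not close.
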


\begin{proof}
By applying Theorem~\ref{plasticityn2polytopes} and taking into account that\\ $a_{1}, a_{3},\ldots a_{N}<0,$
the length of the vectors $\vec{X}_{i}=(B_{i})_{12\ldots n+2}(\delta t)\vec{u}(A_{0},A_{i})$ are decreased after time $\delta t.$ We proceed by arranging some vector terms of the weighted floating balancing condition for $\vec{X}_{i}$ (Theorem~\ref{theor1}):
\[\vec{X}_{2}+\vec{X}_{n+1}=-((\sum_{i=1, i\neq 2}^{N}\vec{X}_{i})+ \vec{X}_{N+2}).\]
We observe that after time $\delta t$ the length of the vector $(\sum_{i=1, i\neq 2}^{N}\vec{X}_{i}$ is decreased.
Thus, on the right hand side of the above equation a vector of reduced length is composed with a vector of increased length $\vec{X}_{N+2}$ and on the left hand side a vector of reduced length $\vec{X}_{N+1}$ is composed with $\vec{X}_{2}$ whose length is increased. Therefore, we obtain that $a_{2}>0.$

\end{proof}
Suppose that at time $t=0,$ an evolutionary multitree occurs, such that $(N+1)! \dot k$ weighted Fermat trees correspond to $k$ simplexes, which form a Frechet $N-$multisimplex in $\mathbb{R}^{N},$ with $(N+1)!$ permutation of the weights $\{B_{1}(0),B_{2}(0),\ldots,B_{N}(0),B_{N+1}(0)\}$ in $\mathbb{R}^{N}$ for boundary incongruent $N-$simplexes $(A_{1})_{k}(A_{2})_{k}\ldots (A_{N+1})_{k},$ for $k \le \frac{\frac{1}{2}N(N+1)!}{(N+1)!}.$
After time $t,$ a $(N+2)th$ branch $(A_{0})_{k}(A_{N+2})_{k}$ starts to grow from the weighted Fermat point $(A_{0})_{k}$ and the new branch $(A_{0})_{k}(A_{N+2})_{k}$ is located inside the cone $C(ray((A_{0})_{k}(A_{1})_{k}),ray((A_{0})_{k}(A_{3})_{k})\ldots, ray((A_{0})_{k}(A_{N+1})_{k}))$ and $(A_{0})_{k}(A_{2})_{k}$ is located outside $C(ray(((A_{0}){k}A_{1})_{k}),ray((A_{0})_{k}(A_{3})_{k})\ldots, ray((A_{0})_{k}(A_{N+1})_{k}))$ and let $(A_{N+1})_{k}$ be upper from the hyperplane formed by the first, second,$\ldots$ and the $(N-1)$th ray and $(A_{N})_{k},$ $(A_{N+2)_{k}}$ are under this hyperplane.

\begin{theorem}\label{plasticitymultitreern}
An increase to the weight that corresponds to the $(N+2)th$ ray causes a decrease to the weight that corresponds to the $(N+1)th$ ray and a variation to the weight that corresponds to the $ith$ ray depends on the difference $(B_{i})_{123\ldots N(N+1)}-(B_{i})_{123\ldots N(N+2)},$ such that the geometric structure of the weighted Fermat-Frechet multitree with respect to a boundary $N-$multisimplex, remains the same, for $i=1,2,\ldots N,$ $1\le k \le \frac{\frac{1}{2}N(N+1)!}{(N+1)!}.$
\end{theorem}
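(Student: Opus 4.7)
The plan is to reduce the multitree statement to a component-wise application of Theorem~\ref{plasticityn2polytopes}. First, I would recall that by hypothesis the evolutionary multitree decomposes into a disjoint family of $(N+1)!\cdot k$ individual weighted Fermat trees, indexed by an incongruent boundary $N$-simplex $(A_{1})_{k}(A_{2})_{k}\ldots(A_{N+1})_{k}$ (of which there are up to $\frac{\frac{1}{2}N(N+1)!}{(N+1)!}$) and by a permutation of the weights $\{B_{1}(0),B_{2}(0),\ldots,B_{N+1}(0)\}$ assigned to the vertices. For each such component the angles $\alpha_{i0j}$ at the weighted Fermat point $(A_{0})_{k}$ are determined uniquely by the weights through the $N$-INVWF solution of Theorem~\ref{nplus1inverseRn}, so the geometric structure of each branch is intrinsically fixed by the weight data.

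Second, for each index $k$ I would apply Theorem~\ref{plasticityn2polytopes} to the enlarged configuration consisting of the rays $(A_{0})_{k}(A_{i})_{k}$, $i=1,\ldots,N+1$, together with the newly grown ray $(A_{0})_{k}(A_{N+2})_{k}$ lying inside the cone $C(\mathrm{ray}((A_{0})_{k}(A_{1})_{k}),\mathrm{ray}((A_{0})_{k}(A_{3})_{k}),\ldots,\mathrm{ray}((A_{0})_{k}(A_{N+1})_{k}))$, with the prescribed upper/lower placement of $(A_{N+1})_{k}$ and $(A_{N})_{k},(A_{N+2})_{k}$ relative to the hyperplane spanned by the first $N-1$ rays. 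The theorem then yields, for every fixed $k$, the coefficient $a_{N+1}=-\frac{(B_{N+1})_{k,123\ldots N+1}}{(B_{N+2})_{k,123\ldots N(N+2)}}<0$ in the linear dependence $(B_{N+1})_{k,123\ldots N+2}=a_{N+1}(B_{N+2})_{k,123\ldots N+2}+(B_{N+1})_{k,123\ldots N+1}$, and the companion formulas
\[
(B_{i})_{k,123\ldots N+2}=\frac{(B_{i})_{k,123\ldots N(N+2)}-(B_{i})_{k,123\ldots N+1}}{(B_{N+2})_{k,123\ldots N(N+2)}}(B_{N+2})_{k,123\ldots N+2}+(B_{i})_{k,123\ldots N+1}
\]
for $i=1,\ldots,N$. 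The sign of the variation $\delta (B_{i})_{k,123\ldots N+2}$ therefore tracks the sign of the difference $(B_{i})_{k,123\ldots N(N+2)}-(B_{i})_{k,123\ldots N+1}$, precisely as asserted.

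Third, I would verify that the clause \emph{``the geometric structure of the weighted Fermat-Frechet multitree remains the same''} is automatic: each component's Fermat point $(A_{0})_{k}$ and the angles between its branches are encoded by the weights through Theorem~\ref{nplus1inverseRn}, and the floating balancing condition (\ref{flotcond1n}) is enforced separately on each component. Hence perturbing the weight of the $(N+2)$-nd branch while maintaining balance at every $(A_{0})_{k}$ leaves the topology and angular pattern of every component unchanged, so the union retains its multitree structure.

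The main obstacle, in my view, is purely bookkeeping: one must track the $(N+1)!\cdot k$ simultaneous sign and orientation conventions $sgn_{\cdot,\cdot}$ coming from the normals $\vec{n}_{0,23\ldots N},\vec{n}_{0,13\ldots N},\ldots,\vec{n}_{0,12\ldots(N-1)}$ consistently across all permutations of weights and all incongruent boundary simplexes. Since, however, the derivation of (\ref{dynamicplasticity2})--(\ref{dynamicplasticity1}) is entirely local at the weighted Fermat point of a single simplex and does not interact across different components, the global conclusion follows by applying the single-simplex plasticity theorem independently to each of the $(N+1)!\cdot k$ trees forming the multitree.
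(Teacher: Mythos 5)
Your proposal is correct and follows essentially the same route as the paper: the paper's own proof is a single sentence that applies Theorem~\ref{plasticityn2polytopes} component-by-component to each of the weighted Fermat trees making up the multitree. Your version simply spells out the linear dependence $(B_{i})_{k,123\ldots N+2}$ on $(B_{N+2})_{k,123\ldots N+2}$ and the sign of $a_{N+1}$ explicitly, which the paper leaves implicit by citation.
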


\begin{proof}
By applying Theorem~\ref{plasticityn2polytopes} starting from a weighted Fermat-Frechet multitree with respect to a boundary $N-$multisimplex in $\mathbb{R}^{N},$ we obtain the plasticity of a weighted Fermat-Frechet-multitree by adding the ray $(A_{0})_{k}(A_{N+2})_{k},$ for $i=1,2,\ldots N,$ $1\le k \le \frac{\frac{1}{2}N(N+1)!}{(N+1)!}.$
\end{proof}

------------------------------
\section{The weighted Fermat-Steiner-Frechet multitree for a given tentuple of positive real numbers determining the edge lengths of incongruent $4-$simplexes in $\mathbb{R}^{4}$}
In this section, we deal with the solution (multitree) of the weighted Fermat-Steiner-Frechet problem (P(Fermat-Steiner-Frechet)) for a given tentuple of positive real numbers determining incongruent $4-$simplexes in $\mathbb{R}^{4},$ by inserting three equality constraints derived by three independent solutions for three variable weighted Fermat problems for the Frechet $4-$multisimplex derived by incongruent boundary $4-$simplexes in $\mathbb{R}^{4},$ which correspond to the same tentuple of positive real numbers (edge lengths) and two equality constraints derived by two different expressions of the line segments connecting the three weighted Fermat-Steiner points. The detection of the weighted Fermat-Steiner Frechet multitrees is achieved by applying the Lagrange multiplier rule. By applying a Lagrange program to detect unweighted Fermat-Frechet multitree for a given tentuple of edge lengths determining 30.240 incongruent $4-$ simplexes (Frechet $4-$multisimplex) in $\mathbb{R}^{4},$ by using Dekster Wilker tenttuples, we derive an interesting result of seeking unweighted Fermat-Frechet multitrees with three equally weighted Fermat-Steiner points inside the Frechet $4-$multisimplex. This result may provide an approach to detect the most natural of six consecutive natural numbers (Herzog sextuples) for $N\ge 30$ and it is achieved by seeking an upper bound for these three equal weights, which yield a global weighted Fermat-Steiner tree of minimum length for the boundary tetrahedron having the maximum volume among the 30.240 incongruent $4-$simplexes in $\mathbb{R}^{4}.$

Let $\{A_{1},A_{2},A_{3},A_{4},A_{5}\}$ be the vertices of a $4-$simplex in $\mathbb{R}^{4}$ and $A_{0}$ be a point inside $A_{1}A_{2}A_{3}A_{4}A_{5}.$

We denote by $A_{0,1234}$ the orthogonal projection of $A_{0}$ to the hyperplane defined by the tetrahedron $A_{1}A_{2}A_{3}A_{4},$ by $A_{0,123}$ the orthogonal projection of $A_{0}$ to the plane defined by $\triangle A_{1}A_{2}A_{3},$ by $a_{ij}$ the length of the line segment $A_{i}A_{j},$ for $i,j=1,2,3,4,5$ by $h_{0,123j}$ the length of $A_{0}A_{0,123j},$ for $j=4,5,$ by $h_{0,123}$ the length of $A_{0}A_{0,123},$ by $h_{0,12}$ the length of $A_{0}A_{0,12}.$

We set $\beta\equiv \angle A_{0}A_{0,123}A_{0,1234},$ $\alpha\equiv \angle A_{0}A_{0,12}A_{0,123}$ $x_{i}\equiv A_{0,1234}A_{i},$ $y_{i}\equiv A_{0,1235}A_{i},$ for $i=1,2,3,4,5$ $h_{(0,123k),(0,12)}\equiv A_{0,123k}A_{0,12}$ $z_{j}\equiv A_{0,123}A_{j}$ for $k=4,5$ $j=1,2,3.$

\begin{theorem}[Generalized cosine law in $\mathbb{R}^{4}$] \label{calculationa0405a01a02a03beta}

The line segment $a_{40},$ $a_{50}$ depend on $a_{10}, a_{20}, a_{30}$ and $\beta$ in $\mathbb{R}^{4}:$

\begin{equation}\label{a04a01a02a03beta}
a_{40}^{2}=h_{0,1234}^{2}(a_{10},a_{20},a_{30},\beta)+ x_{4}^{2}(a_{10},a_{20},a_{30},\beta)
\end{equation}

\begin{equation}\label{a05a01a02a03beta}
a_{50}^{2}=h_{0,1235}^{2}(a_{10},a_{20},a_{30},\beta)+ y_{5}^{2}(a_{10},a_{20},a_{30},\beta)
\end{equation}


\end{theorem}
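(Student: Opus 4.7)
The plan is to extend the vector-decomposition/successive-projection argument used in Lemma~\ref{calculationa0304a01a03alpha} (the cosine law in $\mathbb{R}^{3}$) one dimension up. The basic observation is that since $A_{0}A_{0,1234}$ is perpendicular to the hyperplane defined by $A_{1}A_{2}A_{3}A_{4},$ the Pythagorean identity in the hyperplane gives immediately
\[
a_{40}^{2}=h_{0,1234}^{2}+x_{4}^{2},
\]
so the task reduces to expressing both $h_{0,1234}$ and $x_{4}$ as functions of $a_{10},a_{20},a_{30},\beta$ (with the Frechet tentuple of edge lengths treated as fixed parameters throughout). An analogous decomposition in the hyperplane $A_{1}A_{2}A_{3}A_{5}$ will handle $a_{50}^{2}$.

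The first factor $h_{0,1234}$ is obtained from the right triangle $A_{0}A_{0,123}A_{0,1234},$ which is right-angled at $A_{0,1234}$ because $A_{0}A_{0,1234}$ is orthogonal to the whole hyperplane containing the segment $A_{0,123}A_{0,1234}.$ Hence $h_{0,1234}=h_{0,123}\sin\beta$ and $A_{0,123}A_{0,1234}=h_{0,123}\cos\beta.$ The $3$-dimensional height $h_{0,123}$ is in turn determined by $a_{10},a_{20},a_{30}$ together with the given face edges $a_{12},a_{13},a_{23}$ (for instance, by the intermediate step $h_{0,123}=h_{0,12}\sin\alpha$ of Lemma~\ref{calculationa0304a01a03alpha}, or equivalently by a Cayley--Menger evaluation of the tetrahedron $A_{0}A_{1}A_{2}A_{3}$), so $h_{0,1234}$ has the desired functional dependence.

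For the second factor $x_{4},$ I would work entirely inside the hyperplane $A_{1}A_{2}A_{3}A_{4}.$ The plane of the face $A_{1}A_{2}A_{3}$ is a $2$-flat with a well-defined unit normal $\vec{n}_{123}$ in this hyperplane, and $A_{4}$ sits at a known signed distance along $\vec{n}_{123}$ together with a known projected position in the plane of $A_{1}A_{2}A_{3}$ (both determined by the Frechet edge lengths). Now $A_{0,1234}$ also decomposes within the hyperplane as $A_{0,123}+h_{0,123}\cos\beta\,\vec{n}_{123},$ where $A_{0,123}$ is fixed once $a_{10},a_{20},a_{30}$ are fixed. Applying Pythagoras inside the hyperplane to the components of $\vec{A_{0,1234}A_{4}}$ parallel and perpendicular to the plane of $A_{1}A_{2}A_{3}$ then yields $x_{4}$ as an explicit function of $a_{10},a_{20},a_{30},\beta$ and the edge data, completing the verification of \eqref{a04a01a02a03beta}.

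Equation \eqref{a05a01a02a03beta} is obtained by repeating the argument with $A_{1}A_{2}A_{3}A_{5}$ in place of $A_{1}A_{2}A_{3}A_{4}$: one lets $\beta'$ denote the analogous dihedral angle $\angle A_{0}A_{0,123}A_{0,1235}$ and proceeds identically to produce $h_{0,1235}=h_{0,123}\sin\beta'$ and an explicit $y_{5};$ the dependence on a single $\beta$ in the statement reflects that $A_{5}$ lies on the opposite side of the $3$-flat $A_{0}A_{1}A_{2}A_{3}$ from $A_{4}$, so $\beta'$ is determined by $\beta$ and the given dihedral structure of the Frechet $4$-simplex. The main obstacle is precisely this sign/bookkeeping step for $x_{4}$ and $y_{5}$: one must fix the orientation of $\vec{n}_{123}$ inside the hyperplane consistently with which side of the face $A_{1}A_{2}A_{3}$ the vertex $A_{4}$ (respectively $A_{5}$) lies on, playing the role that $\cos(\alpha_{g_{i}}-\alpha)$ played in the $\mathbb{R}^{3}$ lemma.
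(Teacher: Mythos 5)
Your proposal is correct and follows essentially the same route as the paper: the Pythagorean split $a_{40}^{2}=h_{0,1234}^{2}+x_{4}^{2}$ in the hyperplane, the Schlafli-angle relation $h_{0,1234}=h_{0,123}\sin\beta$ from the right triangle $A_{0}A_{0,123}A_{0,1234}$, and a reduction of $x_{4}$ to a three-dimensional computation inside the hyperplane $A_{1}A_{2}A_{3}A_{4}$ (the paper does this last step by substituting $A_{0}\to A_{0,1234}$ into its $\mathbb{R}^{3}$ generalized cosine law relative to the edge $A_{1}A_{2}$, whereas you decompose directly relative to the $2$-flat $A_{1}A_{2}A_{3}$ — an equivalent bookkeeping). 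Your explicit treatment of the angle $\beta'$ for the $A_{5}$ case, and of the orientation of $\vec{n}_{123}$, is if anything more careful than the paper's ``similar process'' remark.
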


\begin{figure}\label{figg2}
\centering
\includegraphics[scale=0.80]{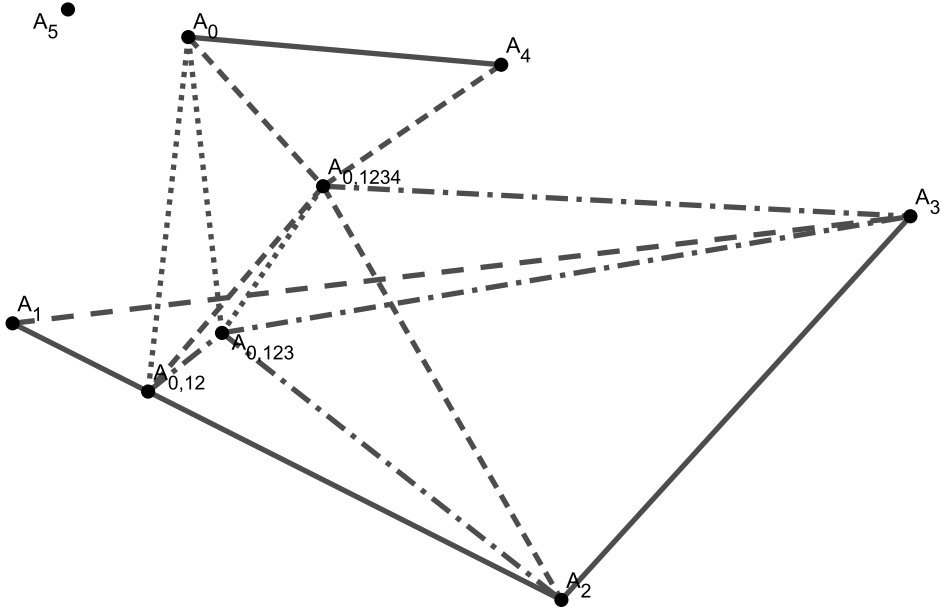}
\caption{Distance characterization of a weighted Fermat-Steiner-Frechet multitree in $\mathbb{R}^{4}$} \label{figg2}
\end{figure}

\begin{proof}
From $\triangle A_{0}A_{4}A_{0,1234},$ $\triangle A_{0}A_{2}A_{0,1234},$ we get, respectively:
\begin{equation}\label{a4r41}
a_{40}^{2}=h_{0,1234}^2+x_{4}^{2}.
\end{equation}

\begin{equation}\label{a4r42}
a_{20}^{2}=h_{0,1234}^2+x_{2}^{2}.
\end{equation}

From $\triangle A_{0}A_{0,1234}A_{0123},$ we obtain a relation for Schlafli's angle $\beta :$

\begin{equation}\label{a4r43}
h_{0,1234}=h_{0,123}\sin\beta
\end{equation}

where
\begin{equation}\label{a4r44}
h_{0,123}=h_{0,12}\sin\alpha
\end{equation}

By substituting $A_{0}\to A_{0,1234}$ and the notations $x_{i}\equiv A_{0,1234}A_{i}$ in (\ref{eq:deral2}) from Lemma~\ref{calculationa0304a01a03alpha} and taking into account Lemma~\ref{a04a01a02a03r3}, we derive that:

\begin{equation}\label{x4x2beta}
x_{4}^{2}=x_{2}^{2}+a_{24}^{2}-2a_{24}[ \sqrt{x_{2}^{2}-h_{(0,1234),(0,12)}^{2}}\cos\alpha_{124}+h_{(0,1234),(0,12)}\sin\alpha_{124}\cos (\alpha_{g_{4}}-\alpha^{\prime}).
\end{equation}

From $\triangle A_{0}A_{0,12}A_{0,1234},$ $\triangle A_{0}A_{0,12}A_{2},$ we get:

\begin{equation}\label{height01234012}
\sqrt{x_{2}^{2}-h_{(0,1234),(0,12)}^{2}}=\sqrt{a_{20}^{2}-h_{0,12}^{2}},
\end{equation}

where

\[h_{0,12}=h_{0,12}(a_{01},a_{02};a_{12})=\frac{a_{01}a_{02}}{a_{12}}\sqrt{1-\left(\frac{a_{01}^{2}+a_{02}^{2}-a_{12}^2}{2a_{01}a_{02}}
\right)^{2}}.\]

Hence, (\ref{height01234012}) depends on $a_{10},$ $a_{20}.$

From $\triangle A_{0}A_{0,12}A_{0,1234},$ we get:

\begin{equation}\label{height01234012n}
h_{(0,1234),(0,12)}=\sqrt{h_{0,12}^{2}-h_{0,123}^{2}}.
\end{equation}

By substituting (\ref{height01234012}), (\ref{x4x2beta}), (\ref{height01234012n}) in (\ref{a4r41}), we obtain:

\begin{align}\label{a4r455}
a_{40}^{2}=a_{20}^{2}+a_{24}^{2}-2a_{24}[\sqrt{a_{20}^{2}-h_{0,12}^{2}}\cos\alpha_{124}+\nonumber\\ +\sqrt{h_{0,12}^{2}-h_{0,123}^{2}}\sin \alpha_{124}\cos(\alpha_{g_{4}}-\alpha^{\prime})]
\end{align}

By substituting $A_{0}\to A_{0,1234}$ and the notations $x_{i}\equiv A_{0,1234}A_{i}$ in (\ref{eq:deral2}) for $i=3$ from Lemma~\ref{calculationa0304a01a03alpha}, we derive that:

\[\alpha^{\prime}=\arccos\left(
\frac{\left(\frac{x_{2}^2+a_{23}^2-x_{3}^2}{2 a_{23}}
\right)-\sqrt{x_{2}^2-h_{(0,1234),(0,12)}^2}\cos\alpha_{123}}{h_{(0,1234),(0,12)}\sin\alpha_{123}}
\right),\]

or

\[\alpha^{\prime}=\arccos\left(
\frac{\left(\frac{x_{2}^2+a_{23}^2-x_{3}^2}{2 a_{23}}
\right)-\sqrt{a_{20}^2-h_{0,12}^2}\cos\alpha_{123}}{\sqrt{h_{0,12}^{2}-h_{0,123}^{2}}\sin\alpha_{123}}
\right).\]

By solving (\ref{eq:deral2}) taken from Lemma~\ref{calculationa0304a01a03alpha} with respect to $\alpha,$ we get:

\begin{equation}\label{alphanew}
\alpha=\arccos\left(
\frac{\left(\frac{a_{02}^2+a_{23}^2-a_{03}^2}{2 a_{23}}
\right)-\sqrt{a_{02}^2-h_{0,12}^2}\cos\alpha_{123}}{h_{0,12}\sin\alpha_{123}}
\right).
\end{equation}


By replacing (\ref{a4r44}), (\ref{alphanew}) in (\ref{a4r43}), we have:

\begin{equation}\label{h0123a01a02a03}
h_{0,123}= \frac{a_{01}a_{02}}{a_{12}}\sqrt{1-\left(\frac{a_{01}^{2}+a_{02}^{2}-a_{12}^2}{2a_{01}a_{02}}
\right)^{2}}\nonumber\\ \sin (\arccos\left(
\frac{\left(\frac{a_{02}^2+a_{23}^2-a_{03}^2}{2 a_{23}}
\right)-\sqrt{a_{02}^2-h_{0,12}^2}\cos\alpha_{123}}{h_{0,12}\sin\alpha_{123}}
\right)),
\end{equation}

\begin{align}\label{h01234}
h_{0,1234}= \frac{a_{01}a_{02}}{a_{12}}\sqrt{1-\left(\frac{a_{01}^{2}+a_{02}^{2}-a_{12}^2}{2a_{01}a_{02}}
\right)^{2}}\nonumber\\ \sin (\arccos\left(
\frac{\left(\frac{a_{02}^2+a_{23}^2-a_{03}^2}{2 a_{23}}
\right)-\sqrt{a_{02}^2-h_{0,12}^2}\cos\alpha_{123}}{h_{0,12}\sin\alpha_{123}}
\right))\sin\beta.
\end{align}

Thus, (\ref{h0123a01a02a03}), (\ref{h01234}) yield:

\[h_{0,123}=h_{0,123}(a_{10},a_{20},a_{30}),\]

\[h_{0,1234}=h_{0,1234}(a_{10},a_{20},a_{30},\beta)\]
and $x_{i}=\sqrt{a_{i0}^2 - h_{0,1234}((a_{10},a_{20},a_{30},\beta))^2}=x_{i}(a_{10},a_{20},a_{30},\beta),$ for $i=2,3.$

Therefore, we derive from (\ref{a04a01a02a03beta}) that $a_{40}$ depends on $a_{10},$ $a_{20},$ $a_{30}$ and $\beta.$

By following a similar process for $i=5,$ we derive from (\ref{a05a01a02a03beta}) that $a_{50}$ depend on $a_{10},$ $a_{20},$ $a_{30}$ and $\beta.$

\end{proof}

Let $A_{0,1},$ $A_{0,2},$ $A_{0,3}$ three points inside the $4-$simplex $A_{1}A_{2}A_{3}A_{4}A_{5}$ in $\mathbb{R}^{4}.$
We denote by $a_{(0,i),j}$ the length of the line segment $A_{0,i}A_{j},$ by $A_{(0,i),jkl},$ $A_{(0,i),jklm}$ the orthogonal projections of $A_{0,i},$ with respect to the plane defined by $\triangle A_{j}A_{k}A_{k}$ and the hyperplane defined by $A_{j}A_{k}A_{l}A_{m},$ respectively,  for $i=1,2,3,$ $j,k,l,m=1,2,3,4,5,$ and we set  $\beta_{1}\equiv \angle A_{0,1}A_{(0,1),123}A_{(0,1),1234},$ $\beta_{2}\equiv \angle A_{0,2}A_{(0,2),234}A_{(0,2),2345},$ $\beta_{3}\equiv \angle A_{0,3}A_{(0,3),234}A_{(0,1),2345},$ where $\beta_{i}$ are the Schafli angles for $i=1,2,3.$
The weighted Fermat-Steiner-Frechet problem for a given tentuple of edge lengths determining incongruent $4-$simplexes in $\mathbb{R}^{4},$ states that:

\begin{problem}[The weighted Fermat-Steiner Frechet in $\mathbb{R}^{4}$]\label{FermatSteinerFrechettetrahedronr3}
Given an octuple of weights $\{b_{1},b_{2},b_{3},b_{4},b_{5},b_{ST},b_{ST},b_{ST}\},$ and a given tentuple of positive real numbers (edge lengths) $\{a_{ij}\}\},$ determining a Frechet $4-$simplex $F(A_{1}A_{2}A_{3}A_{4}A_{5}),$   find the position of $A_{0,1}$ and / or
$A_{0,2}$ and/or $A_{0,3}$  with given
weights $b_{ST}$ in $A_{0,1},$  $b_{ST}$ in $A_{0,2},$ $b_{ST}$ in $A_{0,3}$ such that
\begin{align}\label{equat1L0r4}
f_{0}(a_{(0,1),1},a_{(0,1),2},a_{(0,1),(0,2)},a_{(0,2),3},a_{(0,2),(0,3)},a_{(0,3),4},a_{(0,3),5})=\nonumber\\
b_{1}a_{(0,1),1}+b_{2}a_{(0,1),2}+b_{3}a_{(0,2),3}+b_{4}a_{(0,3),4}+b_{5}a_{(0,3),5}+\nonumber\\
+b_{ST}(a_{(0,1),(0,2)}+a_{(0,2),(0,3)})\to min.
\end{align}
\end{problem}
\begin{definition}[A non degenerate weighted Fermat-Steiner tree for $\{A_{1}A_{2}A_{3}A_{4}A_{5}\}$]

A non degenerate weighted Fermat-Steiner tree consists of the line segments\\
 $\{A_{0,1}A_{1}, A_{0,1}A_{2},A_{0,1}A_{0,2},A_{0,2}A_{3},A_{0,2}A_{0,3}, A_{0,3}A_{4},A_{0,3}A_{5}\}$ with corresponding weights
 $\{b_{1},b_{2},b_{ST},b_{3},b_{ST},b_{4},b_{5},\}$ such that the weighted Fermat-Steiner points $A_{0,i}$ have degree (connections) three.

\end{definition}

\begin{figure}\label{figg3}
\centering
\includegraphics[scale=0.80]{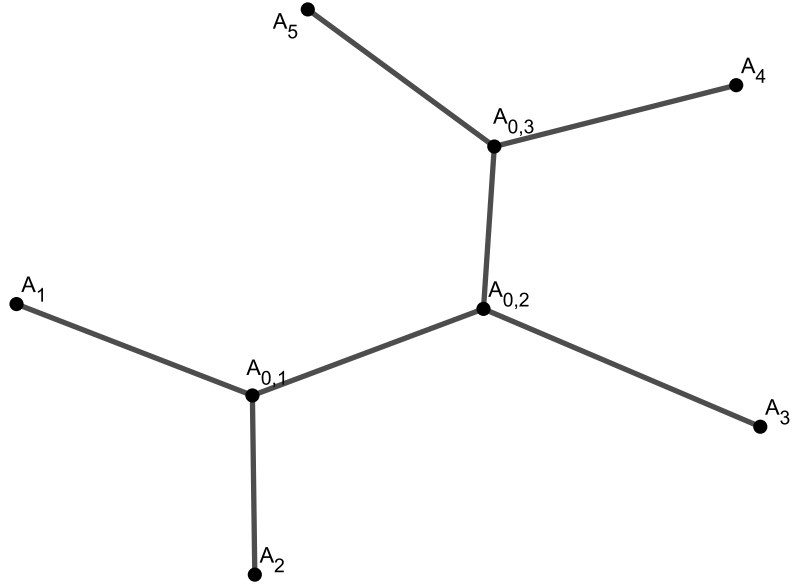}
\caption{A weighted Fermat-Steiner tree for $A_{1}A_{2}A_{3}A_{4}A_{5}$ in $\mathbb{R}^{4}$} \label{figg3}
\end{figure}

We will construct the Lagrangian function \[\mathcal{L}(\tilde{x},\tilde{\lambda})=\sum_{i=0}^{13}\lambda_{i}f_{i}(\tilde{x}),\]
where the point
\[\tilde{x}=\{x_{1},\ldots,x_{24}\}=\] \[=\{a_{(0,1),1},a_{(0,1),2},a_{(0,1),3},\beta_{1},
a_{(0,2),3},a_{(0,2),4},a_{(0,5),3},\beta_{2}, a_{(0,3),3},a_{(0,3),4},a_{(0,3),5},\beta_{3},\]\[ w_{(0,1),1},\ldots, w_{(0,1),4},w_{(0,2),2},\ldots, w_{(0,2),5},w_{(0,3),2},\ldots, w_{(0,3),5}\}\in \mathbb{R}^{24}\] is inside the parallelepiped $\Pi(p_{1},q_{1};\ldots ;p_{24},q_{24}),$
where $p_{i}<x_{i}<q_{i},$ for $i=1,2,\cdots, 29$ and the Lagrange multiplication vector is given by:

\[\tilde{\lambda}=\{\lambda_{0},\lambda_{1},\ldots,\lambda_{13}\}.\]

We extend the weighted Fermat-Steiner-Frechet problem (P(Fermat-Steiner-Frechet)) in $\mathbb{R}^{4},$ by inserting 12 equality constraints derived by three independent solutions for three new weighted Fermat problems for $A_{1}A_{2}A_{3}A_{4}A_{5}$ in $\mathbb{R}^{4},$ which give a connection with the initial weighted Fermat-Steiner objective function and one equality constraint derived by two different expressions of $a_{(0,1),(0,2)}.$

We note that the volume of an $(N-1)$-simplex $A_{1}A_{2}\cdots A_{N}$ in $\mathbb{R}^{N-1}$ is given by the Caley-Menger determinant in terms of edge lengths (\cite[(5.1), p.~125]{Sommerville:58}):
\begin{multline*}
   \operatorname{Vol}(A_{1}A_{2}\cdots A_{N})^{2} = \\ \frac{1}{(-1)^{N} 2^{N-1} ((N-1)!)^{2}} \begin{vmatrix}
  0 & 1 & 1 & \cdots & 1 & 1 \\
  1 & 0 & a_{12}^{2} & \cdots & a_{1(N-1)}^{2} & a_{1N}^{2} \\
  1 & a_{21}^{2} & 0 & \cdots & a_{2(N-1)}^{2} & a_{2N}^{2} \\
  . & . & . & . & . & . \\
  1 & a_{N1}^{2} & a_{N2}^{2} & \cdots & a_{N(N-1)}^{2} & 0
\end{vmatrix}.
\end{multline*}

\begin{problem}[The weighted Fermat-Steiner-Frechet (P(Fermat-Steiner-Frechet)) problem in $\mathbb{R}^{4}$ with equality constraints]
\begin{equation*}
\begin{aligned}
& & f_{0}(\tilde{x})\to min, \\
& & f_{i}(\tilde{x}) = 0, \; i = 1, \ldots,13
\end{aligned}
\end{equation*}

\begin{align}\label{fundzeror4}
f_{0}(\tilde{x})=b_{1}a_{(0,1),1}+b_{2}a_{(0,1),2}+b_{3}a_{(0,2),3}+b_{4}a_{(0,3),4}+\nonumber\\+b_{5}a_{(0,3),5}
+b_{ST}(a_{(0,1),(0,2)}(a_{(0,1),1},a_{(0,1),2},a_{(0,2),3},a_{(0,2),4},a_{(0,2),5},\beta_{2})+\nonumber\\
+a_{(0,2),(0,3)}(a_{(0,3),4},a_{(0,3),5},a_{(0,2),4}),a_{(0,2),5}),
\end{align}

\begin{align}\label{fundoner4}
f_{1}(\tilde{x})=\frac{w_{(0,1),1}}{a_{(0,1),1}\operatorname{Vol}(A_{0,1}A_{2}A_{3}A_{4}A_{5})}-\nonumber\\ -\frac{1-\sum_{i=1}^{4}w_{(0,1),i}}{a_{(0,1),5}(a_{(0,1),1},a_{(0,1),2},a_{(0,1),3},\beta_{1})\operatorname{Vol}(A_{0}A_{1}A_{2}A_{3}A_{4})},
\end{align}

\begin{align}\label{fundtwor4}
f_{2}(\tilde{x})=\frac{w_{(0,1),2}}{a_{(0,1),2}\operatorname{Vol}(A_{1}A_{0,1}A_{3}A_{4}A_{5})}-\nonumber\\ -\frac{1-\sum_{i=1}^{4}w_{(0,1),i}}{a_{(0,1),5}(a_{(0,1),1},a_{(0,1),2},a_{(0,1),3},\beta_{1})\operatorname{Vol}(A_{0,1}A_{1}A_{2}A_{3}A_{4})},
\end{align}

\begin{align}\label{fundthirdr4}
f_{3}(\tilde{x})=\frac{w_{(0,1),3}}{a_{(0,1),3}\operatorname{Vol}(A_{1}A_{2}A_{0,1}A_{4}A_{5})}-\nonumber\\ -\frac{1-\sum_{i=1}^{4}w_{(0,1),i}}{a_{(0,1),5}(a_{(0,1),1},a_{(0,1),2},a_{(0,1),3},\beta_{1})\operatorname{Vol}(A_{0,1}A_{1}A_{2}A_{3}A_{4})},
\end{align}

\begin{align}\label{fundfourthr4}
f_{4}(\tilde{x})=\frac{w_{(0,1),4}}{a_{(0,1),4}(a_{(0,1),1},a_{(0,1),2},a_{(0,1),3},\beta_{1})\operatorname{Vol}(A_{1}A_{2}A_{3}A_{0,1}A_{5})}-\nonumber\\ -\frac{1-\sum_{i=1}^{4}w_{(0,1),i}}{a_{(0,1),5}(a_{(0,1),1},a_{(0,1),2},a_{(0,1),3},\beta_{1})\operatorname{Vol}(A_{0,1}A_{1}A_{2}A_{3}A_{4})},
\end{align}


\begin{align}\label{fundfiver4}
f_{5}(\tilde{x})=\frac{w_{(0,2),5}}{a_{(0,2),5}\operatorname{Vol}(A_{1}A_{2}A_{3}A_{4}A_{0,2})}-\nonumber\\ -\frac{1-\sum_{i=2}^{5}w_{(0,2),i}}{a_{(0,2),1}(a_{(0,2),3},a_{(0,2),4},a_{(0,2),5},\beta_{2})\operatorname{Vol}(A_{0,2}A_{2}A_{3}A_{4}A_{5})},
\end{align}

\begin{align}\label{fundsixr4}
f_{6}(\tilde{x})=\frac{w_{(0,2),4}}{a_{(0,2),4}\operatorname{Vol}(A_{1}A_{2}A_{3}A_{0,2}A_{5})}-\nonumber\\ -\frac{1-\sum_{i=2}^{5}w_{(0,2),i}}{a_{(0,2),1}(a_{(0,2),3},a_{(0,2),4},a_{(0,2),5},\beta_{2})\operatorname{Vol}(A_{0,2}A_{2}A_{3}A_{4}A_{5})},
\end{align}

\begin{align}\label{fundsevenr4}
f_{7}(\tilde{x})=\frac{w_{(0,2),5}}{a_{(0,2),3}\operatorname{Vol}(A_{1}A_{2}A_{0,2}A_{4}A_{5})}-\nonumber\\ -\frac{1-\sum_{i=2}^{5}w_{(0,2),i}}{a_{(0,2),1}(a_{(0,2),3},a_{(0,2),4},a_{(0,2),5},\beta_{2})\operatorname{Vol}(A_{0,2}A_{2}A_{3}A_{4}A_{5})},
\end{align}

\begin{align}\label{fundeightr4}
f_{8}(\tilde{x})=\frac{w_{(0,2),5}}{a_{(0,2),2}(a_{(0,2),3},a_{(0,2),4},a_{(0,2),5},\beta_{2})\operatorname{Vol}(A_{1}A_{0,2}A_{3}A_{4}A_{0,2})}-\nonumber\\ -\frac{1-\sum_{i=2}^{5}w_{(0,2),i}}{a_{(0,2),1}(a_{(0,2),3},a_{(0,2),4},a_{(0,2),5},\beta_{2})\operatorname{Vol}(A_{0,2}A_{2}A_{3}A_{4}A_{5})},
\end{align}
\begin{align}\label{fund9r4}
f_{9}(\tilde{x})=\frac{w_{(0,3),5}}{a_{(0,3),5}\operatorname{Vol}(A_{1}A_{2}A_{3}A_{4}A_{0,3})}-\nonumber\\ -\frac{1-\sum_{i=2}^{5}w_{(0,3),i}}{a_{(0,3),1}(a_{(0,3),3},a_{(0,3),4},a_{(0,3),5},\beta_{3})\operatorname{Vol}(A_{0,3}A_{2}A_{3}A_{4}A_{5})},
\end{align}

\begin{align}\label{fund10r4}
f_{10}(\tilde{x})=\frac{w_{(0,3),4}}{a_{(0,3),4}\operatorname{Vol}(A_{1}A_{2}A_{3}A_{0,3}A_{5})}-\nonumber\\ -\frac{1-\sum_{i=2}^{5}w_{(0,3),i}}{a_{(0,3),1}(a_{(0,3),3},a_{(0,3),4},a_{(0,3),5},\beta_{3})\operatorname{Vol}(A_{0,3}A_{2}A_{3}A_{4}A_{5})},
\end{align}

\begin{align}\label{fund11r4}
f_{11}(\tilde{x})=\frac{w_{(0,3),5}}{a_{(0,3),3}\operatorname{Vol}(A_{1}A_{2}A_{0,2}A_{4}A_{5})}-\nonumber\\ -\frac{1-\sum_{i=2}^{5}w_{(0,3),i}}{a_{(0,3),1}(a_{(0,3),3},a_{(0,3),4},a_{(0,3),5},\beta_{3})\operatorname{Vol}(A_{0,3}A_{2}A_{3}A_{4}A_{5})},
\end{align}

\begin{align}\label{fund12r4}
f_{12}(\tilde{x})=\frac{w_{(0,3),5}}{a_{(0,3),2}(a_{(0,3),3},a_{(0,3),4},a_{(0,3,5},\beta_{3})\operatorname{Vol}(A_{1}A_{0,3}A_{3}A_{4}A_{0,2})}-\nonumber\\ -\frac{1-\sum_{i=2}^{5}w_{(0,3),i}}{a_{(0,3),1}(a_{(0,3),3},a_{(0,3),4},a_{(0,3),5},\beta_{3})\operatorname{Vol}(A_{0,3}A_{2}A_{3}A_{4}A_{5})},
\end{align}

\begin{align}\label{fund13r4}
f_{13}(\tilde{x})=a_{(0,1),(0,2)}(a_{(0,1),1},a_{(0,1),2},a_{(0,2),3},a_{(0,2),4},a_{(0,2),5},\beta_{2};b_{1};b_{2};b_{ST})-\nonumber\\-
a_{(0,1),(0,2)}(a_{(0,1),3},a_{(0,2),3},a_{(0,2),3},a_{(0,2),4},a_{(0,2),5},a_{(0,3),3},a_{(0,3),4},a_{(0,3),5},\beta_{2};b_{3};b_{ST}),
\end{align}

\end{problem}

We note that:

$\bullet$ (\ref{fundzeror4}) is the objective function of the weighted Fermat-Steiner problem for $A_{1}A_{2}A_{3}A_{4}A_{5}$ in $\mathbb{R}^{4}$ having three weighted Fermat-Steiner points $A_{0,i},$ such that:

\[\alpha_{1(0,1)2}=\arccos(\frac{b_{ST}^2 -b_{1}^2-b_{2}^2}{2 b_{1}b_{2}}),\]
\[\alpha_{1(0,1)(0,2)}=\arccos(\frac{b_{2}^2 -b_{1}^2-b_{ST}^2}{2 b_{1}b_{ST}}),\]
\[\alpha_{2(0,1)(0,2}=\arccos(\frac{b_{1}^2 -b_{2}^2-b_{ST}^2}{2 b_{2}b_{ST}}),\]

$A_{0,1}$ is the weight Fermat-Steiner point with respect to the boundary triangle $\triangle A_{1}A_{2}A_{0,2},$

\[\alpha_{(0,1)(0,2)3}=\arccos(\frac{-b_{3}}{2 b_{ST}}),\]
\[\alpha_{(0,1)(0,2)(0,3)}=\arccos(\frac{b_{3}^2 -2b_{ST}^2}{2 b_{ST}^2}),\]
\[\alpha_{3(0,2)(0,3}=\arccos(\frac{-b_{3}}{2 b_{ST}}),\]

$A_{0,2}$ is the weight Fermat-Steiner point with respect to the boundary triangle $\triangle A_{0,1}A_{3}A_{0,3},$

\[\alpha_{4(0,3)5}=\arccos(\frac{b_{ST}^2 -b_{4}^2-b_{5}^2}{2 b_{4}b_{5}}),\]
\[\alpha_{4(0,3)(0,2)}=\arccos(\frac{b_{5}^2 -b_{4}^2-b_{ST}^2}{2 b_{4}b_{ST}}),\]
\[\alpha_{5(0,3)(0,2}=\arccos(\frac{b_{4}^2 -b_{5}^2-b_{ST}^2}{2 b_{5}b_{ST}}),\]

$A_{0,3}$ is the weight Fermat-Steiner point with respect to the boundary triangle $\triangle A_{4}A_{5}A_{0,2}.$
These weighted angular relations are derived as a special case of Lemmas~\ref{ivantuzhimp1},~\ref{ivantuzhimp2}.

$\bullet$ (\ref{fundoner4})-(\ref{fundfourthr4})deal with the solution of the weighted Fermat problem for $A_{1}A_{2}A_{3}A_{4}A_{5}$ with
weights $w_{(0,1),1},w_{(0,1),2},w_{(0,1),3},w_{(0,1),4}$ and $w_{(0,1),5}=1-\sum_{i=1}^{4}w_{(0,1),i},$ which is determined by Theorem~\ref{RatioVolumesSimplex} for $N=4$ and the weighted Fermat point $A_{0,1}.$

$\bullet$ (\ref{fundfiver4})-(\ref{fundeightr4}) deal with the solution of the weighted Fermat problem for $A_{1}A_{2}A_{3}A_{4}A_{5}$ with
weights $w_{(0,2),2},w_{(0,2),3},w_{(0,2),4},w_{(0,2),5},$ and $w_{(0,2),1}=1-\sum_{i=2}^{5}w_{(0,2),i},$ which is determined by Theorem~\ref{RatioVolumesSimplex} for $N=4$ and the weighted Fermat point $A_{0,2}.$

$\bullet$ (\ref{fund9r4})-(\ref{fund12r4}) deal with the solution of the weighted Fermat problem for $A_{1}A_{2}A_{3}A_{4}A_{5}$ with
weights $w_{(0,3),2},w_{(0,3),3},w_{(0,3),4},w_{(0,3),5},$ and $w_{(0,3),1}=1-\sum_{i=2}^{5}w_{(0,2),i},$ which is determined by Theorem~\ref{RatioVolumesSimplex} for $N=4$ and the weighted Fermat point $A_{0,3}.$

$\bullet$  (\ref{fund13r4}) is a derivation of two expressions of $a_{(0,1),(0,2)}$ with respect to the boundary triangles $\triangle A_{1}A_{2}A_{0,2}$
$\triangle A_{0,1}A_{0,3}A_{3},$ by applying the generalized cosine law in $\mathbb{R}^{2}$ given in lemma~\ref{cosinelawr2}.

\begin{theorem}[Lagrange multiplier rule for the weighted Fermat-Steiner Frechet multitree in $\mathbb{R}^{4}$]\label{Lagrangerulemultitreer4}
If the admissible point $\tilde{x}_{i}$ yields a weighted minimum multitree for $1\le i \le 30.240,$ which correspond to a Frechet multifivesimplex derived by a tentuple of edge lengths determining upto 30.240 incongruent $4-$simplexes, then there are numbers $\lambda_{0i},\lambda_{1i},\lambda_{2i},\ldots \lambda_{13i},$ such that:

\begin{equation}\label{lagrangemultitreer3cond1r4}
\frac{\partial \mathcal{L}_{i}(\tilde{x}_{i},\tilde{\lambda}_{i})}{\partial x_{ji}}=0
\end{equation}
for $j=1,2,\ldots,24,$

\[\tilde{x}_{i}= \{a_{(0,1),1},a_{(0,1),2},a_{(0,1),3},\beta_{1},
a_{(0,2),3},a_{(0,2),4},a_{(0,5),3},\beta_{2}, a_{(0,3),3},a_{(0,3),4},a_{(0,3),5},\beta_{3},\]\[ w_{(0,1),1},\ldots, w_{(0,1),4},w_{(0,2),2},\ldots, w_{(0,2),5},w_{(0,3),2},\ldots, w_{(0,3),5}\}  \]

$\tilde{\lambda}_{i}=\{\lambda_{0},\lambda_{1},\ldots,\lambda_{13}\}\}.$

\end{theorem}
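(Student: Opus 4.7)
The plan is to mirror the strategy used in the proof of Theorem~\ref{Lagrangerulemultitree}, extending it to the enlarged constraint system in $\mathbb{R}^{4}$ with three weighted Fermat-Steiner points $A_{0,1},A_{0,2},A_{0,3}$, three Schlafli angles $\beta_1,\beta_2,\beta_3$, and twelve weighted-volume constraints $f_1,\ldots,f_{12}$ arising from three independent inverse weighted Fermat problems (one for each $A_{0,j}$), together with the compatibility constraint $f_{13}$ coming from the two different routings used to express $a_{(0,1),(0,2)}$. The proof splits naturally into three steps: (i) verify $C^{1}$ smoothness of each $f_{k}$ on the parallelepiped $\Pi_{i}=\Pi(p_{1},q_{1};\ldots ;p_{24},q_{24})$ containing the admissible point $\tilde{x}_{i}$; (ii) invoke the classical Lagrange multiplier rule; (iii) normalize $\lambda_{0i}=1$ via a constraint-qualification argument.

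For step (i), the objective $f_{0}$ in (\ref{fundzeror4}) involves $a_{(0,1),(0,2)}$ and $a_{(0,2),(0,3)}$, each of which is expressed via Theorem~\ref{calculationa0405a01a02a03beta} as a smooth composition of edge-length variables and the Schlafli angle $\beta_{2}$; smoothness is clear on the open set where all the square roots and arccosines inside (\ref{h0123a01a02a03})-(\ref{h01234}) have non-vanishing arguments, which is part of the definition of $\Pi_{i}$. The functions $f_{1},\ldots,f_{12}$ in (\ref{fundoner4})-(\ref{fund12r4}) are rational expressions in the weights, the variable edge lengths (including those derived through Theorem~\ref{calculationa0405a01a02a03beta}), and the Caley-Menger volumes of $4$-simplexes $A_{0,j}A_{i_{1}}A_{i_{2}}A_{i_{3}}A_{i_{4}}$, so they are $C^{\infty}$ wherever the volumes stay bounded away from zero (which holds throughout $\Pi_{i}$, since the floating condition of Theorem~\ref{theor1} is assumed for each $A_{0,j}$). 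Finally $f_{13}$ is a difference of two smooth determinations of $a_{(0,1),(0,2)}$ provided by Theorem~\ref{calculationa0405a01a02a03beta}, hence smooth on $\Pi_{i}$.

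For step (ii), form $\mathcal{L}_{i}(\tilde{x}_{i},\tilde{\lambda}_{i})=\sum_{k=0}^{13}\lambda_{ki}\,f_{k}(\tilde{x}_{i})$ and apply the Lagrange multiplier rule in the form of \cite[p.~112]{Tikhomirov:90}, \cite[Theorem~3.1, p.~586]{BrinkhisTikhomirov:05}, and the ordinary convex program with equality constraints of \cite[Theorem~28.1]{Rockafellar:97}; since $\tilde{x}_{i}$ is in the interior of $\Pi_{i}$ and realizes a local minimum of $f_{0}$ subject to $f_{1}=\cdots=f_{13}=0$, the rule produces $\tilde{\lambda}_{i}=\{\lambda_{0i},\lambda_{1i},\ldots,\lambda_{13i}\}$, not all zero, such that (\ref{lagrangemultitreer3cond1r4}) holds for $j=1,\ldots,24$.

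The principal technical obstacle I expect is step (iii): establishing that $\lambda_{0i}\neq 0$ so that the normalization $\lambda_{0i}=1$ is legitimate, which amounts to the linear independence of the gradients $\nabla f_{1}(\tilde{x}_{i}),\ldots,\nabla f_{13}(\tilde{x}_{i})$. The new difficulty, absent in the $\mathbb{R}^{3}$ case of Theorem~\ref{Lagrangerulemultitree}, is that $\beta_{2}$ cannot be eliminated from the expressions for $a_{(0,1),(0,2)}$ and $a_{(0,2),(0,3)}$ produced by Theorem~\ref{calculationa0405a01a02a03beta}, so $\beta_{1},\beta_{2},\beta_{3}$ genuinely couple the geometric and weight blocks of the Jacobian through $f_{0}, f_{4}, f_{8}, f_{12}, f_{13}$. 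I would dispose of this by exhibiting an explicit block-triangular structure: the twelve weighted-volume constraints $f_{1},\ldots,f_{12}$ are organized into three groups of four, one group per $A_{0,j}$, and within each group the rows are linearly independent because each row singles out a distinct weight $w_{(0,j),i}$ (the derivative with respect to $w_{(0,j),i}$ appears only in $f_{i'}$ with $i'$ in the corresponding group), while $f_{13}$ depends non-trivially on $\beta_{2}$ through both branches of the generalized cosine law (Lemma~\ref{cosinelawr2}). Once this block-rank argument is in place, the result follows as a direct particular case of Theorem~\ref{Lagrangerulemultitree} extended to the $24$-variable $\mathbb{R}^{4}$ setting, exactly as indicated in the introductory discussion on how $N-3$ Schlafli angles become embodied in the Lagrangian when $N\ge 4$.
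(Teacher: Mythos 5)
Your proposal follows essentially the same route as the paper: the paper's proof simply notes that the partial derivatives $\frac{\partial (f_{k})_{i}}{\partial x_{ji}}$ are continuous on each parallelepiped $\Pi_{i}$ and then invokes the Lagrange multiplier rule of Tikhomirov/Brinkhuis--Tikhomirov and Rockafellar to produce $\tilde{\lambda}_{i}$, which is exactly your steps (i) and (ii) in condensed form. Your step (iii) on justifying the normalization $\lambda_{0i}=1$ via a block-rank constraint-qualification argument goes beyond both the statement (which only asserts existence of the multipliers) and the paper's proof (which mentions setting $\lambda_{0}=1$ only in a remark following the $\mathbb{R}^{3}$ analogue, without proof), so it is welcome extra care but not required here.
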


\begin{proof}
By taking into account that $\frac{\partial (f_{k})_{i} }{(x_{ji})}$ are continuous in each parallelepiped $\Pi_{i},$ for $1\le i\le 30.240,$ $k=0,1,2,\ldots, 13,$ $j=1,2,\ldots, 24$ and by applying Lagrange multiplier rule,
we obtain the Lagrangian vector $\tilde{\lambda}_{i}=\{\lambda_{0i},\lambda_{1i}\lambda_{2i},\ldots, \lambda_{13i},$ such that (\ref{lagrangemultitreer3cond1r4}) occurs.

\end{proof}


We give the definition of the four dimensional Dekster-Wilker Euclidean domain $DW_{\mathbb{R}^{4}}(\ell, s)$ discovered by Dekster-Wilker in \cite{DeksterWilker:87} and \cite{DeksterWilker:91a}. We denote by $\ell=\max_{i,j}a_{ij},$ $s=\min_{i,j}a_{ij}$ of the given tentuple $a_{ij}$ of positive real numbers.

\begin{definition}{The four dimensional Dekster-Wilker Euclidean domain, \cite{DeksterWilker:87}, \cite{DeksterWilker:91a}}
The four dimensional Dekster-Wilker Euclidean domain $DW_{\mathbb{R}^{4}}(\ell, s)$ is a closed domain in $\mathbb{R}^{2}$ between the ray $s=\ell,$
and the graph of a function $\lambda_{4}(\ell )=\ell \sqrt{\frac{7}{12}},$ $\ell \ge 0,$ which is less than $\ell$ for $\ell \ne 0,$
\end{definition}

\begin{proposition}[Lagrange multiplier rule for the Fermat-Steiner Frechet multitree in $\mathbb{R}^{4}$]\label{LagrangerulemultitreeDeksterWilkerr4}
If the admissible point $\tilde{x}_{i}$ yields a minimum multitree for $i=1,2,\ldots, 30.240,$ which correspond to a Frechet $4-$multisimplex derived by the Dekster-Wilker tentuples of edge lengths determining 30.240 incongruent tetrahedra, then there are numbers $\lambda_{0i},\lambda_{1i},\lambda_{2i},\ldots \lambda_{13i},$ such that:

\begin{equation}\label{lagrangemultitreer3cond1blr4}
\frac{\partial \mathcal{L}_{i}(\tilde{x}_{i},\tilde{\lambda}_{i})}{\partial x_{ji}}=0
\end{equation}
for $j=1,2,\ldots,24,$

\[\tilde{x}_{i}= \{a_{(0,1),1},a_{(0,1),2},a_{(0,1),3},\beta_{1},
a_{(0,2),3},a_{(0,2),4},a_{(0,5),3},\beta_{2}, a_{(0,3),3},a_{(0,3),4},a_{(0,3),5},\beta_{3},\]\[ w_{(0,1),1},\ldots, w_{(0,1),4},w_{(0,2),2},\ldots, w_{(0,2),5},w_{(0,3),2},\ldots, w_{(0,3),5}\}  \]

$\tilde{\lambda}_{i}=\{\lambda_{0},\lambda_{1},\ldots,\lambda_{13}\}\}.$

\end{proposition}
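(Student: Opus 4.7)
The plan is to deduce this proposition as a direct specialization of Theorem~\ref{Lagrangerulemultitreer4} to the Dekster--Wilker domain $DW_{\mathbb{R}^{4}}(\ell,s)$. First I would verify that any tentuple $\{a_{ij}\}$ with $\ell=\max_{i,j}a_{ij}$ and $s=\min_{i,j}a_{ij}$ satisfying $s\ge \lambda_{4}(\ell)=\ell\sqrt{7/12}$ indeed realizes (up to congruence) the full family of $30{,}240=\tfrac{10!}{5!}$ incongruent $4$-simplexes; this is exactly the content of the Dekster--Wilker realizability condition in $\mathbb{R}^{4}$, which guarantees that the Cayley--Menger determinant is positive for each of the $\tfrac{1}{2}\,10!/5!$ pairwise non-isometric edge assignments to $A_{1}A_{2}A_{3}A_{4}A_{5}$.

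Next, for each such incongruent $4$-simplex indexed by $i\in\{1,\ldots,30{,}240\}$, I would construct, exactly as in Theorem~\ref{Lagrangerulemultitreer4}, the Lagrangian
\[
\mathcal{L}_{i}(\tilde{x}_{i},\tilde{\lambda}_{i}) \;=\; \sum_{k=0}^{13}\lambda_{ki}\,f_{ki}(\tilde{x}_{i}),
\]
where the constraint functions $f_{1i},\ldots,f_{12i}$ encode the three inverse weighted Fermat problems at $A_{0,1},A_{0,2},A_{0,3}$ (via the volume equalities of Theorem~\ref{RatioVolumesSimplex} for $N=4$) and $f_{13i}$ encodes the two admissible expressions of $a_{(0,1),(0,2)}$ coming from the generalized cosine law in $\mathbb{R}^{2}$ (Lemma~\ref{cosinelawr2}). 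The key observation is that on the Dekster--Wilker domain the denominators appearing in $f_{1i},\ldots,f_{12i}$ — namely the edge lengths and the volumes of the sub-simplexes — are uniformly bounded away from zero, which allows one to choose, for each $i$, a parallelepiped $\Pi_{i}(p_{1i},q_{1i};\ldots;p_{24i},q_{24i})$ containing the admissible minimizer in its interior.

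I would then check that each partial derivative $\partial f_{ki}/\partial x_{ji}$ is continuous on $\Pi_{i}$: this reduces to verifying smoothness of the generalized cosine-law expressions (Theorem~\ref{calculationa0405a01a02a03beta}) together with the Caley--Menger volume formulas, both of which are rational--algebraic in $\tilde{x}_{i}$ with non-vanishing denominators on the Dekster--Wilker domain. With continuity of the gradients secured, the classical Lagrange multiplier rule (as in the reference proof of Theorem~\ref{Lagrangerulemultitreer4}) yields the existence of multipliers $\lambda_{0i},\ldots,\lambda_{13i}$ satisfying
\[
\frac{\partial \mathcal{L}_{i}(\tilde{x}_{i},\tilde{\lambda}_{i})}{\partial x_{ji}}=0,\qquad j=1,\ldots,24,
\]
which is the conclusion.

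The main obstacle is not the application of Lagrange's rule itself, but the book-keeping required to confirm that the Dekster--Wilker constraint $s\ge \ell\sqrt{7/12}$ simultaneously (i) produces exactly $30{,}240$ incongruent $4$-simplexes, and (ii) keeps every Cayley--Menger volume and every denominator in $f_{1i},\ldots,f_{13i}$ strictly positive on some open $\Pi_{i}$. Once this uniform non-degeneracy is in place, the proof collapses to invoking Theorem~\ref{Lagrangerulemultitreer4} componentwise over the $30{,}240$ incongruent boundary $4$-simplexes, in complete parallel with Proposition~\ref{LagrangerulemultitreeBlumenthal} in the $\mathbb{R}^{3}$ setting.
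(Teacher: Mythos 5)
Your proposal follows essentially the same route as the paper: the paper's proof is a one-line invocation of Theorem~\ref{Lagrangerulemultitreer4} specialized to Dekster--Wilker tentuples determining the $30{,}240$ incongruent $4$-simplexes. Your additional checks (realizability on $DW_{\mathbb{R}^{4}}(\ell,s)$, non-vanishing of the Cayley--Menger volumes, continuity of the gradients on each $\Pi_{i}$) are sound elaborations of hypotheses the paper leaves implicit, but they do not change the argument.
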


\begin{proof}
It is a direct consequence of Theorem~\ref{Lagrangerulemultitreer4} for Dekster-Wilker tentuples $\in DW_{\mathbb{R}^{4}}(\ell, s)$ determining 30.240 incongruent $4-$simplexes in $\mathbb{R}^{4}.$
\end{proof}

\begin{remark}\label{controlledrelationsr4}

Given that:
\[\alpha_{1(0,1)2}=\arccos(\frac{b_{ST}^2 -b_{1}^2-b_{2}^2}{2 b_{1}b_{2}}),\]
\[\alpha_{4(0,3)5}=\arccos(\frac{b_{ST}^2 -b_{4}^2-b_{5}^2}{2 b_{4}b_{5}}),\]
\[\alpha_{(0,1),(0,2),(0,3)}=\arccos(\frac{b_{3}^2 -b_{ST}^2-b_{ST}^2}{2 b_{ST}^2}),\]

\[\alpha_{i(0,k)j}=\arccos(\frac{a_{(0,k),i}^2+a_{(0,k),j}^2-a_{ij}^2}{2 a_{(0,k),i}a_{(0,k),j}}).\]
for $i,j=1,2,3,4,5,$ $k=1,2,3.$
Therefore, we get:
\[w_{(0,1),i}=w_{(0,1)i}(a_{(0,1)1},a_{(0,1),2},a_{(0,1),3},\beta_{1};b_{1});b_{2};b_{ST},\]
for $i=1,2,3,4,5.$
By following a similar process, we get:
\[w_{(0,3),i}=w_{(0,3),i}(a_{(0,3),4},a_{(0,3),5},a_{(0,3),3},\beta_{2};b_{4});b_{5};b_{ST},\]
for $i=1,2,3,4.$

\end{remark}

\begin{theorem}\label{mostnaturalconsecutivetentuplesr4}
The most natural tentuple of numbers from ten consecutive natural numbers $\{a+9,a+8,\ldots,a+1,a,\}$
for $a\ge 30$ is a tentuple of edge lengths having the maximum volume (maximum tentuple) among the 30.240 incongruent $4-$simplexes, which corresponds a Fermat-Steiner tree of minimum total weighted length (global minimum solution), such that the upper bound for the weight $B_{ST}$ is determined by the rest Fermat-Steiner minimal trees having larger or equal weighted minimal total length.
\end{theorem}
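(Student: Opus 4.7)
The plan is to mimic the proof of Theorem~\ref{mostnaturalconsecutivesextuples} in $\mathbb{R}^{3}$, but now with three weighted Fermat-Steiner points, the Dekster-Wilker domain $DW_{\mathbb{R}^{4}}(\ell,s)$ governing admissible tentuples, and the Lagrangian framework of Proposition~\ref{LagrangerulemultitreeDeksterWilkerr4}. The starting point is to certify that every consecutive tentuple $\{a, a+1, \ldots, a+9\}$ with $a\ge 30$ produces a Frechet $4$-multisimplex of the expected maximal cardinality $30{,}240$. Since Dekster-Wilker requires $s/\ell\ge \lambda_{4}(\ell)/\ell=\sqrt{7/12}$, and for a consecutive tentuple $s=a$, $\ell=a+9$, the inequality $a/(a+9)\ge\sqrt{7/12}$ rearranges to $a\ge 9\sqrt{7/12}/(1-\sqrt{7/12})\approx 29.1$, so the threshold $a\ge 30$ is exactly what is needed to place the tentuple inside $DW_{\mathbb{R}^{4}}(\ell,s)$ and thereby realize all $30{,}240$ incongruent $4$-simplexes.

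Next I would apply Proposition~\ref{LagrangerulemultitreeDeksterWilkerr4} to each of the $30{,}240$ boundary $4$-simplexes. For a fixed weight $B_{ST}$ and fixed boundary weights $\{b_{1},\ldots,b_{5}\}$, the Lagrangian system produces, per $4$-simplex, a finite collection of candidate Fermat-Steiner trees (indexed by the choices of which pair of vertices is served by $A_{0,1}$, which is served by $A_{0,3}$, and which vertex sits between them through $A_{0,2}$; this is the analog of the three topologies $T_{S}(A_{1}A_{2};A_{3}A_{4})$, $T_{S}(A_{1}A_{3};A_{2}A_{4})$, $T_{S}(A_{1}A_{4};A_{2}A_{3})$ used in the tetrahedral case). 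For each simplex one selects the minimum over its topologies, producing $30{,}240$ values $L_{k}(B_{ST})$, each of which is a smooth function of $B_{ST}$ via Remark~\ref{controlledrelationsr4} and the explicit dependence of $a_{(0,1),(0,2)}(\cdot)$ and $a_{(0,2),(0,3)}(\cdot)$ on the angular data determined by $B_{ST}$.

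Third, I would identify the index $k^{\star}$ that realises the maximum volume among the $30{,}240$ incongruent $4$-simplexes, using the Cayley-Menger determinant; this is a finite combinatorial search once the tentuple is fixed. The claim is that there exists $B_{ST}\in (0,B^{\max}]$ with $L_{k^{\star}}(B_{ST})\le L_{k}(B_{ST})$ for all $k\ne k^{\star}$, i.e.\ the maximum-volume simplex carries the global minimum Fermat-Steiner length. Because $L_{k^{\star}}(B_{ST})$ decreases monotonically in $B_{ST}$ until the three angular Lemma~\ref{angularsolutionweightedFermatSteiner}-style relations become infeasible, while the trees on the remaining $30{,}239$ simplexes need not enter the feasible region at the same rate, one can select $B_{ST}$ strictly below the least threshold forcing any competitor $L_{k}$ to drop below $L_{k^{\star}}$. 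The upper bound $B^{\max}$ is then defined as the infimum of $\{B_{ST}:L_{k^{\star}}(B_{ST})=L_{k}(B_{ST}) \text{ for some } k\ne k^{\star}\}$.

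The main obstacle is the non-uniqueness and discontinuous-looking behaviour of the minimising topology on each boundary $4$-simplex as $B_{ST}$ varies: a given simplex may switch topology at certain critical values of $B_{ST}$, so the $L_{k}$ are only piecewise smooth. To handle this, I would argue that on each open subinterval of $(0,B^{\max}]$ where all topologies are stable, each $L_{k}$ is real-analytic in $B_{ST}$ (by Theorems~\ref{5inverseR4} and \ref{nplus1inverseRn}, the weights and angles depend analytically on $B_{ST}$, and the Schlafli angle $\beta$ is eliminated up to one residual degree of freedom as in Theorem~\ref{calculationa0405a01a02a03beta}); hence the set $\{L_{k^{\star}}<L_{k}\}$ is open and has positive measure. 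A monotonicity-in-volume argument, together with the plasticity behaviour of Theorem~\ref{plasticitymultitreern}, then produces a nonempty interval of admissible $B_{ST}$ making $k^{\star}$ globally optimal, thereby furnishing the claimed upper bound and completing the proof.
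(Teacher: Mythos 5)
Your proposal follows essentially the same route as the paper's own proof: apply the Lagrangian program of Proposition~\ref{LagrangerulemultitreeDeksterWilkerr4} to each of the $30{,}240$ incongruent $4$-simplexes of the Frechet $4$-multisimplex, single out the maximum-volume one via the Cayley--Menger determinant, and tune the variable weight $B_{ST}$ until its Fermat-Steiner tree undercuts the remaining $30{,}239$, the first crossing value furnishing the upper bound on $B_{ST}$. You in fact supply more detail than the paper does at this point --- the explicit verification that $a\ge 30$ follows from $a/(a+9)\ge\sqrt{7/12}$ (which the paper records only through the general function $a(N)$ of Section~8) and the discussion of topology switching and piecewise analyticity of the $L_{k}(B_{ST})$ --- while the one step that remains heuristic in your argument (that some admissible $B_{ST}$ genuinely makes the maximum-volume simplex globally optimal) is asserted in exactly the same unproved, perturbative manner in the paper's proof.
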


\begin{proof}
By applying Proposition~\ref{LagrangerulemultitreeDeksterWilkerr4}  for the Dekster-Wilker tentuple of edge lengths $\{a+9,a+8,\ldots,a+1,a,\}$ for $a\ge 30$ forming 30.240 incongruent $4-$simplexes in $\mathbb{R}^{4},$ we obtain a class of Fermat-Steiner trees for $b_{i}=1,$ for $i=1,2,3,4,5$ and $b_{ST}=1.$ This class of Fermat-Steiner trees forms the Fermat-Steiner-Frechet multitree in $\mathbb{R}^{4}.$
By selecting the proper tentuple of edge lengths, which yields the maximum volume of all the 30.240 incongruent $4-$simplexes in $\mathbb{R}^{4},$ we consider a variable weighted Fermat-Steiner tree having three equally weighted Fermat Steiner points with weight $B_{ST}$ and the same boundary weights $b_{i}=1$ Therefore, by perturbing the length weighted Fermat-Steiner tree structure for the $4-$simplex $(A_{1}A_{2}A_{3}A_{4}A_{5})_{Max}$ having the maximum volume, we can derive an upper bound for the variable weight $B_{ST},$ which is calculated by comparing the perturbed length tree structure with the pther Fermar-Steiner tree structures, that belong to the unweighted Fermat-Steiner-Frechet multitree in $\mathbb{R}^{4}.$ Hence, this particular arrangement of the ten consecutive natural numbers for $a\ge 30$ with the upper bound $(B_{ST})_{s}$ yield the most natural weighted tentuple of natural numbers building a weighted Fermat-Steiner tree with the minimum mass transfer.
\end{proof}


\section{The weighted Fermat-Steiner-Frechet multitree for a given $\frac{N(N+1)}{2}-$tuple of positive real numbers determining the edge lengths of incongruent $N-$simplexes in $\mathbb{R}^{N}$}
In this section, we deal with the solution (multitree) of the weighted Fermat-Steiner-Frechet problem (P(Fermat-Steiner-Frechet)) for a given $\frac{N}(N+1){2}$ of positive real numbers determining incongruent $N-$simplexes in $\mathbb{R}^{N},$ by inserting $N(N-1)$ equality constraints derived by $(N-1)$ independent solutions for $(N-1)$ variable weighted Fermat problems for the Frechet $N-$multisimplex derived by incongruent boundary $N-$simplexes in $\mathbb{R}^{N},$ which correspond to the same $\frac{N(N+1)}{2}-$tuple of positive real numbers (edge lengths) and $N-2$ equality constraints derived by two different expressions of each line segments connecting two consecutive weighted Fermat-Steiner points. By applying a Lagrange program, we can detect the weighted Fermat-Frechet multitree for a given $\frac{N(N+1)}{2}-$tuple of edge lengths determining incongruent $N-$ simplexes (Frechet $N-$multisimplex) in $\mathbb{R}^{N},$ by using the Dekster Wilker function. By seeking unweighted Fermat-Frechet multitrees with $(N-1)$ equally weighted Fermat-Steiner points inside the Frechet $N-$multisimplex, we can detect the most natural of $\frac{N(N+1)}{2}$ consecutive natural numbers (Dekster-Wilker $\frac{N(N+1)}{2}-$tuple) and it is achieved by seeking an upper bound for these equal weights, which yield a global weighted Fermat-Steiner tree of minimum length for a boundary $N-simplex$ having the maximum volume among the derived incongruent $N-$simplexes in $\mathbb{R}^{N}.$


We describe the $N$-dimensional Dekster-Wilker Euclidean domain (see in \cite{DeksterWilker:87}),\cite{DeksterWilker:91a}), which gives all incongruent $N-$simplexes in $\mathbb{R}^{N}$ derived by the same $\frac{N(N+1)}{2}-$tuple of positive real numbers $\{a_{ij}\}.$

Denote by $\ell=\max_{i,j}a_{ij},$ $s=\min_{i,j}a_{ij}$ and
\[ \lambda_{N}(\ell )= \left\{
\begin{array}{ll}
      \ell \sqrt{1-\frac{2(N+1)}{N(N+2)}} & for\ even\ N\ge 2, \\
      \ell \sqrt{1-\frac{2}{(N+1)}} & for\ odd\ N\ge 3

\end{array}
\right.
\]
\begin{definition}{The $N-$dimensional Dekster-Wilker Euclidean domain \cite{DeksterWilker:87},\cite{DeksterWilker:91a} }
The Dekster-Wilker Euclidean domain $DW_{\mathbb{R}^{N}}(\ell, s)$ is a closed domain in $\mathbb{R}^{2}$ between the ray $s=\ell,$
and the graph of a function $\lambda_{N}(\ell ),$ $\ell \ge 0,$ which is less than $\ell$ for $\ell \ne 0,$
\end{definition}

Let $A_{0,1},A_{0,2},\ldots A_{0,N-1}$ be $N-1$ points inside the $N-$simplex $A_{1}A_{2}\ldots A_{N}A_{N+1}$ in $\mathbb{R}^{N}.$
We denote by $a_{(0,i),j}$ the length of the line segment $A_{0,i}A_{j},$ by $\beta_{(0,k),u}$ the Schafli angle formed by the normals of the subpaces spanned by $\{A_{(0,k)}A_{1}A_{2}\ldots A_{u-2}\}$ and $\{A_{1}A_{2}\ldots A_{u-1}\}$ for $i=1,2,\ldots N$ $j=1,2,\ldots N+1,$ $k=1,2,\ldots N-1,$ $u=4,5,\ldots,N.$
The weighted Fermat-Steiner-Frechet problem for a given $\frac{N(N+1)}{2}-$tuple of edge lengths determining incongruent $N-$simplexes in $\mathbb{R}^{N},$ states that:

\begin{problem}[The weighted Fermat-Steiner Frechet in $\mathbb{R}^{N}$]\label{FermatSteinerFrechettetrahedronrn}
Given a $(2N)-$tuple of weights $\{b_{1},b_{2},\ldots ,b_{N+1},b_{ST},\ldots,b_{ST}\},$ and a given $\frac{N(N+1)}{2}-$tuple of positive real numbers (edge lengths) $\{a_{ij}\}\},$ determining a Frechet $N-$multisimplex $F(A_{1}A_{2}\ldots A_{N+1}),$   find the position of $A_{0,1}$ and / or
$A_{0,2}$ and/or $\ldots$ $A_{0,N-1}$  with given
weights $b_{ST}$ in $A_{0,1},$  $b_{ST}$ in $A_{0,2},$ $\ldots$ $b_{ST}$ in $A_{0,N-1},$ such that
\begin{align}\label{equat1L0rn}
f_{0}(T_{S})=\sum_{a_{i_{k},j_{k}}\in T_{S}}b_{k}a_{i_{k},j_{k}}\to min.
\end{align}
\end{problem}
\begin{definition}[A non degenerate weighted Fermat-Steiner tree for $\{A_{1}A_{2}\ldots A_{N+1}\}$]
A non degenerate weighted Fermat-Steiner tree $T_{S}$ is a tree, which consists of some line segments between the $N-1$ weighted Fermat-Steiner points $A_{0,j},$ such that each $A_{0,j}$ has degree (connections) three and of $N+1$ line segments joining each boundary vertex $A_{i}$ with some $A_{0,j}.$
\end{definition}

We consider the following three types of weighted Fermat-Steiner points $A_{0,i}:$ 

$\bullet$ Type~one, if it is connected with two boundary vertices and one weighted Fermat-Steiner point (see Fig.~\ref{figg4}),

$\bullet$ Type~two, if it is connected with one  boundary vertex and two weighted Fermat-Steiner points (see Fig.~\ref{figg5}),

$\bullet$ Type~three, if it is connected with three weighted Fermat-Steiner points (see Fig.~\ref{figg6}).

\begin{figure}\label{figg4}
\centering
\includegraphics[scale=0.80]{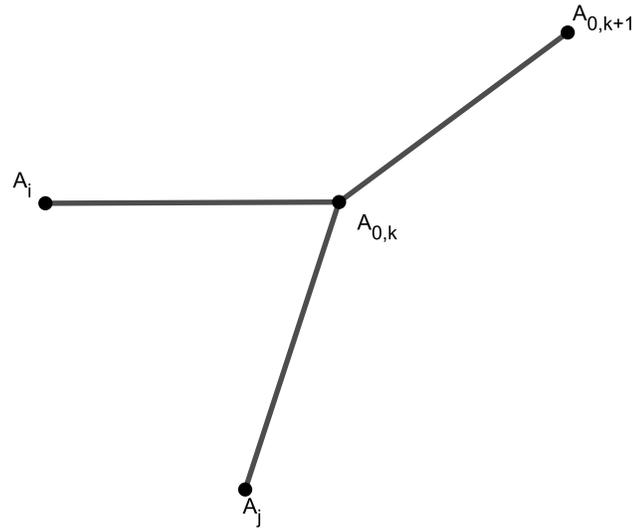}
\caption{A weighted Fermat-Steiner point type~one for $A_{1}A_{2}\ldots A_{N+1}$ in $\mathbb{R}^{N}$} \label{figg4}
\end{figure}

\begin{figure}\label{figg5}
\centering
\includegraphics[scale=0.80]{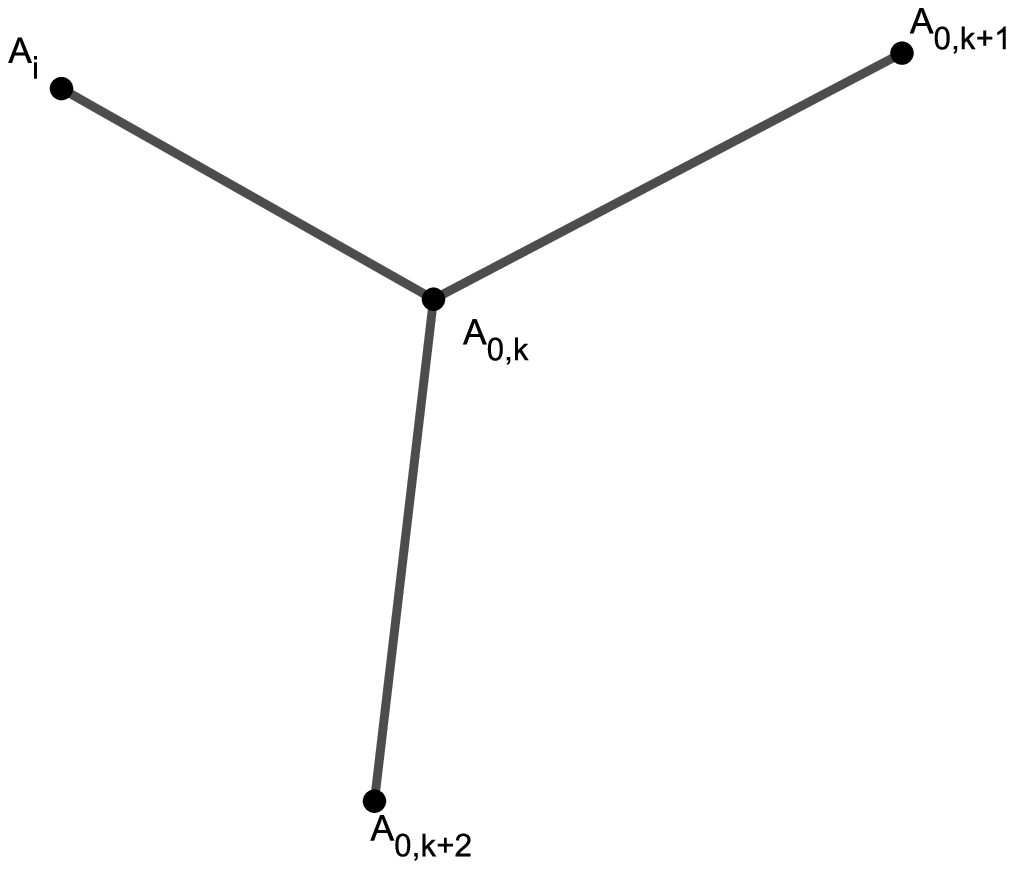}
\caption{A weighted Fermat-Steiner point type~two for $A_{1}A_{2}\ldots A_{N+1}$ in $\mathbb{R}^{N}$} \label{figg5}
\end{figure}

\begin{figure}\label{figg6}
\centering
\includegraphics[scale=0.80]{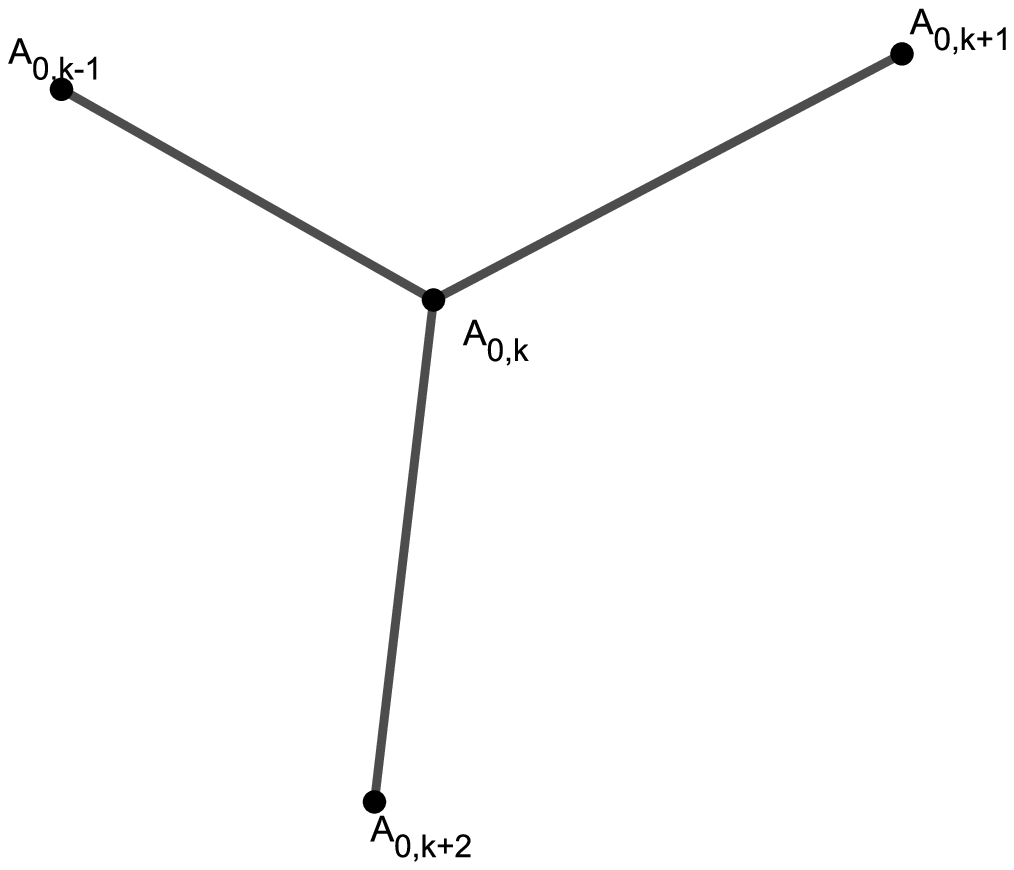}
\caption{A weighted Fermat-Steiner point type~three for $A_{1}A_{2}\ldots A_{N+1}$ in $\mathbb{R}^{N}$} \label{figg6}
\end{figure}


\begin{problem}[The weighted Fermat-Steiner-Frechet (P(Fermat-Steiner-Frechet)) problem in $\mathbb{R}^{N}$ with equality constraints]
\begin{equation*}
\begin{aligned}
& & f_{0}(\tilde{x})=\sum_{a_{i_{k},j_{k}}\in T_{S}}b_{k}a_{i_{k},j_{k}}, \\
& & f_{i,j}(\tilde{x}) = 0, \; i = 1, \ldots,(N-1), j=1,\ldots, N,\\
& & f_{k}(\tilde{x})=0,\; k=1,\ldots,N -2,
\end{aligned}
\end{equation*}
where $N-1$ is: $\#$ of neighboring weighted Fermat-Steiner points $A_{0,i}$ type~one,~two or three,
\begin{align}\label{fundzerorn}
f_{0}(\tilde{x})=f_{0}(T_{S})=\sum_{a_{i_{k},j_{k}}\in T_{S}}b_{k}a_{i_{k},j_{k}},
\end{align}

\begin{align}\label{fundonern}
f_{1,1}(\tilde{x})=\frac{w_{(0,1),1}}{a_{(0,1),1}\operatorname{Vol}(A_{0,1}\ldots A_{N+1})}-\nonumber\\ -\frac{1-\sum_{i=1}^{N}w_{(0,1),i}}{a_{(0,1),N+1}(a_{(0,1),1},a_{(0,1),2},a_{(0,1),3},\beta_{(0,1),4},\beta_{(0,1),5},\ldots, \beta_{(0,1),N})\operatorname{Vol}(A_{0,1}A_{1}\ldots A_{N})},
\end{align}
$\vdots$
\begin{align}\label{fundtworn}
f_{1,N}(\tilde{x})=\nonumber\\ \frac{w_{(0,1),N}}{a_{(0,1),N}(a_{(0,1),1},a_{(0,1),2},a_{(0,1),3},\beta_{(0,1),4},\beta_{(0,1),5},\ldots, \beta_{(0,1),N})\operatorname{Vol}(A_{1}\ldots A_{0,1}A_{N+1})}-\nonumber\\ -\frac{1-\sum_{i=1}^{N}w_{(0,1),i}}{a_{(0,1),N+1}(a_{(0,1),1},a_{(0,1),2},a_{(0,1),3},\beta_{(0,1),4},\beta_{(0,1),5},\ldots, \beta_{(0,1),N})\operatorname{Vol}(A_{0,1}A_{1}\ldots A_{N})},
\end{align}
$\vdots$

\begin{multline}\label{fundn-11rn}
f_{N-1,1}(\tilde{x})= \frac{w_{(0,N-1),1}}{a_{(0,N-1),1}\operatorname{Vol}(A_{0,N-1}\ldots A_{N+1})}-\\ -\frac{1-\sum_{i=1}^{N}w_{(0,N-1),i}}{a_{(0,N-1),N+1})\operatorname{Vol}(A_{0,N-1}A_{1}\ldots A_{N})},
\end{multline}
where \[a_{(0,N-1),N+1}=\]\[=a_{(0,N-1),N+1}(a_{(0,N-1),1},a_{(0,N-1),2},a_{(0,N-1),3},\beta_{(0,N-1),4},\beta_{(0,N-1),5},\ldots, \beta_{(0,N-1),N})\]
$\vdots$
\begin{multline}\label{fundtn-1nn}
f_{N-1,N}(\tilde{x})=\frac{w_{(0,N-1),N}}{a_{(0,N-1),N})\operatorname{Vol}(A_{1}\ldots A_{0,N-1}A_{N+1})}-\\ -\frac{1-\sum_{i=1}^{N}w_{(0,N-1),i}}{a_{(0,N-1),N+1}\operatorname{Vol}(A_{0,N-1}A_{1}\ldots A_{N})},
\end{multline}

where
\[a_{(0,N-1),N}=\]\[=a_{(0,N-1),N}(a_{(0,N-1),1},a_{(0,N-1),2},a_{(0,N-1),3},\beta_{(0,N-1),4},\beta_{(0,N-1),5},\ldots, \beta_{(0,N-1),N})\]

\begin{align}\label{funm1rn}
f_{k}(\tilde{x})=a_{(0,k),(0,k+1)}(\tilde{x};\triangle A_{i}A_{0,k+1}A_{j})-a_{(0,k),(0,k+1)}(\tilde{x};\triangle A_{k+2}A_{0,k}A_{l}),
\end{align}

or

\begin{align}\label{funm2rn}
f_{k}(\tilde{x})=a_{(0,k),(0,k+1)}(\tilde{x};\triangle A_{i}A_{0,k+1}A_{j})-a_{(0,k),(0,k+1)}(\tilde{x};\triangle A_{k+2}A_{0,k}A_{l}),
\end{align}

or

\begin{align}\label{funm3rn}
f_{k}(\tilde{x})=a_{(0,k),(0,k+1)}(\tilde{x};\triangle A_{i}A_{0,k+1}A_{j})-a_{(0,k),(0,k+1)}(\tilde{x};\triangle A_{k+2}A_{0,k}A_{k+l}),
\end{align}

or

\begin{align}\label{funm4rn}
f_{k}(\tilde{x})=a_{(0,k),(0,k+1)}(\tilde{x};\triangle A_{k}A_{0,k+1}A_{j})-a_{(0,k),(0,k+1)}(\tilde{x};\triangle A_{k+2}A_{0,k}A_{k+l}),
\end{align}

for $k=1,2,\ldots N-2.$
\end{problem}

We take into account that:

$\bullet$ (\ref{fundzerorn}) is the objective function of the weighted Fermat-Steiner problem for $A_{1}A_{2}\ldots A_{N+1}$ in $\mathbb{R}^{N}$ having $N-1$ weighted Fermat-Steiner points $A_{0,i},$ such that:

I. Type~one weighted Fermat-Steiner point $A_{0,i}$
\[\alpha_{j(0,i)k}=\arccos(\frac{b_{ST}^2 -b_{i}^2-b_{j}^2}{2 b_{i}b_{j}}),\]
\[\alpha_{j(0,i)(0,i+1)}=\arccos(\frac{b_{k}^2 -b_{j}^2-b_{ST}^2}{2 b_{j}b_{ST}}),\]
\[\alpha_{k(0,i)(0,i+1}=\arccos(\frac{b_{j}^2 -b_{k}^2-b_{ST}^2}{2 b_{k}b_{ST}}),\]

$A_{0,i}$ is the weight Fermat-Steiner point with respect to the boundary triangle $\triangle A_{j}A_{k}A_{0,i+1},$

II. Type~two weighted Fermat-Steiner point $A_{0,i}$

\[\alpha_{(0,i+1)(0,i)j}=\arccos(\frac{-b_{j}}{2 b_{ST}}),\]
\[\alpha_{(0,i+1)(0,i)(0,i+2)}=\arccos(\frac{b_{j}^2 -2b_{ST}^2}{2 b_{ST}^2}),\]
\[\alpha_{(0,i+1)(0,i)j}=\arccos(\frac{-b_{j}}{2 b_{ST}}),\]

$A_{0,2}$ is the weight Fermat-Steiner point with respect to the boundary triangle $\triangle A_{0,i+1}A_{0,i+2}A_{j},$

II. Type~three weighted Fermat-Steiner point $A_{0,i}$

\[\alpha_{(0,i+1)(0,i)(0,i+2)}=\alpha_{(0,i+1)(0,i)(0,i+3)}=120^{\circ},\]

$A_{0,i}$ is the unweighted Fermat-Steiner point with respect to the boundary triangle $\triangle A_{0,i+1}A_{0,i+2}A_{0,i+3}.$

$\bullet$ (\ref{fundonern})-(\ref{fundtworn})deal with the solution of the weighted Fermat problem for $A_{1}\ldots A_{N+1}$ with
weights $w_{(0,1),1},\ldots,w_{(0,1),N}$ and $w_{(0,1),N+1}=1-\sum_{i=1}^{N}w_{(0,1),i},$ which is determined by applying Theorem~\ref{RatioVolumesSimplex}  for the unique weighted Fermat point $A_{0,1}.$

$\vdots$

$\bullet$ (\ref{fundn-11rn})-(\ref{fundtn-1nn}) deal with the solution of the weighted Fermat problem for $A_{1}\ldots A_{N+1}$ with
weights $w_{(0,N-1),1},w_{(0,N-1),2},\ldots,w_{(0,N-1),N},$ and $w_{(0,N-1),N+1}=1-\sum_{i=1}^{N+1}w_{(0,N-1),i},$ which is determined by applying Theorem~\ref{RatioVolumesSimplex} and the unique weighted Fermat point $A_{0,N-1}.$

$\bullet$  (\ref{funm1rn})-(\ref{funm4rn}) is a derivation of possible expressions of $a_{(0,i),(0,i+1)}$ with respect to the boundary $\triangle A_{i}A_{0,k+1}A_{j},$ $\triangle A_{k+2}A_{0,k}A_{l}),$  $\triangle A_{k}A_{0,k+1}A_{j},$ $\triangle A_{k+2}A_{0,k}A_{k+l}$  by applying the generalized cosine law in $\mathbb{R}^{2}$ given in lemma~\ref{cosinelawr2}.

$\bullet$ The Schlafli angle $\beta_{(0,k),u}$ was discovered in \cite{Schlafli:50}.
The distance function $a_{(0,i),j}$ depends
\[a_{(0,i),j}=\]\[=a_{(0,i),j}(a_{(0,i),1},a_{(0,i),2},a_{(0,i),3},\beta_{(0,i),4},\beta_{(0,i),5},\ldots, \beta_{(0,i),j})\]
for $i=1,2,\ldots, N-1,$ $j=N,N+1.$

In $\mathbb{R}^{3},$ we derived that $a_{(0,i),4},$ depend on $a_{(0,i),1},$ $a_{(0,i),2},$ $a_{(0,i),3},$ because $\beta_{(0,i),4},$ can be expressed explicitly as a function with respect to $a_{(0,i),1},$ $a_{(0,i),2},$ $a_{(0,i),3}.$

In $\mathbb{R}^{4},$ we derived that $a_{(0,i),4},$ $a_{(0,i),5},$ depend on $a_{(0,i),1},$ $a_{(0,i),2},$ $a_{(0,i),3},$ $\beta_{(0,i),4},$ because
we derived an implicit function with respect to $a_{(0,i),1},$ $a_{(0,i),2},$ $a_{(0,i),3},$ $\beta_{(0,i),4}$ and $\beta_{(0,i),4}$ cannot be solved explicitly with respect to $a_{(0,i),1},$ $a_{(0,i),2},$ $a_{(0,i),3}.$

Therefore, Schlafli angles $\beta_{(0,i),k}$ are embodied in the computation of the distances $a_{(0,i),N},$ $a_{(0,i),N+1},$ for $N \ge 4$ and $k=4,5,\ldots, N.$

\begin{theorem}[Lagrange multiplier rule for the weighted Fermat-Steiner Frechet multitree in $\mathbb{R}^{N}$]\label{Lagrangerulemultitreern}
If the admissible point $\tilde{x}_{i}$ yields a weighted minimum multitree for $1\le i \le \frac{\frac{1}{2}N(N+1)!}{(N+1)!},$ which correspond to a Frechet $N-$multisimplex derived by a $\frac{N(N+1)}{2}-$tuple of edge lengths determining upto $\frac{\frac{1}{2}N(N+1)!}{(N+1)!}$ incongruent $N-$simplexes constructed by the Dekster-Wilker domain $DW_{\mathbb{R}^{N}}(\ell,s)$, then there are numbers $\lambda_{0i},\lambda_{1i},\lambda_{2i},\ldots \lambda_{(N^2-1)i},$ such that:

\begin{equation}\label{lagrangemultitreer3cond1rn}
\frac{\partial \mathcal{L}_{i}(\tilde{x}_{i},\tilde{\lambda}_{i})}{\partial x_{ji}}=0
\end{equation}
for $j=1,2,\ldots,2N(N-1),$

\[\tilde{x}_{i}= \{a_{(0,1),1},a_{(0,1),2},a_{(0,1),3},\beta_{(0,1),4},\ldots,\beta_{(0,1),N},\ldots,
a_{(0,N-1),1},a_{(0,N-1),2},\]\[a_{(0,N-1),3},\beta_{(0,N-1),4},\ldots,\beta_{(0,N-11),N},\]\[ w_{(0,1),1},\ldots, w_{(0,1),N},w_{(0,2),1},\ldots, w_{(0,2),N},\ldots,\] \[w_{(0,N-1),1},\ldots, w_{(0,N-1),N}\} \]

$\tilde{\lambda}_{i}=\{\lambda_{0},\lambda_{1},\ldots,\lambda_{N^2-1}\}\}.$

\end{theorem}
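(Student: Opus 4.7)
The plan is to follow exactly the pattern of Theorem~\ref{Lagrangerulemultitree} and Theorem~\ref{Lagrangerulemultitreer4}, promoting them to arbitrary dimension $N \ge 3$ via a direct application of the classical Lagrange multiplier rule for smooth programs with equality constraints (as stated in \cite[p.~112]{Tikhomirov:90}, \cite[Theorem~3.1, p.~586]{BrinkhisTikhomirov:05}, or as a particular instance of \cite[Theorem~28.1]{Rockafellar:97}). First, I would fix an admissible index $i$ with $1 \le i \le \frac{\frac{1}{2}N(N+1)!}{(N+1)!}$ corresponding to one incongruent $N$-simplex in the Frechet $N$-multisimplex cut out by the Dekster-Wilker domain $DW_{\mathbb{R}^{N}}(\ell,s)$, and attach to it the parallelepiped $\Pi_{i}\subset \mathbb{R}^{2N(N-1)}$ in which the coordinates of $\tilde{x}_{i}$ range. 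On $\Pi_{i}$ I form the Lagrangian
\[
\mathcal{L}_{i}(\tilde{x}_{i},\tilde{\lambda}_{i})=\sum_{k=0}^{N^{2}-1}\lambda_{ki}\,f_{k,i}(\tilde{x}_{i}),
\]
where $f_{0,i}$ is the objective~(\ref{fundzerorn}), the next $N(N-1)$ functions are the weighted-volume equality constraints of the form (\ref{fundonern})--(\ref{fundtn-1nn}) for the $N-1$ interior weighted Fermat points $A_{0,j}$, and the remaining $N-2$ functions come from the edge-length compatibility equations of the form (\ref{funm1rn})--(\ref{funm4rn}) between consecutive weighted Fermat-Steiner points $A_{0,k}, A_{0,k+1}$.

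Next I would verify the hypotheses of the multiplier rule. The objective $f_{0,i}$ is linear in the edge lengths, hence $C^{1}$ in $\tilde{x}_{i}$. Each equality constraint $f_{k,i}$ involves (i) the weighted volumes $\operatorname{Vol}(\cdots)$, which by the Cayley-Menger determinant are $C^{\infty}$ in the edge lengths away from degenerate simplexes, and (ii) the distances $a_{(0,i),N}, a_{(0,i),N+1}$ which by the generalized cosine law in $\mathbb{R}^{N}$ (the inductive extension of Lemmas~\ref{calculationa0304a01a03alpha} and \ref{calculationa0405a01a02a03beta}) are smooth functions of $a_{(0,i),1},a_{(0,i),2},a_{(0,i),3}$ and the Schlafli angles $\beta_{(0,i),4},\ldots,\beta_{(0,i),N}$. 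Thus $\partial f_{k,i}/\partial x_{ji}$ is continuous on the open interior of $\Pi_{i}$.  Since $\tilde{x}_{i}$ is by assumption an admissible (interior) minimizer satisfying all equality constraints, the Lagrange multiplier rule guarantees the existence of a multiplier vector $\tilde{\lambda}_{i}=(\lambda_{0i},\ldots,\lambda_{(N^{2}-1)i})$, not identically zero, such that (\ref{lagrangemultitreer3cond1rn}) holds for every $j=1,2,\ldots,2N(N-1)$. Summing over $i$ yields the theorem.

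The main obstacle is the embedding of $N-3$ Schlafli angles into the implicit definition of the distances $a_{(0,i),N}, a_{(0,i),N+1}$ for $N \ge 4$: unlike the $\mathbb{R}^{3}$ case where $\beta_{(0,i),4}$ can be eliminated explicitly (Lemma~\ref{a04a01a02a03r3}), in dimension $N$ the relation between these angles and the three independent lengths $a_{(0,i),1},a_{(0,i),2},a_{(0,i),3}$ is given implicitly. I would handle this by promoting each $\beta_{(0,i),k}$, $k=4,\ldots,N$, to an independent coordinate of $\tilde{x}_{i}$ (as is already done in the statement of the theorem) so the constraints are expressed in closed form in the expanded variable vector, and then appeal to the implicit function theorem to guarantee smoothness in a neighborhood of the admissible point. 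A secondary, but routine, point is to normalize the multipliers by setting $\lambda_{0i}=1$, which is legitimate once one checks that the gradients of the $N^{2}-1$ constraint functions are linearly independent at $\tilde{x}_{i}$; this nondegeneracy follows from the fact that the equality constraints split into blocks (one block of $N$ volume-ratio constraints per interior weighted Fermat point $A_{0,j}$, plus $N-2$ cosine-law compatibility equations), and each block controls a disjoint set of coordinates of $\tilde{x}_{i}$, making the constraint Jacobian block-triangular with nonzero diagonal blocks on the admissible set.
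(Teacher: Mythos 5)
Your proposal is correct and follows essentially the same route as the paper: the paper's own proof simply observes that the partial derivatives $\frac{\partial (f_{k})_{i}}{\partial x_{ji}}$ are continuous on each parallelepiped $\Pi_{i}$ and invokes the Lagrange multiplier rule of \cite[p.~112]{Tikhomirov:90}, \cite[Theorem~3.1]{BrinkhisTikhomirov:05}, exactly as you do, and your additional remarks on smoothness via Cayley--Menger determinants and on promoting the Schlafli angles to independent coordinates only make explicit what the paper leaves implicit. The one place you go beyond the paper is the claimed block-triangular constraint Jacobian justifying the normalization $\lambda_{0i}=1$: the paper does not attempt this here, and since each compatibility constraint $f_{k}$ couples coordinates of two consecutive nodes $A_{0,k},A_{0,k+1}$ the blocks are not disjoint as you assert---but this is not needed for the statement as given, which only asserts existence of the multipliers.
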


\begin{proof}
By setting $f_{p,q}\equiv f_{k},$ we consider that $\frac{\partial (f_{k})_{i} }{(x_{ji})}$ are continuous in each parallelepiped $\Pi_{i},$ for $1\le i\le 3\frac{\frac{1}{2}N(N+1)!}{(N+1)!},$ $k=0,1,2,\ldots, N^2-1,$ $j=1,2,\ldots, 2N(N-1)$ and by applying Lagrange multiplier rule,
we obtain the Lagrangian vector $\tilde{\lambda}_{i}=\{\lambda_{0i},\lambda_{1i}\lambda_{2i},\ldots, \lambda_{(N^2-1)i}.$ such that (\ref{lagrangemultitreer3cond1rn}) holds.

\end{proof}


Denote by

\[ a(N)= \left\{
\begin{array}{ll}
      \frac{(\frac{N(N+1)}{2}-1) \sqrt{1-\frac{2(N+1)}{N(N+2)}}}{1-\sqrt{1-\frac{2(N+1)}{N(N+2)}}} & for\ even\ N\ge 2, \\
      \frac{(\frac{N(N+1)}{2}-1) \sqrt{1-\frac{2}{(N+1)}}}{1-\sqrt{1-\frac{2}{(N+1)}}} & for\ odd\ N\ge 3

\end{array}
\right.
\]

The function $a(N)$ is obtained by the function $\ell (N)$ of Dekster-Wilker (see in \cite[p.~352]{DeksterWilker:87}) using the inequality
\[\frac{a(N)}{a(N)+\frac{N(N+1)}{2}-1}\ge \ell(N).\]

\begin{theorem}\label{mostnaturalconsecutivetentuplesrn}
The most natural $\frac{N(N+1)}{2}-$tuple of numbers from $\frac{N(N+1)}{2}$ consecutive natural numbers $\{a+\frac{N(N+1)}{2}-1,\ldots,a+1,a,\}$
for $a\ge a(N)$ is a $\frac{N(N+1)}{2}-$tuple of edge lengths having the maximum volume (maximum $\frac{N(N+1)}{2}-$tuple) among the $\frac{\frac{1}{2}N(N+1)!}{(N+1)!},$ incongruent $N-$simplexes, which corresponds to a Fermat-Steiner tree of minimum total weighted length (global minimum solution), such that the upper bound for the weight $B_{ST}$ is determined by the rest Fermat-Steiner minimal trees having larger or equal weighted minimal total length.
\end{theorem}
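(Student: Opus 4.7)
The plan is to generalize the arguments used in Theorem~\ref{mostnaturalconsecutivesextuples} ($N=3$) and Theorem~\ref{mostnaturalconsecutivetentuplesr4} ($N=4$) by exploiting the Dekster-Wilker admissibility condition together with the Lagrangian program of Theorem~\ref{Lagrangerulemultitreern}. First, I would verify that the consecutive tuple $\{a, a+1, \ldots, a+\tfrac{N(N+1)}{2}-1\}$ lies in $DW_{\mathbb{R}^{N}}(\ell,s)$ precisely when $a\ge a(N)$. Since $\ell = a+\tfrac{N(N+1)}{2}-1$ and $s=a$, the condition $s/\ell \ge \ell(N)$ translates into $a/(a+\tfrac{N(N+1)}{2}-1)\ge \ell(N)$, which, solved for $a$, reproduces exactly the two cases in the definition of $a(N)$ (even and odd $N$). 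This guarantees that every one of the $\tfrac{\frac12 N(N+1)!}{(N+1)!}$ assignments of the tuple to the edges of an $N$-simplex is realizable, so the Frechet $N$-multisimplex is nonempty and possesses the maximal number of incongruent components.

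Second, I would invoke Theorem~\ref{Lagrangerulemultitreern} with the unweighted boundary data $b_{i}=1$ for $i=1,\ldots,N+1$ and with a common (variable) weight $B_{ST}$ at the $N-1$ interior Fermat-Steiner points. For each of the $\tfrac{\frac12 N(N+1)!}{(N+1)!}$ incongruent boundary simplexes, the Lagrangian system $\partial \mathcal{L}_{i}/\partial x_{ji}=0$ together with the equality constraints (\ref{fundonern})--(\ref{funm4rn}) determines a weighted Fermat-Steiner tree whose angular conditions at the type-one, type-two and type-three nodes are supplied by the formulas listed immediately after (\ref{fundzerorn}), and whose distances from a Fermat-Steiner point to a boundary vertex are controlled by the volume identities of Theorem~\ref{RatioVolumesSimplex} combined with the generalized cosine laws and the embedded Schlafli angles $\beta_{(0,i),u}$. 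The union of these trees, indexed over $1\le i \le \tfrac{\frac12 N(N+1)!}{(N+1)!}$, is the Fermat-Steiner-Frechet multitree associated with the tuple.

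Third, I would isolate the (unique) component $(A_{1}\ldots A_{N+1})_{\max}$ of maximal volume in the multisimplex, computed from the Cayley-Menger determinant over all edge permutations, and write the total weighted length $L_{\max}(B_{ST})$ of the Fermat-Steiner tree supported on that component as an explicit function of $B_{ST}$ via the angular relations (of the form $\cos\alpha_{j(0,i)k} = (B_{ST}^{2}-b_{j}^{2}-b_{k}^{2})/(2 b_{j}b_{k})$) and the distance functions $a_{(0,i),j}(\cdot;B_{ST})$. For each other incongruent component I would likewise record $L_{k}(B_{ST})$, $k\ne \max$, and then define
\[
(B_{ST})_{s} \;=\; \sup\Bigl\{\, B_{ST}>0 \,:\, L_{\max}(B_{ST}) \le \min_{k\ne \max} L_{k}(B_{ST}) \,\Bigr\}.
\]
Perturbation of $B_{ST}$ within $(0,(B_{ST})_{s}]$ preserves the property that the tree on $(A_{1}\ldots A_{N+1})_{\max}$ achieves the global minimum length within the multitree, thereby singling out the maximum-volume arrangement of the consecutive tuple as the "most natural" one.

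The principal obstacle is the fourth (comparison) step: establishing that $L_{\max}(B_{ST})-\min_{k\ne \max}L_{k}(B_{ST})$ is monotone in $B_{ST}$ and that the supremum defining $(B_{ST})_{s}$ is attained and strictly positive. Unlike the $N=3$ and $N=4$ cases, where the Schlafli angles either vanish or reduce to a single parameter $\beta_{1}$, for $N\ge 5$ there are $N-3$ independent Schlafli angles $\beta_{(0,i),4},\ldots,\beta_{(0,i),N}$ coupled through the generalized cosine law of Theorem~\ref{calculationa0405a01a02a03beta} (and its inductive extension), so the dependence of the tree length on $B_{ST}$ is implicit. I would handle this by differentiating the Lagrangian optimality conditions with respect to $B_{ST}$, using $\partial L/\partial B_{ST} = \sum_{i=1}^{N-1}a_{(0,i),(0,i+1)}$ from the envelope theorem applied to (\ref{fundzerorn}), so that $L_{\max}$ and $L_{k}$ are continuous and piecewise smooth and the comparison reduces to a finite collection of transversal intersections. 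The condition $a \ge a(N)$ keeps every simplex non-degenerate throughout the perturbation, preventing any $L_{k}$ from collapsing to an absorbing (Gauss) degeneracy and thereby guaranteeing that $(B_{ST})_{s}$ is a well-defined positive upper bound supplied, as stated, by whichever competing tree first attains equal length.
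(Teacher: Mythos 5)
Your proposal follows essentially the same route as the paper's proof: verify that the consecutive tuple is admissible in the Dekster--Wilker sense via $a\ge a(N)$, apply the Lagrangian program of Theorem~\ref{Lagrangerulemultitreern} with $b_{i}=1$ and a common variable $B_{ST}$, single out the maximum-volume component, and obtain the upper bound $(B_{ST})_{s}$ by comparing the perturbed tree length on that component against the Fermat--Steiner trees of the remaining incongruent simplexes. Your write-up is in fact more explicit than the paper's (which compresses the comparison step into a single assertion), notably in defining $(B_{ST})_{s}$ as a supremum and in flagging the continuity and monotonicity issues via the envelope theorem, but these are refinements of, not departures from, the paper's argument.
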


\begin{proof}
We will follow the same process that we used for sextuples and tentuples of consecutive natural numbers in $\mathbb{R}^{3}$ and $\mathbb{R}^{4},$ respectively.

By applying Theorem~\ref{Lagrangerulemultitreern} for the Dekster-Wilker $\frac{N(N+1)}{2}-$tuple of edge lengths $\{a+\frac{N(N+1)}{2}-1,\ldots,a+1,a,\}$ for $a\ge a(N)$ forming $\frac{\frac{1}{2}N(N+1)!}{(N+1)!},$ incongruent $N-$simplexes in $\mathbb{R}^{N},$ we obtain a class of Fermat-Steiner trees for $b_{i}=1,$ for $i=1,2,\ldots,N+1$ and $b_{ST}=1,$ which yields a Fermat-Steiner-Frechet multitree in $\mathbb{R}^{N}.$
By selecting the proper $\frac{N(N+1)}{2}-$tuple of edge lengths, which yields the maximum volume from all $\frac{\frac{1}{2}N(N+1)!}{(N+1)!},$ incongruent $N-$simplexes in $\mathbb{R}^{N},$ we consider a variable weighted Fermat-Steiner tree having three equally weighted Fermat Steiner points with weight $B_{ST}$ and the same boundary weights $b_{i}=1.$ By perturbing the length weighted Fermat-Steiner tree structure for the $N-$simplex $(A_{1}A_{2}\ldots A_{N+1})_{Max}$ having the maximum volume, we can derive an upper bound for the variable weight $B_{ST},$ which is calculated by comparing the perturbed length tree structure compared with the pther Fermar-Steiner tree structures, that belong to the unweighted Fermat-Steiner-Frechet multitree in $\mathbb{R}^{N}.$ Hence, this particular arrangement of the $\frac{N(N+1)}{2}$ consecutive natural numbers for $a\ge a(N)$ with the upper bound $(B_{ST})_{s}$ yields the most natural weighted $\frac{N(N+1)}{2}-$tuple of natural numbers referring to weighted Fermat-Steiner trees with the minimum mass transfer.
\end{proof}

\section{Intermediate weighted Fermat-Steiner-Frechet multitrees for a given $\frac{N(N+1)}{2}-$tuple of positive real numbers determining the edge lengths of incongruent $N-$simplexes in $\mathbb{R}^{N}$}
In this section, we deal with the solution (multitree) of the intermediate weighted Fermat-Steiner-Frechet problem (P(I.Fermat-Steiner-Frechet)) for a given $\frac{N}(N+1){2}$ of positive real numbers determining incongruent $N-$simplexes in $\mathbb{R}^{N},$ by inserting $N(l+s)$ equality constraints derived by $l+s<N-1$ independent solutions for $(l+s$ variable weighted Fermat problems for the Frechet $N-$multisimplex derived by incongruent boundary $N-$simplexes in $\mathbb{R}^{N},$ which correspond to the same $\frac{N(N+1)}{2}-$tuple of positive real numbers (edge lengths) and $l+s-1$ equality constraints derived by two different expressions of each line segments connecting two consecutive weighted Fermat-Steiner points. By applying a Lagrange program, we can detect intermediate weighted Fermat-Frechet multitrees for a given $\frac{N(N+1}{2}-$tuple of edge lengths determining incongruent $N-$ simplexes (Frechet $N-$multisimplex) in $\mathbb{R}^{N},$ by using the Dekster Wilker function.

First, we give the definitions of an intermediate weighted Fermat-Steiner tree and an intermediate weighted Fermat-Steiner multitree for a given $\frac{N(N+1)}{2}-$tuple of positive real numbers determining the edge lengths of incongruent $N-$simplexes in $\mathbb{R}^{N}.$

\begin{definition}[A non degenerate intermediate weighted Fermat-Steiner tree for $\{A_{1}A_{2}\ldots A_{N+1}\}$]
A non degenerate weighted Fermat-Steiner tree $T_{IS}$ is a tree, which consists of some line segments between the weighted Fermat-Steiner points $A_{0,j},$ and weighted Fermat points $P_{0,j}$ whose
$\# <N-1,$ such that each $A_{0,j}$ has degree three, each $P_{0,j}$ has degree more than three and of $N+1$ line segments joining each boundary vertex $A_{i}$ with some $A_{0,j},$ or $P_{0,j}.$
\end{definition}

\begin{definition}
An intermediate weighted Fermat-Steiner multitree $MT_{IS}$ is a union of intermediate weighted Fermat-Steiner trees, which correspond to incongruent $N-$simplexes derived by the same Dekster-Wilker $\frac{N(N+1}{2}-$tuple of positive real numbers determining edge lengths.
\end{definition}


We consider an intermediate weighted Fermat-Steiner multitree in $\mathbb{R}^{N}$ having incongruent boundary simplexes, with $s$ a fixed $\#$ of weighted Fermat-Steiner points and $l$ a fixed $\#$ of weighted Fermat points in $\mathbb{R}^{N},$ such that $s+l < N-1.$

\begin{problem}[The intermediate weighted Fermat-Steiner-Frechet (P(I.Fermat-Steiner-Frechet)) problem in $\mathbb{R}^{N}$ with equality constraints]
\begin{equation*}
\begin{aligned}
& & f_{0}(\tilde{x})=\sum_{a_{i_{k},j_{k}}\in T_{S}}b_{k}a_{i_{k},j_{k}}, \\
& & f_{i,j}(\tilde{x}) = 0, \; i = 1, \ldots,l+s), j=1,\ldots, N,\\
& & f_{k}(\tilde{x})=0,\; k=1,\ldots,l+s-1,
\end{aligned}
\end{equation*}
where $s$ is $\#$ of weighted Fermat-Steiner points $A_{0,i}$ type~one,~two or three and $l$ is $\#$ of weighted Fermat-Steiner points $A_{0,i}$
of degree (connections) more than three.
\begin{align}\label{fundzerornis}
f_{0}(\tilde{x})=f_{0}(T_{S})=\sum_{a_{i_{k},j_{k}}\in T_{S}}b_{k}a_{i_{k},j_{k}},
\end{align}

\begin{align}\label{fundonernis}
f_{1,1}(\tilde{x})=\frac{w_{(0,1),1}}{a_{(0,1),1}\operatorname{Vol}(A_{0,1}\ldots A_{N+1})}-\nonumber\\ -\frac{1-\sum_{i=1}^{N}w_{(0,1),i}}{a_{(0,1),N+1}(a_{(0,1),1},a_{(0,1),2},a_{(0,1),3},\beta_{(0,1),4},\beta_{(0,1),5},\ldots, \beta_{(0,1),N})\operatorname{Vol}(A_{0,1}A_{1}\ldots A_{N})},
\end{align}
$\vdots$
\begin{align}\label{fundtwornis}
f_{1,N}(\tilde{x})=\nonumber\\ \frac{w_{(0,1),N}}{a_{(0,1),N}(a_{(0,1),1},a_{(0,1),2},a_{(0,1),3},\beta_{(0,1),4},\beta_{(0,1),5},\ldots, \beta_{(0,1),N})\operatorname{Vol}(A_{1}\ldots A_{0,1}A_{N+1})}-\nonumber\\ -\frac{1-\sum_{i=1}^{N}w_{(0,1),i}}{a_{(0,1),N+1}(a_{(0,1),1},a_{(0,1),2},a_{(0,1),3},\beta_{(0,1),4},\beta_{(0,1),5},\ldots, \beta_{(0,1),N})\operatorname{Vol}(A_{0,1}A_{1}\ldots A_{N})},
\end{align}
$\vdots$

\begin{multline}\label{fundn-11rnis}
f_{l+s,1}(\tilde{x})= \frac{w_{(0,l+s),1}}{a_{(0,l+s),1}\operatorname{Vol}(A_{0,l+s}\ldots A_{N+1})}-\\ -\frac{1-\sum_{i=1}^{N}w_{(0,l+s),i}}{a_{(0,l+s),N+1})\operatorname{Vol}(A_{0,l+s}A_{1}\ldots A_{N})},
\end{multline}
where \[a_{(0,l+s),N+1}=\]\[=a_{(0,l+s),N+1}(a_{(0,l+s),1},a_{(0,l+s),2},a_{(0,l+s),3},\beta_{(0,l+s),4},\beta_{(0,l+s),5},\ldots, \beta_{(0,l+s),N})\]
$\vdots$
\begin{multline}\label{fundtn-1nnis}
f_{l+s,N}(\tilde{x})=\frac{w_{(0,l+s),N}}{a_{(0,l+s),N})\operatorname{Vol}(A_{1}\ldots A_{0,l+s}A_{N+1})}-\\ -\frac{1-\sum_{i=1}^{N}w_{(0,l+s),i}}{a_{(0,l+s),N+1}\operatorname{Vol}(A_{0,l+s}A_{1}\ldots A_{N})},
\end{multline}

where
\[a_{(0,l+s),N}=\]\[=a_{(0,l+s),N}(a_{(0,l+s),1},a_{(0,l+s),2},a_{(0,l+s),3},\beta_{(0,l+s),4},\beta_{(0,l+s),5},\ldots, \beta_{(0,l+s),N})\]

\begin{align}\label{funm1rnis}
f_{k}(\tilde{x})=a_{(0,k),(0,k+1)}(\tilde{x};boundary simplex~1)-a_{(0,k),(0,k+1)}(\tilde{x};boundary simplex~2),
\end{align}

for $k=1,2,\ldots l+s-1.$
\end{problem}

We take into account that:

$\bullet$ (\ref{fundzerornis}) is the objective function of the intermediate weighted Fermat-Steiner problem for $A_{1}A_{2}\ldots A_{N+1}$ in $\mathbb{R}^{N}$ having $N-1$ weighted Fermat-Steiner points $A_{0,i},$ such that:

There are $l$ weighted Fermat-Steiner points $A_{0,i}$ of degree three with respect to a boundary triangle and $s$ weighted Fermat points $A_{0,i}$ of degree more than three with respect to a boundary $r-$simplex in $\mathbb{R}^{N},$ for $r<N,$ which can be derived by the local minimality criterion of Lemma~\ref{ivantuzhimp1}.

$\bullet$ (\ref{fundonernis})-(\ref{fundtwornis})deal with the solution of the weighted Fermat problem for $A_{1}\ldots A_{N+1}$ with
variable weights $w_{(0,1),1},\ldots,w_{(0,1),N}$ and $w_{(0,1),N+1}=1-\sum_{i=1}^{N}w_{(0,1),i},$ which is determined by applying Theorem~\ref{RatioVolumesSimplex}  for the unique weighted Fermat point $A_{0,1}.$

$\vdots$

$\bullet$ (\ref{fundn-11rnis})-(\ref{fundtn-1nnis}) deal with the solution of the weighted Fermat problem for $A_{1}\ldots A_{N+1}$ with
variable weights $w_{(0,l+s),1},w_{(0,l+s),2},\ldots,w_{(0,l+s),N},$ and $w_{(0,l+s),N+1}=1-\sum_{i=1}^{N+1}w_{(0,l+s),i},$ which is determined by applying Theorem~\ref{RatioVolumesSimplex} and the unique weighted Fermat point $A_{0,l+s}.$

$\bullet$  (\ref{funm1rnis}) is a derivation of possible expressions of $a_{(0,i),(0,i+1)}$ with respect to boundary $r-$simplexes for $2 \le r < N$ by applying the generalized cosine law in $\mathbb{R}^{2}$ given in lemma~\ref{cosinelawr2} combined with Theorem~\ref{RatioVolumesSimplex} for $A_{i_{1}}\ldots A_{i_{r+1}},$ with $r+1$ fixed given weights taken from the set $\{b_{1},b_{2},\ldots,b_{N+1},b_{ST}\}.$


The following Lagrangian program detects an intermediate weighted Fermat-Steiner-Frechet multitree in $\mathbb{R}^{N}.$

\begin{theorem}[Lagrange multiplier rule for the weighted Fermat-Steiner Frechet multitree in $\mathbb{R}^{N}$]\label{Lagrangerulemultitreer4is}
If the admissible point $\tilde{x}_{i}$ yields an intermediate weighted minimum multitree for $1\le i \le \frac{\frac{1}{2}N(N+1)!}{(N+1)!},$ which correspond to a Frechet $N-$multisimplex derived by a $\frac{N(N+1)}{2}-$tuple of edge lengths determining upto $\frac{\frac{1}{2}N(N+1)!}{(N+1)!}$ incongruent $N-$simplexes constructed by the Dekster-Wilker domain $DW_{\mathbb{R}^{N}}(\ell,s)$, then there are numbers $\lambda_{0i},\lambda_{1i},\lambda_{2i},\ldots \lambda_{ ((l+s-1)(N+1)+1)i)},$ such that:

\begin{equation}\label{lagrangemultitreer3cond1rnis}
\frac{\partial \mathcal{L}_{i}(\tilde{x}_{i},\tilde{\lambda}_{i})}{\partial x_{ji}}=0
\end{equation}
for $j=1,2,\ldots,2N(l+s-1),$

\[\tilde{x}_{i}= \{a_{(0,1),1},a_{(0,1),2},a_{(0,1),3},\beta_{(0,1),4},\ldots,\beta_{(0,1),N},\ldots,
a_{(0,l+s),1},a_{(0,l+s),2},\]\[a_{(0,l+s),3},\beta_{(0,l+s),4},\ldots,\beta_{(0,l+s),N},\]\[ w_{(0,1),1},\ldots, w_{(0,1),N},w_{(0,2),1},\ldots, w_{(0,2),N},\ldots,\] \[w_{(0,l+s),1},\ldots, w_{(0,l+s),N}\} \]

$\tilde{\lambda}_{i}=\{\lambda_{0i},\lambda_{1i},\ldots,\lambda_{((l+s-1)(N+1)+1i)}\}.$

\end{theorem}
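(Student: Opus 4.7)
The plan is to model this proof directly on the template already used for Theorems~\ref{Lagrangerulemultitree},~\ref{Lagrangerulemultitreer4} and~\ref{Lagrangerulemultitreern}. The core structure is the same: an objective function subject to finitely many smooth equality constraints, solved by invoking the classical Lagrange multiplier rule from \cite[p.~112]{Tikhomirov:90}, \cite[Theorem~3.1, p.~586]{BrinkhisTikhomirov:05}, and the convex version \cite[Theorem~28.1]{Rockafellar:97}. The only novelty here is bookkeeping: instead of exactly $N-1$ three-valent weighted Fermat-Steiner points we carry $s$ such points plus $l$ higher-valence weighted Fermat points, with $s+l<N-1$, so the number of equality constraints and the dimension of the admissible domain both shrink accordingly.

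First I would fix, for each incongruent $N$-simplex indexed by $1\le i\le \frac{\frac{1}{2}N(N+1)!}{(N+1)!}$, a parallelepiped $\Pi_{i}(p_{1},q_{1};\ldots ;p_{2N(l+s-1)},q_{2N(l+s-1)})$ inside which the admissible point $\tilde{x}_{i}$ lies strictly in the interior. Next I would verify that each coordinate function appearing in $f_{0}$ and in the constraints $f_{1,1},\ldots,f_{l+s,N},f_{1},\ldots,f_{l+s-1}$ has continuous first partial derivatives on $\Pi_{i}$. For the weighted volume constraints $f_{j,k}$ this follows from Theorem~\ref{RatioVolumesSimplex}, after expanding the volumes $\operatorname{Vol}(\cdot)$ through the Cayley-Menger determinants, whose arguments depend smoothly on $a_{(0,j),1},a_{(0,j),2},a_{(0,j),3}$ and on the Schlafli angles $\beta_{(0,j),4},\ldots,\beta_{(0,j),N}$ by the generalized cosine law in $\mathbb{R}^{N}$ established (for $N=3,4$) in Lemmas~\ref{calculationa0304a01a03alpha} and~\ref{calculationa0405a01a02a03beta}, and extended inductively in $\mathbb{R}^{N}$. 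For the bridging constraints $f_{k}$, smoothness follows from two independent evaluations of the same segment $a_{(0,k),(0,k+1)}$ via boundary $r$-simplexes with $2\le r<N$, each expressed through the generalized cosine law of Lemma~\ref{cosinelawr2} together with the angular solutions inherited from Lemma~\ref{angularsolutionweightedFermatSteiner} applied at type-one, type-two, type-three nodes.

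Having verified smoothness, I would then invoke the Lagrange multiplier rule on $(\Pi_{i},f_{0},f_{1,1},\ldots,f_{l+s,N},f_{1},\ldots,f_{l+s-1})$: because $\tilde{x}_{i}$ is a local minimum of $f_{0}$ subject to the $(l+s)N+(l+s-1)=(l+s)(N+1)-1$ equality constraints, there exists a nontrivial Lagrange covector $\tilde{\lambda}_{i}=(\lambda_{0i},\lambda_{1i},\ldots,\lambda_{((l+s-1)(N+1)+1)i})$ so that the Lagrangian $\mathcal{L}_{i}(\tilde{x}_{i},\tilde{\lambda}_{i})$ is stationary in $\tilde{x}_{i}$. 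Since the isoperimetric weight condition~(\ref{isoperimetricconditionweights}) together with the positivity of weights ensures the feasible set is nonempty in the interior of $\Pi_{i}$, Slater's condition holds for the convex reformulation of \cite[Theorem~28.1]{Rockafellar:97}, so we may normalize $\lambda_{0i}=1$. This yields exactly the asserted system (\ref{lagrangemultitreer3cond1rnis}) for $j=1,\ldots,2N(l+s-1)$.

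The main technical obstacle is the implicit presence of the $N-3$ Schlafli angles $\beta_{(0,j),u}$ (for $u=4,\ldots,N$) inside the distance functions $a_{(0,j),N}$ and $a_{(0,j),N+1}$. As noted after Theorem~\ref{Lagrangerulemultitreern}, these cannot be solved explicitly in closed form from $a_{(0,j),1},a_{(0,j),2},a_{(0,j),3}$ for $N\ge 4$, so the differentiations $\partial\mathcal{L}_{i}/\partial\beta_{(0,j),u}$ must be taken by treating $\beta_{(0,j),u}$ as independent coordinates of $\tilde{x}_{i}$ (which is precisely what the statement does). I expect the verification that the gradients of the $(l+s)(N+1)-1$ constraints are linearly independent at $\tilde{x}_{i}$ to require the most care: it reduces to checking that the Jacobian of the weighted volume equalities with respect to the $(w_{(0,j),k})$ block is diagonal and positive (by Theorem~\ref{RatioVolumesSimplex}), while the cross-block involving the $\beta$-variables is nondegenerate provided none of the higher-valence vertices degenerates to a hyperplanar configuration, which is exactly the interior condition guaranteed by working strictly inside $\Pi_{i}$.
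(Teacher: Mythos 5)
Your proposal follows essentially the same route as the paper's own proof: verify that $f_{0}$ and the $(l+s)(N+1)-1$ equality constraints have continuous partial derivatives on each parallelepiped $\Pi_{i}$ (after collapsing the double index $f_{p,q}$ to a single index $f_{k}$), then invoke the Lagrange multiplier rule of Tikhomirov and Brinkhuis--Tikhomirov to produce the multiplier vector $\tilde{\lambda}_{i}$ satisfying (\ref{lagrangemultitreer3cond1rnis}). Your additional remarks on constraint-gradient independence and the normalization $\lambda_{0i}=1$ go beyond what the paper records but do not change the argument.
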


\begin{proof}
By setting $f(p,q)\equiv f_{k}$ we consider that $\frac{\partial (f_{k})_{i} }{(x_{ji})}$  are continuous in each parallelepiped $\Pi_{i},$ for $1\le i\le 3\frac{\frac{1}{2}N(N+1)!}{(N+1)!},$ $k=0,1,2,\ldots, N^2-1,$ $j=1,2,\ldots, 2N(l+s-1)$ and by applying Lagrange multiplier rule,
we obtain the Lagrangian vector $\tilde{\lambda}_{i}=\{\lambda_{0i},\lambda_{1i}\lambda_{2i},\ldots, \lambda_{((l+s-1)(N+1)+1i)},$ such that (\ref{lagrangemultitreer3cond1rnis}) holds.

\end{proof}

\begin{example}\label{multitreer5is}
Let  $\{a_{ij}\}$ be a given $15-$th tuple of positive real numbers determining incongruent $5-$simplexes using Dekster-Wilker conditions in $\mathbb{R}^{5}$ and $A_{1}A_{2}\ldots A_{6}$ be a member of the Frechet $5-$multisimplex in $\mathbb{R}^{5}.$ We consider a fixed tree topology that contains two weighted Fermat-Steiner points $A_{0,1},$ $A_{0,2}$ of degree three and one weighted Fermat point of degree four (see Fig.~\ref{figgr5}).

\begin{figure}\label{figgr5}
\centering
\includegraphics[scale=0.80]{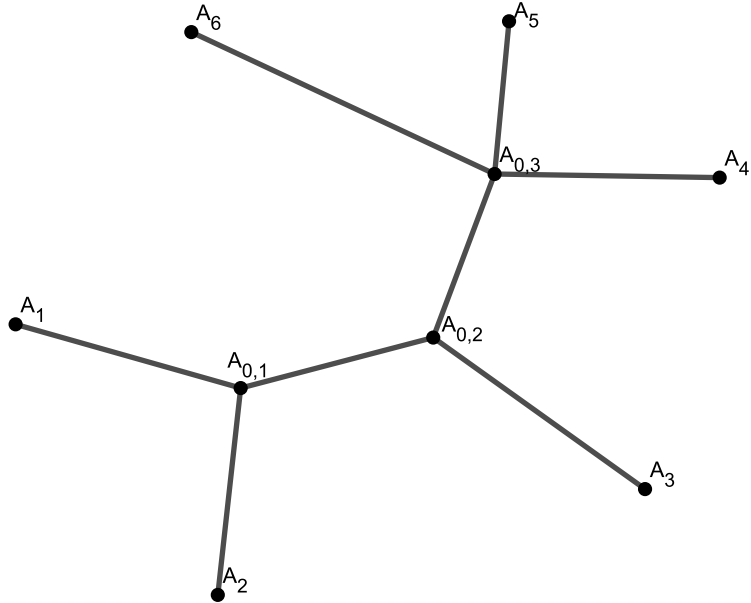}
\caption{An intermediate weighted Fermat-Steiner-Frechet multitree for a boundary $5-$simplex $A_{1}A_{2}\ldots A_{6}$ in $\mathbb{R}^{5}$} \label{figg6}
\end{figure}

The intermediate weighted Fermat-Steiner tree for $A_{1}A_{2}A_{3}A_{4}A_{5}A_{6}$ in $\mathbb{R}^{5}$ consists of the line segments
\[\{A_{0,1}A_{1},A_{0,1}A_{2},A_{0,1}A_{0,2},A_{0,2}A_{3},A_{0,2}A_{0,3},A_{0,3}A_{4},A_{0,3}A_{5},A_{0,3}A_{6}\}.\]

By applying the Lagrangian program of Theorem~\ref{Lagrangerulemultitreer4is}, we can detect intermediate weighted Fermat-Steiner-Frechet multitrees for incongruent $5-$simplexes in $\mathbb{R}^{5}$ derived by a given Dekster-Wilker $15-$tuple of edge lengths.

\end{example}


\section{"Mutation" of an intermediate weighted Fermat Steiner Frechet multitree for $m$ Closed Polytopes in $\mathbb{R}^{N}$}
In this section, we introduce the "mutation" of an intermediate weighted Fermat Steiner Frechet multitree for  $m$ boundary polytopes, by applying the plasticity solutions of the $m-$INVWF  problem for $A_{1}A_{2}\ldots A_{m+1}$ in $\mathbb{R}^{N},$ enriched by a two way mass transportation network.
The dynamic plasticity solutions of the $m-$ INVWF problem in $\mathbb{R}^{N}$ combined with conditions derived by optimal mass transport and storage gives the "mutation of an intermediate weighted Fermat-Frechet multitree for boundary $m$ closed polytopes in $\mathbb{R}^{N}.$

We start by deriving the geometric and dynamic plasticity for $m$ closed polytopes in $\mathbb{R}^{N}$
for $m\ge N+1.$
\begin{definition}\label{defgeomplasticity}
We call \textit{geometric plasticity} of a weighted Fermat tree whose endpoints correspond to a closed polytope in $\mathbb{R}^{N},$ which is formed by $m+1$
variable line segments meeting at the weighted Fermat point $A_{0},$  for
$m+1$ given values of the weights, the set of solutions of the $m+1$
variable lengths of the line segments, which correspond to a family
of weighted networks that preserve the weighted Fermat point $A_{0}.$
\end{definition}

We assume that we select $B_{i}$ that correspond to each vertex $A_{i},$ such that the weighted floating inequalities of Theorem~\ref{theor1} hold:
\[ \|\sum_{j=1,j\ne i}^{m+1}B_{j} \vec {u}(A_j,A_i)\|>B_{i}, \]
for $i,j=1,2,\ldots,m+1.$
Thus, the weighted Fermat point $A_{0}$ of an $m$ closed polytope $A_{1}A_{2}\ldots A_{m}$ in
$\mathbb{R}^{N}$ is the unique intersection point of $m+1$ line segments $A_{0}A_{m}.$

\begin{theorem}[Geometric plasticity of $m$ closed polytopes in $\mathbb{R}^{n}$]\label{geomplasticity}
If we select a point $A_{i}^{\prime}$ on the ray defined by $A_{0}A_{i}$ with corresponding weight $B_{i},$ such that \[ \|\sum_{j=1,j\ne i}^{m+1}B_{j} \vec {u}(A_{j}^{\prime},A_{i}^{\prime})\|>B_{i}, \]
for $i,j=1,2,\ldots,m+1,$ then the corresponding weighted Fermat point $A_{0}^{\prime}\equiv A_{0}.$

\end{theorem}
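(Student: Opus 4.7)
The plan is to exploit the simple but powerful fact that unit vectors are invariant under positive rescaling along a ray, and then invoke the Sturm/Kupitz-Martini existence and uniqueness result (Theorem~\ref{theor1}) for the perturbed configuration.

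First I would record the defining property of the original weighted Fermat point. Since the inequalities in Theorem~\ref{geomplasticity} are assumed (for the primed configuration, and by hypothesis already valid for the unprimed one, as $A_0$ is the weighted Fermat point of $A_{1}\cdots A_{m+1}$), part~(II) of Theorem~\ref{theor1} applies to $A_0$ and yields the balancing identity
\begin{equation}\label{eq:balance-original}
\sum_{i=1}^{m+1} B_{i}\,\vec{u}(A_{0},A_{i}) = \vec{0}.
\end{equation}

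Next I would exploit the ray condition. By hypothesis $A_{i}^{\prime}$ lies on the ray emanating from $A_{0}$ through $A_{i}$, so there exists $t_{i}>0$ with $A_{i}^{\prime} = A_{0} + t_{i}(A_{i}-A_{0})$. This gives the crucial identity
\begin{equation}\label{eq:unitinvariance}
\vec{u}(A_{0},A_{i}^{\prime}) = \vec{u}(A_{0},A_{i}), \qquad i=1,\ldots,m+1.
\end{equation}
Substituting \eqref{eq:unitinvariance} into \eqref{eq:balance-original} yields
\[
\sum_{i=1}^{m+1} B_{i}\,\vec{u}(A_{0},A_{i}^{\prime}) = \vec{0},
\]
so $A_{0}$ satisfies the weighted floating balancing equation \eqref{weightedfloatingft} for the perturbed configuration $A_{1}^{\prime},\ldots,A_{m+1}^{\prime}$ with the same weights $B_{1},\ldots,B_{m+1}$.

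Finally I would close the argument by uniqueness. The hypothesis of Theorem~\ref{geomplasticity} is precisely the floating inequality~(II) of Theorem~\ref{theor1} applied to the primed system, so the weighted Fermat point $A_{0}^{\prime}$ of $A_{1}^{\prime}\cdots A_{m+1}^{\prime}$ exists, lies outside $\{A_{1}^{\prime},\ldots,A_{m+1}^{\prime}\}$, and is the unique interior solution of $\sum_{i=1}^{m+1}B_{i}\vec{u}(A_{0}^{\prime},A_{i}^{\prime})=\vec{0}$. Since $A_{0}$ already satisfies this equation, uniqueness forces $A_{0}^{\prime}\equiv A_{0}$, which is the claim.

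The essential content of the proof is the one-line observation \eqref{eq:unitinvariance}; the only subtlety worth flagging is that without the floating hypothesis of Theorem~\ref{geomplasticity}, some of the displaced points could conceivably coincide with the Fermat point of the new system (an absorbing case of part~(III) of Theorem~\ref{theor1}), so no obstruction to non-uniqueness actually occurs, but the hypothesis is what makes the conclusion rigorous rather than formal.
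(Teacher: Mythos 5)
Your proof is correct and follows essentially the same route as the paper's: the paper likewise derives from the two floating inequalities the balancing identities $\sum_{i=1}^{m+1}B_{i}\vec{u}(A_{0},A_{i})=\vec{0}$ and $\sum_{i=1}^{m+1}B_{i}\vec{u}(A_{0},A_{i}^{\prime})=\vec{0}$ and concludes $A_{0}^{\prime}\equiv A_{0}$ by uniqueness. Your write-up merely makes explicit the ray-invariance step $\vec{u}(A_{0},A_{i}^{\prime})=\vec{u}(A_{0},A_{i})$ that the paper leaves implicit, which is a welcome clarification but not a different argument.
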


\begin{proof}
The weighted floating inequalities
\[ \|\sum_{j=1,j\ne i}^{m+1}B_{j} \vec {u}(A_j,A_i)\|>B_{i},\]
\[ \|\sum_{j=1,j\ne i}^{m+1}B_{j} \vec {u}(A_{j}^{\prime},A_{i}^{\prime})\|>B_{i}, \]
yield respectively,
\[\sum_{i=1}^{m+1}B_{i}\vec {u}(A_0,A_i)=\vec{0},\]
\[\sum_{i=1}^{m+1}B_{i}\vec {u}(A_0,A_{i}^{\prime})=\vec{0}.\]
Thus, we derive that $A_{0}^{\prime}\equiv A_{0}.$

\end{proof}

We assume that $m+1$ line segments $A_{0}A_{i}$ intersect at $A_{0}$ in $\mathbb{R}^{N},$ for $m\ge N+1.$
We note that:

(1) for $n=2,$ $m$ angles $\alpha_{i0j}$ determine the dynamic plasticity equations of $A_{1}A_{2}\ldots A_{m+1}$
in $\mathbb{R}^{2}.$

(2) for $n=3,$ $m+(m-1)$ angles $\alpha_{i0j}$ determine the dynamic plasticity equations of $A_{1}A_{2}\ldots A_{m+1}$ in $\mathbb{R}^{3}.$

(3) for $n>3,$ $\sum_{i=1}^{N-1}(m+1-i)$ angles $\alpha_{i0j}$ determine the dynamic plasticity equations of $A_{1}A_{2}\ldots A_{m+1}$ in $\mathbb{R}^{N}.$

We set $\sum_{12\ldots m+1}B\equiv \sum_{i=1}^{m+1} (B_{i})_{12\ldots m+1},$\\
$\sum_{i_{1}i_{2}\ldots i_{N+1}}B=(B_{i_{1}})_{i_{1}i_{2}\ldots i_{N+1}}+\ldots +(B_{i_{N+1}})_{i_{1}i_{2}\ldots i_{N+1}},$\\ for $i_{1},\ldots i_{N+1} \in\{1,2,\ldots m+1\}.$

\begin{lemma}\label{dependenceweightsm}
If $\,\sum_{12\ldots m+1}B=\sum_{i_{1}i_{2}\ldots i_{n+1}}B,$ for every $i_{1},\ldots i_{n+1} \in\{1,2,\ldots m+1\},$ where
$\sum_{12\ldots m+1}B:=(B_{N+1})_{12\ldots m+1}(1+\sum_{i=1, i\neq N+1}^{m}(\frac{B_i}{B_{N+1}})_{1,2,\ldots m+1}),$
then
\begin{eqnarray}\label{B12nplus2}
 (B_{i})_{12\ldots m+1}=\sum_{j=N+2}^{m}a_{i,j} (B_{j})_{12\ldots m+1}+ b_i,\quad i=1,2\ldots, N+1,
\end{eqnarray}
where

\begin{eqnarray}\label{abnplusonej}
\nonumber
&&(a_{N+1,j},\,b_{N+1})=\nonumber\\&&{} (\frac{(\frac{B_{1}}{B_{N+1}})_{12\ldots N+1}(\frac{B_{N+1}}{B_{j}})_{2\ldots j}+\dots +
(\frac{B_{N}}{B_{N+1}})_{12\ldots N+1}(\frac{B_{N+1}}{B_{j}})_{12\ldots (N-1)(N+1)j}  -1}
{\sum_{i=1}^{N+1}(\frac{B_{i}}{B_{N+1}})_{12\ldots N+1}  },\ \nonumber\\&&{}
(B_{N+1})_{12\ldots N+1}),
\nonumber
\end{eqnarray}

\begin{eqnarray}\label{abn}
\nonumber
&&(a_{N,j},\,b_{N})=(a_{N+1,j}(\frac{B_{N}}{B_{N+1}})_{12\ldots N+1}-(\frac{B_{N}}{B_{N+1}})_{12\ldots N+1}(\frac{B_{N+1}}{B_{j}})_{12\ldots (N-1)(N+1)j},\ \nonumber\\&&{}(B_{N})_{12\ldots N+1}),
\nonumber
\end{eqnarray}
$\vdots$
\begin{eqnarray}\label{ab1}
\nonumber
&&(a_{1,j},\,b_{1})=(a_{N+1,j}(\frac{B_{1}}{B_{N+1}})_{12\ldots N+1}-(\frac{B_{1}}{B_{N+1}})_{12\ldots N+1}(\frac{B_{N+1}}{B_{j}})_{23\ldots j},\ \nonumber\\&&{}(B_{1})_{12\ldots N+1}),
\nonumber
\end{eqnarray}
for $j=n+2,\ldots m+1.$

\end{lemma}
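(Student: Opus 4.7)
The plan is to mimic the proof of Corollary~\ref{dependenceweights} and then extend it by linear superposition over the $m-N$ extra rays indexed by $j=N+2,\ldots,m+1$. First I would write the weighted floating balancing condition at $A_0$ for the full configuration, $\sum_{i=1}^{m+1}(B_i)_{12\ldots m+1}\vec{u}(A_0,A_i)=\vec{0}$, and project it against the normals $\vec{n}_{0,23\ldots N},\vec{n}_{0,13\ldots N},\ldots,\vec{n}_{0,12\ldots (N-1)}$ to the hyperplanes through $A_0$ spanned by each choice of $N-1$ rays among the first $N$. This produces $N$ scalar identities analogous to (\ref{eq1weights})--(\ref{eqnweights}) from Theorem~\ref{theorplastmpol}, but each identity now carries additional terms of the form $(B_j)_{12\ldots m+1}\,\mathrm{sgn}_{j,i_10 i_2\ldots i_{N-1}}\sin a_{j,i_10i_2\ldots i_{N-1}}$ for every $j\in\{N+2,\ldots,m+1\}$. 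Every sine factor is a purely geometric constant depending only on the fixed rays, so the system is linear in the $m+1$ weights $(B_i)_{12\ldots m+1}$.

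The next step is superposition. For each fixed $j\in\{N+2,\ldots,m+1\}$ I would consider the auxiliary configuration in which all extra weights except $(B_j)_{12\ldots m+1}$ are set to zero; this restricts the $m$--INVWF problem to the $(N+1)$--INVWF problem on the closed polytope $A_1A_2\ldots A_{N+1}A_j$, and Corollary~\ref{dependenceweights} (with $j$ playing the role of the $(N+2)$--th vertex) produces explicit coefficients that I denote $a_{i,j}$. Because the $N$ projected balancing identities are linear in all the weights, summing these ``single--extra--ray'' solutions over $j=N+2,\ldots,m+1$ yields
\begin{equation*}
(B_i)_{12\ldots m+1}=\sum_{j=N+2}^{m+1}a_{i,j}(B_j)_{12\ldots m+1}+b_i,\qquad i=1,2,\ldots,N+1,
\end{equation*}
with $a_{i,j}$ precisely of the form displayed in (\ref{abnplusonej})--(\ref{ab1}), the factor $(B_{N+1}/B_j)_{\ldots j}$ being inherited from the $j$-th instance of Corollary~\ref{dependenceweights}.

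It remains to pin down the additive constants $b_i$. Switching off all extra weights isolates the $N$--simplex balancing for the subset $\{1,2,\ldots,N+1\}$, whose unique solution is $(B_i)_{12\ldots N+1}$ by Theorem~\ref{nplus1inverseRn}. The hypothesis $\sum_{12\ldots m+1}B=\sum_{i_1 i_2\ldots i_{N+1}}B$, specialized to $\{1,2,\ldots,N+1\}$, then normalizes $b_{N+1}=(B_{N+1})_{12\ldots N+1}$; the remaining $b_i$ values are determined from the $N$--simplex ratios $(B_i/B_{N+1})_{12\ldots N+1}$ exactly as in Corollary~\ref{dependenceweights}, producing $b_i=(B_i)_{12\ldots N+1}$.

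The main obstacle I anticipate is combinatorial consistency: each extra ray $j$ participates in several $(N+1)$-simplex subsystems (one for every choice of which of the first $N+1$ vertices it replaces), and the superposition must agree across all of them. Consistency is forced by the fact that the hypothesis $\sum_{12\ldots m+1}B=\sum_{i_1 i_2\ldots i_{N+1}}B$ is required to hold for \emph{every} $(N+1)$-subset, not only for $\{1,\ldots,N+1\}$. This global constancy of the total weight $c$ guarantees that every application of Corollary~\ref{dependenceweights} is calibrated against the same normalization, so the coefficients $a_{i,j}$ glue together unambiguously; once this is verified, the affine decomposition of $(B_i)_{12\ldots m+1}$ is forced by linear algebra.
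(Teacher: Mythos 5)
Your argument is sound, but note that the paper itself supplies no proof of Lemma~\ref{dependenceweightsm} at all: the lemma is stated bare in Section~10 as an unproved extension of Corollary~\ref{dependenceweights}, which is in turn announced only as ``a direct consequence'' of Theorem~\ref{theorplastmpol}. What you have written is therefore the missing justification rather than a variant of an existing one. The core of your argument is correct: once the rays through $A_{0}$ are fixed, the projected balancing identities and the normalization $\sum_{i}(B_{i})_{12\ldots m+1}=c$ form an affine system in the $m+1$ weights, so the map from the $m-N$ free weights $(B_{j})_{12\ldots m+1}$, $j=N+2,\ldots,m+1$, to $(B_{1},\ldots,B_{N+1})$ is affine; its constant part is obtained by switching off all extra weights (giving $b_{i}=(B_{i})_{12\ldots N+1}$ via Theorem~\ref{nplus1inverseRn}), and the coefficient of $(B_{j})_{12\ldots m+1}$ is read off from the single-extra-ray subproblem on $A_{1}\ldots A_{N+1}A_{j}$, which is exactly Corollary~\ref{dependenceweights} with $j$ in the role of $N+2$. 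Your observation that the hypothesis must hold for every $(N+1)$-subset is precisely what calibrates all these subproblems to the same constant $c$. One caveat of phrasing: ``summing these single-extra-ray solutions over $j$'' is not literally right, since adding $m-N$ affine solutions would overcount the constant terms $b_{i}$; what you actually use, and should state, is that the linear part of the affine solution map decomposes ray by ray while the constant part is determined once, which you do fix in your final paragraph. You also silently correct the paper's index inconsistency (the upper limit $m$ versus $m+1$ in the displayed sum), which is the right reading.
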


By assuming mass flow continuity and by applying the geometric and dynamic plasticity of $m$ polytopes in $\mathbb{R}^{N},$ (Theorem~\ref{geomplasticity}, Lemma~\ref{dependenceweightsm}) the corresponding weighted Fermat tree solutions yield some mass transportation networks, in which the weights correspond to an instantaneous collection of images of masses, which satisfy some specific conditions.

\begin{definition}\label{mutationpolytopes}
We call $(m,k)$ "mutation" of an I.Fermst-Steiner tree with respect to boundary a$m$ polytope the plasticity solutions of the $m-$INVWF  problem for $A_{1}A_{2}\ldots A_{m+1}$ in $\mathbb{R}^{N},$ enriched by a two way mass transportation network, such that $k$ masses (weights) are transferred in $k$ directions from $A_{i}\to A_{0},$ for $i=1,\ldots k$ (inflow) creating a storage at $A_{0}$ and $N-k$ masses are transferred in $m-k$ directions from $A_{0}\to A_{j}$ (outflow), for $j=k+1,\ldots m+1$ and reversely $m-k$ masses (new weights) are transferred back to $A_{0}$ along the same $m-k$ directions creating new storage at $A_{0},$ and $k$ masses (new weights) are transferred back from $A_{0}$ to $A_{i},$ $i=1,2,\ldots k.$
\end{definition}
We denote by $B_{i}$ a mass flow which is transferred from $A_{i}$
to $A_{0}$ for $i=1,2,\ldots k$ by $B_{0}$ a residual weight which
remains at $A_{0}$ and by $B_{k+1},\ldots B_{m+1}$ a mass flow which is transferred
from $A_{0}$ to $A_{k+1},\ldots, A_{m+1}.$

We denote by $\tilde{B_{i}}$ a mass flow which is transferred from
$A_{0}$ to $A_{i}$ for $i=1,2,\ldots k$ by $\tilde{B_{0}}$ a residual
weight which remains at $A_{0}$ and by $\tilde{B_{k+1}},\ldots,B_{m+1}$ a mass flow
which is transferred from $A_{k+1},\ldots A_{m+1}$ to $A_{0}.$

Thus, we derive that:

\begin{equation}\label{weight1outflow}
\sum_{i=1}^{k}B_{i}=\sum_{k+1}^{m+1}{B_{i}}+B_{0}
\end{equation}

and

\begin{equation}\label{weight2inflow}
\sum_{i=1}^{k}\tilde{B_{i}}+\tilde{B_{0}}=\sum_{i=k+1}^{m+1}\tilde{B_{i}}.
\end{equation}

By adding (\ref{weight1outflow}) and (\ref{weight2inflow}) and by
letting $\bar{B_{0}}=B_{0}-\tilde{B_{0}}$ we get:

\begin{equation}\label{weight12inoutflow}
\sum_{i=1}^{k}\bar{B_{i}}=\sum_{i=k+1}^{m+1}\bar{B_{i}}+\bar{B_{0}}
\end{equation}

such that:

\begin{equation}\label{weight12inflowsum}
\sum_{i=1}^{m+1}\bar{B_{i}}=c,
\end{equation}
where $c$ is a positive real number.

Thus, we derive the following theorem as a direct consequence of Lemma~\ref{dependenceweightsm} under the condition for the weights $\bar{B}_{12\ldots m+1}$ taken from (\ref{weight12inoutflow}), (\ref{weight12inflowsum}), which deal with the $(m,k)$ "mutation" of $m$ closed polytopes in $\mathbb{R}^{N}.$
\begin{theorem}{$(m,k)$ "mutation" of an intermediate weighted Fermat-Steiner tree for boundary $m$ closed polytopes in $\mathbb{R}^{N}$}\label{mutationmk}\\
If $\,\sum_{12\ldots m+1}\bar{B}=\sum_{i_{1}i_{2}\ldots i_{n+1}}\bar{B},$ for every $i_{1},\ldots i_{n+1} \in\{1,2,\ldots m+1\},$ where
$\sum_{12\ldots m+1}\bar{B}:=(\bar{B}_{n+1})_{12\ldots m+1}(1+\sum_{i=1, i\neq n+1}^{m+1}(\frac{\bar{B}_i}{\bar{B}_{N+1}})_{1,2,\ldots m+1}),$
then
\begin{eqnarray}\label{B12nplus2}
 (\bar{B}_{i})_{12\ldots m+1}=\sum_{j=n+2}^{m+1}a_{i,j} (\bar{B}_{j})_{12\ldots m+1}+ b_i,\quad i=1,2\ldots, N+1,
\end{eqnarray}
where

\begin{eqnarray}\label{abnplusonejj}
\nonumber
&&(a_{N+1,j},\,b_{N+1})=\nonumber\\&&{} (\frac{(\frac{\bar{B}_{1}}{\bar{B}_{N+1}})_{12\ldots N+1}(\frac{\bar{B}_{N+1}}{\bar{B}_{j}})_{2\ldots j}+\dots +
(\frac{\bar{B}_{N}}{\bar{B}_{N+1}})_{12\ldots N+1}(\frac{\bar{B}_{N+1}}{\bar{B}_{j}})_{12\ldots (N-1)(N+1)j}  -1}
{\sum_{i=1}^{n+1}(\frac{\bar{B}_{i}}{\bar{B}_{N+1}})_{12\ldots N+1}  },\ \nonumber\\&&{}
(\bar{B}_{N+1})_{12\ldots N+1}),
\nonumber
\end{eqnarray}

\begin{eqnarray}\label{abn}
\nonumber
&&(a_{N,j},\,b_{N})=(a_{N+1,j}(\frac{\bar{B}_{N}}{\bar{B}_{N+1}})_{12\ldots N+1}-(\frac{\bar{B}_{N}}{\bar{B}_{N+1}})_{12\ldots N+1}(\frac{\bar{B}_{N+1}}{\bar{B}_{j}})_{12\ldots (N-1)(N+1)j},\ \nonumber\\&&{}(\bar{B}_{N})_{12\ldots N+1}),
\nonumber
\end{eqnarray}
$\vdots$
\begin{eqnarray}\label{ab1}
\nonumber
&&(a_{1,j},\,b_{1})=(a_{N+1,j}(\frac{\bar{B}_{1}}{\bar{B}_{N+1}})_{12\ldots N+1}-(\frac{\bar{B}_{1}}{\bar{B}_{N+1}})_{12\ldots N+1}(\frac{\bar{B}_{N+1}}{\bar{B}_{j}})_{23\ldots j},\ \nonumber\\&&{}(\bar{B}_{1})_{12\ldots N+1}),
\nonumber
\end{eqnarray}
for $j=n+2,\ldots m+1,$
under the conditions for the weights:
\[\sum_{i=1}^{k}(\bar{B_{i}})_{12\ldots m+1}=\sum_{i=k+1}^{m+1}(\bar{B_{i}})_{12\ldots m+1}+(\bar{B_{0}})_{12\ldots m+1}\]

\[\sum_{i=1}^{m+1}(\bar{B_{i}})_{12\ldots m+1}=c.\]

\end{theorem}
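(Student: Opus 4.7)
The plan is to reduce Theorem~\ref{mutationmk} to a direct application of Lemma~\ref{dependenceweightsm}, after replacing the original weights $B_i$ by the bar-counterparts $\bar{B}_i$ produced by the two-way mass transport, and then appending the two linear constraints coming from the inflow/outflow bookkeeping at the storage node $A_0$.

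First, I would verify that the geometric skeleton of the intermediate weighted Fermat-Steiner tree is preserved when one passes from $B_i$ to $\bar{B}_i$. Because the bar-weights are scalar multipliers on the existing rays $\overrightarrow{A_0 A_i}$ and do not move the boundary vertices $A_i$, the geometric plasticity Theorem~\ref{geomplasticity} applies as soon as the weighted floating inequalities of Theorem~\ref{theor1}(II) continue to hold with $\bar{B}_i$ in place of $B_i$. Consequently the angular data $\alpha_{i0j}$ (together with the higher-dimensional angles $a_{i,i_{1}0 i_{2}\ldots i_{N-1}}$ entering Theorem~\ref{nplus1inverseRn}) are the same for both families of weights, so that the sine-ratio coefficients $a_{i,j}$ and the constants $b_i$ appearing in Lemma~\ref{dependenceweightsm} are numerically unchanged by the substitution.

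Second, I would invoke Lemma~\ref{dependenceweightsm} verbatim on the bar-weights. Under the hypothesis $\sum_{12\ldots m+1}\bar{B} = \sum_{i_{1}i_{2}\ldots i_{N+1}}\bar{B}$ for every index subset of size $N+1$, the $N+1$ weights $(\bar{B}_i)_{12\ldots m+1}$ with $i=1,\ldots,N+1$ become affine functions of the $m-N$ free weights $(\bar{B}_j)_{12\ldots m+1}$ with $j=N+2,\ldots,m+1$; this reproduces exactly the expressions for $(a_{N+1,j},b_{N+1}),(a_{N,j},b_{N}),\ldots,(a_{1,j},b_{1})$ listed in the statement, with the same combinatorial pattern as in Lemma~\ref{dependenceweightsm} and hence with the same proof, only carried out for the transported weights.

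Finally, I would incorporate the transport conditions. By adding \eqref{weight1outflow} and \eqref{weight2inflow} and setting $\bar{B}_0 = B_0 - \tilde{B}_0$ one obtains the storage balance \eqref{weight12inoutflow}, while the isoperimetric normalization $\sum_{i=1}^{m+1}\bar{B}_i = c$ yields \eqref{weight12inflowsum}; both are linear in the bar-weights and therefore compatible with, and imposed on top of, the affine parameterization \eqref{B12nplus2}. The main obstacle will be to confirm that these two linear conservation laws do not contradict the $N+1$ affine plasticity identities: since \eqref{weight12inflowsum} coincides with the isoperimetric normalization already built into Lemma~\ref{dependenceweightsm}, and \eqref{weight12inoutflow} only partitions the free bar-weights into an inflow block $\{\bar{B}_1,\ldots,\bar{B}_k\}$ and an outflow block $\{\bar{B}_{k+1},\ldots,\bar{B}_{m+1}\}$ summing up to the residue $\bar{B}_0$, one linear degree of freedom among the parameters $(\bar{B}_j)_{12\ldots m+1}$ is absorbed without altering the coefficients $a_{i,j}$ or $b_i$. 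This consistency check closes the argument and yields the $(m,k)$ mutation formulas asserted in Theorem~\ref{mutationmk}.
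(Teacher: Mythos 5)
Your proposal matches the paper's own (very brief) argument: the paper derives the theorem as a direct consequence of Lemma~\ref{dependenceweightsm} applied to the transported weights $\bar{B}_{i}$, with the two linear conservation laws obtained by adding \eqref{weight1outflow} and \eqref{weight2inflow} and setting $\bar{B}_{0}=B_{0}-\tilde{B}_{0}$, exactly as you do. Your additional remarks on preserving the geometric skeleton via Theorem~\ref{geomplasticity} and on the consistency of the two constraints with the affine parameterization are a slightly more careful write-up of the same route, not a different one.
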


Suppose that at time $t=0,$ an I.Fermat-Steiner-Frechet multitree occurs with respect to a given $\frac{N(N+1)}{2}-$tuple of positive real numbers determining the edge lengths of incogruent $N-$simplexes generates by the Dekster-Wilker domain in $\mathbb{R}^{N}.$ Then $N+2,\ldots, m+1$ rays start to grow from the weighted Fermat-point $(A_{0})_{l},$ which creates a two-way mass transport, in order to obtain an $(m,k)$ mutation of an I.Fermat-Steiner-Frechet multitree having one weighted Fermat point of degree $m+1.$

\begin{theorem}[(m,k) "Mutation" of an intermediate weighted Fermat-Steiner Frechet multitree in $\mathbb{R}^{N}$]\label{mutationamultitrees}
The equations of (m,k)"mutation" of an intermediate weighted Fermat-Steiner Frechet multitree in $\mathbb{R}^{N}$ is derived by adding\\ $(A_{0})_{l}(A_{N+2})_{l},(A_{0})_{l}(A_{N+3})_{l},\ldots,(A_{0})_{l} (A_{m+1})_{l}$ rays to a given weighted Fermat-Frechet multitree with respect to a boundary Frechet $N-$multisimplex, such that $k$ masses (weights) are transferred in $k$ directions from $(A_{i})_{l}\to (A_{0})_{l},$ for $i=1,\ldots k$ (inflow) creating a storage at $(A_{0})_{l}$ and $N-k$ masses are transferred in $m-k$ directions from $(A_{0})_{l}\to (A_{j})_{l}$ (outflow), for $j=k+1,\ldots m+1$ and reversely $m-k$ masses (new weights) are transferred back to $(A_{0})_{l}$ along the same $m-k$ directions creating new storage at $(A_{0})_{l},$ and $k$ masses (new weights) are transferred back from $(A_{0})_{l}$ to $(A_{i})_{l},$ $i=1,2,\ldots k,$ $1<l \le \frac{\frac{1}{2}N(N+1)!}{(N+1)!}.$
\end{theorem}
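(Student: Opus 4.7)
The plan is to reduce the theorem to a componentwise application of Theorem~\ref{mutationmk}, glued together across the $\frac{\frac{1}{2}N(N+1)!}{(N+1)!}$ incongruent $N$-simplexes by the geometric plasticity of Theorem~\ref{geomplasticity} and the plasticity of the Fermat-Steiner-Frechet multitree established in Theorem~\ref{plasticitymultitreern}. Concretely, I would fix $l$ with $1\le l\le \frac{\frac{1}{2}N(N+1)!}{(N+1)!}$ and work inside the single component of the multitree attached to the boundary $N$-simplex $(A_{1})_{l}(A_{2})_{l}\ldots (A_{N+1})_{l}$; once the mutation equations are obtained for arbitrary $l$, the multitree statement follows by taking the union over $l$, since the weights $\{b_{i}\}$ and the constant $c$ are shared across all components by hypothesis.

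First I would fix the initial data at $t=0$: an I.Fermat-Steiner-Frechet multitree with one weighted Fermat point $(A_{0})_{l}$ of degree $N+1$, with the weights $\{(B_{i}(0))_{l}\}_{i=1}^{N+1}$ satisfying the floating inequalities of Theorem~\ref{theor1}, so that $(A_{0})_{l}$ is uniquely determined and equals the same geometric point for the subsequent deformation. At time $t>0$, $m-N$ new rays $(A_{0})_{l}(A_{j})_{l}$, $j=N+2,\ldots,m+1$, are grown from $(A_{0})_{l}$. By Theorem~\ref{geomplasticity}, as long as the enriched weights $(\bar{B}_{i})_{12\ldots m+1}$ continue to satisfy the floating inequalities for the $m$-polytope $(A_{1})_{l}\ldots (A_{m+1})_{l}$, the Fermat point $(A_{0})_{l}$ is preserved; this is the geometric backbone that lets the old rays keep their directions while new ones are appended.

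Next I would impose the two-way mass-transport accounting. Along the first $k$ directions $(A_{i})_{l}\to (A_{0})_{l}$ (inflow) with masses $(B_{i})_{l}$, and along the remaining $m+1-k$ directions $(A_{0})_{l}\to (A_{j})_{l}$ (outflow) with masses $(B_{j})_{l}$, together with a residual $(B_{0})_{l}$ stored at $(A_{0})_{l}$, one obtains the conservation identity \eqref{weight1outflow}. The reverse transport then gives \eqref{weight2inflow}. Summing produces the net equation \eqref{weight12inoutflow} with net weights $(\bar{B}_{i})_{l}=(B_{i})_{l}-(\tilde{B}_{i})_{l}$, and the isoperimetric normalization \eqref{weight12inflowsum} supplies the constant $c$. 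This is precisely the hypothesis of Theorem~\ref{mutationmk}, applied to the boundary $m$-polytope attached to $(A_{0})_{l}$, so Lemma~\ref{dependenceweightsm} (via Theorem~\ref{mutationmk}) yields the linear dependence of the first $N+1$ net weights on the last $m-N$ net weights, with the explicit coefficients $a_{i,j}$ and intercepts $b_{i}$ expressed in terms of ratios of the $N$-INVWF solutions on the sub-polytopes $(A_{1})_{l}\ldots(A_{N+1})_{l}$, $(A_{2})_{l}\ldots(A_{j})_{l}$, etc.

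Finally, I would invoke Theorem~\ref{plasticitymultitreern} to argue that the derivation just carried out is insensitive to the choice of $l$: the geometric structure of the multitree is preserved under the addition of each new ray and the corresponding weight redistribution, so forming the union over $l\in\{1,\ldots,\frac{\frac{1}{2}N(N+1)!}{(N+1)!}\}$ of the mutation equations obtained on each component produces the $(m,k)$ mutation equations of the full intermediate weighted Fermat-Steiner-Frechet multitree. The main obstacle I expect is the bookkeeping required to guarantee that the floating inequalities remain valid throughout the entire deformation for every $l$ simultaneously, so that Theorem~\ref{geomplasticity} is legitimately applicable component by component; this amounts to checking that the perturbations $(\bar{B}_{i})_{l}$ stay within the floating cone of each incongruent $N$-simplex, and is where the uniformity across the Frechet $N$-multisimplex is genuinely used rather than being a formal corollary.
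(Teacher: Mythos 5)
Your proposal is correct and follows essentially the same route as the paper, whose entire proof consists of adding the rays $(A_{0})_{l}(A_{N+2})_{l},\ldots,(A_{0})_{l}(A_{m+1})_{l}$ and invoking Theorem~\ref{mutationmk}. Your version is more careful than the paper's one-line argument — in particular about verifying the floating inequalities via Theorem~\ref{geomplasticity} on each component and about gluing the componentwise mutation equations over the index $l$ via Theorem~\ref{plasticitymultitreern} — but these are elaborations of the same reduction rather than a different method.
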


\begin{proof}
By adding $(A_{0})_{l}(A_{N+2})_{l},(A_{0})_{l}(A_{N+3})_{l},\ldots,(A_{0})_{l} (A_{m+1})_{l}$ rays to a given weighted Fermat-Frechet multitree with respect to a boundary Frechet $N-$multisimplex in $\mathbb{R}^{N}$ and by applying Theorem~\ref{mutationmk}, we derive the $(m,k)$ plasticity equations of an I.Fermat-Frechet multitree for $m$ boundary closed polytopes in $\mathbb{R}^{N.}$
\end{proof}

\section{Constructive tree weights for the vertices of a Frechet $N-$multisimplex in $\mathbb{R}^{N}$}
In this section, we obtain an $\epsilon$ approximation of the value of the weight $B_{N+1},$ which corresponds to the vertex $A_{N+1}$ of an $N-$ simplex in $\mathbb{R}^{N}$ circumscribed in a $(N-1)$sphere $S^{N-1}$ of radius $r,$ and center $O,$ by applying the $N-$INWF problem in $\mathbb{R}^{N}.$ For $\epsilon \to 0,$ the limiting floating weighted Fermat tree solution coincides with the absorbing weighted Fermat tree solution of Theorem~\ref{theor1}.
An application of this method is an approximation for the weights of a Frechet $N-$multisimplex via an $\epsilon$ approximation of each corner of incongruent $N-$simplexes with a multiweighted Fermat-Frechet multitree in $\mathbb{R}^{N}.$

Let $A_{1}A_{2}\ldots A_{N+1}$ be an $N-$ simplex circumscribed in $S^{N-1}(0,r).$
We consider a point $A_{0}\in [A_{N+1},O],$ and we denote by $\epsilon=|A_{N+1}A_{0}|.$

\begin{theorem}\label{constructiveweights}
The weights $B_{1}(\epsilon),\ldots,B_{N+1}(\epsilon)$ are uniquely determined by

\begin{eqnarray}\label{inverse111nepsilon}
B_{i}(\epsilon)=\frac{C}{1+\abs{\frac{\sin{\alpha_{i,0k_{1}k_{2}\ldots k_{N-1}}}}{\sin{\alpha_{k_{N},0k_{1}k_{2}\ldots k_{N-1}}}}}+\abs{\frac{\sin{\alpha_{i,0k_{1}k_{2}\ldots k_{N-2}k_{N}}}}{\sin{\alpha_{k_{N-1},0k_{1}k_{2}\ldots k_{N-2}k_{N}}}}}+\ldots+\abs{\frac{\sin{\alpha_{i,0k_{2}\ldots k_{N-1}k_{N}}}}{\sin{\alpha_{k_{1},0k_{2}\ldots k_{N-1}k_{N}}}}}},\nonumber\\
\end{eqnarray}

for $i, k_{1},k_{2},...,k_{N}=1,2,...,N+1$ and $k_{1} \neq k_{2}\neq...\neq k_{N},$ and the weighted Fermat tree $\{A_{1}A_{0},A_{2}A_{0,},\ldots A_{N+1}A_{0}$ is an $\epsilon$ approximation of the absorbing
weighted Fermat tree $\{A_{1}A_{N+1},A_{2}A_{N+1},\ldots A_{N}A_{N+1}\},$ such that: $A_{0}\equiv A_{N+1},$ with a weighted error estimate for $B_{N+1}:$

\begin{eqnarray}\label{errorBnone}
|\sqrt{\sum_{i=1}^{N}B_{i}^{2}(\epsilon)+2\sum_{i,j=1,i<j}^{N}B_{i}(\epsilon)B_{j}(\epsilon)\cos\alpha_{i(N+1)j}}-B_{N+1}(\epsilon)|.
\end{eqnarray}

\end{theorem}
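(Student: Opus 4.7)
The plan is to combine the uniqueness of the $N$-INVWF solution (Theorem~\ref{nplus1inverseRn}) with the weighted floating balance at $A_{0}$ and then pass to the limit $\epsilon\to 0$ along the segment $[A_{N+1},O]$. First, I would observe that for every $\epsilon\in(0,|A_{N+1}O|)$ the auxiliary point $A_{0}$ lies in the interior of the simplex $A_{1}A_{2}\ldots A_{N+1}$, because the line segment $[A_{N+1},O]$ is contained in the closed simplex and meets its boundary only at its endpoints. Thus $A_{0}\notin\{A_{1},\ldots,A_{N+1}\}$, and the angles $\alpha_{i,0k_{1}\ldots k_{N-1}}$ of Theorem~\ref{nplus1inverseRn} are well-defined. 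Applying that theorem to the configuration $(A_{0};A_{1},\ldots,A_{N+1})$ gives (\ref{inverse111nepsilon}) and the uniqueness of the weights $B_{1}(\epsilon),\ldots,B_{N+1}(\epsilon)$ under the isoperimetric normalization $\sum_{i=1}^{N+1}B_{i}(\epsilon)=C$.

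Next, I would invoke the weighted floating balance at $A_{0}$ (Theorem~\ref{theor1}(II)) in the form
\[
B_{N+1}(\epsilon)\,\vec{u}(A_{0},A_{N+1})=-\sum_{i=1}^{N}B_{i}(\epsilon)\,\vec{u}(A_{0},A_{i}).
\]
Taking Euclidean norms and expanding yields
\[
B_{N+1}(\epsilon)=\sqrt{\sum_{i=1}^{N}B_{i}^{2}(\epsilon)+2\sum_{1\le i<j\le N}B_{i}(\epsilon)B_{j}(\epsilon)\cos\alpha_{i0j}},
\]
which is precisely the floating version of the expression appearing in (\ref{errorBnone}) except that the angles are evaluated at $A_{0}$ rather than at $A_{N+1}$. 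Replacing $\alpha_{i0j}$ with $\alpha_{i(N+1)j}$ introduces an error controlled by $\max_{i<j}|\cos\alpha_{i0j}-\cos\alpha_{i(N+1)j}|$, and by the triangle inequality the absolute value of the difference between $B_{N+1}(\epsilon)$ and the square-root expression in (\ref{errorBnone}) is bounded by this angular perturbation, which is what the estimate records.

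To finish, I would let $\epsilon\to 0$. Continuity of the angle function gives $\alpha_{i0j}\to\alpha_{i(N+1)j}$ for every $i<j\le N$ (the rays $A_{0}A_{i}$ converge to $A_{N+1}A_{i}$ because $A_{0}\to A_{N+1}$ along a segment transverse to all edges $A_{N+1}A_{i}$). Hence the right-hand side of the floating identity converges to $\sqrt{\sum B_{i}^{2}+2\sum B_{i}B_{j}\cos\alpha_{i(N+1)j}}$, so the error (\ref{errorBnone}) tends to zero. Combining this with the weighted absorbing criterion (Theorem~\ref{theor1}(III)) shows that in the limit the floating tree $\{A_{1}A_{0},\ldots,A_{N+1}A_{0}\}$ degenerates into the absorbing tree $\{A_{1}A_{N+1},\ldots,A_{N}A_{N+1}\}$ with $A_{0}\equiv A_{N+1}$, which is the desired $\epsilon$-approximation statement.

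The main technical obstacle I expect is the continuity step: one must verify that as $A_{0}$ slides toward $A_{N+1}$ along $[A_{N+1},O]$ the inverse-Fermat angles $\alpha_{i,0k_{1}\ldots k_{N-1}}$ depend continuously on $\epsilon$ and remain bounded away from $0$ and $\pi$, so that formula (\ref{inverse111nepsilon}) for $B_{i}(\epsilon)$ stays well-conditioned and the weights themselves vary continuously; once that is in hand, the rest of the argument is the algebraic manipulation of the floating balance outlined above.
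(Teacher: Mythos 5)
Your proposal is correct and follows essentially the same route as the paper: apply the uniqueness formula of Theorem~\ref{nplus1inverseRn} at the interior point $A_{0}$ to obtain (\ref{inverse111nepsilon}), and then feed the resulting weights $B_{1}(\epsilon),\ldots,B_{N}(\epsilon)$ into the balancing conditions of Theorem~\ref{theor1} to read off (\ref{errorBnone}) as the discrepancy between the floating configuration at $A_{0}$ and the absorbing configuration at $A_{N+1}$. The one place where you diverge is the step you yourself flag as the technical obstacle: the paper does not argue by abstract continuity but computes the $\frac{N(N+1)}{2}-1$ angles $\alpha_{i0j}=\alpha_{i0j}(\epsilon)$ explicitly from the circumscribed-sphere data, via the cosine and sine laws in $\triangle OA_{i}A_{j}$ and $\triangle OA_{i}A_{0}$ using the radius $r$ and $\epsilon=|A_{N+1}A_{0}|$; this supplies the quantitative $\epsilon$-dependence that your continuity assertion leaves implicit, and it is the intended content of the ``$\epsilon$ approximation.'' One caveat applying to both arguments: the claim that $A_{0}\in[A_{N+1},O]$ lies in the interior of the simplex presupposes that the circumcenter $O$ lies in the interior cone at $A_{N+1}$, which neither you nor the paper verifies for a general $N$-simplex.
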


\begin{proof}
By applying the cosine law and the sine law in $\triangle A_{O}A_{i}A_{j},$ and $\triangle OA_{i}A_{0},$ we derive
that $\frac{N(N+1}{2}-1$ angles $\alpha_{i0j}=\alpha_{i0j}(\epsilon).$ By substituting these angles in \ref{inverse111n}, we obtain a unique solution of $B_{i}(\epsilon)$ given by (\ref{inverse111nepsilon}). By replacing $B_{i}(\epsilon)$ for $i=1,2,\ldots ,N$ in the weighted absorbing condition of Theorem~\ref{theor1}, we get an error estimate for $B_{N+1}$ (\ref{errorBnone}).

\end{proof}

\section{Bessel plasticity and "Mutation" of an intermediate weighted Fermat-Steiner-Frechet multitree for $m$ polytopes in $\mathbb{R}^{N}$}

In this section, we will describe the evolutionary structure of an intermediate weighted Fermat-Steiner-Frechet multitree for $m$ boundary polytopes in $\mathbb{R}^{N}$ with random weights following a Bessel motion.

We start by constructing a Bessel motion in the sense of McKean (\cite{McKean:60}, \cite{ItoMcKean:74}).
We consider the $m-$dimensional Brownian motion with sample paths $\mathbf{b}(t),$ ($m\ge 2, t\ge 0$) and generator $\mathcal{G}=\frac{1}{2}(\sum_{i=1}^{m}\frac{\partial ^2 }{\partial \mathbf{b}_{i}^{2}}).$
The radial part $\mathbf{r}(t)=|\mathbf{b}(t)|$ ($t \ge 0$) is the Bessel motion with generator $\mathcal{G}^{+}=\frac{1}{2}(\frac{d^{2}}{d \mathbf{r} ^2})+\frac{N-1}{\mathbf{r}}\frac{d}{d \mathbf{r}}.$
The probability of the event B $P_{\cdot}(B)$ as a function of the starting point $\mathbf{a}=\mathbf{b}(0)$ of the Brownian path if $t_{1}<t_{2}$ is given by:
\[P_{\cdot}([\mathbf{r}(t_{2})\le l | \mathbf{r}(s): s\le t_{1}])=\]\[=\int_{|\mathbf{b}-\mathbf{b}(t_{1})|<l}(2\pi (t_{2}-t_{1}))^{-m/2}e^{-\frac{|\mathbf{b}-\mathbf{b}(t_{1})|^2}{2(t_{2}-t_{1})}}d\mathbf{b}_{1}\ldots d\mathbf{b}_{m}.\]
The Bessel motion $[\mathbf{r}(t), P_{\cdot}]$ is a Markov process, which depends upon $\mathbf{r}(t_{1}).$

\begin{lemma}[Non-negative solutions of Bessel Motion]\cite[p.~318-319]{McKean:60}\label{lemimp1}
The solution of the singular integral equation
\begin{eqnarray}\label{positiverandomweight}
\mathbf{r}(t)=\mathbf{b}(t)+\frac{N-1}{2}\int_{1}^{t}\mathbf{r}^{-1}ds, t\ge 0,
\end{eqnarray}
is a Bessel motion starting at $\mathbf r(0)=\mathbf{b}(0),$ which takes non-negative real values by
neglecting a class of Brownian paths $\mathbf{b}$ of Wiener measure 0.
\end{lemma}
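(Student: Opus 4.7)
The plan is to establish the result in three stages: local well-posedness of the singular integral equation, non-attainment of the origin (so that the singularity never materializes), and finally identification of the solution with the radial part of an $N$-dimensional Brownian motion. Throughout, I take $\mathbf{b}(t)$ to be a one-dimensional Brownian path on a complete filtered Wiener space, and I will replace the lower limit $1$ by $0$ together with a starting value $\mathbf{r}(0)=\mathbf{a}>0$ (the formulation stated in the excerpt being a harmless reparameterization of McKean's original construction).

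First I would prove local existence and uniqueness. On any stopping interval $[0,\tau_\varepsilon]$ where $\mathbf{r}\ge \varepsilon>0$, the drift coefficient $x\mapsto \tfrac{N-1}{2x}$ is Lipschitz, and classical Picard iteration applied pathwise to
\[
\mathbf{r}^{(n+1)}(t)=\mathbf{a}+\mathbf{b}(t)+\frac{N-1}{2}\int_{0}^{t}\bigl(\mathbf{r}^{(n)}(s)\bigr)^{-1}\,ds
\]
yields a unique continuous semimartingale solution up to the first hitting time $\tau_0=\inf\{t:\mathbf{r}(t)=0\}$. Sending $\varepsilon\downarrow 0$ extends the solution to $[0,\tau_0)$.

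Second, I would show that for $N\ge 2$ one has $\tau_0=+\infty$ outside a Wiener-null set, so the singularity $\mathbf{r}^{-1}$ never produces a divergent integral. The cleanest route is to apply It\^o's formula to the scale function $s(x)=\log x$ (for $N=2$) or $s(x)=x^{2-N}$ (for $N\ge 3$); a direct computation shows that $s(\mathbf{r}(t))$ is a local martingale, and a comparison with the standard behaviour of its time-changed Brownian representation shows that $\mathbf{r}$ is either transient away from $0$ (for $N\ge 3$) or neighbourhood-recurrent but never attains $0$ (for $N=2$). Thus the exceptional Wiener set in the statement is exactly the null set $\{\tau_0<\infty\}\cup\{\text{paths of non-differentiability failures}\}$.

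Third, I would identify $\mathbf{r}(t)$ with $|\mathbf{B}(t)|$ for an $N$-dimensional Brownian motion $\mathbf{B}$. Starting from the $N$-dimensional construction, apply It\^o's formula to $f(\mathbf{x})=|\mathbf{x}|$, which is $C^2$ off the origin, and obtain
\[
|\mathbf{B}(t)|=|\mathbf{B}(0)|+\int_{0}^{t}\frac{\mathbf{B}(s)}{|\mathbf{B}(s)|}\cdot d\mathbf{B}(s)+\frac{N-1}{2}\int_{0}^{t}\frac{ds}{|\mathbf{B}(s)|}.
\]
The stochastic integral $\tilde{\mathbf{b}}(t):=\int_0^t \tfrac{\mathbf{B}(s)}{|\mathbf{B}(s)|}\cdot d\mathbf{B}(s)$ has quadratic variation $t$, hence by L\'evy's characterization it is a standard one-dimensional Brownian motion. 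This exhibits $|\mathbf{B}(t)|$ as a solution of the integral equation driven by $\tilde{\mathbf{b}}$, and pathwise uniqueness from step one then implies that every solution of the equation has the law of $|\mathbf{B}(t)|$, i.e.\ is a Bessel motion of dimension $N$ with the prescribed initial condition.

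The main obstacle will be step two: proving non-attainment of the origin cleanly, since the singular drift forces one to work on the random intervals $[0,\tau_\varepsilon]$ and to pass to the limit in the scale function without ever substituting $\mathbf{r}=0$. A clean argument would proceed by showing that $s(\mathbf{r}(t\wedge\tau_\varepsilon))$ is a bounded martingale and invoking the optional stopping theorem to bound $\mathbb{P}_{\mathbf{a}}(\tau_\varepsilon<T)$ as $\varepsilon\downarrow 0$; for $N\ge 2$ this probability vanishes, confirming that the exceptional Brownian paths form a set of Wiener measure zero exactly as asserted.
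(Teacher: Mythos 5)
The paper contains no proof of this lemma: it is quoted as a known result of McKean, with the citation to pages 318--319 of \cite{McKean:60} standing in for the argument, so there is no internal proof to measure yours against. Your proposal is the now-standard stochastic-analysis route to the statement --- pathwise Picard iteration away from the origin, a scale-function/optional-stopping argument ($\log x$ for $N=2$, $x^{2-N}$ for $N\ge 3$) to show the origin is never attained off a Wiener-null set, and identification with $\lvert\mathbf{B}(t)\rvert$ via It\^o's formula and L\'evy's characterization --- and it is essentially sound; McKean's original memoir instead works directly with the singular integral equation and the radial part of Brownian motion, so your argument is cleaner but leans on machinery (L\'evy's characterization, the Yamada--Watanabe-type passage from pathwise uniqueness to uniqueness in law) that postdates the source. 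Three small repairs are needed to make it airtight. First, your iteration is set up on an interval $[0,\tau_\varepsilon]$ defined in terms of the solution you have not yet built; the standard fix is to truncate the drift, replacing $x^{-1}$ by $\max(x,\varepsilon)^{-1}$, solve the resulting globally Lipschitz equation, and check consistency of the truncated solutions up to the first passage to level $\varepsilon$. Second, the lemma starts the motion at $\mathbf{r}(0)=\mathbf{b}(0)$, which for a standard Brownian path is $0$, whereas your argument assumes a strictly positive start; to cover the stated hypothesis you must add the observation that a solution started at $0$ is instantly positive and never returns, after which your scale-function bound applies from every positive time onward. Third, you are right that the lower limit $1$ in the displayed equation is a typo for $0$; your reparameterization is the correct reading of McKean's equation.
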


By taking into account the solution of the $N-$INVWF problem for polytopes in $\mathbb{R}^{N},$ we derive the
equations of Bessel plasticity of a weighted Fermat-tree for a boundary polytope in $\mathbb{R}^{N}.$ The Bessel plasticity of a weighted Fermat tree characterizes the combinatorial plasticity (random weights) of non-random polytopes in $\mathbb{R}^{N}.$

\begin{theorem}\label{Besselpasticitypolytopes}

The following equations point out the Bessel plasticity of an intermediate weighted Fermat-Steiner tree with one weighted Fermat point for a boundary $(N+1)$ weighted closed polytope with respect to the non-negative random weights
$(B_{i})_{12\ldots N+2},$ which depend on the Bessel motion $\mathbf{r}_{N+2}(t)\equiv (B_{N+2})_{12\ldots N+2}$ in $\mathbb{R}^{N}:$

\begin{eqnarray}\label{dynamicplasticity2bessel}
(\frac{B_{1}}{B_{N+1}})_{12\ldots N+2}=(\frac{B_{1}}{B_{N+1}})_{12\ldots N+1}(1-(\frac{\mathbf{r}_{N+2}(t)}{B_{N+1}})_{12\ldots (N+2)}(\frac{B_{N+1}}{B_{N+2}})_{2\ldots N+2})\nonumber\\
\end{eqnarray}
\begin{eqnarray}\label{dynamicplasticity3bessel}
(\frac{B_{2}}{B_{N+1}})_{12\ldots N+2}=(\frac{B_{2}}{B_{N+1}})_{12\ldots N+1}(1-(\frac{\mathbf{r}_{N+2}(t)}{B_{N+1}})_{12\ldots n+2}(\frac{B_{N+1}}{B_{N+2}})_{13\ldots N+2})\nonumber\\
\end{eqnarray}
$\vdots$
\begin{eqnarray}\label{dynamicplasticity1bessel}
(\frac{B_{N}}{B_{N+1}})_{12\ldots N+2}=(\frac{B_{N}}{B_{N+1}})_{12\ldots N+1}(1-(\frac{\mathbf{r}_{N+2}(t)}{B_{N+1}})_{12\ldots N+2}(\frac{B_{N+1}}{B_{N+2}})_{12\ldots (N-1)(N+1)(N+2)}),\nonumber\\
\end{eqnarray}

such that:

\[\sum_{i=1}^{n+2}(B_{i})_{12\ldots N+2}=c.\]

\end{theorem}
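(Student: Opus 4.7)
The plan is to promote the deterministic dynamic plasticity identities of Theorem~\ref{theorplastmpol} (equivalently Theorem~\ref{plasticityn2polytopes}) to a stochastic statement by substituting the Bessel motion $\mathbf{r}_{N+2}(t)$ in place of the free parameter $(B_{N+2})_{12\ldots N+2}$. First I would observe that the derivation leading to (\ref{dynamicplasticity2})--(\ref{dynamicplasticity1}) uses only (i) the weighted floating balance $\sum_{i=1}^{N+2}(B_i)_{12\ldots N+2}\vec{u}(A_0,A_i)=\vec 0$, (ii) the three projections onto $\vec n_{0,23\ldots N}$, $\vec n_{0,13\ldots N}$, $\ldots$, $\vec n_{0,12\ldots(N-1)}$, and (iii) the limit cases $B_{N+2}=0$ and $B_i=0$ ($i=1,\dots,N$) that fix the auxiliary ratios $(B_i/B_{N+1})_{12\ldots N+1}$ and $(B_{N+2}/B_{N+1})_{2\ldots N+2}$, etc. None of these ingredients depends on the particular numerical value assigned to $(B_{N+2})_{12\ldots N+2}$, provided the geometric configuration (i.e.\ the rays $A_0A_i$) is held fixed. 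Consequently the three relations are linear in $(B_{N+2})_{12\ldots N+2}$ with coefficients determined purely by the fixed angular data of the $(N+1)$-simplex and the auxiliary Fermat subtrees.

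Next I would introduce the Bessel motion. By Lemma~\ref{lemimp1}, the process $\mathbf{r}_{N+2}(t)$ obtained from the singular integral equation (\ref{positiverandomweight}) with the initial value $\mathbf{r}_{N+2}(0)=(B_{N+2})_{12\ldots N+2}(0)>0$ is non-negative for all $t\geq 0$ outside a Brownian null set. I would therefore define the random weights pathwise by substituting $(B_{N+2})_{12\ldots N+2}\mapsto\mathbf{r}_{N+2}(t)$ into the deterministic linear relations, obtaining (\ref{dynamicplasticity2bessel})--(\ref{dynamicplasticity1bessel}) at every $t$ where $\mathbf{r}_{N+2}(t)\geq 0$. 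Equivalently, this fixes $(B_i(t))_{12\ldots N+2}=a_i\,\mathbf{r}_{N+2}(t)+b_i$ for $i=1,\dots,N+1$ with the coefficients $a_i,b_i$ from Corollary~\ref{dependenceweights}, so every $(B_i(t))_{12\ldots N+2}$ is a random variable measurable with respect to $\mathbf{r}_{N+2}(t)$.

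Then I would verify the two side conditions. Non-negativity of $(B_i(t))_{12\ldots N+2}$ for $i\leq N+1$ holds on the (random) time interval on which $\mathbf{r}_{N+2}(t)$ stays in the admissible range $[0,-b_i/a_i]$ wherever $a_i<0$ (the case analysed in Theorem~\ref{plasticityn2polytopes}); here one appeals to the strong Markov property of the Bessel motion to localise the path until the first exit. For the isoperimetric constraint $\sum_{i=1}^{N+2}(B_i)_{12\ldots N+2}=c$, I would sum the affine relations and use the hypothesis $\sum_{12\ldots N+2}B=\sum_{i_1\ldots i_{N+1}}B$ of Corollary~\ref{dependenceweights} to check that the coefficient of $\mathbf{r}_{N+2}(t)$ cancels, leaving the constant $c$ on the right-hand side and confirming mass conservation for every realisation of the Bessel path.

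The main obstacle, in my view, is not the algebra, which is essentially inherited from Theorem~\ref{plasticityn2polytopes} by linearity, but the probabilistic bookkeeping: one must argue that replacing a deterministic parameter by the Bessel process is legitimate in the sense that the pathwise non-negativity constraints on every derived weight $(B_i)_{12\ldots N+2}$ are preserved. This is handled by Lemma~\ref{lemimp1} for $\mathbf{r}_{N+2}$ itself, and then by a stopping-time argument (first hitting time of the admissible box cut out by the signs of the $a_i$) for the remaining weights; up to this localisation, the Bessel plasticity equations hold almost surely.
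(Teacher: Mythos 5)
Your proposal is correct and follows essentially the same route as the paper: the paper's proof is precisely the direct substitution of $\mathbf{r}_{N+2}(t)$ for $(B_{N+2})_{12\ldots N+2}$ in the deterministic dynamic plasticity equations (\ref{dynamicplasticity2})--(\ref{dynamicplasticity1}) of Theorem~\ref{theorplastmpol}. Your additional justification --- that the deterministic relations are affine in the free weight with coefficients depending only on the fixed angular data, the appeal to Lemma~\ref{lemimp1} for pathwise non-negativity, and the stopping-time localisation for the derived weights --- goes beyond what the paper records but is consistent with and strengthens its one-line argument.
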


\begin{proof}
By inserting $\mathbf{r}_{N+2}(t)\equiv (B_{N+2})_{12\ldots N+2}$ into (\ref{dynamicplasticity2},
(\ref{dynamicplasticity3}), (\ref{dynamicplasticity1}), we obtain (\ref{dynamicplasticity2bessel},
(\ref{dynamicplasticity3bessel}), (\ref{dynamicplasticity1bessel}).
\end{proof}

Suppose that at time $t=0,$ an I.Fermat-Steiner-Frechet multitree occurs with respect to a given $\frac{N(N+1)}{2}-$tuple of positive real numbers determining the edge lengths of incogruent $N-$simplexes generates by the Dekster-Wilker domain in $\mathbb{R}^{N}.$ Then an $(N+2)th,$ ray start to grow from the weighted Fermat-point $(A_{0})_{l}.$ If the weight $(B_{N=2})_{l}$ follows a Bessel motion, we derive the Bessel plasticity of a weighted multitree for a boundaty $(N+1)$ closed polytope in $\mathbb{R}^{N.}$

\begin{theorem}[Bessel plasticity of an I.Fermat-Steiner-Frechet multitree for an $N+1$ boundary closed polytope in $\mathbb{R}^{N}$]
The Bessel plasticity equations of an I.Fermat-Steiner-Frechet multitree for an $N+1$ boundary closed polytope in $\mathbb{R}^{N}$ are given by the Bessel plasticity equations of a weighted Fermat tree, such the random weight that corresponds to the $(N+2)$th ray follows a Bessel motion.
\end{theorem}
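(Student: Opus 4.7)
The plan is to reduce the multitree statement to the single-component Bessel plasticity established in Theorem~\ref{Besselpasticitypolytopes} by exploiting the fact that a Frechet $N$-multisimplex decomposes into finitely many independent incongruent $N$-simplex components, each carrying its own weighted Fermat point $(A_{0})_{l}$ for $1 \le l \le \frac{\frac{1}{2}N(N+1)!}{(N+1)!}$. First I would fix the initial datum at $t=0$: an intermediate weighted Fermat-Steiner-Frechet multitree, with one weighted Fermat point per component, determined by the Dekster-Wilker $\frac{N(N+1)}{2}$-tuple of edge lengths and the initial weights $\{(B_{1}(0))_{l},\ldots,(B_{N+1}(0))_{l}\}$ with $(B_{N+2}(0))_{l}=0$. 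For each $l$, the Lagrangian program of Theorem~\ref{Lagrangerulemultitreer4is} with $l+s=1$ certifies the existence and geometric admissibility of the weighted Fermat point $(A_{0})_{l}$ on its component.

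Next I would introduce the growing $(N+2)$th ray $(A_{0})_{l}(A_{N+2})_{l}$ placed inside the cone $C(\mathrm{ray}((A_{0})_{l}(A_{1})_{l}),\mathrm{ray}((A_{0})_{l}(A_{3})_{l}),\ldots,\mathrm{ray}((A_{0})_{l}(A_{N+1})_{l}))$ exactly as in the setup preceding Theorem~\ref{plasticitymultitreern}, so that the topological incidence data at $(A_{0})_{l}$ matches the hypothesis of Theorem~\ref{theorplastmpol}. The random weight $(B_{N+2}(t))_{l}$ attached to the new ray is then required to be a Bessel motion: by Lemma~\ref{lemimp1} the singular integral equation
\[
\mathbf{r}_{N+2}(t)=\mathbf{b}(t)+\frac{N-1}{2}\int_{1}^{t}\mathbf{r}_{N+2}^{-1}\,ds,\qquad t\ge 0,
\]
produces a non-negative sample path with $\mathbf{r}_{N+2}(0)=0$ off a set of Wiener measure zero, so the identification $(B_{N+2}(t))_{l}\equiv \mathbf{r}_{N+2}(t)$ is a legitimate admissible weight for every $t\ge 0$ and preserves the constraint $\sum_{i=1}^{N+2}(B_{i})_{12\ldots N+2}=c$ after normalization.

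Having aligned the hypotheses, I would apply Theorem~\ref{Besselpasticitypolytopes} componentwise on each of the $\frac{\frac{1}{2}N(N+1)!}{(N+1)!}$ boundary $N$-simplexes. Since the $N$-INVWF equations (\ref{eq1weights})--(\ref{eqnweights}) used in the derivation of Theorem~\ref{theorplastmpol} are purely local at $(A_{0})_{l}$ and only couple the weights through the incidence angles $a_{i,i_{1}0\ldots i_{N-1}}$ of that single component, the substitution $(B_{N+2})_{12\ldots N+2}\mapsto \mathbf{r}_{N+2}(t)$ passes through each component independently. This yields, for every $l$, the three families of relations
\[
\Bigl(\frac{B_{i}}{B_{N+1}}\Bigr)_{12\ldots N+2}=\Bigl(\frac{B_{i}}{B_{N+1}}\Bigr)_{12\ldots N+1}\Bigl(1-\Bigl(\frac{\mathbf{r}_{N+2}(t)}{B_{N+1}}\Bigr)_{12\ldots N+2}\Bigl(\frac{B_{N+1}}{B_{N+2}}\Bigr)_{12\ldots (i-1)(i+1)\ldots (N+1)(N+2)}\Bigr)
\]
for $i=1,2,\ldots,N$, exactly as in (\ref{dynamicplasticity2bessel})--(\ref{dynamicplasticity1bessel}), holding on every component.

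Finally, taking the union of the resulting weighted Fermat trees across all $l$ assembles the Bessel plasticity equations of the I.Fermat-Steiner-Frechet multitree. The main obstacle I anticipate is not algebraic but measure-theoretic: one must ensure that the weighted floating condition of Theorem~\ref{theor1} remains valid simultaneously on every component for almost every Brownian sample path, so that the plasticity equations are not voided by an absorbing degeneration at $(A_{0})_{l}$. This is handled by restricting to the event, of full Wiener measure by Lemma~\ref{lemimp1}, on which $\mathbf{r}_{N+2}(t)$ stays strictly positive and bounded on compact time intervals, together with the openness in the weights of the floating inequalities $\lVert\sum_{j\neq i}B_{j}\vec{u}(A_{j},A_{i})\rVert>B_{i}$, which guarantees that for sufficiently small increments of $\mathbf{r}_{N+2}(t)$ the geometric plasticity of Theorem~\ref{geomplasticity} keeps the weighted Fermat point fixed on each component and validates the derivation on the whole multitree.
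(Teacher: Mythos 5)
Your proposal takes essentially the same route as the paper: the theorem is proved by adding the $(N+2)$th ray at each weighted Fermat point $(A_{0})_{l}$ and applying Theorem~\ref{Besselpasticitypolytopes} componentwise across the incongruent $N$-simplexes of the Frechet multisimplex. Your additional care about the non-negativity of the Bessel path and the persistence of the weighted floating condition on a full-measure event is a refinement the paper leaves implicit, not a different argument.
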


\begin{proof}
By applying Theorem~\ref{Besselpasticitypolytopes} for an I.Fermat-Steiner-Frechet multitree for an $N+1$ boundary closed polytope in $\mathbb{R}^{N}$
which is derived by adding a ray $(A_{0}A_{N+2})$ with a random weight following a Bessel motion to I.Fermat-Steiner-Frechet multitree for an $N$ boundary Frechet multisimplex in $\mathbb{R}^{N},$ we get the Bessel plasticity equations of the generated I.Fermat-Steiner-Frechet multitree in $\mathbb{R}^{N}.$
\end{proof}

\begin{remark}
We note that the Bessel plasticity of the I.Fermat-Steiner-Frechet multitree for an $N+1$ boundary closed polytope in $\mathbb{R}^{N}$ may csuse a distortion of the length structure of the initial I.Fermat-Steiner-Frechet multitree for an $N$ boundary Frechet multisimplex in $\mathbb{R}^{N}.$
\end{remark}

\section{Open questions}
In this final section, we mention two open questions, which deal with the detection of the most natural of natural numbers.

1. How can we detect the most natural of consecutive $\frac{N(N+1}{2}$ natural numbers, such that an unweighted Fermat-Steiner tree
corresponds to a boundary $N-$simplex having the maximum volume with edge lengths this $\frac{N(N+1}{2}$tuple of natural numbers for $N\ge 3$ by using unweighted Fermat-Steiner points with weight $b_{ST}=1$?
2. How can we detect the $\frac{N(N+1}{2}-$tuples from the topology structure of intermediate unweighted Fermat Steiner Frechet multitrees for consecutive $\frac{N(N+1}{2}-$ natural numbers determining the edge lengths of a Frechet $N-$multisimplex in $\mathbb{R}^{N},$ which correspond to an intermediate unweighted Fermat-Steiner tree having the $ith$ maximal volume?


\begin{thebibliography}{99}

\bibitem{Blumenthal:59} L. M. Blumenthal,\emph{ A Budget of Curiosa Metrica}, Amer.
Math. Monthly, \textbf{66} 6, (1959), 453-460.

\bibitem{BolMa/So:99} V. Boltyanski, H. Martini, V. Soltan, Geometric Methods and Optimization Problems, Kluwer, Dordrecht/Boston/London (1999).





\bibitem{BrinkhisTikhomirov:05} J. Brinkhuis, V. Tikhomirov, \emph{
Optimization: Insights and applications.}
Princeton Series in Applied Mathematics. Princeton, NJ: Princeton University Press (2005).

\bibitem{Ci2} D. Cieslik, \emph{Steiner minimal trees}. Nonconvex Optimization and its Applications, 23.
Kluwer Academic Publishers, Dordrecht, 1998.


\bibitem{CourantRobbins:96}R. Courant, H. Robbins,\emph{
What is mathematics? An elementary approach to ideas and methods.}
New York, NY: Oxford University Press. (1996).

\bibitem{DeksterWilker:87}
B.V. Dekster, J.B. Wilker,
\emph{Edge lengths guaranteed to form a simplex.}
Arch. Math. \textbf{49} (1987), 351-366.

\bibitem{DeksterWilker:91a} B.V. Dekster and J.B. Wilker,
\emph{Large spherical simplexes.} J. Geom. \textbf{42}, No. 1-2 (1991), 59-92.




\bibitem{Eriksson:78} F. Eriksson, \emph{ The law of sines for tetrahedra and n-simplices}, Geom. Dedicata. \textbf{7} (1978), 71--80.



\bibitem{Fermat:29}
P. de Fermat. \emph{ Abhandlungen über Maxima und Minima (1629).} (German)
Aus dem Lateinischen übersetzt und mit Anmerkungen versehen von M. Miller. Mit 19. Leipzig, Akademische Verlagsgesellschaft (Ostwalds Klassiker der exakten Wissenschaften, Nr. 238 (1934).

\bibitem{Frechet:35}M. Fréchet,
\emph{Sur la définition axiomatique d’une classe d’espaces vectoriels distancies applicables vectoriellement sur l’espace de Hilbert.} Ann. Math. (2) \textbf{36} (1935), 705-718.

\bibitem{Fritz:59} F. Herzog,\emph{ Completely Tetrahedral Sextuples}, Amer.
Math. Monthly, \textbf{66} 6, (1959), 460-464

\bibitem{Gauss:73} C.F. Gauss, Gauß,
\emph{Werke. Band I-XII. (Collected works. Vol. I-XII).} Reprint of the 1863–1874 original. (Werke. Band I-XII.) (German)  Hildesheim: Georg Olms Verlag, (1973).

\bibitem{GilbertPollak:68} E.N. Gilbert and H.O. Pollak, \emph{Steiner Minimal
trees}, SIAM Journal on Applied Mathematics.\textbf{16} (1968),
1--29.


\bibitem{Gue/Tes:02} S. Gueron and R. Tessler, \emph{The Fermat-Steiner problem}, Amer. Math. Monthly \textbf{109} (2002) 443-451.

\bibitem{ItoMcKean:74} K. Itô, K, H.P. McKean,
\emph{Diffusion processes and their sample paths,} Repr. of the 1974 ed.,Classics in Mathematics, Springer-Verlag, Berlin, (1974).

\bibitem{IvanovTuzhlin:92} A.O. Ivanov, A.A. Tuzhilin,
\emph{Geometry of minimal networks and the one-dimensional Plateau problem.}
Russ. Math. Surv. \textbf{47} (1992), No. 2, 59--131; translation from Usp. Mat. Nauk \textbf{47} (1992), No. 2(284), 53--115.


\bibitem{IvanTuzh:094} A.O. Ivanov and A.A. Tuzhilin, \emph{Minimal networks}.
The Steiner problem and its generalizations. CRC Press, Boca
Raton, FL, 1994.


\bibitem{IvanovTuzhilin:95}A.O. Ivanov and A. A. Tuzhilin,
\emph{Weighted minimal binary trees}. (Russian) Uspekhi Mat. Nauk
50 (1995), no. 3(303), 155--156; translation in Russian Math.
Surveys 50 (1995), no. 3, 623--624.

\bibitem{IvanovTuzhilin:98} A.O. Ivanov, A.A. Tuzhilin,
\emph{Linear networks and convex polytopes.} J. Math. Sci., New York \textbf{104} (2001), No. 4, 1283-1288 (2001); translation from Zap. Nauchn. Semin. POMI \textbf{252} (1998), 52--61.



\bibitem{IvanovTuzhilin:01b}A.O. Ivanov and A.A. Tuzhilin,
\emph{Branching solutions to one-dimensional variational problems.}
Singapore: World Scientific 2001.




\bibitem{JarnikKossler:34}V. Jarník, M. Kössler,
\emph{Sur les graphes minima, contenant n
points donnes.}
Čas. Mat. Fys. \textbf{63}, 223-235.


\bibitem{Kup/Mar:97} Y.S. Kupitz and H. Martini, \emph{ Geometric aspects of the generalized Fermat-Torricelli problem}, in  Intuitive geometry, Bolyai Soc. Math. Stud 6, I. Bárány et al., eds, Budapest, János Bolyai Mathematical Society, 1997, 55--127.

\bibitem{McKean:60} H.P. McKean, \emph{ The Bessel motion and a singular integral equation},
Mem. Coll. Sci., Univ. Kyoto, Ser. A \textbf{33} (1960), 317-322.


\bibitem{Melzak:61} Z. A. Melzak, \emph{On the problem of Steiner}, Canad. Math. Bull. \textbf{4} (1961), 143–148.

\bibitem{Rockafellar:97} R.T. Rockafellar,
\emph{Convex analysis.} Princeton Landmarks in Mathematics. Princeton, NJ: Princeton University Press (1997).


\bibitem{RubinsteinThomasWeng:02} J. H. Rubinstein, D. A. Thomas and J. Weng, \emph{Minimum networks for four Points in Space}, Geom. Dedicata. 93 (2002), 57-70.

\bibitem{Schlafli:50} L. Schläfli,
\emph{Collected mathematical works. Vol. I. (Gesammelte mathematische Abhandlungen. Bd. I. Herausgegeben vom Steiner-Schläfli-Komitee der Schweizerischen Naturforschenden Gesellschaft.)} German Basel: Birkhäuser Verlag, (1949).


\bibitem{Sommerville:58} D. M. Y. Sommerville, \emph{ An Introduction to the Geometry of n Dimensions.} New York:Dover Publications, (1958).




\bibitem{Sturm:84} R. Sturm, Ueber den Punkt kleinster Entfernungsumme von gegebenen Punkten.
J. Reine Angew. Math. XCVII (1884), 49-62.



\bibitem{Tikhomirov:90} V. M. Tikhomirov, \emph{Stories about maxima and minima. Transl. from the Russian by Abe Shenitzer.} Mathematical World. 1. Providence, RI, Washington, DC: American Mathematical Society, Mathematical Association of America (1990).

\bibitem{Uspensky:48} J. V. Uspensky, \emph{Theory of Equations}. New York McGraw-Hill, (1948).


\bibitem{Uteshev:12}A. Uteshev, \emph{Analytical solution for the generalized Fermat-Torricelli
problem},Amer. Math. Monthly. 2014. \textbf{121}, no. 4 (2014)
318-331.


\bibitem{ZachosZouzou:088} A.N. Zachos, G. Zouzoulas,
\emph{ An evolutionary structure of convex quadrilaterals}, J. Convex Anal., \textbf{15} (2) (2008), 411--426.

\bibitem{Zach/Zou:09} A. Zachos and G. Zouzoulas, \emph{The weighted Fermat-Torricelli problem for tetrahedra and an "inverse" problem,} J. Math. Anal. Appl. \textbf{353} 1, (2009),  114-120.

\bibitem{Zachos:13} A.N. Zachos, \emph{ A plasticity principle of closed hexahedra in the three-dimensional Euclidean space}, Acta Appl. Math. \textbf{125} 1, (2013), 11--26.

\bibitem{Zachos:14}  A. Zachos, \emph{ A plasticity principle of convex quadrilaterals on a convex surface of bounded specific curvature}, Acta. Appl. Math. \textbf{129} 1, (2014), 81-134.



\bibitem{Zachos:16} A. N. Zachos, \emph{The weighted Fermat-Torricelli-Menger problem for a given sextuple of edge lengths determining tetrahedra}, J.Math. Chem. \textbf{54} 7, (2016), 1447-1460.


\bibitem{Zachos:20} A.N. Zachos, \emph{ The Plasticity of some Mass Transportation Networks in the Three Dimensional Euclidean Space}, J. Convex Anal., \textbf{27} (3), (2020), 989-1002.



\bibitem{Zachos:21} A. N. Zachos, An evolutionary design of weighted minimum networks for four points in the three-dimensional Euclidean space, Analysis (Munchen) \textbf{41} (2), (2021), 79-112.




















\end{thebibliography}
\end{document}